\numberwithin{equation}{section}
\newtheorem{theorem}{Theorem}[section]
\newtheorem{cor}[theorem]{Corollary}
\newtheorem{proposition}[theorem]{Proposition}
\newtheorem{lemma}[theorem]{Lemma}
\newtheorem{prop}[theorem]{Proposition}
\theoremstyle{definition}
\newtheorem{definition}[theorem]{Definition}
\newtheorem{remark}[theorem]{Remark}
\newtheorem{example}[theorem]{Example}
\newtheorem{Conjecture}[theorem]{Conjecture}
\renewcommand\j{{\bf j}}
\newcommand{\bmu}{\bar{\mu}}
\newcommand\half{\tfrac{1}{2}}
\DeclareMathOperator{\ad}{ad}
\newcommand\rhat{\widehat\rho}
\newcommand\be{\beta}
\newcommand\w{\wedge}
\newcommand\g{\mathfrak g}
\newcommand\ga{\widehat{\mathfrak g}}
\newcommand\h{\mathfrak h}
\newcommand\ha{\widehat{\mathfrak h}}
\newcommand\n{\mathfrak n}
\newcommand\D{\Delta}
\renewcommand\l{\lambda}
\newcommand\Dp{\Delta^+}
\newcommand\Da{\widehat\Delta}
\newcommand\Pia{\widehat\Pi}
\newcommand\Dap{\widehat\Delta^+}
\newcommand\Wa{\widehat{W}}
\renewcommand\d{\delta}
\renewcommand\a{\alpha}
\newcommand{\Z}{\mathbb Z}
\newcommand\nat{\mathbb N}
\renewcommand\j{\mathfrak j}
\newcommand\s{\sigma}
\renewcommand\L{\Lambda}
\newcommand\e{\epsilon}
\newcommand\C{\mathbb C}
\newcommand\R{\mathbb R}
\newcommand\A{\mathcal  A}
\newcommand\si{\sigma}
\newcommand\What{\widehat W}
\renewcommand\ha{\widehat{\mathfrak h}}
\newcommand{\fg}{\mathfrak{g}}
\newcommand{\fh}{\mathfrak{h}}
\newcommand{\fn}{\mathfrak{n}}
\newcommand{\CC}{\mathbb{C}}
\newcommand{\RR}{\mathbb{R}}
\newcommand{\ZZ}{\mathbb{Z}}
\newcommand{\End}{\mbox{End}}
\newcommand{\Lie}{\mbox{Lie}}
\newcommand{\Res}{\mbox{Res}}
\newcommand{\tw}{\rm{tw}}
\newcommand{\langlerangle}{\langle \, . \, , 
. \, \rangle}
\newcommand{\parenthesis}{(\,\cdot\,|\,\cdot\,)}
\newcommand{\vac}{|0\rangle}
\newcommand{\bea}{\begin{eqnarray}}
\newcommand{\eea}{\end{eqnarray}}
\newcommand{\Ws}{W_k^{\min}(\g)}
\newcommand{\Wu}{W^k_{\min}(\g)}
\newcommand{\Zhu}{Zhu_R}
\begin{document}
\title{Unitarity of minimal $W$--algebras and their representations II: Ramond sector}
\author[Victor~G. Kac, Pierluigi M\"oseneder Frajria,  Paolo  Papi]{Victor~G. Kac\\Pierluigi M\"oseneder Frajria\\Paolo  Papi}

 \begin{abstract}
 In this paper we study unitary Ramond twisted representations of minimal $W$-algebras. We classify all such irreducible highest weight representations with highest weight which is not Ramond extremal (unitarity in the Ramond extremal case, as well as in the untwisted extremal case, remains open). We compute the characters of these representations and deduce from them the denominator identities for all superconformal algebras in the Neveu-Schwarz and Ramond sector. Some of the results rely on conjectures about the properties of the quantum Hamiltonian reduction functor in the Ramond sector.
\end{abstract}

\maketitle

\tableofcontents
\section{Introduction}

The problem of unitarity of highest weight representations of infinite-dimensional Lie algebras and superalgebras has been a hot 
topic in the 1980s and 1990s in Mathematics and Physics (see e.g. \cite{AKMP} for a list of references). With the advent of vertex algebra theory
it was realized that all these Lie (super)algebras arise naturally in the context of a special class of vertex algebras, called the simple quantum  affine
$W$-algebras, obtained by quantum Hamiltonian reduction from  affine vertex algebras of level $k$, starting from the datum $(\g,\mathfrak s)$ where 
$\g$ is a {\it basic} Lie superalgebra and $\mathfrak s$ is an $sl_2$ subalgebra. The most important among these $W$-algebras are the minimal ones, denoted by $\Ws$, corresponding to the ``minimal'' choice of $\mathfrak s$. They include all superconformal algebras, which play an important role in quantum physics.

This paper is a continuation of our paper \cite{KMP1} on classification of unitary minimal (quantum affine) $W$-algebras $\Ws$ and their (non-twisted) representations. 
In that paper we singled out necessary conditions for unitarity and divided the irreducible highest weight representations satisfying these conditions into {\it extremal} and {\it non-extremal ones}.
We proved  unitarity of non-extremal representations and conjectured that extremal representations are unitary, completing thereby the classification of unitary highest weight representations. Support to  our conjecture is given by 
the cases of $N=3$ and $N=4$ superconformal algebra.

In the present paper we study Ramond twisted unitary representations of minimal  $W$-algebras. We introduce (suitably modified)  notions of 
extremal and non extremal modules and we prove unitarity of non-extremal representations, under  (for a certain subclass) the conjectural validity 
of exactness properties for the twisted quantum Hamiltonian reduction functor. Again we conjecture that extremal representations are unitary, and again we have unitarity for extremal representations of the $N=3$ and $N=4$ superconformal algebra. Another indirect support to the properties of this functor that we are assuming  is given by classical combinatorial identites derived from  character formulas by using this functor.

Let us discuss in more  detail the setting and the results of the paper.
Let $\g$ be a simple finite-dimensional Lie superalgebra, over $\C$, with a reductive even part $\g_{\bar 0}$ and invariant
non-degenerate symmetric bilinear form $(\cdot |\cdot)$, with restriction to $\g_{\bar 0}$ non-degenerate.
 Let $\mathfrak s=Span\{e,x,f\}$, where $[e,f]=x, [x,e]=e, [x,f]=-f$ be an $sl_2$ subalgebra of $\g_{\bar 0}$. To the datum
 $(\g,\mathfrak s, k\in\C)$ one associates the universal quantum affine $W$-algebra $W^k(\g,\mathfrak s)$ of level $k$ by the quantum Hamiltonian reduction
 \cite{KRW}, \cite{KW1}. If $k$ is different from the critical level $k_{crit}$, the vertex algebra $W^k(\g,\mathfrak s)$ has a unique maximal ideal , and the quotient by this ideal is a simple  $W$-algebra, denoted by $W_k(\g,\mathfrak s)$.
 
 The {\it minimal} $W$-algebras correspond to the choice of $\mathfrak s$, called minimal, for which the
 $ad\,x$-eigenspace decomposition is of the form 
 \begin{equation}\label{1.1}
 \g=\g_{-1}\oplus\g_{-1/2}\oplus\g_0\oplus\g_{1/2}\oplus\g_1,\quad \text{ where $\g_{-1}=\C f,\,
 \g_{1}=\C e$}.\end{equation}
 We normalize the bilinear form $(\cdot |\cdot)$ by the condition $(x|x)=\half$. Then $k_{crit}=-h^\vee$, where 
 $h^\vee$ is half of the eigenvalue of the Casimir operator on $\g$. The decomposition \eqref{1.1} and the numbers $h^\vee$ are listed in \cite[Tables 1-3]{KW1}.

In order to define unitarity of a $W$-algebra, one needs a conjugate linear involution $\phi$ of $\g$, which fixes the subalgebra $\mathfrak s$ pointwise. Then, provided that $k\in\R$, $\phi$ induces a conjugate linear involution  of the vertex algebra $W^k(\g,\mathfrak s)$, and it descends to $W_k(\g,\mathfrak s)$.

It is proved in \cite[Proposition 7.2]{KMP1} that for minimal $\mathfrak s\subset \g$ and a non-collapsing level $k\in\R$, any conjugate linear involution $\phi$ of the vertex algebra $W^k(\g,\mathfrak s)$ is necessarily induced by a conjugate linear involution $\phi$ of $\g$ fixing $\mathfrak s$. (Recall that $k$ is called a collapsing level if $W_k(\g,\mathfrak s)$ is isomorphic to its affine part.) Moreover, it is proved in \cite[Proposition 8.9]{KMP1} that the vertex algebra $W_k(\g,\mathfrak s)$
is unitary only if the centralizer $\g^\natural$ of $\mathfrak s$ in $\g$ is a semisimple subalgebra of $\g_{\bar0}$, and the conjugate linear  involution $\phi$ is {\sl almost compact}, i.e. it restricts to a compact  involution of $\g^\natural$, and it leaves $\{e,x,f\}$ fixed. We write $\g^\natural=\oplus_i\g^\natural_i$, where $\g^\natural_i$ are simple components of $\g^\natural$.

We prove in  \cite{KMP1} that an almost compact conjugate linear involution of $\g$ exists if and only if $\g$ is from the following lists:
\begin{equation}\label{isunitary}
psl(2|2),\ spo(2|m) \text{ for }m\ge0, D(2,1;a)\text{ for }a\in \R,\ F(4),\ G(3);
\end{equation}
\begin{equation}\label{nonunitary}
sl(2|m)\text{ for }m\ge3,\ osp(4|m)\text{ for }m>2\text{ even},
\end{equation}
and it is essentially unique. Moreover, in these cases $\g$ admits a unque, up to conjugation, minimal $sl_2$-subalgebra $\mathfrak s$. We denote the corresponding minimal unversal $W$-algebra of level $k$ by $\Wu$, and its simple quotient by $\Ws$. 
Recall that, for $k\ne k_{crit}$,  the vertex algebra $\Wu$ is conformal with Virasoro field $L=\sum_{n\in\Z}L_nz^{-n-2}$, and it is strongly and freely generated by the operators $L_n$, $n\in\Z$, and the Fourier coefficients of the primary fields $J^{\{a\}}(z)=\sum_{n\in\Z} J^{\{a\}}_{n}z^{-n-1}$, $a\in\g^\natural$, of conformal weight 1, and $G^{\{u\}}(z)=\sum_{n\in\half+\Z}G^{\{u\}}_nz^{-n-\tfrac{3}{2}}$, $u\in\g_{-1/2}$, of conformal weight $\tfrac{3}{2}$ \cite[Theorems 4.1 and 5.1]{KW1}.

Note that, as in \cite{KMP1}, we exclude the case of $\g=spo(2|m)$, $m=0,1$, and $2$, since in these cases the $W$-algebra $\Wu$ is the universalVirasoro, Neveu-Schwarz, and $N=2$ vertex algebra, respectively, for which unitarity of non-twisted and twisted modules is well understood.

A non-degenerate Hermitian form $H$ on a module $M$ with finite-dimensional $L_0$ eigenspaces over a conformal vertex algebra is called {\sl $\phi$-invariant} if it defines an isomorphism of $M$ with its restricted dual. For $\Wu$ this is equivalent to the following  conditions \cite{DL}, \cite{KMP}:
$$
L_n^*=L_{-n},\ \left(J^{\{a\}}_{n}\right)^*=J^{\{\phi(a)\}}_{-n},\ \left(G^{\{u\}}_{n}\right)^*=G^{\{\phi(u)\}}_{-n}.
$$

We proved in \cite[Proposition 8.19]{KMP1} that, for $k\ne k_{crit}$, the minimal $W$-algebra $\Ws$ is not unitary for $\g$ from the list \eqref{nonunitary}, except when $\g=sl(2|m)$, $m\ge3$, and the level is the collapsing level $k=-1$. Furthermore, we proved in \cite[Corollary 11.2]{KMP1} that, for $\g$ from the list \eqref{isunitary}, the vertex algebra $\Ws$ is non-trivial   unitary for $k\ne k_{crit}$ if and only if $k$ lies in the {\sl unitary range}, given in the  following Table \ref{tabel0}, along with $k=k_{crit}$ and $k=k_0$ for which $\dim\Ws=1$:
\renewcommand{\arraystretch}{1.4}
\begin{center}
\begin{tabular}{c | c| c |c   }
$\g$&
unitary range&
$k_{crit}$& $k_0$\\
\hline
$psl(2|2)$&$-(\nat+1)$&$0$&$-1$\\
\hline
$spo(2|3)$&$-\tfrac{1}{4}(\nat+2)$&$-\half$&$-\half$\\
\hline
$spo(2|m),\,m\ge4$&$-\tfrac{1}{2}(\nat+1)$&$\tfrac{m}{2}-2$&$-\half$\\
\hline
$D(2,1;\tfrac{m}{n})$&$-\tfrac{mn}{m+n}\nat,\ m,n\in\nat\text{ coprime},\, (m,n)\ne(1,1)$&$0$& \text{none}\\
\hline
$F(4)$&$-\tfrac{2}{3}(\nat+1)$&$2$&$-\tfrac{2}{3}$\\
\hline
$G(3)$&$-\tfrac{3}{4}(\nat+1)$&$\tfrac{3}{2}$&$-\tfrac{3}{4}$
\end{tabular}
 \captionof{table}{\label{tabel0}}
\end{center}
 In our paper \cite{KMP1}, we also studied unitarity of irreducible highest weight $\Wu$-modules $L^W(\nu,\ell_0)$, where $\g$ is one of the Lie superalgebras from Table \ref{tabel0} (with the exception of $spo(2|m)$, $m\le 2$), and $k$ lies in the unitary range. These modules are parametrized by pairs $\nu\in (\h_\R^\natural)^*$ and $\ell_0\in\R$. We proved that unitarity of $L^W(\nu,\ell_0)$ holds if and only if the following condition holds: 
 \begin{enumerate}
 \item[(a)] the affine levels $M_i(k)$ for $\g^\natural_i$ are non-negative integers;
 \item[(b)] $\nu\in P^+_k=\{\text{dominant integral weights for }\g^\natural\text{ such that }\nu(\theta_i^\vee)\le M_i(k)\}$ where $\theta_i$ are highest roots of $\g_i^\natural$.
 \item[(c)]$\ell_0\ge A(k,\nu)$, where $A(k,\nu)$ is defined in \cite[formula (8.11)]{KMP1}, and $\ell_0=A(k,\nu)$ if $\nu$ is an extremal weight (i.e. $\nu(\theta_i^\vee)>M_i(k)+\chi_i$ for some $i$, $\chi_i$ being displayed in \cite[Table 2]{KMP1}), except that the unitarity of $L^W(\nu,\ell_0)$ when $\nu$ is an extremal weight and $\ell_0=A(k,\nu)$ is still an open question.
\end{enumerate}

 Actually in \cite{KMP1} we studied unitarity of the $\Wu$-modules; however it has been proved in \cite[Theorem 5.1]{AKMP} that any unitary $\Wu$-module descends to the simple $W$-algebra $\Wu$. 
 
 The study of unitarity of the Ramond twisted irreducible highest weight modules over the vertex algebra $\Wu$, where $k$ is in the unitary range, proceeds along similar lines.
 The main difference is that in the Ramond sector one has to consider separately two cases: when $\half\theta$ is not a root of $\g$, and when it is a root, where $\theta$ is the highest root of $\g$. In both cases the necessary conditions of unitarity are similar to the above conditions (a), (b), (c), except that in condition (c) the constant $A(k,\nu)$ is replaced by the one given by \eqref{Aknufirst}, which we denote here by $A^{\tw}$, and the notion of an extremal weight needs to be replaced by that of a Ramond extremal weight, defined by \eqref{Rextremal}. See Section \ref{necessary} for details.
 
 As in \cite[Section 10]{KMP1}, we find sufficient conditions of unitarity of Ramond twisted irreducible highest weight modules over $\Wu$ by using its free field realization, introduced in \cite[Theorem 5.2]{KW1} and the Ramond twisted version of the Fairlie type modification. As a result, we prove unitarity for $\ell_0$ larger than a certain constant $B$, defined by \eqref{BKnurhoR}, in the cases when $\nu$ is not Ramond extremal (see Section \ref{sufficient}).
 
It turns out that $B=A^{\tw}$ in the cases when $\theta/2$ is a root of $\g$ (see Lemma \ref{Aknusecond} (1)), which completes the proof of unitarity when $\nu$ is not Ramond extremal.

However, in the case when $\theta/2$ is not a root of $\g$, $B=A^{\tw}$ only for some very special weights $\nu$ (see Corollary \ref{thetahalfisaroot}). Generically one has that $B>A^{\tw}$, and we need  to use Proposition \ref{typical} on Euler-Poincar\'e characters, instead of   determinants of $\phi$-invariant Hermitian forms for twisted $\Wu$-modules \cite{KW}, as in \cite[Section 11]{KMP1} for the non-twsited sector. At this point we need to use Conjecture \ref{Arakawa}, which claims that Arakawa's results \cite{Araduke} on properties of the quantum Hamiltonian reduction functor can be extended to the Ramond twisted case. See Section \ref{UbetweenAB} for details.

Note that, due to Corollary  \ref{910}, analogous to that in \cite{AKMP}, all unitary irreducible non-twisted or twisted highest weight modules over $\Wu$ descend to $\Ws$.

In Section \ref{Explicit} conditions for unitarity of Ramond twisted irreducible highest weight modules over $\Wu$ are exhibited in all cases, except for the well known cases of $\g=sl_2$, $spo(2|1)$, and $spo(2|2)$, corresponding to Virasoro, Neveu-Schwarz, and $N=2$ vertex algebras.

In Section 11 we prove unitarity of Ramond extremal modules over the $N=3$ and $N=4$ vertex algebras. The analysis of extremal modules for the big $N=4$ superconformal algebra will appear in a forthcoming publication. For other unitary minimal $W$-algebras the problem of unitarity of Ramond extremal modules remains open.

In Section \ref{massless}, we compute the characters of the Ramond twisted irreducible highest weight modules $\Ws$, when $k$ is in the unitary range. As in the non-twisted case \cite{KMP1}, there are two cases to consider. In the first case, called {\sl massive}, the Ramond twisted $\Ws$-module is obtained by twisted quantum Hamiltonian reduction of typical modules over the corresponding affine Lie algebra, and in the second case, called {\sl massless}, from the maximally atypical ones. The corresponding character formulas are obtained by twisted quantum Hamiltonian reduction, using the properties conjectured in Conjecture \ref{Arakawa}, and they are given by Theorem \ref{characters} and Theorem \ref{charactersmassless} respectively.

In Section \ref{denominator}, using the character formulas for massless representations in the case of level $k_0$ when $\dim\, W_{k_0}^{\min}(\g)=1$, we find the denominator identities for $\Wu$. As a result, we recover the classical identities of Euler, Gauss and Ramanujan,  and find some new identities.

In the Appendix we discuss  a denominator identity for minimal $W$-algebras of Deligne series by exploiting a recent result \cite{BKK} about certain $\ga$-modules of negative integer level. 

Throughout the paper the base field is $\C$, and $\Z_+$ and $\nat$ stand for the set of non-negative and positive integers, respectively.
 
 \section{Twisted modules}\label{Twist}

We will denote by $p$ the parity in a vector superspace, and let $p(a,b)=(-1)^{p(a)p(b)}$.
Let $R$ be a Lie conformal superalgebra over $\C$ with infinitesimal translation operator $T$ and $\l$-bracket
\begin{equation}\label{b}
[a_\l b]=\sum_{j\in\ZZ_+}\tfrac{\l^j}{j!}a_{(j)}b.
\end{equation}
 Let   $\sigma$ be a diagonalizable automorphism of $R$.  We shall
always assume that all eigenvalues of $\sigma$
have modulus~$1$.  We have:
\begin{equation}
  \label{eq:1.6}
  R=\bigoplus_{\bar{\mu}\in \RR /\ZZ} R^{\bar{\mu}}, \hbox{ where }
     R^{\bar{\mu}} =\{ a \in R | \sigma (a) = e^{2\pi i\bar{\mu}}a
        \} \, .
\end{equation}
Here and further $\bar{\mu}$ denotes the coset $\mu + \ZZ$ of
$\mu \in \RR$.  Consider the subspace $\oplus_{\mu\in\R}(R^{\bar\mu}\otimes t^\mu)$  of $R[t^\R]$; it is $T\otimes 1 + 1\otimes\partial_t$-invariant. We associate to the pair $(R ,\sigma)$ the
$\sigma$-\emph{twisted} Lie superalgebra
\begin{equation}
  \label{eq:1.7}
  \Lie (R ,\sigma) =(\bigoplus_{\mu \in \RR} \quad
    (R^{\bmu} \otimes t^{\mu}))\big\slash Image (T \otimes 1 +
        1\otimes \partial_t)\,,
\end{equation}
 endowed with the following (well-defined) bracket, where $a_{(\mu)}$ stands for the image of $a
\otimes t^{\mu}$ in  $\Lie (R ,\sigma)$:
\begin{equation}
  \label{eq:1.2}
  [a_{(\mu)}, b_{(\nu)}]=\sum_{j \in \ZZ_+} \binom{\mu}{j}
     (a_{(j)}b)_{(\mu +\nu-j)},\quad \mu,\nu\in\R.
\end{equation}
A $\Lie(R,\s)$-module $M$ is said to be {\it restricted} if, for each $m\in M$, 
$$
a_{(\mu)}m=0\text{ for } \mu\gg0.
$$
Let $V(R)$ be the universal enveloping vertex algebra of $R$. By the universality property of $V(R)$, $\s$ extends to define an automorphism of $V(R)$. Since $V(R)$ is generated by $R$, it is clear that $\s$ is diagonalizable on $V(R)$ with modulus one eigenvalues.

If $V$ is a vertex algebra and $\s$ is a diagonalizable automorphism of $V$ with modulus one eigenvalues, then we write
$$
V=\oplus_{\bar\mu \in \RR /\ZZ}
V^{\bar\mu}
$$ to be its eigenspace decomposition. 
Recall that a $\sigma$-twisted
module $M$ over $V$ is a linear map $a \to Y^M(a,z) = \sum_{\mu
\in \bar\mu} a^M_{(\mu)} z^{-\mu-1}$ $(a \in V^{\bar\mu})$  where $a^M_{{(\mu)}}
\in \End M$ and for any $v \in M$, $a^M_{(\mu)} v =0$ if $\mu \gg
0$, satisfying
\begin{gather}
  \vac _{(\mu)}^M = \delta_{\mu,-1} I_M \, ,\label{vacuumaxiom}\\
    \sum_{j \in \Z_+} \binom{\mu}{j} (a_{(n+j)}b)^M_{(\mu+\nu-j)}v\label{Borcherds}\\
    = \sum_{j \in \Z_+} (-1)^j \binom{n}{j} (a^M_{(\mu+n-j)}
       b^M_{(\nu+j)} -p(a,b)(-1)^n b^M_{(\nu+n-j)}a^M_{(\mu+j)}) v\, ,\notag
      \\
         \label{eq:1.5}
 ( Ta)^M_{(\mu)} =-\mu a^M_{(\mu-1)}.
  \end{gather}
where $a \in V^{\bar\mu}$, $b \in V^{\bar\nu}$,  $n \in \Z$.
Specializing \eqref{Borcherds} to $n=0$ one obtains \eqref{eq:1.2}. It follows from \eqref{eq:1.2}  that a $\s$-twisted $V(R)$-module that satisfies \eqref{eq:1.5} is naturally a restricted $\Lie(R,\s)$-module. The construction given in \cite[\S 3]{Litwist} shows that the  converse also holds: if $M$ is a $\Lie(R,\s)$-module, let,  for $a\in R^{\bar\mu}$, $a_{(\mu)}^M$ to be the operator on $M$ given by the action of  $a_{(\mu)}$.  For completeness we give a detailed proof of Li's result  in a slightly more general setting  (since we consider also infinite order automorphisms). Define the quantum fields 
\begin{equation}\label{YM}
a^M(z)\equiv Y^M(a,z)=\sum_{\mu\in\bar\mu}a^M_{(\mu)}z^{-\mu-1},\quad a\in R^{\bar\mu}.
\end{equation}

\begin{proposition}\label{lii}
The assignment $a\mapsto Y^M(a,z)$ given by \eqref{YM}, extends to define the structure of a $\s$--twisted $V(R)$--module on $M$ such that \eqref{eq:1.5} holds for all $a\in V(R)$.
\end{proposition}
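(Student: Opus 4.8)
The plan is to follow Li's original argument from \cite{Litwist}, but keeping track of the fact that $\sigma$ may have infinite order (so that $\bmu$ ranges over all of $\R/\Z$, not a finite cyclic group). First I would verify the vacuum axiom \eqref{vacuumaxiom} and the translation-covariance axiom \eqref{eq:1.5}: the former is immediate from $\vac_{(\mu)} = \delta_{\mu,-1}$ in $\Lie(R,\sigma)$, and the latter holds by construction on generators $a\in R$, since in $\Lie(R,\sigma)$ the image of $Ta\otimes t^\mu + \mu\, a\otimes t^{\mu-1}$ is zero. The real content is the Borcherds identity \eqref{Borcherds} for all $a,b\in V(R)$ and all $n\in\Z$. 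The standard strategy is: (i) establish it on generators, i.e.\ for $a,b\in R$, where for $n\geq 0$ it reduces to the bracket relation \eqref{eq:1.2} which holds by the definition of $\Lie(R,\sigma)$, and for $n<0$ it must be checked by a locality/normal-ordering argument; (ii) bootstrap from generators to all of $V(R)$ using the reconstruction theorem, i.e.\ show that the collection of fields $\{Y^M(a,z) : a\in V(R)\}$ obtained by iterated $(j)$-products of the generating fields is closed and satisfies the $\sigma$-twisted Borcherds identity, by an induction on the "length" of $a$ as an iterated normally-ordered product.

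The key technical engine is a twisted version of the Dong/locality machinery: one shows that for $a\in R^{\bmu}$, $b\in R^{\bar\nu}$ the fields $a^M(z)$ and $b^M(z)$ are mutually \emph{$\bmu,\bar\nu$-local} in the appropriate twisted sense — meaning $(z-w)^{N}[a^M(z),b^M(w)] = 0$ for $N\gg 0$ after multiplying by suitable fractional powers of $z$ and $w$ to account for the $t^\mu$-grading — and that this fractional locality is inherited under $(j)$-products. Concretely, I would introduce, for each field, the formal expansion $Y^M(a,z) = z^{-\bar\mu} \sum_{m\in\Z} a^M_{(m-1-\mu_0)}z^{-m}$ (fixing a representative $\mu_0$ of $\bmu$), reducing twisted locality to ordinary locality of the "untwisted shadows", then transport the classical Kac-style proof of "$n$-th product of local fields is local, and the OPE coefficients are given by the $n$-th products of the states". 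The bookkeeping with binomial coefficients $\binom{\mu}{j}$ for non-integer $\mu$ — which is exactly the term that distinguishes \eqref{Borcherds} from the untwisted Borcherds identity — needs the Vandermonde/Chu identity $\binom{\mu+\nu}{k} = \sum \binom{\mu}{i}\binom{\nu}{k-i}$ valid for arbitrary complex arguments, together with the observation that these fractional binomials arise precisely from expanding $(z+w)^\mu$ in the domain $|z|>|w|$.

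I expect the main obstacle to be exactly this fractional-exponent bookkeeping in the inductive step: showing that if \eqref{Borcherds} holds for pairs $(a,c)$ and $(b,c)$, then it holds for the pair $(a_{(k)}b, c)$, where $a_{(k)}b$ now carries the grading $\overline{\mu+\nu}$ and the relevant binomials involve $\binom{\mu+\nu-k}{j}$. In the untwisted case this is the standard (and already delicate) "Borcherds identity is generated by its $n\in\Z$ instances on generators" argument; in the twisted case every contour-deformation/formal-variable manipulation must be justified with branch cuts $z\mapsto z^\mu$ fixed once and for all, and one must check that no spurious monodromy is introduced when commuting the operators past each other. I would isolate this as a lemma on fractional-power formal distributions (a twisted analogue of the $\delta$-function calculus, $z^{\mu}\delta(z-w)$ identities), prove that lemma carefully, and then let the rest of the reconstruction argument run as in the untwisted case, noting that only finitely many eigenvalues of $\sigma$ occur among the $R$-components generating any given vector, so all the relevant sums over $\bmu$ are effectively finite and convergence of the formal expressions is unproblematic.
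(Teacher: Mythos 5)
Your plan is sound and rests on the same engine as the paper's proof: twisted locality, a twisted Dong's lemma, and the universal property of $V(R)$. But the packaging is genuinely different in a way worth noting. You propose to verify the twisted Borcherds identity \eqref{Borcherds} first on generators and then bootstrap it to all of $V(R)$ by induction on the length of $a$ as an iterated $(j)$-product, and you correctly flag the inductive step on $(a_{(k)}b,\,c)$, with its fractional-binomial Vandermonde bookkeeping, as the delicate point. The paper avoids this induction altogether. It defines the twisted $n$-products \eqref{g-nprod} on fields, forms the maximal local family $A$ of twisted fields on $M$ containing the $Y^M(a,z)$, and observes that $A$ is automatically a vertex algebra (by the twisted Dong's lemma) and that $M$ is automatically a $\sigma$-twisted $A$-module, because $A$ literally consists of fields on $M$. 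At that point the only thing left to check is that the map $a\mapsto Y^M(a,z)$ from $R$ into $A$ is a morphism of Lie conformal superalgebras, i.e.\ that $Y^M(a,z)_{(j)}Y^M(b,z)=Y^M(a_{(j)}b,z)$ for $j\in\Z_+$ — a single direct formal computation with twisted $\delta$-functions. Universality of $V(R)$ then produces the vertex algebra morphism $V(R)\to A$ and hence the twisted module structure in one stroke, without ever having to propagate the full Borcherds identity through normally ordered products. So your route is more elementary in concept (closer to a direct reconstruction theorem) but heavier in bookkeeping; the paper's route trades that for one appeal to the universal property and one OPE-coefficient computation.

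One small imprecision in your proposal: the finiteness you invoke at the end ("only finitely many eigenvalues of $\sigma$ occur among the $R$-components generating a given vector") is not really what makes the infinite-order case work. The index $\mu$ in a single twisted field $\sum_{\mu\in\bmu}a^M_{(\mu)}z^{-\mu-1}$ ranges over the infinite coset $\bmu=\mu_0+\Z$ even for one eigenvalue; what makes the formal expressions well defined is the restrictedness of the module ($a^M_{(\mu)}m=0$ for $\mu\gg 0$) and the fact that the binomial coefficients $\binom{\mu}{j}$ are polynomials in $\mu$, not that only finitely many $\bmu$'s appear. This doesn't affect the soundness of your plan, but the convergence argument you sketch should be rephrased in those terms.
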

\begin{proof} Fix $\bar\mu\in \R/\ZZ$ and let 
$$F(M,\bar\mu)=\left\{\sum_{\mu\in\bar\mu}a^M_{(\mu)}z^{-\mu-1}\mid  a^M_{(\mu)}\in End(M),\ a^M_{(\mu)}m=0\text{ for $m\in M,\,\mu\gg 0$}\right\}.$$
If $a\in R^{\bar\mu}$, then $a^M(z)$ lies in $F(M,\bar\mu)$. 
For $a(z)\in F(M,\bar\mu)$ choose $\mu\in\bar\mu$ and define an $n$-product by setting
\begin{equation}
  \label{g-nprod}
  a(z)_{(n)} b(z) =\Res_{z_1} Res_{z_0}i_{z_1,z_0}\left(\frac{z_1-z_0}{z}\right)^{\mu}z_0^n X,\end{equation}
  where 
  
  $$X=\sum_{n\in\ZZ}z_0^{-n-1}\left(i_{z_1,z}(z_1-z)^na(z_1)b(z)-(-1)^{n}p(a,b)i_{z,z_1}(z-z_1)^nb(z)a(z_1)\right).$$
  As usual , $i_{z_0,z_1}$ stands for the expansion in the domain $|z_0|>|z_1|$.
  Note that $a(z)_{(n)} b(z) \in F(M, \bar\mu+\bar\nu)$. Similarly to \cite[Remark 3.8]{Litwist}, one shows that this definition does not depend on the choice of $\mu$. Recall from \cite[Definition 3.2]{Litwist} the definition of locality for twisted quantum fields and remark that 
Dong's Lemma holds in this setting (cf.  \cite[Proposition 3.9]{Litwist}). Hence the maximal local family $A$ containing $a^M(z), a\in R^{\bar\mu},\, \bar\mu\in\R/\ZZ$, is a vertex algebra and the map on $A$ defined by setting $\si(a(z))=e^{2\pi\sqrt{-1}\bar\mu}a(z)$ for $a(z)\in F(M,\bar\mu)\cap A$ is an automorphism of $A$. It is then clear that $M$ is a $\si$-twisted representation of $A$. 

It is therefore enough to show that $A$ is a quotient of $V(R)$:
  consider the special case $a(z_1)=Y^M(a,z_1), b(z)=Y^M(b,z)$, $a\in R^{\bar\mu}$, $b\in R$. Since $M$ is a twisted representation of $A$, \eqref{Borcherds} holds. We rewrite it as in \cite[(2.43)]{DK} using the twisted delta functions 
  $$\d_{\bar\mu}(z-w):=z^{-1}\sum_{\mu\in\bar\mu}(\tfrac{w}{z})^\mu.
  $$ Using standard properties of formal calculus, we obtain  
\begin{align}\notag X&=\sum_{n\in\ZZ}z_0^{-n-1}\left(i_{z_1,z}(z_1-z)^na(z_1)b(z)-p(a,b)i_{z,z_1}(z_1-z)^nb(z)a(z_1)\right)\\
\notag&=\sum_{n\in\ZZ}z_0^{-n-1}\left(\sum_{j \in \Z_+} Y^M (a_{(n+j)} b,z) \partial^j_{z}
\delta_{\bar\mu}(z_1-z)/j!\right)\\\notag
&=\sum_{n\in\ZZ}z_0^{-n-1}\left(\sum_{j \in \Z_+}  \sum_{m \in \bar{\gamma}}  Y^M (a_{(n+j)} b,z) \partial^j_{z}
z^m z_1^{-1-m}/j!\right)\\\notag
&=\left(\sum_{j \in \Z_+}  \sum_{m \in \bar{\gamma}}  Y^M (\sum_{n\in\ZZ}z_0^{-n-1} a_{(n+j)} b,z) \partial^j_{z}
z^m z_1^{-1-m}/j!\right)\\\notag
&=\left(\sum_{j \in \Z_+} z_0^j \sum_{m \in \ZZ}  Y^M (Y(a,z_0)b,z) \partial^j_{z}
z^{m+\gamma_a} z_1^{-1-m-\gamma_a}/j!\right)
\\&=Y^M (Y(a,z_0)b,z)\sum_{m\in\ZZ} \sum_{j \in \Z_+} \tfrac{ 1}{j!}z_1^{-1-m-\gamma_a}\partial^j_{z_0}\left(z+z_0\right)^{m+\gamma_a} _{|z_0=0}z_0^j
\notag\\&=Y^M (Y(a,z_0)b,z)\sum_{m\in\ZZ}z_1^{-1} i_{z,z_0}\left(\frac{z+z_0}{z_1}\right)^mi_{z,z_0}\left(\frac{z+z_0}{z_1}\right)^{\gamma_a}
\notag\\&= Y^M (Y(a,z_0)b,z) z^{-1}\sum_{m\in\ZZ}i_{z_1,z_0}\left(\frac{z_1-z_0}{z}\right)^mi_{z_1,z_0}\left(\frac{z_1-z_0}{z}\right)^{-\gamma_a}.\label{last}\end{align}  
 Using \eqref{last} we have 
\begin{align*}
{Y^M(a,z)}_{(j)}{Y^M(b,z)} &= \Res_{z_1} Res_{z_0}i_{z_1,z_0}\left(\frac{z_1-z_0}{z}\right)^{\gamma_a}z_0^j X\\
 &=\Res_{z_1} \Res_{z_0}z_0^j Y^M (Y(a,z_0)b,z) z^{-1}\sum_{m\in\ZZ}i_{z_1,z_0}\left(\frac{z_1-z_0}{z}\right)^m\\
  &= \Res_{z_1} \Res_{z_0}z_0^j Y^M (Y(a,z_0)b,z) z_1^{-1}\sum_{m\in\ZZ}i_{z,z_0}\left(\frac{z+z_0}{z_1}\right)^m\\
&=\Res_{z_0}z_0^j Y^M (Y(a,z_0)b,z) =Y(a_{(j)}b,z).\end{align*}
  We have proven that the map 
$a\mapsto Y^M(a,z)$ is a Lie conformal superalgebra homomorphism $R\to A$. By the universality property of $V(R)$ there is a vertex  algebra homomorphism $V(R)\to A$ extending $a\mapsto Y^M(a,z),\,a\in R$. 
 \end{proof}

\begin{example}\label{12} Let $A$ be a superspace with a non-degenerate skew-supersymmetric bilinear form $\langlerangle$. Let $R=(\C[T]\otimes A)\oplus\C K$ be the Lie conformal superalgebra with $\l$-bracket
$$
[a_\l b]=\langle a,b\rangle K,\ a,b\in A.
$$
Let $V(R)$ be the universal vertex algebra of $R$ and set $F(A)=V(R)/(K-\vac)$.
Let $\s$ be a linear diagonalizable map on $A$ with modulus one eigenvalues and such that $\langle \s(a),\s(b)\rangle=\langle a,b\rangle$. Write $A=\oplus _{\bar\mu\in \R/\ZZ}A^{\bar\mu}$ for the corresponding eigenspace decomposition. In this case 

\begin{align*}\Lie(R,\si) &=\left(\bigoplus_{\mu\in\R}(\C[T]\otimes A^{\bar\mu})t^\mu\oplus\C K[t^{\pm 1}]\right)/Image (T \otimes 1 +
        1\otimes \partial_t)\\
        &=\left(\bigoplus_{\mu\in\R}(\C[T]\otimes A^{\bar\mu})t^\mu\right)/Image (T \otimes 1 +
        1\otimes \partial_t)\oplus\C K.
        \end{align*}
        From now on we shall drop $\otimes$ sign.
        Consider the superspace
$$\widetilde A=\sum_{\mu\in \R} A^{\bar\mu}t^\mu,$$
and Lie superalgebra
$$
\widehat A^{\tw}=\widetilde A	\oplus \C K.
$$
with bracket
$$[ at^\mu+\a K,  bt^\nu+\beta K]= \d_{\mu+\nu,-1}\langle a,b\rangle K,\quad a\in A^{\bar \mu}, b\in A^{\bar \nu},\,\a,\beta\in \C.
$$
Then the map 
\begin{equation}\label{identif}(\frac{T^r}{r!}\otimes a)t^\mu\mapsto (-1)^r \binom{\mu}{r} a t^{\mu-r},\quad K\mapsto K
\end{equation} 
extends to  a Lie superalgebra  isomorphism $\Lie(R,\si)\cong  \widehat A^{\tw}$.

Extend $\langlerangle$ to $\sum_{\mu\in \R} A^{\bar\mu}t^\mu$ by
$$
\langle  at^\nu, bt^\mu\rangle=\delta_{\nu+\mu,-1}\langle a,b\rangle,
$$
and consider the corresponding Clifford algebra $Cl(\widetilde A,\langlerangle)$. Choose a maximal isotropic subspace $U$ of $A^{-1/2}$ and set
$$
\widetilde A^+=\left(Ut^{-1/2}\right)\oplus \sum_{\mu>-\tfrac{1}{2}} A^{\bar\mu}t^\mu.
$$
Let 
$$F(A,\s)=Cl(\widetilde A,\langlerangle)\big/(Cl(\widetilde A,\langlerangle)\widetilde A^+).
$$
We can extend the natural action of $\widetilde A$ on $F(A,\s)$ to $\widehat A^{\tw}$ by letting $K$ act by $I_{F(A,\si)}$.
Under  the identification \eqref{identif}, Proposition \ref{lii} gives a $\si$-twisted representation of $V(R)$ on $F(A,\si)$ with $K$ acting by  $I_{F(A,\si)}$, hence a $\si$-twisted  representation of $F(A)$.
\end{example}

\begin{example}\label{13}
Let $\g$ be either a simple  Lie superalgebra or an (even) abelian Lie algebra. Assume furthermore that $\g$  is equipped with an even  supersymmetric non-degenerate invariant bilinear  form $\parenthesis$. Fix $k\in\C$ and let $R=\C[T]\otimes \g\oplus \C K$ be the  Lie conformal algebra with $\l$-bracket
$$
[a_\l b]=[a,b]+\l k (a|b)K.
$$
The vertex algebra $V(R)/(K-k\vac)$ is called the universal affine vertex algebra of level $k$ associated to $\g$ and it is denoted by 
$V^k(\g)$. 
Let $\s$ be an automorphism of $\g$ with modulus one eigenvalues such that $(\s(a)|\s(b))=(a|b)$, and
let 
$$
\g=\oplus_{\bar\mu\in\R/\Z} \g^{\bar\mu},\ \text{where }\g^{\bar\mu}=\{a\in\g\mid \si(a)=e^{2\pi i\bar\mu}a\}
$$
be its eigenspace decomposition. Let
$$
\widetilde\g^{\tw}=\sum_{\mu\in \R}\g^{\bar\mu} t^\mu
$$
be the corresponding twisted loop algebra, and consider the  central extension
$${\widehat{\g}}^{\tw'}=\widetilde\g^{\tw}\oplus\C  K$$of $
\widetilde\g$ with bracket
\begin{equation}\label{braffine}
[at^\mu,bt^\nu]=[a,b]t^{\mu+\nu}+\mu\d_{\mu,-\nu}(a|b) K,\ a\in\g^{\bar\mu},\,b\in\g^{\bar\nu}.
\end{equation}
As in Example \ref{12}, we can identify $\Lie(R,\si)$ with $\widehat\g^{\tw'}$ via \eqref{identif}.

Let $(M,\pi_M)$ be a restricted ${\widehat{\g}}^{\tw'}$--module such that $K$ acts by $kI_M$ and define  $\s$--twisted fields
$$
Y^M(a,z)=\sum_{\mu\in\R}\pi_M(at^\mu)z^{-\mu-1},\text{ where $a\in \g^{\bar\mu}$.}
$$
Since \eqref{eq:1.2} in this case is \eqref{braffine},
we see, by Proposition \ref{lii}, that $M$ is a $\s$--twisted module over the vertex algebra $V^k(\g)$.

\end{example}

\section{Minimal $W$-algebra setup}\label{Setup}Let $\g$ be a basic simple finite-dimensional Lie superalgebra such that 
 \begin{equation}\label{goss}
\g_{\bar 0}=\mathfrak s\oplus \g^\natural.
\end{equation}
where  $\mathfrak s\cong sl_2$ and $\g^\natural$ is the centralizer of $\mathfrak s $ in $\g$. This corresponds to consider $\g$  as in  Table 2 of \cite{KW1}. 
Let $\{e,x,f\}$ be an $sl_2$-triple for $\mathfrak s$, i.e.  $\mathfrak s=span(e,x,f)$, and $[x,e]=e, [x,f]=-f, [e,f]=x$. 
Let
\begin{equation}\label{grad}
\g=\bigoplus_{j\in\half\ZZ}\g_j
\end{equation}
be the $ad\,x$-eigenspace decomposition of $\g$. Thus
\begin{displaymath}
  \fg =\CC f + \fg_{-1/2}+\fg_0
     +\fg_{1/2} +\CC e,
\end{displaymath}
with 
$$
\g_0=\C x\oplus \g^\natural.
$$
Note that our assumptions imply that $\g_{\pm 1/2}$ are purely odd. \par
We will also assume, as in \cite{KMP1}, that $\g^\natural$ is not abelian; this condition rules out   $\g=spo(2|m),\ m=0,1,2$. 
Since we are interested in unitary $W$-algebras, according to \cite{KMP1}, we may exclude $\g=sl(2|m)$ and $osp(4|m)$ 
with $m>2$ from consideration. Then 
\begin{equation}\label{gnatural}\g^\natural=\bigoplus_{i=1}^s\g^\natural_i\end{equation} is  the decomposition of $\g^\natural$ into the direct sum of  simple ideals (where $s\le 2$).

Recall that $\g$ carries an even invariant non-degenerate supersymmetric bilinear form $(\, . \, | \,. \, )$ that we normalize by requiring that $(x|x)=\half$. An important role is played by the following bilinear forms
$\langle \, . \, , \, . \, \rangle_{\text{ne}}$ on $\fg_{1/2}$ and $\langle \, . \, , \, . \, \rangle$  on $\fg_{-1/2}$:
\begin{equation}
  \label{eq:2.2}
  \langle a,b \rangle_{\text{ne}}=(f|[a,b]),\  \langle a,b \rangle=(e|[a,b]),
\end{equation}
which are symmetric and non-degenerate. Denote by $A_{ne}$ the vector superspace $\g_{-1/2}$ with the bilinear form $\langle \cdot,\cdot\rangle$ and by $A_{ch}$ the vector superspace $\Pi(\g_{<0}+\g^*_{<0})$ with the skewsupersymmetric bilinear form
given by pairing ($\Pi$ is the  parity reversing functor). We use 

 Let $h^\vee$ be the dual Coxeter number of $\g$.
Let $\mathcal C(\g)$ be the vertex algebra  $V^k(\g)\otimes F(A_{ch})\otimes F(A_{ne})$. Let $d\in \mathcal C(\g)$ be as in \cite[Section 1]{KW1}. Since $[d_\l d]=0$, $d_{(0)}^2=0$ on $\mathcal C(\g)$. The homology $(H_\bullet(\mathcal C(\g),d_{(0)})$ is the vertex algebra $\Wu$, called the {\it universal minimal  $W$-algebra  of level $k$}, associated to the pair $(\g,\mathfrak s)$ (cf \cite{KW1}).  If $k\ne - h^\vee$, this vertex algebra has a unique simple quotient $\Ws$, since $\Wu$, when $k\ne -h^\vee$, is a conformal vertex algebra with conformal vector $L$, given in \cite[(2.2)]{KW1}. Furthermore, this vertex algebra is strongly and freely generated by $L$, primary elements $J^{\{a\}}$, $a\in\g^\natural$, of conformal weight $1$, and primary odd elements $G^{\{u\}}$, $u\in\g_{-1/2},$ of conformal weight $3/2$ \cite[Theorem 5.1]{KW1}.

Fix an automorphism $\sigma$ of $\fg$ with the following three
properties:
\begin{align}
&\sigma (x) =x,\ \sigma (f) =f;\label{sigma1}\\
&(\si(a)|\si(b))=(a|b),\ a,b\in\g;\label{sigma2}\\\
&\sigma\text{ is diagonalizable and all its eigenvalues have modulus~$1$}.\label{sigma3}
\end{align}
 Then
$\si$ defines automorphisms of vertex algebras $V^k(\g)$, $F(A_{ch})$, $F(A_{ne})$, hence an automorphism  of $\mathcal C(\g)$. Since $\sigma(d)=d$, $\si$ induces an automorphism of $\Wu$ (and also of $\Ws$).

We want to apply the construction of  Example \ref{12} to $A_{ch}$ and $A_{ne}$. For this we
introduce the following $\tfrac{1}{2}\ZZ$-graded subalgebra of
$\fg$:
\begin{equation}
  \label{eq:2}
  \fg (\sigma) =\bigoplus_{j \in \frac{1}{2}\ZZ} \fg_{j}(\sigma)\, ,
  \hbox{ where } \fg_j (\sigma) = \{ a \in \fg_j |\sigma (a)
     =(-1)^{2j}a \} \, .
\end{equation}
The  $\frac{1}{2} \ZZ $-gradation 
(\ref{eq:2}) looks as follows:
\begin{displaymath}
  \fg (\sigma)=\CC f + \fg^{-\sigma}_{-1/2}+\fg^{\sigma}_0
     +\fg^{-\sigma}_{1/2} +\CC e.
\end{displaymath}

Fix a $\s$-stable Cartan subalgebra $\h^\natural$ of $\g^\natural$. Then $\h=\h^\natural\oplus\C x$ is a $\s$-stable Cartan subalgebra of $\g$. Define $\theta\in\h^*$ by $\theta(\h^\natural)=0$ and $\theta(x)=1$. Observe that $\theta$ is the weight of $e$ and  $(\theta|\theta)=2$.

Since
$\fg^{\sigma}_0=(\fg^\natural)^{\sigma} + \CC x$, it follows that
there exists an element $h_0 \in (\fh^\natural)^\s$ such that 
\begin{itemize}
\item the eigenvalues of $\ad h_0$
are real, 
\item $h_0$ is a regular element of $(\fg^\natural)^{\sigma}$, 
\item  the
$0$-{th} eigenspace of $\ad h_0$ on $\fg^{-\sigma}_{1/2}$
(resp. $\fg^{-\sigma}_{-1/2}$) is either $0$ or $\CC e_{\theta /2}$ (resp. $\CC
e_{-\theta /2}$). Here $e_{\theta /2}$ is a root vector of $\fg
(\sigma)$ and $\theta /2$ stands for the restriction of $\theta /2$ to $\fh^{\sigma}$. 
\end{itemize}
 Let $\fn(\sigma)_+ $ (resp. $\fn (\sigma)_-$) be
the span of all eigenvectors of $\ad h_0$ with positive
(resp. negative) eigenvalues and the vectors $f=e_{-\theta}$ and
$e_{-\theta/2}$ (resp. $e=e_{\theta}$ and $e_{\theta /2}$). Then 
\begin{equation}
  \label{eq:2.4}
  \fg (\sigma) = \fn (\sigma)_- \oplus \fh^{\sigma}
     \oplus \fn (\sigma)_+ .
\end{equation}
Set $\n_j(\s)_\pm=\n(\s)_\pm\cap \g_j(\s)$. Then  the following properties hold:

\begin{enumerate}
\item 
$\fn (\sigma)_{\pm}$ are isotropic with respect to $(\, . \, | \,. \, )$, and are  nilpotent subalgebras normalized by $\fh^{\sigma}$,

\item 
$f \in \fn (\sigma)_+$,

\item 
$\fn_{1/2}(\sigma)_+ $ is a
maximal isotropic subspace of $\fg_{1/2}(\sigma)$ with respect to
$\langle \, . \, , \, . \, \rangle_{\text{ne}}$ and $\fn_{1/2}(\sigma)_- $ is a
maximal isotropic subspace of $\fg_{1/2}(\sigma)$ with respect to $\langle \, . \, , \, . \, \rangle$.

\item 
$\fn_{1/2} (\sigma)_-$ is a direct sum of a maximal isotropic
subspace $\fn_{1/2}(\sigma)_-^{'}$ of  $\fg_{1/2}(\sigma)$ with respect to $\langle \,
. \, , \, . \, \rangle_{\text{ne}}$ and of a  subspace
$\fg^0_{1/2}(\sigma)$ (at most $1$-dimensional), normalized by $\fh^{\sigma}$.
\item $\fn_{-1/2} (\sigma)_+$ is a direct sum of a maximal isotropic
subspace $\fn_{-1/2}(\sigma)_+^{'}$ of  $\fg_{-1/2}(\sigma)$ with respect to $\langle \,
. \, , \, . \, \rangle$ and of a  subspace
$\fg^0_{-1/2}(\sigma)$ (at most $1$-dimensional), normalized by $\fh^{\sigma}$.
\end{enumerate}

We thus have the following decompositions:
\begin{equation}
\label{eq:2.5}
\fg_{1/2} (\sigma)= \fn_{1/2} (\sigma)_+ + \fg^0_{1/2}(\sigma)
    + \fn_{1/2}(\sigma)_-^{'} \, ,
\end{equation}
and
\begin{equation}
\label{eq:2.5bis}
\fg_{-1/2} (\sigma)= \fn_{-1/2} (\sigma)'_+ + \fg^0_{-1/2}(\sigma)
    + \fn_{-1/2}(\sigma)_- \, .
\end{equation}

Set \begin{equation}\label{eps} \epsilon (\sigma) =\dim \fg_{1/2}^0(\sigma)=\dim \fg_{-1/2}^0(\sigma).\end{equation} Then $\epsilon(\si)=0$ or $1$ and
 $\epsilon (\sigma ) \neq 0$ iff $\dim \fg_{1/2}
(\sigma)$ ($=\dim \fg_{-1/2}
(\sigma)$)  is odd.

Note also that in the decomposition (\ref{eq:2.5}),
$\fn_{1/2}(\sigma)_+$ (resp. $\fn_{1/2}(\sigma)'_-$) is the span of
all eigenvectors of $\ad h_0$ with positive (resp. negative)
eigenvalues and $\epsilon (\sigma) \neq 0$ iff $\theta /2$ is a
root of $\fg$  with respect to $\fh$ and $\sigma (e_{\theta /2})=-e_{\theta /2}$.

Following \cite{KW}, we let $F(A_{ne},\si)$ be the $\si$-twisted $F(A_{ne})$-module constructed as in Example \ref{12} with $U=\fn_{1/2}(\sigma)_+$ as maximal isotropic subspace of $A_{ne}^{-1/2}=\g_{1/2}^{-\s}$.
Similarly, we let $F(A_{ch},\si)$ be the $\si$-twisted $F(A_{ch})$-module constructed using $U=\fn_{1/2}(\sigma)_+\oplus (\fn_{1/2}(\sigma)_-)^*$ as maximal isotropic subspace of $A_{ch}^{-1/2}$.

Given a $\si$-twisted module $M$ of $V^k(\g)$, then 
$$
\mathcal C(M)=M\otimes F(A_{ch},\si)\otimes F(A_{ne},\si)
$$
is a  $\si$-twisted module over the vertex algebra  $\mathcal C(\g)$. It has the {\sl charge decomposition} 
$$
\mathcal C(M)=\oplus_{j\in\ZZ}\mathcal C_j(M),
$$
defined by 
$$
\text{charge\,}M=\text{charge\,}F(A_{ne},\s)=0
$$
and
$$
\text{charge\,}(\fn_{1/2}(\sigma)_+\oplus \fn_{1/2}(\sigma)_-^*)=-\text{charge\,}(\fn_{1/2}(\sigma)_+^*\oplus \fn_{1/2}(\sigma)_-)=1.
$$

Let 
$$
C(\g)=\oplus_{\bar\mu\in\R/\Z}C(\g)^{\bar\mu}
$$
be the eigenspace decompositions for $\s$, and 
$$
\Wu=\oplus_{\bar\mu\in\R/\Z}\Wu^{\bar\mu}
$$
the corresponding decomposition of its homology.

Since $[d_\l d]=0$, it follows from \eqref{eq:1.2} that $(d^{\tw}_{(0)})^2=0$.
Let
$
H(M)=\sum_{j\in\ZZ}H_j(M)
$
be the homology of the complex $(\mathcal C(M),d^{\tw}_{(0)})$, with the $\ZZ$-grading induced by the charge decompostion.

If $a\in\mathcal C(\g)$ then
$$[d^{\tw}_{(0)},a^{\tw}_{(\mu)}]=(d_{(0)}a)^{\tw}_{(\mu)}.
$$
In particular, if $d_{(0)}a=0$ and $d^{\tw}_{(0)}m=0$, then $d^{\tw}_{(0)}(a^{\tw}_{(\mu)}m)=0$ and, if $m=d^{\tw}_{(0)}m'$, then $a^{\tw}_{(\mu)}m=(-1)^{p(a)}d^{\tw}_{(0)}(a^{\tw}_{(\mu)}m')$. Moreover, if $a=d_{(0)}a'$, then
$a^{\tw}_{(\mu)}m=d^{\tw}_{(0)}((a')^{\tw}_{(\mu)}m)$. Therefore the quantum fields
$$
Y^{H(M)}(a,z)=\sum_{\mu\in\bar \mu}a^{\tw}_{(\mu)}z^{-\mu-1},\ a\in \Wu^{\bar\mu},
$$
are well defined and define the structure of a $\si$-twisted $\Wu$-module on $H(M)$. This module is called the quantum Hamitonian reduction of the $\s$-twisted $V^k(\g)$-module $M$.



  If $a\in \Wu^{\bar\mu}$ with conformal weight $\D_a$ and $M$ is a 
$\si$-twisted module, we write the field $Y^M(a,z)$ as
$$
Y^M(a,z)=\sum_{n\in {\bar\mu}-\D_a}a^M_nz^{-n-\D_a}.
$$
With this notation, \eqref{Borcherds} can be rewritten in its graded version:
\begin{align}
  \label{eq:2.36}
  \sum_{j \in \Z_+} &\binom{m+\Delta_a -1}{j}
     (a_{(n+j)} b)^M_{m+k}\\
\nonumber
&= \sum_{j \in \Z_+} (-1)^j \binom{n}{j}
(a^M_{m+n-j} b^M_{k+j-n}- p(a,b) (-1)^n b^M_{k-j} a^M_{m+j})\, ,
\end{align}
where  $a\in\Wu^{\bar\mu}, b\in\Wu^{\bar\nu}$, $m \in {\bar\mu}-\D_a$, $n \in \Z$, $k \in {\bar\nu}-\D_b$. Note that, putting $n=0$, \eqref{eq:2.36} becomes the (twisted) commutator formula
\begin{align}
  \label{eqtcf}
[a^M_{m}, b^M_{k}]=\sum_{j \in \Z_+} &\binom{m+\Delta_a -1}{j}
     (a_{(j)} b)^M_{m+k}.\end{align}

\section{Twisted highest weight modules over minimal $W$-algebras}
Recall from Example \ref{13}  the Lie superalgebras $\widetilde\g$ and $\widehat\g^{\tw'}$. Let $D=-L^{\fg ,\tw}_0$.  Recall \cite{KW} that we have $(a \in
\fg^{\bar{\mu}})$:
\begin{displaymath}
  [D,at^{\mu}] =\mu (at^{\mu}) \, , \, [D,K]=0\, .
\end{displaymath}
As usual, we shall consider the extension
\begin{displaymath}
\widehat\g^{\tw}=\widehat\g^{\tw'}\rtimes \CC D 
\end{displaymath}
 of $\widehat\g^{\tw'}$.
 The decomposition \eqref{eq:2.4} induces a triangular
decomposition of the Lie superalgebra $\widehat\g^{\tw}$:
\begin{equation}
  \label{eq:2.6}
\widehat\g^{\tw}= \widehat{\fn}_- \oplus \widehat{\fh}\oplus
      \widehat {\fn}_+ \, , \\
\end{equation}
where
\begin{eqnarray}
   \label{eq:hhat}
      \widehat{\fh} &=&\h^\si\oplus \C K\oplus \C D\, , \\
   \label{eq:2.8}
      \widehat{\fn}_+ &=& \sum_{j \in \frac{1}{2}\ZZ}
          (\fn_j (\sigma)_+ t^{-j} +
          \sum_{\substack{\mu \in \RR\\j+\mu >0}} \fg^{\bmu}_j
           t^{\mu})\, , \\
   \label{eq:2.9}
      \widehat{\fn}_- &=& \sum_{j \in \frac{1}{2} \ZZ}(\fn_j(\sigma)_-
          t^{-j} + \sum_{\substack{\mu \in \RR \\
             j+\mu<0}}
            \fg^{\bmu}_j  t^{\mu}) \, .
\end{eqnarray}

 Recall that, given a triangular decomposition
(\ref{eq:2.6}), a \emph{highest weight module} over the Lie
superalgebra $\widehat\g^{\tw}$ with highest
weight $\widehat\Lambda \in \widehat\fh^*$ is a $\widehat\g^{\tw}$-module $M$
which admits a non-zero vector $v_{\widehat{\Lambda}}$ with  the properties:

\begin{enumerate}
\item 
 $hv_{\widehat{\Lambda}} =\widehat{\Lambda} (h) v_{\widehat{\Lambda}}$, $h \in \widehat{\fh}$,

\item 
$\widehat{\fn}_+ v_{\widehat{\Lambda}}=0$,

\item 
$U(\widehat{\fn}_-)v_{\widehat{\Lambda}}=M$.
\end{enumerate}
Note that a highest weight module $M$ is graded by the eigenspace decomposition corresponding to the action of $D$. Since, by \eqref{eq:2.9}, the eigenvalues of the action of $D$ have real parts bounded above, it is clear that a highest weight module $M$ is restricted. Moreover $M$ has level $k$ if and only if 
\begin{equation}\label{levelk}
\widehat \L(K)=k.
\end{equation} 
In particular the highest weight modules of highest weight $\widehat\L$  such that \eqref{levelk} holds are $\si$-twisted $V^k(\g)$-modules.

Let $\{u_i\}_{i\in S}$ be a basis of $\g$, compatible with the decomposition \eqref{grad}, where $S$ is the index set. For $j\ne0$ let $S_j$ denote the subset of indices of $S$ which corresponds to a basis of $\g_j$, and denote by $S'\subset S$ the subset of indices of the part of the basis  
$\{ u_i \}_{i \in S}$ of $\fg$, which is a basis of $\fg \mod \fh^{\sigma}$.
Let
\begin{equation}
  \label{eq:3.1}
  s_{u_i} =\min \{ n |\, u_i  t^n \hbox{ is non-zero and lies in }
      \hat{\fn}_+ \}\,\hbox{for}\,i\in S' \, , s_{h}=1\, \hbox{for}\, h\in \fh^{\sigma}.
\end{equation}
Since, by \eqref{sigma1}, each summand $\fg_j$ of the gradation \eqref{grad}  is
$\sigma$-invariant, we have its $\sigma$-eigenspace
decomposition:
\begin{displaymath}
   \fg_j = \oplus_{\bmu \in \RR /\ZZ}
\fg^{\bmu}_j, \hbox{where}\, \fg^{\bmu}_j =\{ a \in \fg_j| \sigma (a)
=e^{2\pi i \bmu}a \}.
\end{displaymath}
Hence for a basis element $u_i \in
\fg^{\bmu_i}_{m_i}$ we can rewrite formula (\ref{eq:3.1}) for
$s_i =s_{u_i}$ ($i \in S'$) as follows:
\begin{eqnarray}
  \label{eq:3.2}
  s_i = \left\{
    \begin{array}{ll}
      \min \{ n \in \bmu_i |\,  n>-m_i \} \hbox{ if }
          & u_i \not\in \fn (\sigma)_+ \, , \\
       -m_i \hbox{ if } u_i \in \fn (\sigma)_+ \, .
    \end{array}\right.
\end{eqnarray}
It is easy to see that for a dual basis element $u^i \in
\fg^{-\bmu}_{-m_i}$ we have for $s^i =s_{u^i}$:
\begin{equation}
  \label{eq:3.3}
  s^i = 1-s_i \hbox{ for all }i \in S' \, .
\end{equation}

We extend this definition to
$F(A_{ne},\si)$ and $F(A_{ch},\si)$ as follows. Let us relabel the basis $\{u_i\}_{i\in S_{-1/2}}$ of $A_{ne}=\g_{-1/2}$ as $\{\Phi_i\}_{i\in S_{-1/2}}$ and the pair of dual bases
$(\{u_i\}_{i\in S_{<0}}, \{u_i^*\}_{i\in S_{<0}}),$ of $A_{ch}=\g_{<0}\oplus \g^*_{<0}$ as $\{\varphi_i\},\{\varphi_i^*\}$. Then
\begin{equation}
  \label{eq:3.4}
  s_{\Phi_i} = s_i  \,(i \in S_{1/2}) \, , \, s_{\varphi_i}=s_i \, , \,
     s_{\varphi^*_i}=1-s_i \,\, (i \in S_+)\, .
\end{equation}
It is easy to see that we have
\begin{equation}
  \label{eq:3.5}
  s_{\Phi_i}=\mp 1/2  \hbox{ if }\Phi_i \in \fn_{1/2}
     (\sigma)_{\pm}\, , \, |s_{\Phi_i}|<1/2\hbox{ otherwise.}
\end{equation}
\begin{equation}
  \label{eq:3.6}
  s_{\Phi_i} + s_{\Phi^i} = \delta_{i,i_0}\, , \, \hbox{where}\,\, \langle \Phi_{i_0},\Phi_{i_0}
 \rangle_{ne}  \neq 0.
\end{equation}

For a $\si$-twisted $\Wu$-module $M$, a vector $m\in M$ is called cyclic if  polynomials in the operators $J^{\{ a \} ,\tw}_n$, with $a\in \g^\natural,\, n\in\ZZ,$ $G^{\{ v \} ,\tw}_n,$ with $v\in\g_{-1/2}, n\in \half+ \ZZ,$ and $L_n^{\tw}$ with $n\in\ZZ$
applied to  $m$ span $M$. A vector $m\in M$ such that there are $\ell_0\in\C$ and $\l\in((\h^\natural)^\si)^*$ for which
\begin{align}
\label{m1}
&L_0^{\tw}(v_{\lambda,\ell_0})=\ell_0 v_{\lambda,\ell_0},
\\& J^{\{ a\} ,\tw}_0 v_{\lambda,\ell_0} =
     \lambda (a) v_{\lambda,\ell_0} \hbox{ if } a \in (\fh^\natural)^{\sigma},\label{m2}
\end{align}
is called a weight vector and  the pair $(\l,\ell_0)$ is called the weight of $m$.

We define a highest weight module for $\Wu$ as follows: 

\begin{definition}
 A $\s$-twisted $\Wu$-module $M$ is called a
\emph{highest weight module} of highest weight  $(\lambda,\ell_0)$ if there exists  a cyclic weight  vector $v_{\lambda,\ell_0
} \in M$ of weight  $(\lambda,\ell_0)$ such that 
\begin{align}
  &J^{\{ a \} ,\tw}_m v_{\lambda,\ell_0}=G^{\{ v \} ,\tw}_m v_{\lambda,\ell_0} =L^{\tw}_m v_{\lambda,\ell_0} =
  0 \text{ if $m>0$,}\label{m3}\\
 &J^{\{ a \} ,\tw}_0 v_{\lambda,\ell_0} =
  0 \text{ if 
     $a \in \fn_0 (\sigma)_+$,}\label{m4}\\
     &G^{\{ u \},\tw}_0 v_{\lambda,\ell_0} =
  0 \text{ if 
     $u \in \fn_{-1/2} (\sigma)'_+$}.\label{m5}
\end{align}
The vector $v_{\lambda,\ell_0}$ is called a highest weight vector.
\end{definition}


Let  $V$ be a conformal vertex algebra strongly generated by elements
$\{ J^{\{ i \}}\}_{i \in I},$ where $J^{\{ i \}}$ has conformal
weight $\Delta (i) \in \RR$. Let $M$ be a $\si$-twisted positive energy module (i.e. the eigenvalues of $L_0^{\tw}$ are bounded below). Write, for shortness,  $J^{\{ i \}}_m$ instead of  $(J^{\{ i \}})^M_m$. Then, by  the commutator formula 	\eqref{eqtcf}, we have:
\begin{equation}
  \label{eq:6.3}
  [J^{\{ i \}}_m \, , \, J^{\{ j \}}_n]=
  \sum_{\vec{s} , \vec{t}} c^{ij}_{m,n} (\vec{s} ,\vec{t})
  (J^{\{ s_1 \}}_{t_1} \,  J^{\{ s_2 \}}_{t_2} \ldots \,),
\end{equation}
where $c^{ij}_{m,n}\in\C$ and for each term of this sum we have:
\begin{equation}
  \label{eq:6.4}
  t_r \in \ZZ -\Delta (s_r) \, , \, \sum_r t_r =m+n
     \hbox{  and  } t_1 \leq t_2 \leq \ldots \, .
\end{equation}

Denote by $\A$ the unital associative superalgebra generated by 
$J^{\{ i \}}_m$ $(i \in I , m \in \ZZ-\Delta (i))$ inside $End(M)$. 
Note that, though the sum in the R.H.S. of \eqref{eq:6.3} may not be finite, by \eqref{eq:6.4} and the fact that $M$ is a positive energy $V$-module, 
the R.H.S. of \eqref{eq:6.3} makes sense as an element of $End(M)$.

  Let $\tilde{\A}_-$, $\tilde{\A}_+$ and
$\tilde{\A}_0$ be the subalgebras of $\A$ generated by the $J^{\{
  i \}}_m$ with $m<0$, $m>0$ and $m=0$, respectively.   It follows
from (\ref{eq:6.3}) and (\ref{eq:6.4}) that
\begin{equation}
  \label{eq:6.5}
  \A = \tilde{\A}_- \tilde{\A}_0 \tilde{\A}_+ \, .
\end{equation}
%

The previous discussion proves the following result.
\begin{lemma}\label{basisWu}
Let  $M$ be a $\s$-twisted highest weight $\Wu$-module  and let  $v$ be a highest weight vector for $M$.
Then $M$ is spanned  by 
\begin{equation}\label{basisW}
\{P_{j_1}P_{j_2}\cdots P_{j_t}v\mid j_1<j_2\cdots<j_t\le 0\}\text{ with $P_j\in\mathcal M_j$, where}
\end{equation}
\begin{align*}
&\mathcal M_j=\{(J^{\{a_{i_1}\}}_{j})^{m_1}\cdots (J^{\{a_{i_t}\}}_{j})^{m_t}G^{\{u_{i_1}\}}_{j}\cdots G^{\{u_{i_t}\}}_{j}L_{j}^k\},\ \text{if $j<0$,}\\
&\mathcal M_0=\{(J^{\{a_{i_1}\}}_{0})^{m_1}\cdots (J^{\{a_{i_t}\}}_{0})^{m_t}G^{\{u_{i_1}\}}_{0}\!\!\cdots G^{\{u_{i_t}\}}_{0}\!\!: a_{i_s}\in \n_0(\s)_-,
u_{i_r}\in(\n_{-1/2}(\si)_-\oplus \g^0_{-1/2}(\si))\}.
\end{align*}
\end{lemma}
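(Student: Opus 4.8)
The plan is to derive the spanning set from the triangular-style factorization \eqref{eq:6.5} together with the PBW-type normal ordering encoded in \eqref{eq:6.3}--\eqref{eq:6.4}. First I would recall that, since $\Wu$ is strongly and freely generated by $L$, the currents $J^{\{a\}}$ ($a\in\g^\natural$) and the odd fields $G^{\{u\}}$ ($u\in\g_{-1/2}$), the associative algebra $\A\subset\End(M)$ is generated by the modes $L_n^{\tw}$, $J^{\{a\},\tw}_n$, $G^{\{u\},\tw}_n$. By \eqref{eq:6.5} we have $\A=\tilde\A_-\tilde\A_0\tilde\A_+$, and because $v_{\l,\ell_0}$ is a cyclic highest weight vector, \eqref{m3} gives $\tilde\A_+v_{\l,\ell_0}=\C v_{\l,\ell_0}$, so $M=\tilde\A_-\tilde\A_0 v_{\l,\ell_0}$. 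Thus the task reduces to (i) choosing an ordered spanning set for the image of $\tilde\A_-$ and (ii) identifying which zero modes survive in $\tilde\A_0 v_{\l,\ell_0}$.

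For step (i), I would use the commutator relations \eqref{eq:6.3}: any product of negative modes can be reordered, modulo lower-order terms, so that modes of the same index $j<0$ are grouped and, within a group, the even generators $J^{\{a\}}_j$, $L_j$ come with arbitrary nonnegative powers while the odd $G^{\{u\}}_j$ appear at most linearly (by $p$-antisymmetry, $(G^{\{u\},\tw}_j)^2$ lies in the span of lower products via \eqref{eqtcf}). The weight (grading by $D=-L_0^{\tw}$) decreases or stays constant under these reorderings in a way that a standard induction on $(\text{total } L_0\text{-weight},\text{length})$ closes; this gives exactly the monomials in $\mathcal M_j$, $j<0$, applied in increasing order of index $j_1<\cdots<j_t\le 0$, as in \eqref{basisW}.

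For step (ii), I would analyze $\tilde\A_0 v_{\l,\ell_0}$ using \eqref{m4} and \eqref{m5}: the zero modes $J^{\{a\},\tw}_0$ with $a\in\fn_0(\sigma)_+$ and $G^{\{u\},\tw}_0$ with $u\in\fn_{-1/2}(\sigma)'_+$ annihilate $v_{\l,\ell_0}$, while $J^{\{a\},\tw}_0$ with $a\in(\fh^\natural)^\sigma$ act by scalars \eqref{m2}; hence, after reordering zero modes via \eqref{eqtcf} (here the commutators of zero modes again produce only zero modes, so the induction is internal to $\tilde\A_0$), only the modes with $a\in\fn_0(\sigma)_-$ and $u\in\fn_{-1/2}(\sigma)_-\oplus\g^0_{-1/2}(\sigma)$ remain — matching $\mathcal M_0$. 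Note $L_0^{\tw}$ also acts by a scalar on $v_{\l,\ell_0}$ by \eqref{m1}, which is why no factor $L_0^k$ appears in $\mathcal M_0$; and the decomposition $\g_{-1/2}(\sigma)=\fn_{-1/2}(\sigma)'_+\oplus\g^0_{-1/2}(\sigma)\oplus\fn_{-1/2}(\sigma)_-$ from \eqref{eq:2.5bis} is what lets us restrict the surviving $G$-modes precisely to the stated subspace.

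The main obstacle is making the reordering/normal-ordering induction genuinely rigorous despite the potentially infinite sums on the right-hand side of \eqref{eq:6.3}: one must check that on a positive-energy module the expressions are well defined (this is guaranteed by \eqref{eq:6.4} plus positivity of energy, already remarked in the text) and that the induction terminates. The cleanest way is to induct on the $L_0^{\tw}$-eigenvalue — which takes finitely many values below the top and is bounded below on a highest weight module — and, within a fixed eigenvalue, on the length of the monomial, using that every commutator correction in \eqref{eq:6.3} strictly increases $t_1$ (pushes modes to the right) or shortens the monomial. Once this bookkeeping is set up, the claimed spanning set \eqref{basisW} follows immediately, so the proof is essentially the remark ``the previous discussion proves the following'' together with the explicit identification of $\mathcal M_0$ via \eqref{m2}, \eqref{m4}, \eqref{m5}.
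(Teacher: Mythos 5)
Your proposal is correct and follows essentially the same route the paper takes: invoke the factorization $\A=\tilde\A_-\tilde\A_0\tilde\A_+$ from \eqref{eq:6.5}, kill $\tilde\A_+$ using \eqref{m3}, identify the surviving zero modes via \eqref{m2}, \eqref{m4}, \eqref{m5} and the decomposition \eqref{eq:2.5bis}, and order the negative modes by the commutator relations \eqref{eq:6.3}--\eqref{eq:6.4}. The paper simply writes "the previous discussion proves the following result," and your elaboration of the positive-energy/weight-length induction is exactly the bookkeeping that remark tacitly relies on.
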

We say that a highest weight $\Wu$-module $M$ of highest weight $(\l,\ell_0)$ is a {\it Verma module} (denoted by $M^W(\nu,\ell_0)$) if the elements in \eqref{basisW} form a basis of $M$. 
The following proposition summarizes various results  proven in  Proposition 3.1, Theorem 3.1,  and Proposition 4.1 of \cite{KW}.

\begin{proposition}\label{Hhw}
Let $M$ be a $\si$-twisted  highest weight module over $V^k(\g)$ with highest weight $\widehat{\Lambda}$  and highest weight vector $v_{\widehat\L}$. Set $\L=\widehat \L_{|\h^\si}$ and
\begin{equation}\label{g12}
  \gamma'=\frac{1}{2}\sum_{\alpha \in S'} (-1)^{p(\a)}
      s_{\alpha} \alpha \, , \quad
  \gamma_{1/2} =\frac{1}{2}\sum_{\alpha \in S_{1/2}}
     (-1)^{p(\a)}s_{\alpha} \alpha \, .
\end{equation}
Then
\begin{enumerate}
\item $d_0^{\rm tw}(v_{\widehat\L}\otimes 1\otimes 1)=0$,
\item if the homology class $[v_{\widehat\L}\otimes 1\otimes 1]$ is non-zero, then it  is a highest weight vector of the highest weight module $\Wu [v_{\widehat\L}\otimes 1\otimes 1]$ whose weight is $(\l,\ell)$, where

\begin{equation}\label{ellgen}
 \ell = \frac{1}{2(k+h^\vee)} ((\L | \L)
  -2(\L|\gamma'))
  -\L(x) + s_{fg} + s_{gh}\, ,
\end{equation}
with
\begin{displaymath}
  s_{fg}=-\frac{k}{4(k+h^\vee)} \sum_{\alpha \in S'}
     (-1)^{p(\a)} s_{\alpha} (s_{\alpha}-1)\, , \quad
     s_{gh}=\frac{1}{4}
     \sum_{\alpha \in S_{1/2}} (-1)^{p(\a)}s^2_{\alpha},
\end{displaymath}
and 
\begin{equation}\label{lambdagen}
 \l=(\L- \gamma_{1/2})_{|\h^\natural}.
\end{equation}
\item If $M(\widehat \L)$ is a $\s$-twisted Verma module over  $V^k(\g)$, then $H_j(M(\widehat \L))=0$ for $j\ne0$ and $H_0(M(\widehat \L))$ is  the $\s$-twisted Verma module over $\Wu$ of highest weight $(\l,\ell)$ given by \eqref{ellgen}, \eqref{lambdagen}.
\end{enumerate}
\end{proposition}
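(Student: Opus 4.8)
The plan is to establish Proposition \ref{Hhw} by transporting the known untwisted computation of \cite{KW} into the $\sigma$-twisted setting, exploiting the fact that the only structural change is the replacement of integer mode indices by the coset $\bar\mu$ and the corresponding shift in the defining vectors $s_i$. First I would verify (1): the complex $\mathcal C(M(\widehat\L))$ is computed from the Koszul-type differential $d^{\tw}_{(0)}$, whose action on $v_{\widehat\L}\otimes 1\otimes 1$ involves only the operators $u_i t^{s_i}$ with $s_i>0$ (these kill $v_{\widehat\L}$ by the highest weight condition $\widehat\n_+ v_{\widehat\L}=0$, using \eqref{eq:2.8}) together with the Clifford/fermionic vacua, and the fact that $f\in\fn(\sigma)_+$ so $s_f=-m_f=1>0$; one checks termwise that each summand of $d^{\tw}_{(0)}(v_{\widehat\L}\otimes 1\otimes 1)$ lands either in $(\widehat\n_+ v_{\widehat\L})\otimes\cdots=0$ or contains an annihilation operator on the twisted fermionic vacuum, exactly as in \cite[Proposition 3.1]{KW} but now reading the mode indices in $\bar\mu_i$ rather than $\ZZ$.

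Next, for (2), assuming $[v_{\widehat\L}\otimes 1\otimes 1]\ne 0$, I would identify its weight. The $L_0^{\tw}$-eigenvalue $\ell$ is computed by writing $L=L^{sug}+L^{ch}+L^{ne}+\partial(\cdots)$ as in \cite[(2.2)]{KW1}, applying the twisted zero mode to $v_{\widehat\L}\otimes 1\otimes 1$, and using the twisted normal ordering: the Sugawara part contributes $\frac{1}{2(k+h^\vee)}(\L|\L)$ shifted by the twist correction $-\frac{1}{k+h^\vee}(\L|\gamma')$ (with $\gamma'$ the half-sum weighted by the $s_\alpha$ as in \eqref{g12}), the term $-\L(x)$ comes from the grading element, and the constants $s_{fg}$, $s_{gh}$ are the vacuum anomalies of the $bc$- and $\Phi$-systems twisted by $\sigma$ — these are exactly the sums over $S'$ and $S_{1/2}$ of $(-1)^{p(\alpha)}s_\alpha(s_\alpha-1)$ and $(-1)^{p(\alpha)}s_\alpha^2$ appearing in the twisted Clifford vacuum energy. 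The weight $\l$ on $\h^\natural$ picks up the shift $-\gamma_{1/2}$ from the fermionic contribution $G^{\{u\}}$, again following \cite[Theorem 3.1]{KW}. That $[v_{\widehat\L}\otimes 1\otimes 1]$ is a highest weight vector for $\Wu$, i.e. satisfies \eqref{m3}--\eqref{m5}, follows because the positive modes of $J^{\{a\}}, G^{\{u\}}, L$ are built from positive modes in $\widehat\g^{\tw}$ and fermionic annihilation operators, all of which annihilate $v_{\widehat\L}\otimes 1\otimes 1$; and the conditions \eqref{m4}, \eqref{m5} reflect precisely the choice of maximal isotropic subspaces $\fn_0(\sigma)_+$ and $\fn_{-1/2}(\sigma)'_+$ in the twisted reduction.

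Finally, for (3), when $M(\widehat\L)$ is a twisted Verma module over $V^k(\g)$, I would argue that $\mathcal C(M(\widehat\L))$ is a free module over the relevant Clifford/loop algebra built from $\widehat\n_-$ and the twisted fermions, so the differential $d^{\tw}_{(0)}$ has the structure of a Koszul complex, whose homology is concentrated in charge $0$; this is the twisted analogue of \cite[Proposition 4.1]{KW} and should go through verbatim once the PBW-type basis of Lemma \ref{basisWu} is in hand — indeed the spanning set \eqref{basisW} is then linearly independent because the corresponding monomials in $U(\widehat\n_-)\otimes F(A_{ch},\sigma)\otimes F(A_{ne},\sigma)$ are, so $H_0(M(\widehat\L))$ is the twisted Verma module $M^W(\nu,\ell)$ with $\nu=\l$. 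I expect the main obstacle to be the bookkeeping in step (2): keeping careful track of the twisted vacuum anomalies and signs $(-1)^{p(\alpha)}$ so that the constants $s_{fg}$ and $s_{gh}$ come out exactly as stated, since in the twisted case the $s_\alpha$ are genuinely non-integral and the cancellations that make the untwisted formula clean need to be re-examined; everything else is a routine transcription of \cite{KW}.
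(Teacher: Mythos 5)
Your sketch is mathematically sound and correctly reconstructs the arguments needed for all three parts: the vanishing of $d^{\tw}_{(0)}$ on the highest weight vector via positivity of the $s_i$, the bookkeeping of the Sugawara, ghost and neutral-fermion vacuum anomalies that produce \eqref{ellgen} and \eqref{lambdagen}, and the Koszul-complex collapse of the homology to charge zero for Verma modules. However, the paper itself offers no proof: the sentence immediately before Proposition~\ref{Hhw} states that it ``summarizes various results proven in Proposition 3.1, Theorem 3.1, and Proposition 4.1 of~\cite{KW},'' and \cite{KW} in this bibliography is Kac--Wakimoto's \emph{Quantum reduction in the twisted case}, which already works with an arbitrary diagonalizable $\sigma$ satisfying \eqref{sigma1}--\eqref{sigma3}. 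You appear to have taken \cite{KW} to be the untwisted paper (that is \cite{KRW}) and proposed to ``transport'' its computation into the twisted setting; no such transport is needed, because the cited results are already stated and proved there in exactly the generality Proposition~\ref{Hhw} requires. So your argument is a correct re-derivation of the cited reference, but you should not present it as an adaptation of an untwisted result: the constants $s_{fg}$, $s_{gh}$ and the shift $\gamma_{1/2}$ that you carefully flag as ``the main obstacle'' are already computed in \cite[Theorem 3.1]{KW} with the non-integral $s_\alpha$ in place, and the paper simply quotes them.
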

%

%



\section{The Zhu algebra in the Ramond sector}\label{SectionZhu}
Let $\sigma_R$ be the automorphism of $\g$ given by $\sigma_R(a)=(-1)^{p(a)}a$, which clearly satisfies the properties \eqref{sigma1}, \eqref{sigma2}, \eqref{sigma3}.  In this case
$$\g(\sigma_R)=\g,\quad \g^{\si_R}=\g_{\bar 0}=\g^\natural \oplus \C x,\quad \g_{\pm 1/2}^{-\si_R}=\g_{\pm 1/2},\quad \h^{\si_R}=\h=\h^\natural \oplus \C x.
$$

The main purpose of this section is to compute  the Zhu algebra $Zhu_{\si_R}(\Wu)$, which will be denoted $\Zhu$ for short, define its Verma and irreducible highest weight modules, and check the existence of  an invariant even Hermitian form on Verma modules.

Note  that the grading induced by $\si_R$ is the same as the $\half \ZZ/\ZZ$-grading induced by $L_0$ as described in Example 2.12 of \cite{DK}. It follows that  the Zhu algebra $\Zhu$ is the algebra $Zhu_{L_0}(\Wu)$ described in \cite[Section 7]{KMP} which we now recall. Let $\pi_Z: \Wu\to \Zhu$ be the canonical projection; then the map  $\g^\natural\oplus \g_{-1/2}\oplus \C L\to \Zhu $ defined by $a\mapsto \pi_Z(J^{\{a\}})$, $a\in \g^\natural$, $v\mapsto \pi_Z(G^{\{v\}})$, $v\in \g_{-1/2}$, $L\mapsto \pi_Z(L)$ is a linear isomorphism onto a set of generators.

Moreover the commutation relations among the generators are as follows (here $[\cdot,\cdot]_\g$ denotes the bracket in $\g$, while $[\cdot,\cdot]$ is the bracket in $\Zhu$.
\begin{enumerate}
\item $L$ is a central element,
\item $[a,b]=[a,b]_\g$ if $a,b\in\g^\natural$,
\item $[a,v]=[a,v]_\g$ if $a\in \g^\natural$ and $v\in\g_{-1/2}$,
\item If $u,v\in\g_{-1/2}$ then  \begin{align}\label{Zhu4}
[u,v]=&\langle u,v\rangle\left(\sum_{\a=1}^{\dim\g^\natural} a^\a*a_\a-2(k+h^\vee)L-\half p(k)\right)\\
&+\sum_{\a,\be=1}^{\dim\g^\natural}\langle[a_\a,u]_\g,[v,a^\be]_\g\rangle (a^\a*a_\be+a_\be*a^\a), \notag
\end{align}
\end{enumerate}
where $p(k)$ is a monic quadratic polynomial defined in \cite{AKMPP} and $\{a_\a\},\,\{a^\a\}$ are dual bases of $\g^\natural$. By (2) and (3) above, we can drop the subscript $\g$ from the bracket $[\cdot,\cdot]_\g$.
Let $L'=2(k+h^\vee)L+\half p(k)$. Then $\Zhu$ is independent of $k$ if $k\ne-h^\vee$. 

Let 
\begin{align*}&m_+=\dim\n_{-1/2}(\si_R)'_+,\,m_-=\dim\n_{-1/2}(\si_R)_-+\e(\sigma_R),\\&n= \dim\n_{0}(\si_R)_-=\dim\n_{0}(\si_R)_+, r=\dim \h^\natural.\end{align*}
Choose  $\{v^+_i\mid i=1,\ldots,m_+\}$  (resp. $\{v^-_i\mid i=1,\ldots,m_-\}$) as a basis of $\fn_{-1/2}(\sigma_R)'_+$ (resp- $\fn_{-1/2}(\sigma_R)_-\oplus
\g^0_{-1/2}(\sigma_R)$). Also assume  that $ v^-_{m_-}\in \g^0_{-1/2}(\si_R)$ when $\epsilon(\s_R)=1$   (see \eqref{eq:2.5}). Similarly  choose 
$\{a^\pm_i\mid i=1,\ldots n\}$ bases of $\n_0(\si_R)_\pm$ and $\{h_i\mid i=1,\ldots r\}$ a basis of $\h^\natural$. 
If 
$$
\mathbf p=(p_1,\ldots, p_{m_\pm})\in\{0,1\}^{m_\pm},\ \mathbf q=(q_1,\ldots, q_n)\in\Z_+^n,\ \mathbf k =(k_1,\ldots,k_r)\in\Z_+^r,
$$
given $\l\in (\h^\natural)^*$, set
$$
a_\pm^{\mathbf q}=(a^\pm_1)^{q_1}\cdots (a^\pm_{n})^{q_n},\ v_\pm^{\mathbf p}=(v^\pm_1)^{p_1}\cdots (v^\pm_{m_\pm})^{p_{m_\pm}},
$$
$$
h(\l)^{\mathbf k}=(h_1-\l(h_1))^{k_1}\cdots (h_{r}-\l(h_r))^{k_{r}}.
$$

 Theorem 3.25 of \cite{DK} implies that a basis of $\Zhu$ is given by
 $$
 \{a_-^{\mathbf q_-}*v_-^{\mathbf p_-}*h(0)^{\mathbf k}*v_+^{\mathbf p_+}*a_+^{\mathbf q_+}*L^k\},
 $$
 where $*$ denotes the product in $\Zhu$ \cite{DK}. It follows that, for all $\l\in (\h^\natural)^*$ and $\ell_0\in\C$, the set 
\begin{equation}\label{BLell}
 \mathcal B_{\l,\ell_0}=\{a_-^{\mathbf q_-}*v_-^{\mathbf p_-}*h(\l)^{\mathbf k}*v_+^{\mathbf p_+}*a_+^{\mathbf q_+}*(L-\ell_0)^{k}\}.
\end{equation}
 is also a basis of $\Zhu$.
 
 By a  $\Zhu$-module we mean a representation of $\Zhu$ as an associative superalgebra.
A $\Zhu$-module $M$ is called a highest weight module of highest weight $(\l,\ell_0)$ if $M$ is generated by a vector $v_{\l,\ell_0}$ such that  
\begin{align}\label{zh1}
&Lv_{\lambda,\ell_0}=\ell_0 v_{\lambda,\ell_0},
\\& a v_{\lambda,\ell_0} =
     \lambda (a) v_{\lambda,\ell_0} \hbox{ if } a \in \fh^\natural\label{zh2}, \\ 
     &a v_{\lambda,\ell_0} =
  0 \text{ if 
     $a \in \fn_0 (\sigma_R)_+$},\label{zh3}\\
    &u v_{\lambda,\ell_0} =
  0 \text{ if 
     $u \in \fn_{-1/2} (\sigma_R)'_+$.}\label{zh5}
\end{align}

\begin{remark}\label{basisMhw}
Since the set $\mathcal B_{\l,\ell_0}$ is a basis of $\Zhu$ it is clear that,   if $M$ is a highest weight vector of highest weight $(\l,\ell_0)$ and $v_{\l,\ell_0}$ is a highest weight vector, then $M$ is spanned by 
\begin{equation}\label{Vermabasis}
\{a_-^{\mathbf q_-}*v_-^{\mathbf p_-}\cdot v_{\l,\ell_0}\}.
\end{equation}
Set $S^{0}_{-1/2}=\emptyset$ if $\epsilon(\si_R)=0$ and $S^{0}_{-1/2}=\{m_-\}$ if $\epsilon(\si_R)=1$.
Since the set in \eqref{Vermabasis} is a set of linear generators for $M$,  the weight space of $M$ of weight $(\l,\ell_0)$ is
$$
M_{\l,\ell_0}=span(v_{\l,\ell_0},v^-_\gamma v_{\l,\ell_0}\mid\gamma\in S^{0}_{-1/2}).
$$
Since $\h^\natural\oplus \C L$  is even we have 
$$
M_{\l,\ell_0}=(M_{\l,\ell_0})_{\bar0}\oplus (M_{\l,\ell_0})_{\bar1}.
$$
Decompose $v_{\l,\ell_0}$ accordingly:
$$
v_{\l,\ell_0}=(v_{\l,\ell_0})_{\bar0}+(v_{\l,\ell_0})_{\bar1}.
$$
If $v^-_\gamma v_{\l,\ell_0}=0$ then $v_{\l,\ell_0}$ is either even or odd.
If $v^-_\gamma v_{\l,\ell_0}\ne 0$ then there is $\bar i $ such that $v^-_\gamma (v_{\l,\ell_0})_{\bar i}\ne0$. Then 
\begin{equation}\label{MMM}
M_{\l,\ell_0}=span((v_{\l,\ell_0})_{\bar i},v^-_\gamma (v_{\l,\ell_0})_{\bar i}).
\end{equation}
In particular $(v_{\l,\ell_0})_{\bar i}$ generates $M$. The outcome is that we can always assume that the highest weight vector is either even or odd. From now on this will be always assumed.
\end{remark}

The following two lemmas  record easy consequences of Remark \ref{basisMhw}.

\begin{lemma}
If $M$ is a highest weight module over  $\Zhu$, then it admits a unique maximal proper submodule.
\end{lemma}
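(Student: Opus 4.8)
The statement is the standard fact that a cyclic module over an associative (super)algebra with a ``highest weight'' structure has a unique maximal proper submodule; the only thing to verify is that the collection of proper submodules has a unique maximal element, which as usual follows once we know that no proper submodule contains the (image of the) highest weight vector, together with the fact that a sum of proper submodules is proper. The key input from the preceding discussion is Remark \ref{basisMhw}: after replacing $v_{\l,\ell_0}$ by its homogeneous component $(v_{\l,\ell_0})_{\bar i}$ we may assume the highest weight vector $v=v_{\l,\ell_0}$ is homogeneous, it generates $M$, and the weight space $M_{\l,\ell_0}$ is spanned by $v$ together with possibly $v^-_\gamma v$ for $\gamma\in S^0_{-1/2}$, where $v^-_\gamma v \in M_{\l,\ell_0}$ is a scalar multiple of the \emph{other} parity component. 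In particular $M_{\l,\ell_0}$ is at most two-dimensional, and $v$ lies in $M_{\l,\ell_0}$.

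\medskip

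\textbf{Step 1: a proper submodule meets $M_{\l,\ell_0}$ trivially, or rather does not contain $v$.} First I would record that any submodule $N$ of $M$ is a sum of its weight spaces, since $\Zhu$ contains $\h^\natural\oplus\C L$ acting semisimply (by \eqref{zh1}, \eqref{zh2}) and the decomposition of $M$ into $(\l',\ell_0')$-eigenspaces is respected by the $\Zhu$-action; so $N$ proper forces $N\cap M_{\l,\ell_0}$ to be a proper subspace of $M_{\l,\ell_0}$, because if $v\in N$ then $N\supseteq \Zhu\cdot v = M$ by cyclicity. Thus every proper submodule $N$ satisfies $v\notin N$. Here the point about homogeneity of $v$ matters: because $M_{\l,\ell_0}$ may be two-dimensional with a single ``new'' generator $v^-_\gamma v$, I must make sure that $N$ proper does not merely avoid $v$ but cannot reconstruct $v$; since $v$ generates and $\Zhu\cdot(v^-_\gamma v)$ need not be all of $M$, this is exactly why we pass to the homogeneous generator in Remark \ref{basisMhw} — with $v$ homogeneous, $N$ proper $\iff v\notin N$.

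\medskip

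\textbf{Step 2: the sum of all proper submodules is proper.} Let $M^{\circ}$ be the sum of all proper submodules of $M$. It is a submodule; I claim it is proper. Indeed $M^{\circ}$ is a sum of weight spaces, and $(M^{\circ})_{\l,\ell_0} = \sum_N N_{\l,\ell_0}$ over proper $N$. Each $N_{\l,\ell_0}$ is a proper subspace of the (at most two-dimensional) space $M_{\l,\ell_0}$ not containing $v$. If $M_{\l,\ell_0}$ is one-dimensional (spanned by $v$) then each such $N_{\l,\ell_0}=0$ and hence $(M^{\circ})_{\l,\ell_0}=0\ne M_{\l,\ell_0}$, so $M^{\circ}$ is proper. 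If $M_{\l,\ell_0}$ is two-dimensional, spanned by $v$ and $w:=v^-_\gamma v$ with $w$ of opposite parity to $v$, then every proper $N$ has $N_{\l,\ell_0}\subseteq \C w$ (a line not containing $v$, and the only such line invariant enough to appear is $\C w$ since all weight vectors are linear combinations of $v,w$ and a line avoiding $v$ through the origin is $\C w$ up to nothing else — more carefully, $N_{\l,\ell_0}$ is a subspace of $\C v\oplus\C w$ with $v\notin N_{\l,\ell_0}$; if $\dim N_{\l,\ell_0}=1$ it is spanned by some $av+bw$ with $a$ forced to $0$ only if no submodule generated by such a vector is all of $M$, which holds precisely because $\Zhu\cdot w\subsetneq M$). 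Consequently $(M^{\circ})_{\l,\ell_0}\subseteq \C w\subsetneq M_{\l,\ell_0}$, so again $M^{\circ}\ne M$.

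\medskip

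\textbf{Step 3: conclusion.} Being a proper submodule, $M^{\circ}$ is itself one of the submodules summed, hence it is the unique maximal proper submodule: it contains every proper submodule by construction, and it is proper by Step 2. This completes the proof. \textbf{The main obstacle} is the two-dimensional weight space: one must argue that a proper submodule cannot have $(\l,\ell_0)$-weight component equal to the line $\C w = \C(v^-_\gamma v)$ while still, together with another proper submodule whose weight component is a different line, sum up to all of $M_{\l,\ell_0}$ — this is ruled out because, as noted in Remark \ref{basisMhw}, $v^-_\gamma w = (v^-_\gamma)^2 v = 0$ (the $v^-_i$ are odd and square to zero in $\Zhu$ by \eqref{Zhu4} with $\langle v^-_\gamma, v^-_\gamma\rangle$ contributing a central/diagonal term that annihilates $M_{\l,\ell_0}$ on the nose), so $\Zhu\cdot w$ has $(\l,\ell_0)$-component equal to $\C w$ only, and no proper submodule can contain $v$; hence all proper submodules have $(\l,\ell_0)$-component inside $\C w$, forcing their sum to be proper. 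I would present Steps 1–3 cleanly in the paper, absorbing the parity bookkeeping into a single appeal to Remark \ref{basisMhw}.
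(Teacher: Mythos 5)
Your proof reaches the right conclusion, but the justification of the key step — ruling out a proper submodule $N$ with $N_{\l,\ell_0}$ a ``slanted'' line $\C(av+bw)$, $a,b\ne0$ — is not sound as written, and the paper handles it differently.

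The paper's argument is short and clean: a submodule $N$ of a $\Z_2$-graded $\Zhu$-module is itself $\Z_2$-graded, so $N\cap M_{\l,\ell_0}$ is the direct sum of its even and odd parts. Since $v$ is parity-homogeneous (Remark \ref{basisMhw}) and spans one parity-component of $M_{\l,\ell_0}$ while $w=v^-_\gamma v$ spans the other, and since $v\notin N$ for $N$ proper, the $v$-parity part of $N\cap M_{\l,\ell_0}$ vanishes, giving $N\cap M_{\l,\ell_0}\subseteq\C w$. No slanted line can arise because such a line is not $\Z_2$-homogeneous. You mention that $w$ has opposite parity but never actually invoke the $\Z_2$-gradedness of $N$; instead you try to exclude the slanted line by appealing to ``$\Zhu\cdot w\subsetneq M$,'' which is a non sequitur (whether $\Zhu\cdot(av+bw)$ is proper is logically independent of whether $\Zhu\cdot w$ is), and the claim may even be false: by Proposition \ref{exhf} one has $\omega(v^-_\gamma)v^-_\gamma\,v=\text{const}\cdot v$, so $v^-_\gamma w=(v^-_\gamma)^2v$ is a \emph{nonzero-in-general} multiple of $v$, not $0$ as you assert at the end of Step 3 — in fact if that constant is nonzero then $\Zhu\cdot w=M$. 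Replace your Step 2 justification (and the final parenthetical) with the one-line appeal to $\Z_2$-homogeneity of $N$ and of the generator $v$, and the proof becomes correct and matches the paper's.
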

\begin{proof}
 If $N$ is a proper submodule, since $v^-_\gamma$ is odd  and $N$ is $\Z_2$-graded, by \eqref{MMM} we have $N\cap M_{\l,\ell_0}\subset \C v^-_\gamma v_{\l,\ell_0}$, $\gamma\in S^{0}_{-1/2}$. This implies that the sum of all proper submodules is still proper. The statement follows.
\end{proof}

 \begin{lemma} Let $M$ be an irreducible finite-dimensional module over $\Zhu$. Then $M$ is a highest weight module.
 \end{lemma}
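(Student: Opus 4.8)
The plan is to reduce the statement to the standard fact that a finite-dimensional module over the associative superalgebra $\Zhu$ contains a ``highest weight vector'' in the sense of \eqref{zh1}--\eqref{zh5}, and then invoke irreducibility to conclude $M$ is generated by it. First I would record the relevant structural decomposition $\A=\tilde\A_-\tilde\A_0\tilde\A_+$, which in the Zhu-algebra picture corresponds to the triangular-type decomposition coming from the basis $\mathcal B_{\l,\ell_0}$ of \eqref{BLell}: the subalgebra $\Zhu^+$ generated by $\n_0(\si_R)_+$ and $\n_{-1/2}(\si_R)'_+$, the subalgebra $\Zhu^0$ generated by $\h^\natural$ and $L$, and the subalgebra $\Zhu^-$ generated by $\n_0(\si_R)_-$ and $\n_{-1/2}(\si_R)_-\oplus\g^0_{-1/2}(\si_R)$. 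The key point making this work is that $\n_0(\si_R)_+$ is a finite-dimensional nilpotent Lie algebra and $\n_{-1/2}(\si_R)'_+$ is an isotropic space of odd elements (so its elements square to scalars, in fact to $0$ up to the relation \eqref{Zhu4}), hence $\Zhu^+$ is a finite-dimensional algebra that is ``locally nilpotent'' in a suitable sense — more precisely, the augmentation ideal $\Zhu^+_{>0}$ spanned by the PBW monomials of positive $\ad h_0$-degree is nilpotent.

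Next I would argue as follows. Since $M$ is finite-dimensional and $L$ is central in $\Zhu$ (relation (1)), $L$ acts on $M$ preserving generalized eigenspaces; pick $\ell_0$ an eigenvalue of $L$ and replace $M$ by the (nonzero, $\Zhu$-invariant) generalized $\ell_0$-eigenspace — by irreducibility this forces $L$ to act as the scalar $\ell_0$. Now consider the action of the commutative subalgebra $\h^\natural$: again by finite-dimensionality it has a common generalized eigenvector; within that, since $\n_0(\si_R)_+$ is a nilpotent Lie algebra acting on the finite-dimensional space of vectors killed by $\Zhu^+_{>0}$-augmentation, Lie's theorem (in the super setting, using that $\n_0(\si_R)_+\subseteq\g_{\bar0}^\natural$ is an ordinary nilpotent Lie algebra) yields a nonzero vector $v$ with $a v=0$ for $a\in\n_0(\si_R)_+$ and $h v=\l(h)v$ for $h\in\h^\natural$. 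Then I would let the odd nilpotents $u\in\n_{-1/2}(\si_R)'_+$ act on the span of $v$ and its $\n_{-1/2}(\si_R)'_+$-descendants: since these are finitely many odd elements with $u^2$ a scalar (and in fact the products $u_{i_1}\cdots u_{i_m}$ with $m$ large vanish, as $\n_{-1/2}(\si_R)'_+$ is isotropic of dimension $m_+$), iterating produces a nonzero vector $v_{\l,\ell_0}$ satisfying all of \eqref{zh1}--\eqref{zh5}. Finally, $\Zhu\cdot v_{\l,\ell_0}$ is a nonzero submodule, so by irreducibility it equals $M$, and by Remark \ref{basisMhw} we may take $v_{\l,\ell_0}$ even or odd; hence $M$ is a highest weight module.

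The main obstacle I anticipate is the simultaneous triangularization step: one needs a single common ``highest weight vector'' annihilated by \emph{both} $\n_0(\si_R)_+$ and $\n_{-1/2}(\si_R)'_+$ while being a weight vector for $\h^\natural$ and $L$. The clean way is to observe that $\Zhu^+$ together with $\h^\natural$ and $L$ generate a finite-dimensional associative subalgebra $\B$ whose augmentation-type ideal $\B_{>0}$ (spanned by PBW monomials of strictly positive $\ad h_0$-weight) is a nilpotent two-sided ideal — this follows because bracketing raises $\ad h_0$-weight and the relation \eqref{Zhu4} is $\ad h_0$-weight-homogeneous of weight $0$, so products of sufficiently many positive-weight generators land in arbitrarily high weight, which is eventually empty. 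Then $M/\B_{>0}M$ is a nonzero module over the finite-dimensional algebra $\B/\B_{>0}$, which is a quotient of $U(\n_0(\si_R)_-)\otimes S(\h^\natural)\otimes\C[L]$-type commutative-ish data — no, more carefully it is generated by the image of $\h^\natural$, $L$ and the \emph{commuting-modulo-$\B_{>0}$} images, so it is commutative, and admits a common eigenvector; lifting gives the desired $v_{\l,\ell_0}$. I would double-check that no issue arises from $\e(\si_R)=1$: there $\g^0_{-1/2}(\si_R)$ sits on the \emph{lowering} side, so it does not affect the highest-weight conditions \eqref{zh5}, consistent with Remark \ref{basisMhw}.
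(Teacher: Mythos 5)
Your basic instinct — exploit the $\ad h_0$-grading so that "raising" operators are automatically killed on a top weight space — is the right one, and is exactly what the paper's proof does; but two of the specific claims you lean on are false as stated, and the route through them is both longer and, in one place, genuinely broken. First, the claim that $\B_{>0}$ (the span of PBW monomials of positive $\ad h_0$-weight inside your subalgebra $\B$) is a \emph{nilpotent} two-sided ideal is wrong: the weight is unbounded on $\Zhu$ (already $(a^+)^n$ for a single $a^+\in\n_0(\si_R)_+$ is a nonzero monomial of weight $n\cdot\mathrm{wt}(a^+)$ by the PBW-type basis $\mathcal B_{\l,\ell_0}$), so no finite power of $\B_{>0}$ is zero and "products of sufficiently many positive-weight generators land in arbitrarily high weight, which is eventually empty" does not hold in the algebra. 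What is true, and what you actually want, is that the \emph{action} of $\B_{>0}$ on a finite-dimensional module $M$ is nilpotent, because the $h_0$-eigenvalues on $M$ are bounded; but you have to say that, not the algebra-level statement. Second, even granting the nilpotent action, the final lifting step is a genuine gap: taking a common eigenvector $\bar v$ in $M/\B_{>0}M$ and lifting to $v\in M$ only gives $hv-\l(h)v\in\B_{>0}M$ and $\Zhu^+_{>0}\,v\in\B_{>0}M$, not the exact relations \eqref{zh2}, \eqref{zh3}, \eqref{zh5} required of a highest weight vector, so you have not produced $v_{\l,\ell_0}$.

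Both defects disappear if you run the argument on $M$ directly rather than on a quotient, which is what the paper does and which is also noticeably shorter. Since $M$ is finite-dimensional and $\g^\natural$ is semisimple, $\h^\natural$ acts \emph{semisimply} on $M$ (not merely with generalized eigenvectors — this uses the semisimplicity of $\g^\natural$, which your write-up never invokes); in particular $h_0$ is diagonalizable, and one takes $M_0$ to be the eigenspace for the maximal $h_0$-eigenvalue. Because $h_0\in\h^\natural$ is central in $\h^\natural$, $M_0$ is $\h^\natural$-stable, so one can pick a weight $\l$ with $(M_0)_{\l,\ell_0}\ne 0$, and by Remark \ref{basisMhw} a nonzero vector there of definite parity. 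The conditions \eqref{zh3} and \eqref{zh5} now hold \emph{for free}: every element of $\n_0(\si_R)_+$ and of $\n_{-1/2}(\si_R)'_+$ has strictly positive $\ad h_0$-weight, so it pushes $M_0$ into a higher eigenspace, which is zero by maximality. Irreducibility then gives $\Zhu\cdot v_{\l,\ell_0}=M$. No triangular decomposition of $\Zhu$, no Lie/Engel theorem, and no lifting argument is needed; the maximality of the $h_0$-eigenvalue does all the work in one step.
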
 
\begin{proof}  Since $M$ is irreducible and $L$ is central, $L$ acts by $\ell_0I_M$ for some $\ell_0\in\C$. Since $M$ is finite dimensional and $\g^\natural$ is a semisimple Lie algebra,  $\h^\natural$ acts semisimply on it. Let $M_0$ be the   eigenspace corresponding to 
 the maximal eigenvalue for the action of $h_0$. $M_0$ is clearly $\h^\natural$-stable, hence  we can choose a weight $\l$ such that $(M_0)_{\l,\ell_0}\ne \{0\}$. As in in Remark \ref{basisMhw}, we have $((M_0)_{\l,\ell_0})_{\bar i}\ne \{0\}$ for some $\bar i$, so we can choose a nonzero vector $v_{\lambda,\ell_0}$ in it. Clearly \eqref{zh3} and \eqref{zh5} hold. By construction \eqref{zh1} and \eqref{zh2} hold. Since $M$ is  irreducible, $v_{\lambda,\ell_0}$ 
generates $M$.
\end{proof}

A Verma module of highest weight $(\l,\ell_0)$ for $\Zhu$ is  a highest weight module $M^Z(\l,\ell_0)$ of highest weight $(\l,\ell_0)$ such that the set of linear generators given in \eqref{Vermabasis} is a basis of $M$.

\begin{lemma}For all pairs $(\l,\ell_0)$, a Verma module over $\Zhu$ of highest weight $(\l,\ell_0)$ exists.
\end{lemma}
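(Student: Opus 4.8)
The plan is to construct $M^Z(\lambda,\ell_0)$ as a quotient of the free module, or equivalently as an induced module over a suitable subalgebra, and then verify that the prescribed spanning set \eqref{Vermabasis} is actually linearly independent by invoking the PBW-type basis of $\Zhu$ already established via Theorem 3.25 of \cite{DK}. Concretely, I would first isolate the subalgebra $\Zhu^{\geq 0}$ of $\Zhu$ generated by $h(0)^{\mathbf k}$, the $v^+_i$, the $a^+_i$, and $(L-\ell_0)^{k_0}$; the basis $\{a_-^{\mathbf q_-}*v_-^{\mathbf p_-}*h(0)^{\mathbf k}*v_+^{\mathbf p_+}*a_+^{\mathbf q_+}*L^k\}$ shows that $\Zhu$ is free as a right $\Zhu^{\geq 0}$-module on the set $\{a_-^{\mathbf q_-}*v_-^{\mathbf p_-}\}$. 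Then one defines a one-dimensional (or, when $\epsilon(\sigma_R)=1$, possibly two-dimensional, but in any case cyclic) module $\CC_{\lambda,\ell_0}$ over $\Zhu^{\geq 0}$ by letting $h_i$ act by $\lambda(h_i)$, $L$ by $\ell_0$, and $\fn_0(\sigma_R)_+$ together with $\fn_{-1/2}(\sigma_R)'_+$ act by $0$; one must check this is well-defined, i.e.\ that these relations are consistent with the multiplication in $\Zhu^{\geq 0}$.

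Next I would set $M^Z(\lambda,\ell_0) := \Zhu \otimes_{\Zhu^{\geq 0}} \CC_{\lambda,\ell_0}$ and observe that by freeness this module has $\{a_-^{\mathbf q_-}*v_-^{\mathbf p_-}\otimes 1\}$ as a basis, that $v_{\lambda,\ell_0}:=1\otimes 1$ satisfies \eqref{zh1}--\eqref{zh5} by construction, and that $v_{\lambda,\ell_0}$ generates. The delicate point is that the relations \eqref{zh3} and \eqref{zh5} do not by themselves force $h_i$ and $L$ to act by scalars on the cyclic vector in the correct way unless we have the precise basis; but since we are using the basis $\mathcal B_{\lambda,\ell_0}$ from \eqref{BLell}, the change of variables $h_i \mapsto h_i - \lambda(h_i)$ and $L\mapsto L-\ell_0$ makes the ideal generated by $\{h_i-\lambda(h_i),\ L-\ell_0,\ \fn_0(\sigma_R)_+,\ \fn_{-1/2}(\sigma_R)'_+\}$ transparently have the desired codimension, and the quotient of $\Zhu$ by the left ideal generated by these elements is spanned exactly by \eqref{Vermabasis}.

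The main obstacle I expect is checking the consistency of the $\Zhu^{\geq 0}$-action on $\CC_{\lambda,\ell_0}$, i.e.\ that the subalgebra generated by the "non-negative" generators together with the scalars $h_i-\lambda(h_i)$ and $L-\ell_0$ really does act through the augmentation. This requires knowing that whenever one commutes two positive generators (or a positive generator past $h(\lambda)$), the result lands in the left ideal one is quotienting by — in other words, that $\fn_0(\sigma_R)_+ \oplus \fn_{-1/2}(\sigma_R)'_+$ together with $\h^\natural$ and $L$ generate a subalgebra and that the relation \eqref{Zhu4} (which mixes $v$'s and $a$'s) respects the filtration appropriately. Here one uses that $\fn_0(\sigma_R)_+$ is a subalgebra of $\g^\natural$, that $[\fn_0(\sigma_R)_+,\fn_{-1/2}(\sigma_R)'_+]\subseteq \fn_{-1/2}(\sigma_R)'_+$, and that the quadratic terms in \eqref{Zhu4} involving $v^+$'s, when $u,v\in\fn_{-1/2}(\sigma_R)'_+$, are accounted for by the ordering in $\mathcal B_{\lambda,\ell_0}$. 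Once consistency is in hand, freeness of $\Zhu$ over $\Zhu^{\geq 0}$ (a direct consequence of the PBW basis) gives both existence of the module and linear independence of \eqref{Vermabasis}, completing the proof.
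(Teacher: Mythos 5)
Your approach is genuinely different from the paper's, and it has a gap exactly where you yourself flag one.

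You propose inducing from a ``parabolic'' subalgebra $\Zhu^{\geq 0}$ that contains the odd positive generators $v^+_i$, making the highest weight vector $1\otimes 1$. For this you need a one-dimensional $\Zhu^{\geq 0}$-module on which the $v^+_i$, $a^+_i$, $h_i-\lambda(h_i)$ and $L-\ell_0$ all act by zero — and you correctly identify ``checking the consistency of the $\Zhu^{\geq 0}$-action'' as the crux. But you only sketch why this might hold and then write ``Once consistency is in hand\dots''; you never actually establish it. This is not a small omission: because of the quadratic relation \eqref{Zhu4}, the bracket $[u,v]$ of two odd generators $u,v\in\fn_{-1/2}(\sigma_R)'_+$ is a sum of terms $a^\a*a_\be+a_\be*a^\a$ ranging over \emph{all} of $\g^\natural$, not just $\mathfrak b^\natural$. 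Showing that these land in the right left ideal (equivalently, that they annihilate the would-be cyclic vector) requires a nontrivial weight/filtration argument, and it is also not immediate that the collection you call $\Zhu^{\geq 0}$ is closed under multiplication. So your construction, as written, does not prove existence.

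The paper sidesteps this entirely by inducing only from $U(\mathfrak b^\natural)\otimes\C[L]$ where $\mathfrak b^\natural=\h^\natural\oplus\fn_0(\sigma_R)_+$ is an honest Lie subalgebra of $\g^\natural$, so the one-dimensional representation $\C_{\l,\ell_0}$ is a character and is trivially well-defined. The odd generators $v^\pm_i$ are left \emph{free} in $\Zhu\otimes_{U(\mathfrak b^\natural)\otimes\C[L]}\C_{\l,\ell_0}$, and the Verma module is then \emph{not} the whole induced module but the submodule $\Zhu\cdot(v^+_1*\cdots*v^+_{m_+}\otimes 1)$. The vector $v^+_1*\cdots*v^+_{m_+}\otimes 1$ is automatically killed by each $v^+_i$ (by super-skew-symmetry inside the Clifford-like part of $\Zhu$) and is a highest weight vector because it realizes the maximal $h_0$-eigenvalue $(\l+2\rho_R)(h_0)$ in the induced module — a cheap weight argument that replaces the delicate consistency check. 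This also explains the $\rho_R$-shift: the Verma constructed this way has highest weight $(\l+2\rho_R,\ell_0)$, not $(\l,\ell_0)$, and one recovers the statement for arbitrary $(\l,\ell_0)$ by relabeling $\l\mapsto\l-2\rho_R$. Your proposal does not account for this shift, which is a symptom of the same unproven consistency claim.

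To repair your argument you would essentially need to redo the missing verification, which amounts to redoing much of what the paper's indirect construction already gives you for free; so while your route could probably be completed with additional work, the paper's trick is both shorter and cleaner.
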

\begin{proof}
Given the pair $(\l,\ell_0)$, let $\C_{\l,\ell_0}$ be the one dimensional  representation of $\h^\natural\oplus \C L\oplus \n_0(\si_r)_+$ given by
$$
(h+cL+ n).\,1=\l(h)+c\ell_0,\ h\in\h^\natural,\ n\in\n_0(\si_r)_+.
$$

Set $\mathfrak b^\natural=\h^\natural\oplus \n_0(\si_R)_+$ and
$$Ind(\C_{\l,\ell_0})=\Zhu\otimes_{U(\mathfrak b^\natural)\otimes \C[L]}\C_{\l,\ell_0}.
$$

For simplicity, consider $\C_{\l,\ell_0}$ as a purely even space. 
The $\Z_2$-gradings on $\C_{\l,\ell_0}$ and $\Zhu$ define a $\Z_2$-grading on $\Zhu\otimes\C_{\l,\ell_0}$ by $p(a\otimes m)=p(a)+p(m)$ and, since $U(\mathfrak b^\natural)\otimes \C[L]$ is even, hence graded, the $\Z_2$-grading pushes down to $Ind(\C_{\l,\ell_0})$ making the latter a $\Zhu$-module.

Since $\Zhu$ is free  as a right $U(\mathfrak b^\natural)\otimes \C[L]$-module, a basis of 
$Ind(\C_{\l,\ell_0})$ is given by 
\begin{equation}\label{Vermabasis+}
 \{a_-^{\mathbf q_-}*v_-^{\mathbf p_-}*v_+^{\mathbf p_+}\otimes 1\}.
\end{equation}
 
Set $v_{\l+2\rho_R,\ell_0}=v^+_{1}*\cdots *v^+_{m_+}\otimes 1$. Then
$$
M^Z(\l+2\rho_R,\ell_0)=\Zhu.\,v_{\l+2\rho_R,\ell_0}
$$
is a Verma module. Indeed, since the set \eqref{Vermabasis+} generates $Ind(\C_{\l,\ell_0})$, the eigenvalues of $h_0$ on $Ind(\C_{\l,\ell_0})$ are less or equal to $(\l+2\rho_R)(h_0)$. It follows that
$v_{\l+2\rho_R,\ell_0}$ is a highest weight vector of weight $(\l+2\rho_R,\ell_0)$. This implies that the set
$$
 \{a_-^{\mathbf q_-}*v_-^{\mathbf p_-}\cdot  v_{\l+2\rho_R,\ell_0}\}
$$
generates $M^Z(\l+2\rho_R,\ell_0)$ and, since \eqref{Vermabasis+} is a basis of $Ind(\C_{\l,\ell_0})$, it is linearly independent.\end{proof}

We now show that the modules $M^Z(\l,\ell_0)$ satisfy the usual universal property for highest weight modules. If $\mathbf m=(m_1,\ldots,m_t)\in \Z_+^t$, we set $|\mathbf m|=\sum_{i=1}^t m_i$.

\begin{lemma}
The annihilator of $v_{\l,\ell_0}\in M^Z(\l,\ell_0)$ in $\Zhu$ is 
$$
J_{\l,\ell_0}=span(a_-^{\mathbf q_-}*v_-^{\mathbf p_-}*h(\l)^{\mathbf k}*v_+^{\mathbf p_+}*a_+^{\mathbf q_+}*(L-\ell_0)^k\mid |\mathbf k|+|\mathbf q_+|+|\mathbf p_+|+k>0).
$$
In particular $J_{\l,\ell_0}$ is a left ideal and 
$$
M^Z(\l,\ell_0)\simeq \Zhu/J_{\l,\ell_0}.
$$
\end{lemma}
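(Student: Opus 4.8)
The plan is to identify the annihilator of the cyclic vector $v_{\l,\ell_0}$ directly using the PBW-type basis $\mathcal{B}_{\l,\ell_0}$ of $\Zhu$ from \eqref{BLell}. First I would observe that the defining relations \eqref{zh1}--\eqref{zh5} say precisely that $h(\l)^{\mathbf k}\cdot v_{\l,\ell_0}=0$ whenever $|\mathbf k|>0$ (since each factor $h_i-\l(h_i)$ kills $v_{\l,\ell_0}$), that $a_+^{\mathbf q_+}\cdot v_{\l,\ell_0}=0$ whenever $|\mathbf q_+|>0$ (as $\fn_0(\si_R)_+$ annihilates it), that $v_+^{\mathbf p_+}\cdot v_{\l,\ell_0}=0$ whenever $|\mathbf p_+|>0$ (as $\fn_{-1/2}(\si_R)'_+$ annihilates it), and that $(L-\ell_0)^k\cdot v_{\l,\ell_0}=0$ whenever $k>0$. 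Hence every element of the spanning set of $J_{\l,\ell_0}$ kills $v_{\l,\ell_0}$, giving the inclusion $J_{\l,\ell_0}\subseteq \mathrm{Ann}(v_{\l,\ell_0})$.

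For the reverse inclusion, I would use that $\mathcal{B}_{\l,\ell_0}$ is a basis of $\Zhu$: write an arbitrary $z\in\Zhu$ as a linear combination of the basis monomials $a_-^{\mathbf q_-}*v_-^{\mathbf p_-}*h(\l)^{\mathbf k}*v_+^{\mathbf p_+}*a_+^{\mathbf q_+}*(L-\ell_0)^{k}$, and split this combination into the part $z_0$ involving only monomials with $|\mathbf k|+|\mathbf q_+|+|\mathbf p_+|+k=0$ (i.e.\ monomials of the form $a_-^{\mathbf q_-}*v_-^{\mathbf p_-}$) and the remainder $z'\in J_{\l,\ell_0}$. Since $z'$ kills $v_{\l,\ell_0}$ by the first part, $z\cdot v_{\l,\ell_0}=z_0\cdot v_{\l,\ell_0}$; but the vectors $\{a_-^{\mathbf q_-}*v_-^{\mathbf p_-}\cdot v_{\l,\ell_0}\}$ form a basis of $M^Z(\l,\ell_0)$ by definition of the Verma module, so $z\cdot v_{\l,\ell_0}=0$ forces $z_0=0$, whence $z=z'\in J_{\l,\ell_0}$. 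This proves $\mathrm{Ann}(v_{\l,\ell_0})=J_{\l,\ell_0}$; in particular this set, being an annihilator of a vector, is automatically a left ideal, and the map $\Zhu\to M^Z(\l,\ell_0)$, $z\mapsto z\cdot v_{\l,\ell_0}$, is a surjective $\Zhu$-module homomorphism with kernel $J_{\l,\ell_0}$, yielding the isomorphism $M^Z(\l,\ell_0)\simeq\Zhu/J_{\l,\ell_0}$.

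The one point requiring a little care — and the place I expect the only genuine obstacle — is the normal-ordering/straightening step: when I rewrite a product $z\cdot v_{\l,\ell_0}$ using the relations, I am implicitly moving the factors $h(\l)^{\mathbf k}$, $v_+^{\mathbf p_+}$, $a_+^{\mathbf q_+}$, $(L-\ell_0)^k$ to the right so they can act on $v_{\l,\ell_0}$. This is exactly what the ordered basis $\mathcal{B}_{\l,\ell_0}$ encodes, so as long as one invokes that $\mathcal{B}_{\l,\ell_0}$ is a basis (established just before the statement, via Theorem 3.25 of \cite{DK}) the straightening is already done and no further commutator gymnastics are needed. I would therefore phrase the argument so that it rests entirely on: (i) the explicit action of the generators on $v_{\l,\ell_0}$ given by \eqref{zh1}--\eqref{zh5}, (ii) the basis $\mathcal{B}_{\l,\ell_0}$ of $\Zhu$, and (iii) the basis \eqref{Vermabasis} of $M^Z(\l,\ell_0)$, and avoid any appeal to a more refined structural description of $\Zhu$.
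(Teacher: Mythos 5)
Your proposal is correct and takes essentially the same approach as the paper. The paper's proof is terser, recording only the key combinatorial observation that the spanning set of $J_{\l,\ell_0}$ together with $\{a_-^{\mathbf q_-}*v_-^{\mathbf p_-}\}$ partitions the basis $\mathcal B_{\l,\ell_0}$ of $\Zhu$, but the two inclusions you spell out (each spanning element kills $v_{\l,\ell_0}$ by \eqref{zh1}--\eqref{zh5}; conversely any annihilating element must have vanishing ``$z_0$'' component because \eqref{Vermabasis} is a basis of $M^Z(\l,\ell_0)$) are exactly the intended argument.
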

\begin{proof}
The set $\{a_-^{\mathbf q_-}*v_-^{\mathbf p_-}\mid \mathbf q_-\in \Z_+^{n},\ \mathbf p_-\in\{0,1\}^{m_-}\}$ completes the set
$$
\{a_-^{\mathbf q_-}*v_-^{\mathbf p_-}*h(\l)^{\mathbf k}*v_+^{\mathbf p_+}*a_+^{\mathbf q_+}*(L-\ell_0)^k\mid |\mathbf k|+|\mathbf q_+|+|\mathbf p_+|+k>0\}
$$
to the  basis $\mathcal B_{\l,\ell_0}$ of $\Zhu$.
\end{proof}

\begin{cor}\label{unqueirr}
The Verma module $M^Z(\l,\ell_0)$ is the universal highest weight module over $\Zhu$ of highest weight $(\l,\ell_0)$. In particular its unique simple quotient $L^Z(\l,\ell_0)$ is the unique irreducible highest weight module of highest weight $(\l,\ell_0)$.
\end{cor}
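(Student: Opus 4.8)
The plan is to deduce Corollary~\ref{unqueirr} directly from the preceding lemma, which identifies $M^Z(\l,\ell_0)$ with the quotient $\Zhu/J_{\l,\ell_0}$ by the left ideal annihilating the highest weight vector. First I would spell out what "universal highest weight module" should mean here: given any $\Zhu$-module $N$ generated by a highest weight vector $w$ of highest weight $(\l,\ell_0)$ in the sense of \eqref{zh1}--\eqref{zh5}, there is a (necessarily unique, since $v_{\l,\ell_0}$ generates) $\Zhu$-module homomorphism $M^Z(\l,\ell_0)\to N$ sending $v_{\l,\ell_0}\mapsto w$. Because $M^Z(\l,\ell_0)=\Zhu\cdot v_{\l,\ell_0}$, the obvious candidate map is $a\cdot v_{\l,\ell_0}\mapsto a\cdot w$ for $a\in\Zhu$, and the content of the statement is exactly that this is well defined, i.e.\ that the annihilator $J_{\l,\ell_0}$ of $v_{\l,\ell_0}$ is contained in the annihilator of $w$.

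The key step is therefore to check that each spanning element $a_-^{\mathbf q_-}*v_-^{\mathbf p_-}*h(\l)^{\mathbf k}*v_+^{\mathbf p_+}*a_+^{\mathbf q_+}*(L-\ell_0)^{k}$ with $|\mathbf k|+|\mathbf q_+|+|\mathbf p_+|+k>0$ kills $w$. This is immediate from the highest weight conditions: $(L-\ell_0)w=0$ by \eqref{zh1}, $(h_i-\l(h_i))w=0$ by \eqref{zh2}, $a^+_i w=0$ by \eqref{zh3}, and $v^+_i w=0$ by \eqref{zh5}; reading the monomial from right to left, the first factor that acts nontrivially is one of these raising/Cartan-shifted operators, so the whole product annihilates $w$. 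Hence $J_{\l,\ell_0}$ is contained in $\mathrm{Ann}_{\Zhu}(w)$, and since by the lemma $M^Z(\l,\ell_0)\cong\Zhu/J_{\l,\ell_0}$, the map $a+J_{\l,\ell_0}\mapsto a\cdot w$ is a well-defined surjective $\Zhu$-module homomorphism onto $N=\Zhu\cdot w$. Uniqueness holds because the image of the cyclic generator is prescribed.

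For the second assertion I would argue as follows. A simple highest weight module of highest weight $(\l,\ell_0)$ is in particular a highest weight module, hence a quotient of $M^Z(\l,\ell_0)$ by the universal property just established; by the earlier lemma $M^Z(\l,\ell_0)$ has a unique maximal proper submodule, so it has a unique simple quotient $L^Z(\l,\ell_0)$, and any simple highest weight module of this highest weight must coincide with it. Conversely $L^Z(\l,\ell_0)$ genuinely is a highest weight module of highest weight $(\l,\ell_0)$: the image of $v_{\l,\ell_0}$ is nonzero (the maximal proper submodule, being proper and $\Z_2$-graded, meets the weight space $M_{\l,\ell_0}$ only inside $\C v^-_\gamma v_{\l,\ell_0}$ as in Remark~\ref{basisMhw}, so it cannot contain the cyclic vector), and it still satisfies \eqref{zh1}--\eqref{zh5} since these pass to quotients.

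I do not anticipate a serious obstacle here; the statement is essentially bookkeeping built on the explicit PBW-type basis $\mathcal B_{\l,\ell_0}$ and the annihilator computation of the previous lemma. The only point requiring mild care is the parity convention: one must invoke the normalization from Remark~\ref{basisMhw} that the highest weight vector may be taken homogeneous, so that "highest weight module" is unambiguous and the surjection $M^Z(\l,\ell_0)\to N$ respects the $\Z_2$-grading. With that in hand the proof is a direct application of the universal property.
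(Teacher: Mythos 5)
Your proof is correct and follows essentially the same route as the paper's (which compresses it into a single sentence: a highest weight module's cyclic vector is killed by $J_{\l,\ell_0}$, so the module is a quotient of $\Zhu/J_{\l,\ell_0}=M^Z(\l,\ell_0)$). You simply spell out the annihilation check term by term using \eqref{zh1}--\eqref{zh5} and add the routine deduction of the second assertion from the unique-maximal-submodule lemma; there is no substantive difference in approach.
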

\begin{proof}
If $M$ is a highest weight module of highest weight $(\l,\ell_0)$ and highest weight vector $v_{\l,\ell_0}$ then $J_{\l,\ell_0}\cdot v_{\l,\ell_0}=\{0\}$.
\end{proof}
Let $\phi$ be an almost compact conjugate linear involution of $\g$. 
Recall also that there is a conjugate linear anti-involution $\omega$ defined on generators by
$$\omega(L)=L,\ \omega(a)=-\phi(a),\ a\in\g^\natural,\ \omega(v)=\phi(v),\ v\in\g_{-1/2}.
$$

Fix a pair $(\l,\ell_0)$ with $\l$ purely imaginary (i. e. $\l(\phi(h))=-\overline{\l(h)}$) and $\ell_0$ real.
We now define an invariant Hermitian form on $M^Z(\l,\ell_0)$.

Let $p^\natural:\Zhu\to U(\g^\natural)\otimes \C[L]$ be the projection with respect to the decomposition
$$
\Zhu=span(a_-^{\mathbf q_-}*v_-^{\mathbf p_-}*h(0)^{\mathbf k}*v_+^{\mathbf p_+}*a_+^{\mathbf q_+}*L^k\!\mid |\mathbf p_-|+|\mathbf p_+|>0)\oplus U(\g^\natural)\otimes \C[L].
$$
Let $p^{\h^\natural}$ be the usual Harish-Chandra projection from $U(\g^\natural)$ to $U(\h^\natural)$.
Set 
$$p^Z=(p^{\h^\natural}\otimes I_{\C[L]})\circ p^\natural.
$$
Consider the pair $(\l,\ell_0)$ as an element of $(\h^\natural\oplus\C L)^*$ via $(\l,\ell_0)(h+L)=\l(h)+\ell_0$. View $U(\h^\natural)\otimes \C[L]$ as the polynomial algebra on $(\h^\natural\oplus\C L)^*$ and define a sesquilinear form 
$$
H:\Zhu\times \Zhu \to \C
$$
by setting
$$
H(a,b)=\overline{p^Z(\omega(b)a)(\l,\ell_0)}.
$$
This form is obviously $\omega$-invariant:
$$
H(a,bc)=\overline{p^Z(\omega(bc)a)(\l,\ell_0)}=\overline{p^Z(\omega(c)\omega(b)a)(\l,\ell_0)}=H(\omega(b)a,c).
$$
We now prove that the form $H(\cdot,\cdot)$ can be chosen to be  Hermitian: since the  eigenvalues of $ad(h_0)$ on $\g$ are real we can assume that $\phi(h_0)=-h_0$. With this assumption we have
$$
\phi(\n_0(\si_R)_+)=\n_0(\si_R)_-,\ \phi(\n_{-1/2}(\si_R)_+)=\n_{-1/2}(\si_R)'_-,\ \phi(\fg^0_{-1/2}(\sigma_R))=\fg^0_{-1/2}(\sigma_R), 
$$
hence we can choose the bases $\{v_i^\pm\}$ so that $\omega(v_i^+)=v^-_{m_--\epsilon(\si_R)+1-i}$ and, if $\epsilon(\si_R)=1$, $\omega(v^-_{m_-})=v^-_{m_-}$.
With this choice of bases it is clear that, if $\gamma\in S^0_{-1/2}$,
$$\omega(a_-^{\mathbf q_-}*v_-^{\mathbf p_-}*h(0)^{\mathbf k}*v_+^{\mathbf p_+}*a_+^{\mathbf q_+}*L^k)=\omega(a_+^{\mathbf q_+})*v_-^{\mathbf p'_+}*\omega(h(0)^{\mathbf k})*v_\gamma^{p_\gamma^-}*v_+^{\mathbf p'_-}*\omega(a^{\mathbf q_+})*L^k
$$
where if $\mathbf p\in\{0,1\}^{m_\pm}$ we let $\mathbf p'=(p_{m_+}, \ldots,p_1)$.
Since $[h,v^-_\gamma]=0$ we obtain
$$\omega(a_-^{\mathbf q_-}*v_-^{\mathbf p_-}*h(0)^{\mathbf k}*v_+^{\mathbf p_+}*a_+^{\mathbf q_+}*L^k)=\omega(a_+^{\mathbf q_+})*v_-^{\mathbf p'_+}*v_\gamma^{p_\gamma^-}*\omega(h(0)^{\mathbf k})*v_+^{\mathbf p'_-}*\omega(a_-^{\mathbf q_-})*L^k
$$
It follows that $p^\natural\circ\omega=\omega\circ p^\natural$. Similarly one has $\omega\circ p^{\h^\natural}=p^{\h^\natural}\circ \omega$ so 
$$
p^Z\circ \omega=\omega\circ p^Z.
$$
We are finally ready to compute
$$
H(b,a)=\overline{p^Z(\omega(a)b)(\l,\ell_0)}=\overline{p^Z(\omega(\omega(b)a))(\l,\ell_0)}=\overline{\omega(p^Z(\omega(b)a))(\l,\ell_0)},
$$
hence we need only to check that 
$$
\overline{\omega(p^Z(\omega(b)a))(\l,\ell_0)}=p^Z(\omega(b)a))(\l,\ell_0)=\overline{H(a,b)}.
$$
To check this last equality it is enough to prove that
$$
\overline{\omega(h(0)^{\mathbf k}L^k)(\l,L_0)}= (h(0)^{\mathbf k}L^k)(\l,L_0).
$$
Indeed, since $\l$ is purely imaginary and $\ell_0$ is real,
$$
\overline{\omega(h(0)^{\mathbf k}L^k)(\l,L_0)}= \overline{\l(\omega(h_1))^{k_1}\cdots \l(\omega(h_r))^{k_r} \ell_0^k}=\overline{\l(-\phi(h_1))^{k_1}\cdots \l(-\phi(h_r))^{k_r} \ell_0^k}.
$$
$$
=\overline{\overline{\l(h_1)}^{k_1}\cdots \overline{\l(h_r)}^{k_r} \ell_0^k}=\l(h_1)^{k_1}\cdots \l(h_r)^{k_r} \ell_0^k= (h(0)^{\mathbf k}L^k)(\l,L_0).
$$
We now show that $H(\cdot,\cdot)$ can be pushed down to define a Hermitan form on $M^Z(\l,\ell_0)$. We need to check that
$$
H(J_{\l,\ell_0},\Zhu)=H(\Zhu,J_{\l,\ell_0})=0.
$$
Since the form is Hermitan it is enough to show that 
$$
H(J_{\l,\ell_0},\Zhu)=0.
$$
Since $J_{\l,\ell_0}$ is a left ideal and $H(a,b)=\overline{p^Z(\omega(a)b)(\l,\ell_0)}$, it is enough to show that, if $a\in J_{\l,\ell_0}$, then $
p^Z(a)(\l,\ell_0)=0$. This is easily checked: if $|\mathbf q_-|+|\mathbf p_-|+|\mathbf q_+|+|\mathbf p_+|>0$ then
$$
p^Z(a_-^{\mathbf q_-}*v_-^{\mathbf p_-}*h(\l)^{\mathbf k}*v_+^{\mathbf p_+}*a_+^{\mathbf q_+}*(L-\ell_0)^k)=0.
$$
If $|\mathbf q_-|=|\mathbf p_-|=|\mathbf q_+|=|\mathbf p_+|=0$, then
$$
p^Z(a_-^{\mathbf q_-}*v_-^{\mathbf p_-}*h(\l)^{\mathbf k}*v_+^{\mathbf p_+}*a_+^{\mathbf q_+}*(L-\ell_0)^k)(\l,\ell_0)=h(\l)^{\mathbf k}(L-\ell_0)^k(\l,\ell_0)=0.
$$

We summarize the above discussion in the following result. Recall that a Hermitian form $H(\cdot,\cdot)$ on a superspace $V$ is called even if $H(V_{\bar0},V_{\bar1})=0$.

\begin{prop}\label{exhf}
If $\l$ is purely imaginary and $\ell_0\in \R$, then there is a unique even Hermitian form $H_{\l,\ell_0}(\cdot,\cdot)$ on $M^Z(\l,\ell_0)$ such that $H_{\l,\ell_0}(v_{\l,\ell_0},v_{\l,\ell_0})=1$. 

Furthermore, the radical of $H_{\l,\ell_0}(\cdot,\cdot)$ is the unique maximal $\Z_2$-graded proper submodule of $M^Z(\l,\ell_0)$, hence $H_{\l,\ell_0}(\cdot,\cdot)$ induces the unique nondegenerate invariant even Hermitian form on $L^Z(\l,\ell_0)$ such that $H_{\l,\ell_0}(v_{\l,\ell_0},v_{\l,\ell_0})=1$.
\end{prop}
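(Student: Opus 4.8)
The plan is to obtain $H_{\l,\ell_0}$ by descending the sesquilinear form $H$ on $\Zhu\times\Zhu$ constructed just above, and then to identify its radical. Most of the work is in fact already done before the statement: $H$ is $\omega$-invariant, it is Hermitian (this is where $\l$ purely imaginary and $\ell_0$ real are used), and $H(J_{\l,\ell_0},\Zhu)=H(\Zhu,J_{\l,\ell_0})=0$. Hence $H$ factors through $M^Z(\l,\ell_0)=\Zhu/J_{\l,\ell_0}$ and yields an $\omega$-invariant Hermitian form $H_{\l,\ell_0}$ on the module with $H_{\l,\ell_0}(v_{\l,\ell_0},v_{\l,\ell_0})=\overline{p^Z(1)(\l,\ell_0)}=1$. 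It then remains only to check evenness: since $p^Z$ takes values in the purely even algebra $U(\h^\natural)\otimes\C[L]$ and $\omega$ preserves parity, $H(a,b)=0$ whenever $p(a)\ne p(b)$, because an odd element of $\Zhu$ is a combination of PBW monomials $a_-^{\mathbf q_-}*v_-^{\mathbf p_-}*h(0)^{\mathbf k}*v_+^{\mathbf p_+}*a_+^{\mathbf q_+}*L^k$ with $|\mathbf p_-|+|\mathbf p_+|$ odd, hence positive, hence lying in $\ker p^\natural$. This establishes existence.

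For uniqueness I would first note that $\omega$-invariance, together with $\l$ purely imaginary and the normalization $\phi(h_0)=-h_0$ (so $\omega(h)=-\phi(h)$ on $\h^\natural$), forces any $\omega$-invariant Hermitian form on $M^Z(\l,\ell_0)$ to be orthogonal between distinct $(\h^\natural\oplus\C L)$-weight spaces. By Remark \ref{basisMhw}, the only weight space meeting $\C v_{\l,\ell_0}$ is $M_{\l,\ell_0}=\C v_{\l,\ell_0}\oplus\C v^-_\gamma v_{\l,\ell_0}$ (the second summand present only if $\epsilon(\si_R)=1$), and its two summands have opposite parity. Hence, once $H(v_{\l,\ell_0},v_{\l,\ell_0})=1$ is imposed, weight-orthogonality and evenness determine the functional $m\mapsto H(v_{\l,\ell_0},m)$ on all of $M^Z(\l,\ell_0)$: it returns the coefficient of $v_{\l,\ell_0}$ in the $M_{\l,\ell_0}$-component of $m$. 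Since $M^Z(\l,\ell_0)$ is generated by $v_{\l,\ell_0}$, $\omega$-invariance $H(Pv_{\l,\ell_0},Qv_{\l,\ell_0})=H(v_{\l,\ell_0},\omega(P)Q\,v_{\l,\ell_0})$ then determines $H$ everywhere, so $H=H_{\l,\ell_0}$.

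For the radical the claim is $\mathrm{rad}(H_{\l,\ell_0})=N$, where $N$ is the unique maximal $\Z_2$-graded proper submodule of $M^Z(\l,\ell_0)$ found earlier. One inclusion is easy: $\mathrm{rad}(H_{\l,\ell_0})$ is a submodule by $\omega$-invariance, it is $\Z_2$-graded because $H_{\l,\ell_0}$ is even, and it is proper because $H_{\l,\ell_0}(v_{\l,\ell_0},v_{\l,\ell_0})=1$; hence $\mathrm{rad}(H_{\l,\ell_0})\subseteq N$. For the reverse inclusion, take $m$ in a proper graded submodule $N'$ and $\eta\in\Zhu$; using Hermitian-ness and $\omega$-invariance, $H_{\l,\ell_0}(m,\eta v_{\l,\ell_0})=\overline{H_{\l,\ell_0}(v_{\l,\ell_0},\omega(\eta)m)}$, and $\omega(\eta)m\in N'$; weight-orthogonality leaves only the $(\l,\ell_0)$-weight component of $\omega(\eta)m$, which lies in $N'\cap M_{\l,\ell_0}\subseteq\C v^-_\gamma v_{\l,\ell_0}$ as in the proof of the lemma on maximal proper submodules above, and that pairs to $0$ with $v_{\l,\ell_0}$ by evenness; so $m\in\mathrm{rad}(H_{\l,\ell_0})$ and $N\subseteq\mathrm{rad}(H_{\l,\ell_0})$. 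Then $M^Z(\l,\ell_0)/\mathrm{rad}(H_{\l,\ell_0})=L^Z(\l,\ell_0)$, and $H_{\l,\ell_0}$ induces a nondegenerate invariant even Hermitian form on it with value $1$ on the highest weight vector; uniqueness on $L^Z(\l,\ell_0)$ follows by pulling any such form back along the projection $M^Z(\l,\ell_0)\to L^Z(\l,\ell_0)$ and applying the uniqueness already proved on $M^Z(\l,\ell_0)$.

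The step I expect to be the real difficulty is the treatment of the case $\epsilon(\si_R)=1$: one must be certain that $\omega$-invariance together with evenness really do suffice to dispose of the extra odd vector $v^-_\gamma v_{\l,\ell_0}$ in the top weight space, both in the uniqueness argument and in the inclusion $N'\cap M_{\l,\ell_0}\subseteq\C v^-_\gamma v_{\l,\ell_0}$, and that the weight-orthogonality everything rests on does follow cleanly from $\l$ being purely imaginary and the choice $\phi(h_0)=-h_0$. The remaining steps are essentially bookkeeping with the PBW basis $\mathcal B_{\l,\ell_0}$ of $\Zhu$ and with the left ideal $J_{\l,\ell_0}$.
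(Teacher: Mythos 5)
Your proof is correct and follows essentially the same strategy as the paper's: descend the explicit form $H$ on $\Zhu$ through $J_{\l,\ell_0}$, use evenness (via $p^Z$ vanishing on odd monomials) plus $\omega$-invariance and top-weight-space orthogonality for uniqueness, and identify the radical with the unique maximal graded proper submodule by the same inclusion argument. The one small variation is in the uniqueness step: the paper works with the difference $H-H'$ and checks directly that it vanishes on $M_{\l,\ell_0}\times M_{\l,\ell_0}$ (needing the observation that $\omega(v^-_\gamma)v^-_\gamma v_{\l,\ell_0}$ is a scalar multiple of $v_{\l,\ell_0}$), whereas you pin down the functional $H(v_{\l,\ell_0},\cdot)$ and propagate by invariance, which sidesteps that computation; both are valid and the ideas are the same.
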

\begin{proof}
The invariant Hermitian form $H(\cdot,\cdot)$ defined above has clearly the property that $H(1,1)=1$. Moreover, since $p^Z(a)=0$ if $a$ is odd, $H(\cdot,\cdot)$ is even. Suppose that $H'(\cdot,\cdot)$ is another even invariant Hermitian form on $M^Z(\l,\ell_0)$ such that $H'(v_{\l,\ell_0},v_{\l,\ell_0})=1$. Set  $(H-H')(\cdot,\cdot)=H(\cdot,\cdot)-H'(\cdot,\cdot)$. Clearly $(H-H')(\cdot,\cdot)$ is an invariant even Hermitian form. Moreover $(H-H')(v_{\l,\ell_0},v_{\l,\ell_0})=0$ and, since $(H-H')(\cdot,\cdot)$ is even,  
$$(H-H')(v_{\l,\ell_0},v_\gamma^-v_{\l,\ell_0})=(H-H')(v_\gamma^-v_{\l,\ell_0},v_{\l,\ell_0})=0.
$$
Observe that $\omega(v_\gamma^-)v_\gamma^-v_{\l,\ell_0}\in M^Z(\l,\ell_0)_{\l,\ell_0}$ and it has the same parity of $v_{\l,\ell_0}$. Since $M^Z(\l,\ell_0)_{\l,\ell_0}=span(v_\gamma^-v_{\l,\ell_0},v_{\l,\ell_0})$, it follows that $\omega(v_\gamma^-)v_\gamma^-v_{\l,\ell_0}=const.\,v_{\l,\ell_0}$. Thus
$$(H-H')(v_\gamma^-v_{\l,\ell_0},v_\gamma^-v_{\l,\ell_0})=(H-H')(\omega(v_\gamma^-)v_\gamma^-v_{\l,\ell_0},v_{\l,\ell_0})=\overline{const.}\,(H-H')(v_{\l,\ell_0},v_{\l,\ell_0})=0.
$$
The upshot is that $(H-H')(\cdot,\cdot)_{|M^Z(\l,\ell_0)_{\l,\ell_0}\times M^Z(\l,\ell_0)_{\l,\ell_0}}=0$. By invariance, if $(\l,\ell_0)\ne(\nu,\ell)$, 
$$(H-H')(M^Z(\l,\ell_0)_{\l,\ell_0},M^Z(\l,\ell_0)_{\nu,\ell})=0
$$  so we conclude that $(H-H')(M^Z(\l,\ell_0)_{\l,\ell_0},M^Z(\l,\ell_0))=0$. Since $M^Z(\l,\ell_0)_{\l,\ell_0}$ generates $M^Z(\l,\ell_0)$ we deduce $(H-H')(\cdot,\cdot)=0$ so $H'(\cdot,\cdot)=H(\cdot,\cdot)$.

It remains to prove that the radical of the form is the unique maximal $\Z_2$-graded proper submodule of $M^Z(\l,\ell_0)$. By invariance of the form, the radical is a submodule of $M^Z(\l,\ell_0)$ and, since $H(1,1)=1$, it is proper. If $N$ is a proper  $\Z_2$-graded submodule of $M^Z(\l,\ell_0)$, then $N\cap M^Z(\l,\ell_0)_{\l,\ell_0}\subset \C v_\gamma^-v_{\l,\ell_0}$. Since $H$ is even, $H(N,v_{\l,\ell_0})=0$ and, since $v_{\l,\ell_0}$ generates $M^Z(\l,\ell_0)_{\l,\ell_0}$, $N$ is in the radical of the form.
\end{proof}
\begin{remark} Universal minimal $W$-algebras $\Wu$ are defined for arbitrary simple basic finite-dimensional Lie superalgebras \cite{KRW}, \cite{KW1}, but in general the grading \eqref{grad} is not compatible with parity. The automorphism $\si_R$ in general is defined by 
$$\si_R(a)=(-1)^{2j}a,\,\text{ if $a\in\g_j$ in \eqref{grad}.}$$ 
Then the Zhu algebra $Zhu_R$ is still defined by the above commutation relations \cite{KMP}. On the other hand, it is isomorphic to  the finite $W$-algebra, associated to a minimal $sl_2$, studied in \cite{premet} for simple Lie algebras. Actually, this result holds for all quantum affine $W$-algebras \cite[Example 2.12]{DK}. Since $Zhu_R$ is independent of $k$, we obtain from \eqref{Zhu4} that in cases when $\g$ is a simple Lie algebra, Premet's constant in \cite{premet} is $c_0=-\half p(-h^\vee)$.
\end{remark}
\section{Ramond sector: necessary conditions for unitarity}\label{necessary}
 Let $\D$ and $\D^\natural$ be the sets of roots of $\g$ and $\g^\natural$ respectively. As in \cite[Table 1]{KMP1},  we choose a specific set  $\Pi$ of simple roots for $\g$. This set $\Pi$ has the property that, letting $\Pi_{odd}=\{\a\in\Pi\mid\a\text{ is odd}\}$, then $\Pi^\natural:=\Pi\setminus\Pi_{odd}$ is a set of simple roots for $\g^\natural$. We let $\Dp$ and $(\D^\natural)^+$ denote the sets of positive roots in $\D$ and $\D^\natural$ corresponding to $\Pi$ and $\Pi^\natural$ respectively. We also let $\xi$ be the highest weight (with respect to $\Pi^\natural$) of $\g_{1/2}$ as a $\g^\natural$-module. Since $\g_{-1/2}$ is isomorphic to $\g_{1/2}$ as a $\g^\natural$-module, we necessarily have that $\xi=-\a_{|\h^\natural}$  for all $\a\in\Pi_{odd}$.
 
Let  $\Da^{\tw}$ denote the set of roots of $\ga^{\tw}$;  explicitly
   $$\Da^{\tw}=\{s\d+\a\mid \a\in\D^\natural\cup \{\theta\},\,s\in\ZZ\}\cup   \{s\d+\a\mid \a\in\D_{1/2},\,s\in\half+\ZZ\}\cup \{s\d\mid s\in \pm \nat\}.$$
 We also set
 $
\Da^\natural=\{s\d+\a\mid \a\in\D^\natural\mid\,s\in\ZZ\}\cup \{s\d\mid s\in \pm \nat\}$.

We specialize to $\s=\s_R$ the triangular decomposition given in  \eqref{eq:2.6}, \eqref{eq:2.8}, and \eqref{eq:2.9}. Let  $\Da^{\tw}_+,\,\Pia$ be the corresponding sets of positive and simple roots and set $\Da_+^\natural=\Da^\natural\cap \Da^{\tw}_+$.

Let $\D_{i}=\{\a\in\D\mid\a(x)=i\}$ and $\overline \D_{i}=(\D_{i})_{|\h^\natural}$. 
Since $\ad f$ is an isomorphism of $\g^\natural$-modules between $\g_{1/2}$ and $\g_{-1/2}$ it follows that $\D_{\pm1/2}=\{\pm\half\theta+\eta\mid \eta\in\overline{\D}_{1/2}\}$. Therefore $\overline{\D}_{1/2}=\overline{\D}_{-1/2}=-\overline{\D}_{1/2}$.

The set $\overline\D_{1/2}$ is naturally  partially ordered setting $\beta\prec \gamma$ if and only if  $\gamma-\beta$ is a sum of roots in $\Pi^\natural$.

Recall from Section \ref{Setup} that the   triangular decomposition \eqref{eq:2.6} depends on the choice of  a  regular element $h_0\in\h^\natural$. We choose $h_0$ such that $\a(h_0)>0$ for all $\a\in\Pi^\natural$, so that $\Da_+^\natural$ is the set of positive roots in $\Da^\natural$ whose corresponding set of simple roots is $\Pia^\natural:=\Pi^\natural\cup\{\d-\theta_i\mid 1\leq i\leq s\}$.

 Set 
\begin{equation}\label{Deltaplusonehalf}
\overline\D^+_{1/2}=\{\eta\in\overline\D_{1/2}\mid \eta(h_0)>0\}.
\end{equation}
Below we give an explicit description of the set $\overline{\D}_{1/2}$ in all cases. A direct check using this  data  shows that three cases occur:
\begin{enumerate}
\item Case 1: $\g=psl(2|2)$. In this case $\overline\D_{1/2}=\{\xi,\xi-\theta_1=-\xi\}$. It follows that $\overline{\D}^+_{1/2}=\{\xi\}$. Set $\eta_{\min}=\xi$. It is obvious that $\eta_{\min}$ is the unique minimal element of $\overline\D^+_{1/2}$.
\item Case 2: $0\in\overline\D_{1/2}$. In this case $\prec$ is a total order. It follows that $\overline\D_{1/2}^+=\{\eta\in\overline\D_{1/2}\mid \eta\succ0\}$. Set $\eta_{\min}$ to be its minimum.
\item Case 3: $\g\ne psl(2|2)$ and $0\notin\overline\D_{1/2}$. In this case   the Hasse diagram  of the poset $\overline{\D}_{1/2}$ is as follows:
$$
{\xymatrixcolsep{.3pc}\xymatrixrowsep{.3pc}
\xymatrix@R+3pt@C+2pt{ 
&&&{\bullet}\ar@{-}[d]\\
&&&{\bullet}\ar@{.}[d]\\
&&&{\bullet}\ar@{-}[dr]\ar@{-}[dl]\\ 
&&{\bullet}\ar@{-}[dr]&&{\bullet}\ar@{-}[dl]\\  
&&&{\bullet}\ar@{.}[d]\\ 
&&&{\bullet}\ar@{-}[d]\\ 
&&&{\bullet}
}}
$$
If $\eta_{\min}$ is a minimal element of  $\overline{\D}_{1/2}^+$, then $\overline{\D}_{1/2}^+\supset\{\eta\in\overline{\D}_{1/2}^+\mid \eta\succeq\eta_{\min}\}$. Let $\eta_1,\eta_2$ be the two non comparable elements in $\overline\D_{1/2}$.
 The map $\eta\mapsto -\eta$ is the unique order reversing involution without fixed points of the diagram. It follows that $\eta_1=-\eta_2$, hence $\{\eta_1,\eta_2\}\cap\overline\D_{1/2}^+$ has exactly one element. It is then clear that $\eta_{\min}$ is the unique element of $\{\eta_1,\eta_2\}\cap\overline\D_{1/2}^+$ (hence $\overline\D_{1/2}^+$ has a unique minimal element) and that $\overline{\D}_{1/2}^+=\{\eta\in\overline{\D}_{1/2}^+\mid \eta\succeq\eta_{\min}\}$.

\end{enumerate}  
 
The outcome of this discussion is that, if $\g$ is of type $spo(2|2r), F(4), D(2,1;a)$,  then there are two choices of $\Da^{\tw}_+$; one for each choice of $\eta_{\min}$. In all the other cases there is only one choice for $\Da^{\tw}_+$. Remarkably, in all cases $\overline\D_{1/2}^+$ has a unique minimal element $\eta_{\min}$.

We now describe explicitly case by case, for each choice of $\Da_+^{\tw}$,  the set $\overline{\D}_{1/2}$, the set $\overline{\D}_{1/2}^+$ with its minimal element $\eta_{\min}$, and the set of simple roots $\Pia$.

\bigskip
\noindent$\mathbf{\g=psl(2|2)}$. In this case $\overline{\D}_{1/2}=\{\xi,-\xi\}$ with $\xi=\tfrac{\d_1-\d_2}{2}$, $\overline{\D}_{1/2}^+=\{\xi\}$, $\eta_{\min}=\xi$ and  
$$
\Pia=\{\d-(\e_1-\e_2),-\half \d+\d_1-\e_2,\d-(\d_1-\d_2),-\half\d+\e_1-\d_2\}.
$$
The set $\Pia$ is actually a set of simple roots for $\ga^{\,\tw}$ and  the two simple isotropic roots are linearly independent (their restrictions to $\ha$ are equal). 

\medskip
\noindent$\g=\mathbf{spo(2|3)}$. In this case  $\overline\D_{1/2}=\{\xi, 0,-\xi\}$ with $\xi=\e_1$, $\overline\D^+_{1/2}=\{\xi\}$, $\eta_{\min}=\xi$ and
$$
\Pia=\{-\frac{\d}{2}+\d_1+\e_1
   ,\frac{\d}{2}-\d_1,\frac{\d}{2}
   +\d_1-\e_1\}.
   $$
$\mathbf{\g=spo(2|2r), r>2}$. In this case $\overline\D_{1/2}=\{\pm\e_i\mid i=1,\ldots,r\}$ with $\xi=\e_1$. There are two choices for $\overline\D^+_{1/2}$: either  $\overline\D^+_{1/2}=\{\e_i\mid i=1,\ldots,r\}$ or $\overline\D^+_{1/2}=\{\e_i\mid i=1,\ldots,r-1\}\cup\{-\e_r\}$.

For the first choice  $\eta_{\min}=\e_r$ and 
$$ 
\Pia=\{\d-(\e_1+\e_2),\e_1-\e_2,\ldots,\e_{r-2}-\e_{r-1},\e_{r-1}-\e_r, -\half\d+\d_1+\e_r,\d-2\d_1\}.
$$

For the second choice  $\eta_{\min}=-\e_r$ and 
$$ 
\Pia=\{\d-(\e_1+\e_2),\e_1-\e_2,\ldots,\e_{r-2}-\e_{r-1},\e_{r-1}+\e_r, -\half\d+\d_1-\e_r,\d-2\d_1\}.
$$
$\mathbf{\g=spo(2|2r+1), r>1}$. In this case $\overline\D_{1/2}=\{\pm\e_i\mid i=1,\ldots,r\}\cup\{0\}$,  $\xi=\e_1$, $\overline\D^+_{1/2}=\{\e_i\mid i=1,\ldots,r\}$, $\eta_{\min}=\e_r$ and 
$$
\Pia=\{\d-(\e_1+\e_2),\e_1-\e_2,\ldots,\e_{r-2}-\e_{r-1},\e_{r-1}-\e_r, -\half\d+\d_1+\e_r,\half\d-\d_1\}.
$$
$\bf{\g=D(2,1;a).}$ In this case $\overline\D_{1/2}=\{\pm\e_2\pm\e_3\}$ with $\xi=\e_2+\e_3$. There are two choices for $\overline\D^+_{1/2}$: either 
 $\overline\D^+_{1/2}=\{\e_2+\e_3,\e_2-\e_3\}
 $ or 
 $\overline\D^+_{1/2}=\{\e_2+\e_3,-\e_2+\e_3\}
 $.

For the first choice  $\eta_{\min}=\e_2-\e_3$ and
$$
 \Pia=\{-\half\d+\e_1+\e_2-\e_3,2\e_3,\d-2\e_2,\d-2\e_1\}.
 $$
 
For the second  choice  $\eta_{\min}=-\e_2+\e_3$ and
$$
 \Pia=\{-\half\d+\e_1-\e_2+\e_3,2\e_2,\d-2\e_3,\d-2\e_1\}.
 $$
$\mathbf{\g=F(4)}$.  In this case $\overline\D_{1/2}=\{\half(\pm\e_1\pm\e_2\pm\e_3)\}$ with $\xi=\half(\e_1+\e_2+\e_3)$. There are two choices for $\overline\D^+_{1/2}$: either 
 $$\overline\D^+_{1/2}=\{\half(\e_1+\e_2+\e_3),\half(\e_1+\e_2-\e_3),\half(\e_1-\e_2+\e_3),\half(-\e_1+\e_2+\e_3)\}
 $$ or 
 $$\overline\D^+_{1/2}=\{\half(\e_1+\e_2+\e_3),\half(\e_1+\e_2-\e_3),\half(\e_1-\e_2+\e_3),\half(\e_1-\e_2-\e_3)\}.
 $$
 
For the first choice   $\eta_{\min}=\half(-\e_1+\e_2+\e_3)$ and 
 $$
 \Pia=\{\d-(\e_1+\e_2),\e_1-\e_2,\e_2-\e_3,-\half(\d-\d_1+\e_1-\e_2-\e_3),\d-\d_1\}.
 $$
  
For the second  choice  $\eta_{\min}=\half(\e_1-\e_2-\e_3)$ and 
 $$
 \Pia=\{\d-(\e_1+\e_2),\e_2-\e_3,\e_3,-\half(\d-\d_1-\e_1+\e_2+\e_3),\d-\d_1\}.
 $$
$\mathbf{\g=G(3)}$.   In this case $\overline\D_{1/2}=\{\pm\e_1,\pm\e_2,\pm(\e_2+\e_1)\}\cup\{0\}$, $\xi=\e_2+\e_1$,
 $\overline\D^+_{1/2}=\{\e_1,\e_2,\e_2+\e_1\}$,  $\eta_{\min}=\e_1$ and 
$$
\Pia=\{
-\frac{\d}{2}+\delta_1+\e_1,
 \frac{\d}{2}-\delta_1 ,
 \e_2-\e_1 ,
 \d-\e_1-2 \e_2\}.
 $$

 \bigskip
Let $M$ be a $\sigma_R$-twisted $W^k_{\min}(\g)$-module.  If $v\in\g_{-1/2}$, then
$$Y^M(G^{\{v\}}, z)=\sum_{m\in  \tfrac{1}{2}+\ZZ} G^{\{v\},\tw}_{(m)}z^{-m-1}=\sum_{m\in \mathbb Z}G^{\{v\},\tw}_{m}z^{-m-3/2}.$$
In this section and the following ones we will write $G^{\{v\},\tw}_{m}=G^{\{v\}}_{m}$,  $J^{\{a\},\tw}_{m}=J^{\{a\}}_{m}$ for short.
 
Next, we would like to provide a more precise description of the action of $G_0^{\{v\}}$ on the highest weight vector of a highest weight module $M$.
 For computing it, we use  a {\it good choice} $\{X_\a\mid \a\in\D\}$ of root vectors as in \cite{KMP1}, and choose as a basis of $\g_{-1/2}$
the vectors 
\begin{equation}\label{utris}u_\a=X_\a+\sqrt{-1} N_{-\theta,\a}X_{\a-\theta},\quad \a\in\Dp,\,\text{$\a$ odd}.\end{equation}
Here $N_{\a,\beta}$ are structure constants w.r.t. $\{X_\a\}$. We observe that we can choose $u_\gamma=X_\gamma$ for $\gamma\in \D^\natural$, so that 
$u^\gamma=X_{-\gamma}$. We now check that the $N_{\a,\beta}$  are the structure constants w.r.t $\{u_\a\}$. Compute, for $\gamma\in\D^\natural$ and $\a$ odd positive,
\begin{align*}
[X_\gamma, u_{\a}]&=[X_\gamma, X_{\a}+\sqrt{-1} N_{-\theta,\a}X_{\a-\theta}]\\
&=N_{\gamma,\a} X_{\gamma+\a}+\sqrt{-1}N_{-\theta,\a}N_{\gamma,\a-\theta}X_{\gamma+\a-\theta}.\end{align*}
Since this vector lies in $\g_{-1/2}$, it is a linear combination of the $u_\a$'s, and this forces 
\begin{equation}\label{r2}[X_\gamma, u_{\a}]=N_{\gamma,\a} u_{\gamma+\a}.\end{equation} Substituting  $u_{\gamma+\a}$ by its expression \eqref{utris}, we deduce that 
\begin{equation}\label{r11}
N_{-\theta,\a}N_{\gamma,\a-\theta}=N_{-\theta,\gamma+\a}N_{\gamma,\a}.\end{equation}

Let $\phi$ be an almost compact conjugate linear involution of $\g$ \cite[Definition 1.1]{KMP1}.
\begin{lemma}\label{complication} For $\gamma\in \D^\natural$, $\a$ odd positive, set $c_{\gamma,\a}=N_{\gamma,-\a}N_{\a,-\gamma}$. Then 
\begin{equation}\label{dacc}\langle[u_\gamma,\phi(u_\a)],[u_\a,u^\gamma]\rangle=
\langle\phi(u_\a),u_\a\rangle c_{\gamma,\a}.\end{equation}
\end{lemma}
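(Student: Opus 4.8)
The plan is to move the adjoint action of $u_\gamma=X_\gamma$ across the symmetric form $\langle\,\cdot\,,\,\cdot\,\rangle$, turning the left-hand side of \eqref{dacc} into a scalar multiple of $\langle\phi(u_\a),u_\a\rangle$, and then to identify that scalar with $c_{\gamma,\a}$ by elementary structure-constant manipulations. From $\phi$ I will only use that it fixes $x$, hence preserves each $\ad x$-eigenspace of $\g$; in particular $\phi(u_\a)$ again lies in the odd part $\g_{1/2}\oplus\g_{-1/2}$, so all the brackets and pairings below are defined.

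Two preliminary facts do most of the work. First, extending $\langle a,b\rangle=(e|[a,b])$ of \eqref{eq:2.2} to all of $\g_{1/2}\oplus\g_{-1/2}$ by the same formula, this bilinear form is $\g^\natural$-invariant: for $c\in\g^\natural$ and odd $a,b$ the super Jacobi identity and invariance of $(\,\cdot\,|\,\cdot\,)$ give $\langle[c,a],b\rangle+\langle a,[c,b]\rangle=(e|[c,[a,b]])=([e,c]|[a,b])=0$, because $[e,\g^\natural]=0$. Second, since $u^\gamma=X_{-\gamma}$ and $-\gamma\in\D^\natural$, formula \eqref{r2} gives $[u_\a,u^\gamma]=-[X_{-\gamma},u_\a]=-N_{-\gamma,\a}\,u_{\a-\gamma}$; when $\a-\gamma$ is a root of $\g$ it satisfies $(\a-\gamma)(x)=\tfrac12$, hence lies in $\D_{1/2}$, so $u_{\a-\gamma}$ is one of the vectors \eqref{utris} (and if $\a-\gamma$ is not a root, then $N_{-\gamma,\a}$ and $N_{\a,-\gamma}$ both vanish and \eqref{dacc} is trivial). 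Feeding these into the left-hand side, using $u_\gamma=X_\gamma$ and a further application of \eqref{r2} in the form $[X_\gamma,u_{\a-\gamma}]=N_{\gamma,\a-\gamma}u_\a$, I expect to obtain
\[
\langle[u_\gamma,\phi(u_\a)],[u_\a,u^\gamma]\rangle
=-\langle\phi(u_\a),[X_\gamma,[u_\a,u^\gamma]]\rangle
=N_{-\gamma,\a}\,N_{\gamma,\a-\gamma}\,\langle\phi(u_\a),u_\a\rangle .
\]

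There then remains the purely combinatorial identity $N_{-\gamma,\a}\,N_{\gamma,\a-\gamma}=N_{\gamma,-\a}\,N_{\a,-\gamma}=c_{\gamma,\a}$. Skew-symmetry of the bracket (with $\gamma$ even and $\a$ odd) rewrites the right-hand side as $N_{-\a,\gamma}\,N_{-\gamma,\a}$, so it suffices to prove $N_{\gamma,\a-\gamma}=N_{-\a,\gamma}$. I would obtain this by applying invariance of $(\,\cdot\,|\,\cdot\,)$ to the root vectors $X_{-\a},X_\gamma,X_{\a-\gamma}$, whose weights sum to zero, together with the skew-symmetry of $(\,\cdot\,|\,\cdot\,)$ on odd elements, which should yield
\[
N_{\gamma,\a-\gamma}\,(X_\a|X_{-\a})=N_{-\a,\gamma}\,(X_{\a-\gamma}|X_{-(\a-\gamma)}) .
\]

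The step I expect to be the real content is the normalization $(X_\a|X_{-\a})=(X_{\a-\gamma}|X_{-(\a-\gamma)})$. Since $\a$ and $\a-\gamma$ differ by the $\g^\natural$-root $\gamma$, they lie in a common $\g^\natural$-submodule of $\g_{1/2}$, and for the good choice of root vectors of \cite{KMP1} the pairing $(X_\beta|X_{-\beta})$ of dual root spaces takes a common value on such a submodule; granting this, $N_{\gamma,\a-\gamma}=N_{-\a,\gamma}$, and combining with the first displayed identity proves \eqref{dacc}. If a self-contained argument is preferred, this last equality can instead be verified directly, case by case, from the explicit descriptions of $\D_{1/2}$ and of $\g_{1/2}$ as a $\g^\natural$-module, which is small enough in each relevant $\g$ to make the check immediate.
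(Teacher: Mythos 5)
Your strategy is genuinely different from the paper's. The paper substitutes the explicit formulas $\phi(u_\a)=-N_{-\theta,\a}u_{\theta-\a}$ and $\langle u_\a,u_\beta\rangle=-(N_{-\theta,\a}+N_{-\theta,\beta})\d_{\theta-\a,\beta}$ from \cite{KMP1} directly into the left-hand side of \eqref{dacc}, then uses relation \eqref{r11} together with the $\theta$-shift identities $N_{\gamma,-\a}=N_{\gamma,\theta-\a}$ and $N_{-\gamma,\a}=N_{-\gamma,\a-\theta}$, which it derives from the normalization $[X_{-\theta},[X_\theta,X_{-\a}]]=X_{-\a}$ of the good choice. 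You instead move $\ad X_\gamma$ across the $\g^\natural$-invariant form and never touch $\theta$ at all. Your bookkeeping is correct: the left-hand side does collapse to $N_{-\gamma,\a}N_{\gamma,\a-\gamma}\langle\phi(u_\a),u_\a\rangle$, the vanishing cases are handled properly, and the reduction to the single scalar identity $N_{\gamma,\a-\gamma}=N_{-\a,\gamma}$ (equivalently $N_{-\gamma,\a}N_{\gamma,\a-\gamma}=c_{\gamma,\a}$) is right.

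The gap is exactly where you flag it: the normalization
$$(X_\a\,|\,X_{-\a})=(X_{\a-\gamma}\,|\,X_{-(\a-\gamma)})$$
for $\gamma\in\D^\natural$ and $\a,\a-\gamma$ positive odd. This is not a formal consequence of $\g^\natural$-invariance of $(\,\cdot\,|\,\cdot\,)$ --- a rescaling $X_\a\mapsto cX_\a$, $X_{-\a}\mapsto c'X_{-\a}$ with $cc'\ne1$ destroys it while preserving every structure constant you otherwise use --- but a property of the specific \emph{good choice} of root vectors from \cite{KMP1}. The present paper establishes a \emph{different} normalization property of that choice (the $[X_{-\theta},[X_\theta,X_{-\a}]]$ relation underlying the $\theta$-shift identities) and nowhere states yours. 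So as written your argument rests on an unproved premise, and ``verify it case by case'' is a plan, not a proof. To close the argument you should either (i) quote the precise normalization statement from \cite{KMP1} and show it entails your identity, or (ii) derive $N_{\gamma,\a-\gamma}=N_{-\a,\gamma}$ directly from the $\theta$-shift identities that the paper does prove, or (iii) fall back to the paper's direct substitution. Until one of these is done, the proof is incomplete.
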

\begin{proof} 
To compute \eqref{dacc} we use the following formulas
$$ 
\phi(u_{\a})=-N_{-\theta,\a}u_{\theta-\a},\ \langle u_\a,u_\beta\rangle=-(N_{-\theta,\a}+N_{-\theta,\beta})\d_{\theta-\a,\beta}.
$$
The first one is given in Lemma 5.3 of \cite{KMP1} while the second is  (5.12) of \cite{KMP1}.
In particular
 we have
\begin{equation}\label{46}
\langle \phi(u_{\a}),u_{\a}\rangle=-N_{-\theta,\a}\langle u_{\theta-\a},u_{\a}\rangle=N_{-\theta,\a}(N_{-\theta,\theta-\a}+N_{-\theta,\a}).
\end{equation}
It follows that
\begin{align}
\langle[u_\gamma,\phi(u_\a)],[u_\a,u^\gamma]\rangle&=
\langle[X_\gamma,\phi(u_{\a})],[u_{\a},X_{-\gamma}]\rangle\notag\\
&=\langle[X_\gamma,-N_{-\theta,\a}u_{\theta-\a}],[u_{\a},X_{-\gamma}]\rangle\notag\\
&=N_{-\theta,\a} N_{\gamma,\theta-\a}N_{-\gamma,\a}\langle  u_{\gamma+\theta-\a}, u_{-\gamma+\a}\rangle\notag\\
&=-(N_{-\theta,\gamma+\theta-\a}+N_{-\theta,-\gamma+\a})N_{-\theta,\a} N_{\gamma,\theta-\a}N_{-\gamma,\a}.\label{referenza}
\end{align}
Using \eqref{r11} for $\gamma$ and $-\gamma$, formula \eqref{referenza} becomes
\begin{align}
\langle[u_\gamma,\phi(u_\a)],[u_\a,u^\gamma]\rangle&=-N_{-\theta,\a}^2N_{-\gamma,\a-\theta}N_{\gamma,\theta-\a}-N_{-\theta,\a}N_{-\theta,\theta-\a}N_{\gamma,-\a}N_{-\gamma,\a}\\\label{r44}
&=N_{-\theta,\a}^2c_{-\gamma,\theta-\a}+N_{-\theta,\a}N_{-\theta,\theta-\a}c_{\gamma,\a}.
\end{align}
We want to prove that $N_{-\gamma,\a-\theta}N_{\gamma,\theta-\a}=N_{\gamma,-\a}N_{-\gamma,\a}$. 
Observe that
$$
[X_{-\theta},[X_{\theta},X_{-\a}]]=X_{-\a}
$$
hence
$$
[X_\gamma,[X_{-\theta},[X_{\theta},X_{-\a}]]]=N_{\gamma,-\a}X_{-\a}.$$
On the other hand
$$
[X_\gamma,[X_{\theta},X_{-\a}]]=N_{\gamma,\theta-\a}[X_{\theta},X_{-\a}]
$$
so
$$ 
[X_\gamma,[X_{-\theta},[X_{\theta},X_{-\a}]]]=N_{\gamma,\theta-\a}[X_{-\theta},[X_{\theta},X_{-\a}]]=N_{\gamma,\theta-\a}X_{-\a}.
$$
thus 
$$
N_{\gamma,-\a}=N_{\gamma,\theta-\a}.
$$
Similarly one has
$$
N_{-\gamma,\a}=N_{-\gamma,\a-\theta}.
$$

This implies $c_{-\gamma,\theta-\a}=c_{\gamma,\a}$; \eqref{r44} now reads
\begin{align}
\langle[u_\gamma,\phi(u_\a)],[u_\a,u^\gamma]\rangle&=c_{\gamma,\a}(N_{-\theta,\a}^2+N_{-\theta,\a}N_{-\theta,\theta-\a}).\label{r45}
\end{align}
Substituting \eqref{46} into \eqref{r45} yields 	\eqref{dacc}.
\end{proof}
Let  $v$ be a weight vector of the $\g^\natural$-module $\g_{-1/2}$ of weight $\eta$.  Observe that $\eta=\a_{|\h^\natural}$, with $\a=\half\theta+\eta$, a positive odd root. If $\gamma\in \D^\natural$, set  $c_{\gamma,\eta}=c_{\gamma,\a}$.

\begin{lemma}\label{cgammaeta}If $\eta\ne0$ then
$$
c_{\gamma,\eta}=\begin{cases}
(\gamma|\eta)&\text{if $(\gamma|\eta)\le0$}\\
0&\text{otherwise}
\end{cases}.
$$
If $\eta=0$ then 
$$
c_{\gamma,\eta}=\begin{cases}
-\half&\text{if $\gamma$ is a root from $\D^\natural$ of minimal length,}\\
0&\text{otherwise.}
\end{cases}
$$
\end{lemma}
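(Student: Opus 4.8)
The strategy is to reduce $c_{\gamma,\eta}=N_{\gamma,-\a}N_{\a,-\gamma}$, where $\a=\half\theta+\eta$, to a computation inside the $\mathfrak{sl}_2$-subalgebra of $\g^\natural$ spanned by $X_{\pm\gamma}$ and $[X_\gamma,X_{-\gamma}]$ acting on $\g_{-1/2}$, and then to read off the numerical value from the explicit description of $\overline\D_{1/2}$ given above. First I would observe that, since $\gamma\in\D^\natural$ has $\ad x$--eigenvalue zero, the elements $\a-\gamma$ and $\gamma-\a$ either are both roots of $\g$ or are both non--roots, and by the identity $\D_{\pm1/2}=\{\pm\half\theta+\mu\mid\mu\in\overline\D_{1/2}\}$ this occurs exactly when $\eta-\gamma\in\overline\D_{1/2}$. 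Since in all our cases the weight spaces of $\g_{\pm1/2}$ are at most one--dimensional, the two structure constants $N_{\gamma,-\a}$ and $N_{\a,-\gamma}$ are simultaneously nonzero precisely in that case; hence $c_{\gamma,\eta}=0$ whenever $\eta-\gamma\notin\overline\D_{1/2}$, and for $\eta=0$ this says $c_{\gamma,0}=0$ unless $\gamma$ is itself a weight of $\g_{1/2}$, i.e. $\gamma\in\overline\D_{1/2}$.

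When $\eta-\gamma\in\overline\D_{1/2}$ I would evaluate $c_{\gamma,\eta}$ by the Jacobi--identity device already used in the proof of Lemma~\ref{complication}: applying $\ad X_{-\gamma}$ to $[X_\gamma,X_{-\a}]$ and using the super--Jacobi identity, together with the fact that $[X_\gamma,X_{-\gamma}]$ is a scalar multiple of the element $t_\gamma\in\h^\natural$ representing $\gamma$ under the form (so that $\ad t_\gamma$ acts on the weight--$\mu$ space of $\g_{-1/2}$ by $(\gamma|\mu)$), one gets an identity expressing $N_{\gamma,-\a}N_{\a,-\gamma}$ in terms of $(\gamma|\eta)$ and of the analogous product attached to the weight $\eta-\gamma$. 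By the explicit lists above, for $\eta\ne0$ every $\gamma$--string inside $\overline\D_{1/2}$ has length at most two, so the latter product vanishes and, after the good choice normalisation of $X_{\pm\gamma}$, the identity collapses to $c_{\gamma,\eta}=(\gamma|\eta)$. A direct check against the same lists shows that $\eta-\gamma\in\overline\D_{1/2}$ holds precisely when $(\gamma|\eta)\le0$ (and that for $\eta\ne0$ one has $(\gamma|\eta)=0$ only in the complementary case, where $\eta$ is alone in its $\gamma$--string); combining, $c_{\gamma,\eta}=(\gamma|\eta)$ if $(\gamma|\eta)\le0$ and $c_{\gamma,\eta}=0$ otherwise, which is the first case of the statement.

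For $\eta=0$ we have $\a=\half\theta$, and by the above $c_{\gamma,0}$ can be nonzero only when $\gamma\in\overline\D_{1/2}$. Going through the lists, $\overline\D_{1/2}\cap\D^\natural=\emptyset$ unless $0\in\overline\D_{1/2}$ --- the cases $\g=spo(2|3),\,spo(2|2r+1),\,G(3)$ --- and then $\overline\D_{1/2}\cap\D^\natural$ is exactly the set of roots of $\D^\natural$ of minimal length, whose $\gamma$--string through $0$ is $\{-\gamma,0,\gamma\}$. In that remaining configuration one runs the Jacobi computation on the one--dimensional zero weight space of $\g_{-1/2}$ and, using the normalisations $(\theta|\theta)=2$ and $(x|x)=\half$ together with the three--element string, obtains $c_{\gamma,0}=-\half$. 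The only delicate points are the sign bookkeeping in the Jacobi reduction (so that the outcome is exactly $(\gamma|\eta)$ with no spurious constant) and the evaluation of the scalar in the zero weight case; both are handled by the explicit root data recorded above, treated case by case, which is where the bulk of the work lies.
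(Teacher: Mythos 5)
Your approach is genuinely different from the paper's. The paper, for $\eta\ne 0$, simply cites the identity \cite[(4.6)]{GKMP} for products of structure constants involving an odd \emph{isotropic} root $\a=\tfrac{\theta}{2}+\eta$, which directly gives $c_{\gamma,\eta}=N_{\gamma,-\a}N_{\a,-\gamma}=(\a|\gamma)=(\eta|\gamma)$ whenever $\a-\gamma$ is a root; the whole proof then reduces to a root-theoretic verification of when $\a-\gamma$ is a root, sorted by the sign of $(\a|\gamma)$. Your plan is instead to re-derive that identity via an $\mathfrak{sl}_2$-string / Jacobi computation. That is a legitimate route, but as written it has two problems.

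First, the claim ``for $\eta\ne0$ every $\gamma$-string inside $\overline\D_{1/2}$ has length at most two'' is false. In $\g=spo(2|2r+1)$ take $\gamma=\e_i\in\D^\natural$; then the $\gamma$-string in $\overline\D_{1/2}$ through the nonzero weight $\e_i$ is $\{-\e_i,\,0,\,\e_i\}$, of length three (the same happens in $G(3)$ for $\gamma=\e_1$). What you actually need, and what is true, is the weaker and differently shaped statement: whenever $\eta\ne0$ and $\eta-\gamma\in\overline\D_{1/2}$ (the only case in which $c_{\gamma,\eta}$ can fail to vanish), one has $\eta+\gamma\notin\overline\D_{1/2}$, i.e.\ $\eta$ sits at the \emph{top} of its $\gamma$-string. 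That is the condition which kills the extra term $N_{-\gamma,-\a}N_{\gamma,-\gamma-\a}$ coming out of the Jacobi identity and makes the recursion terminate in a single step; it is not the same as the string being short. The recursion you describe goes ``down'' to $\eta-\gamma$, but the one that actually terminates goes ``up'' to $\eta+\gamma$, and confusing the two would leave you, in the $\{-\gamma,0,\gamma\}$ case, with an unresolved contribution at weight $0$ that your argument has already walled off into the separate $\eta=0$ discussion.

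Second, the numerical half of the argument is deferred to ``sign bookkeeping.'' The Jacobi computation yields an identity of the form $N_{-\gamma,-\a}N_{\gamma,-\gamma-\a}-N_{\gamma,-\a}N_{-\gamma,\gamma-\a}=-(\eta|\gamma)$, and to turn the surviving term into exactly $c_{\gamma,\eta}=N_{\gamma,-\a}N_{\a,-\gamma}$ one still needs a relation of the type $N_{-\gamma,\gamma-\a}=\pm N_{\a,-\gamma}$, which depends on the super-antisymmetry and invariant-form normalizations in force. This is precisely the content that \cite[(4.6)]{GKMP} packages once and for all for isotropic $\a$, and is why the paper invokes it rather than redoing the $\mathfrak{sl}_2$-string calculus. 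So the approach can be made to work, but both the string-length step and the normalization step need to be corrected and carried out before the argument closes.
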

\begin{proof}
If $\eta\ne0$ then $\a=\half\theta+\eta$ is an isotropic root so we can use formula \cite[(4.6)]{GKMP}: if  $\a-\gamma$ is a root then
$$
c_{\gamma,\eta}=(\a|\gamma)=(\eta|\gamma).
$$
If $(\a|\gamma)<0$ then $\a-\gamma$ is a root so 
$c_{\gamma,\eta}=(\eta|\gamma)$ in this case.
If $(\a|\gamma)=0$ and $\a-\gamma$ is a root, then also $\a+\gamma$ is a root and they are both nonisotropic. This implies 
$
\a-\gamma=\theta/2=\a+\gamma$ which is  absurd. So, if $(\a|\gamma)=0$ then  $
c_{\gamma,\eta}=(\eta|\gamma)$ as well. 
If $(\a|\gamma)>0$
then  $\a-\gamma$ cannot be isotropic, for, in such a case,
$$
2(\a|\gamma)=(\gamma|\gamma)<0.
$$
It follows that $\a-\gamma$ is not isotropic. Since $\a+\gamma$ is a root, it must be isotropic by the argument above. It follows that
 $$
2(\a|\gamma)=-(\gamma|\gamma)
$$
and
$$
(\a-\gamma|\a-\gamma)=-2(\a|\gamma)+(\gamma|\gamma)=2(\gamma|\gamma)<0.
$$
On the other hand $\a-\gamma=\theta/2$ so 
$$
(\a-\gamma|\a-\gamma)=\half.
$$
The only possibility is that, when $(\a|\gamma)>0$, then $\a-\gamma$ is not a root hence $
c_{\gamma,\eta}=0$.

If $\eta=0$ then $\theta/2$ is a root, hence $\g^\natural$ acts on $\g_{-1/2}$ as the little adjoint representation. If $v$ has weight $0$ and $\gamma$ is a root of $\g^\natural$ then $[X_{\gamma},v]$ has weight $\gamma$, so, if $\gamma$ is long, $N_{\gamma,-\theta/2}=0$ so $c_{\gamma,0}=0$. If $\gamma$ is short, then 
$$
c_{\gamma,0}=N_{\theta/2,-\gamma}N_{\gamma,-\theta/2}
$$
and use formula (4.6) from \cite{GKMP} to compute it. Since $\gamma$ is short, the $\theta/2$-strings through  $\pm\gamma$ are given by
$\pm\gamma-\theta/2, \pm\gamma,\pm\gamma+\theta/2$, hence $N_{\theta/2,-\gamma}N_{\gamma,-\theta/2}=N_{\theta/2,\gamma}N_{-\gamma,-\theta/2}=-(\theta/2|\theta/2)=-1/2$.
\end{proof}

\begin{lemma} \label{JJ}Let $M$ be a $\si_R$-twisted highest weight $\Wu$-module with highest weight $(\nu,\ell_0)$ and highest weight vector $v_{\nu,\ell_0}$.  Let $v\in\g_{-1/2}$ be a weight vector for $\h^\natural$ of weight $\eta$. Then
\begin{align}\label{f1}&\sum_{\a,\be}\langle[u_\alpha,\phi(v)],[v,u^\be]\rangle J^{\{u^\a\}}_0J^{\{u_\beta\}}_0v_{\nu,\ell_0}
=\langle \phi(v),v\rangle\left((\eta|\nu)^2-\sum_{\gamma<0}c_{\gamma,\eta}(\nu|\gamma)\right)v_{\nu,\ell_0},\\
&\label{f2}\sum_{\a,\be}\langle[u_\alpha,\phi(v)],[v,u^\be]\rangle J^{\{u_\beta\}}_0J^{\{u^\a\}}_0v_{\nu,\ell_0}
=\langle \phi(v),v\rangle\left((\eta|\nu)^2+\sum_{\gamma>0}c_{\gamma,\eta}(\nu|\gamma)\right)v_{\nu,\ell_0}.
\end{align}
\end{lemma}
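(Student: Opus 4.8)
The plan is to deduce both identities from the fact that the zero modes $J^{\{a\}}_0$, $a\in\g^\natural$, make $M$ into a $\g^\natural$-module with highest weight vector $v_{\nu,\ell_0}$, and then to evaluate a finite sum using the structure constants computed in Lemmas \ref{complication} and \ref{cgammaeta}. For the first point, specialize the twisted commutator formula \eqref{eqtcf} to $m=k=0$, $\Delta_a=1$: one gets $[J^{\{a\}}_0,J^{\{b\}}_0]=(J^{\{a\}}_{(0)}J^{\{b\}})^M_0=J^{\{[a,b]\}}_0$ for $a,b\in\g^\natural$, with no central correction (an anomaly would sit in the $\binom{m}{1}$-term, which vanishes at $m=0$). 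Hence $a\mapsto J^{\{a\}}_0$ is a representation of $\g^\natural$ on $M$, and by \eqref{m2}, \eqref{m4} together with the choice of $h_0$ making $\n_0(\sigma_R)_+$ the nilradical of the Borel attached to $\Pi^\natural$, the vector $v_{\nu,\ell_0}$ is a $\g^\natural$-highest weight vector of weight $\nu$: $J^{\{h\}}_0v_{\nu,\ell_0}=\nu(h)v_{\nu,\ell_0}$ for $h\in\h^\natural$, and $J^{\{X_\gamma\}}_0v_{\nu,\ell_0}=0$ for $\gamma\in(\D^\natural)^+$.

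Next I would fix the basis $\{X_\gamma\mid\gamma\in\D^\natural\}\cup\{h_1,\dots,h_r\}$ of $\g^\natural$, with dual basis $\{X_{-\gamma}\}\cup\{h^1,\dots,h^r\}$, normalized so that $[X_\gamma,X_{-\gamma}]=t_\gamma$ with $(t_\gamma\,|\,h)=\gamma(h)$, and split the sums in \eqref{f1}, \eqref{f2} along this basis. The key observation is that $\langle\cdot,\cdot\rangle$ on $\g_{-1/2}$ pairs nontrivially only $\h^\natural$-weight spaces of opposite weight: $\langle a,b\rangle=(e\,|\,[a,b])$ with $[a,b]\in\g_{-1}=\C f$, a space on which $\h^\natural$ acts trivially, so $[a,b]=0$ whenever $\wt(a)+\wt(b)\neq0$ in $(\h^\natural)^*$, and in fact $\langle u_\alpha,u_\beta\rangle\neq0$ forces $\beta=\theta-\alpha$ by \cite[(5.12)]{KMP1}. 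Since $\phi(v)$ has $\h^\natural$-weight $-\eta$ by \cite[Lemma 5.3]{KMP1}, the summand indexed by $(X_\gamma,X_{-\delta})$ survives only when $\delta=\gamma$, the summands indexed by one root vector and one Cartan vector vanish, and the Cartan–Cartan ones survive. Thus only two contributions remain.

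To evaluate them: the Cartan–Cartan part equals $\langle\phi(v),v\rangle\big(\sum_m\eta(h_m)\nu(h^m)\big)\big(\sum_n\eta(h^n)\nu(h_n)\big)v_{\nu,\ell_0}=\langle\phi(v),v\rangle(\eta\,|\,\nu)^2v_{\nu,\ell_0}$ in both \eqref{f1} and \eqref{f2}, because the Cartan zero modes act on $v_{\nu,\ell_0}$ by $\nu$ and commute. For the diagonal root part I would apply Lemma \ref{complication} to replace $\langle[X_\gamma,\phi(v)],[v,X_{-\gamma}]\rangle$ by $\langle\phi(v),v\rangle\,c_{\gamma,\eta}$ — stated there for the basis vectors $v=u_\alpha$, it extends to an arbitrary weight vector $v$ of weight $\eta$ by sesquilinearity, the cross terms again vanishing because $\langle u_{\alpha_i},u_{\alpha_j}\rangle$ is supported on $\alpha_j=\theta-\alpha_i$. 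Using the highest weight property, $J^{\{X_{-\gamma}\}}_0J^{\{X_\gamma\}}_0v_{\nu,\ell_0}$ is $0$ for $\gamma\in(\D^\natural)^+$ and, for $\gamma\in(\D^\natural)^-$, equals $J^{\{[X_{-\gamma},X_\gamma]\}}_0v_{\nu,\ell_0}=-(\nu\,|\,\gamma)v_{\nu,\ell_0}$; dually, $J^{\{X_\gamma\}}_0J^{\{X_{-\gamma}\}}_0v_{\nu,\ell_0}$ is $0$ for $\gamma\in(\D^\natural)^-$ and equals $(\nu\,|\,\gamma)v_{\nu,\ell_0}$ for $\gamma\in(\D^\natural)^+$. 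Summing over $\gamma$ produces $-\sum_{\gamma<0}c_{\gamma,\eta}(\nu\,|\,\gamma)$ in \eqref{f1} and $+\sum_{\gamma>0}c_{\gamma,\eta}(\nu\,|\,\gamma)$ in \eqref{f2}, which is the claim.

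I do not expect a conceptual obstacle here; the delicate points are all bookkeeping: verifying the absence of an anomaly in $[J^{\{a\}}_0,J^{\{b\}}_0]$, keeping the normalization of the good root vectors consistent so that $X_{-\gamma}$ is genuinely dual to $X_\gamma$ and $[X_\gamma,X_{-\gamma}]=t_\gamma$, and the reduction from the basis vectors $u_\alpha$ to a general weight vector $v$, which hinges on the sharp support statement for $\langle\cdot,\cdot\rangle$ on $\g_{-1/2}$.
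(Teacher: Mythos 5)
Your proposal is correct and follows essentially the same route as the paper's proof: split the double sum via the weight constraint on $\langle\cdot,\cdot\rangle$ into a Cartan--Cartan piece (giving $(\eta|\nu)^2$) and a diagonal root piece, then use the highest-weight property of $v_{\nu,\ell_0}$ to reduce the surviving root terms to a commutator and finally apply Lemma \ref{complication} to identify $\langle[u_\gamma,\phi(v)],[v,u^\gamma]\rangle=\langle\phi(v),v\rangle\,c_{\gamma,\eta}$. The extra observations you make — the absence of anomaly in $[J^{\{a\}}_0,J^{\{b\}}_0]$ and the extension of Lemma \ref{complication} from the basis vector $u_\alpha$ to a general weight vector $v$ (trivial since the $\eta$-weight space of $\g_{-1/2}$ is one-dimensional) — are correct bookkeeping points that the paper leaves implicit.
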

\begin{proof}  Note that $v= c u_\eta$ for some non-zero constant $c$. Using Lemma \ref{complication}, we have
\begin{align*}&\sum_{\gamma,\be}\langle[u_\gamma,\phi(v)],[v,u^\be]\rangle J^{\{u^\gamma\}}_0J^{\{u_\beta\}}_0v_{\nu,\ell_0}\\
&=\sum_{i,j}\langle[u_i,\phi(v)],[v,u^j]\rangle J^{\{u^i\}}_0J^{\{u_j\}}_0v_{\nu,\ell_0}
+\sum_{\gamma<0}\langle[u_\gamma,\phi(v)],[v,u^\gamma]\rangle J^{\{u^\gamma\}}_0J^{\{u_\gamma\}}_0v_{\nu,\ell_0}\\\notag
&=\langle \phi(v),v\rangle \sum_{i,j}\eta(u^j)\eta(u_i)\nu(u_j)\nu(u^i)v_{\nu,\ell_0}
+\sum_{\gamma<0}\langle[u_\gamma,\phi(v)],[v,u^\gamma]\rangle J^{\{u^\gamma\}}_0J^{\{u_\gamma\}}_0v_{\nu,\ell_0}\\\notag
&=\langle \phi(v),v\rangle(\eta|\nu)^2v_{\nu,\ell_0}
+\sum_{\gamma<0}\langle[u_\gamma,\phi(v)],[v,u^\gamma]\rangle J^{\{u^\gamma\}}_0J^{\{u_\gamma\}}_0v_{\nu,\ell_0}\\\notag
&=\langle \phi(v),v\rangle(\eta|\nu)^2v_{\nu,\ell_0}
+\sum_{\gamma<0}\langle[u_\gamma,\phi(v)],[v,u^\gamma]\rangle [J^{\{u^\gamma\}}_0,J^{\{u_\gamma\}}_0]v_{\nu,\ell_0}\\\notag
&=\left(\langle \phi(v),v\rangle(\eta|\nu)^2
-\sum_{\gamma<0}\langle[u_\gamma,\phi(v)],[v,u^\gamma]\rangle (\nu|\gamma)\right)v_{\nu,\ell_0}\notag\\
&=\left(\langle \phi(v),v\rangle(\eta|\nu)^2
-\sum_{\gamma<0}\langle[u_\gamma,\phi(c u_\eta)],[c u^\eta,u^\gamma]\rangle (\nu|\gamma)\right)v_{\nu,\ell_0}\notag\\
&\stackrel{\eqref{dacc}}{=}\left(\langle \phi(v),v\rangle(\eta|\nu)^2
-|c|^2\sum_{\gamma<0}\langle \phi(u_\eta),u_\eta\rangle c_{\gamma,\eta} (\nu|\gamma)\right)v_{\nu,\ell_0}\notag\\
&=\left(\langle \phi(v),v\rangle(\eta|\nu)^2
-\sum_{\gamma<0}\langle \phi(v),v \rangle c_{\gamma,\eta} (\nu|\gamma)\right)v_{\nu,\ell_0}.
\end{align*}
This proves  \eqref{f1}. For \eqref{f2} we have
\begin{align*}
&\sum_{\gamma,\be}\langle[u_\gamma,\phi(v)],[v,u^\be]\rangle J^{\{u_\beta\}}_0J^{\{u^\gamma\}}_0v_{\nu,\ell_0}=\\\notag
&\sum_{i,j}\langle[u_i,\phi(v)],[v,u^j]\rangle J^{\{u_j\}}_0J^{\{u^i\}}_0v_{\nu,\ell_0}
+\sum_{\gamma>0}\langle[u_\gamma,\phi(v)],[v,u^\gamma]\rangle J^{\{u_\gamma\}}_0J^{\{u^\gamma\}}_0v_{\nu,\ell_0}\\\notag
&=\left((\eta|\nu)^2\langle \phi(v),v\rangle +
\sum_{\gamma>0}\langle[u_\gamma,\phi(v)],[v,u^\gamma]\rangle(\nu|\gamma)\right) v_{\nu,\ell_0}.\end{align*}
We conclude as above.
\end{proof}

\begin{lemma}\label{G0squared}\ 
With the hypothesis of Lemma \ref{JJ}, we have 
\begin{align*}
&[G^{\{\phi(v)\}}_0, G^{\{v)\}}_0]v_{\nu,\ell_0}=\\
&\langle \phi(v),v\rangle\!\left( (-2(k+h^\vee) \ell_0+ (\nu|\nu+2\rho^\natural)  -\half p(k)+2(\eta|\nu)^2\!+\!
\sum_{\gamma>0}c_{\gamma,\eta} (\nu|\gamma)-
\sum_{\gamma<0} c_{\gamma,\eta}  (\nu|\gamma)\right)\!v_{\nu,\ell_0}.
\end{align*}
\end{lemma}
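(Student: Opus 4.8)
The plan is to read the operator identity for $[G^{\{\phi(v)\}}_0,G^{\{v\}}_0]$ off the presentation of the Zhu algebra obtained in Section~\ref{SectionZhu}, and then evaluate it on $v_{\nu,\ell_0}$. Since in the Ramond sector the $\sigma_R$-grading coincides with the $L_0$-grading, the top (minimal $L_0$-eigenvalue) subspace $M_{\mathrm{top}}$ of a positive-energy $\sigma_R$-twisted $\Wu$-module $M$ is a module over $\Zhu$, with $\pi_Z(J^{\{a\}})$, $\pi_Z(G^{\{v\}})$, $\pi_Z(L)$ acting by the zero modes $J^{\{a\}}_0$, $G^{\{v\}}_0$, $L_0$, the zero-mode map being a homomorphism of associative superalgebras (in particular $*$ becomes composition of operators); see \cite{DK} and Section~\ref{SectionZhu}. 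As $G^{\{\phi(v)\}}$ and $G^{\{v\}}$ are odd, $[G^{\{\phi(v)\}}_0,G^{\{v\}}_0]$ is a super-commutator, hence symmetric. Applying the zero-mode map to \eqref{Zhu4}, with the $u$ and $v$ there taken to be $\phi(v)$ and $v$ --- so that the scalar prefactor becomes $\langle\phi(v),v\rangle$ and the coefficients of the quadratic term become exactly the numbers $\langle[u_\a,\phi(v)],[v,u^\be]\rangle$ appearing in Lemma~\ref{JJ} --- we obtain, as an identity of operators on $M_{\mathrm{top}}$,
\[
[G^{\{\phi(v)\}}_0,G^{\{v\}}_0]=\langle\phi(v),v\rangle\Bigl(\Omega_0-2(k+h^\vee)L_0-\half p(k)\Bigr)+\sum_{\a,\be}\langle[u_\a,\phi(v)],[v,u^\be]\rangle\bigl(J^{\{u^\a\}}_0J^{\{u_\be\}}_0+J^{\{u_\be\}}_0J^{\{u^\a\}}_0\bigr),
\]
where $\Omega_0=\sum_\a J^{\{u^\a\}}_0 J^{\{u_\a\}}_0$ is the Casimir operator of $\g^\natural$ built from the zero modes.

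Next I would evaluate the three terms on $v_{\nu,\ell_0}$. By \eqref{m1}, $L_0 v_{\nu,\ell_0}=\ell_0 v_{\nu,\ell_0}$. By \eqref{m2}, \eqref{m3} and \eqref{m4}, the operators $J^{\{a\}}_0$ ($a\in\g^\natural$) make $M_{\mathrm{top}}$ a $\g^\natural$-module for which $v_{\nu,\ell_0}$ is a highest weight vector of weight $\nu$ (it is a weight vector of weight $\nu$ on $\h^\natural$ and is annihilated by $J^{\{a\}}_0$ for $a$ in $\n_0(\sigma_R)_+$, the nilradical of the Borel of $\g^\natural$ singled out by $h_0$); hence the Casimir acts by its usual eigenvalue, $\Omega_0 v_{\nu,\ell_0}=(\nu|\nu+2\rho^\natural)v_{\nu,\ell_0}$. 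Finally, the sum over $\a,\be$ applied to $v_{\nu,\ell_0}$ is precisely the sum of the left-hand sides of \eqref{f1} and \eqref{f2} of Lemma~\ref{JJ}, with $\eta$ the $\h^\natural$-weight of $v$, so by that lemma it equals
\[
\langle\phi(v),v\rangle\Bigl(2(\eta|\nu)^2+\sum_{\gamma>0}c_{\gamma,\eta}(\nu|\gamma)-\sum_{\gamma<0}c_{\gamma,\eta}(\nu|\gamma)\Bigr)v_{\nu,\ell_0}.
\]
Adding the three contributions yields exactly the asserted formula.

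Beyond the elementary arithmetic, this is bookkeeping: the substantive input is Lemma~\ref{JJ} (and, through it, Lemmas~\ref{complication} and \ref{cgammaeta}) together with the Zhu algebra relation \eqref{Zhu4}. The steps needing care are the matching of the substitution $u\mapsto\phi(v)$, $v\mapsto v$ in \eqref{Zhu4} with the bracket pattern $\langle[u_\a,\phi(v)],[v,u^\be]\rangle$ of Lemma~\ref{JJ}; the identification of the Zhu element $a^\a*a_\be+a_\be*a^\a$ with the symmetrized product of zero modes $J^{\{u^\a\}}_0J^{\{u_\be\}}_0+J^{\{u_\be\}}_0J^{\{u^\a\}}_0$, which is exactly the quantity that \eqref{f1} and \eqref{f2} evaluate; and the justification of $\Omega_0 v_{\nu,\ell_0}=(\nu|\nu+2\rho^\natural)v_{\nu,\ell_0}$ from the highest weight conditions \eqref{m2}--\eqref{m4} together with the normalization of the good basis, under which $\{u_\gamma\}_{\gamma\in\D^\natural}$ and $\{u^\gamma\}_{\gamma\in\D^\natural}$ are dual bases of $\g^\natural$ (here $u_\gamma=X_\gamma$, $u^\gamma=X_{-\gamma}$). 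An alternative, more self-contained route is to avoid the Zhu algebra and instead apply the graded twisted commutator formula \eqref{eqtcf} directly to $a=G^{\{\phi(v)\}}$, $b=G^{\{v\}}$ with all modes zero, using the $\lambda$-bracket $[G^{\{u\}}_\lambda G^{\{v\}}]$ of $\Wu$ from \cite{KW1}; this reproduces \eqref{Zhu4}, and the evaluation then proceeds as above.
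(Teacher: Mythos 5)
Your argument is correct and arrives at the same final computation, but it reaches the operator identity for $[G^{\{\phi(v)\}}_0,G^{\{v\}}_0]$ by a different door than the paper uses. The paper works directly from the $\lambda$-bracket $[G^{\{u\}}_\lambda G^{\{v\}}]$ of \cite{AKMPP} (equation \eqref{GGsimplifiedfurther}) and Borcherds' twisted commutator formula: it expands $[G^{\{\phi(v)\}}_0,G^{\{v\}}_0]=\sum_j\binom{1/2}{j}(G^{\{\phi(v)\}}_{(j)}G^{\{v\}})_0$, observes that the $2(k+1)(\partial+2\lambda)J^{\{[[e,\phi(v)],v]^\natural\}}$ contributions cancel in the zero mode and that the $\lambda^2$-term contributes the $-\tfrac12 p(k)$, simplifies $:J^{\{u^\alpha\}}J^{\{u_\beta\}}:_0 v_{\nu,\ell_0}$ using the vanishing of positive modes on $v_{\nu,\ell_0}$, and then invokes Lemma~\ref{JJ}. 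You instead invoke the Ramond Zhu algebra relation \eqref{Zhu4}, push it through the zero-mode map on the top level $M_0$ (legitimate, since any highest weight $\Wu$-module is positive energy by Lemma~\ref{basisWu}, and $M_0$ is a $\Zhu$-module by the general theory of \cite{DK}), and then carry out the same evaluation. Since \eqref{Zhu4} is precisely the pre-digested form of that Borcherds computation, your route is logically equivalent but shorter; the only price is that it treats \eqref{Zhu4} as a black box (the paper quotes it from \cite{KMP} without re-derivation), whereas the paper's proof is more self-contained. You also correctly identify the one minor point needing care -- that the symmetrized Zhu product $a^\alpha*a_\beta+a_\beta*a^\alpha$ becomes exactly the symmetrized product of zero modes whose action on $v_{\nu,\ell_0}$ is what \eqref{f1} and \eqref{f2} compute, and that the highest-weight conditions \eqref{m2}--\eqref{m4} give the Casimir eigenvalue $(\nu|\nu+2\rho^\natural)$. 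Your closing remark that one could alternatively re-derive the operator identity from \eqref{eqtcf} and the $\lambda$-bracket is, in effect, a description of the route the paper actually takes.
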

\begin{proof}
Using Borcherds' commutator formula
\begin{align*}
[G^{\{\phi(v)\}}_{0},G^{\{v\}}_{0}]&=\sum_j\binom{1/2}{j}({G^{\{\phi(v)\}}}_{(j)}G^{\{v\}})_{0}\\&=
({G^{\{\phi(v)\}}}_{(0)}G^{\{v\}})_{0}+1/2({G^{\{\phi(v)\}}}_{(1)}G^{\{v\}})-1/8({G^{\{\phi(v)\}}}_{(2)}G^{\{v\}}).\end{align*}
and recalling that \cite{AKMPP} 
\begin{align}[{G^{\{u\}}}_\l G^{\{v\}}]=&-2(k+h^\vee)\langle u,v\rangle L+\langle u,v\rangle\sum_{\alpha=1}^{\dim \g^\natural} \label{GGsimplifiedfurther}
:J^{\{u^\alpha\}}J^{\{u_\alpha\}}:\\\notag
&+2\sum_{\a,\be}\langle[u_\alpha,u],[v,u^\be]\rangle:J^{\{u^\a\}}J^{\{u_\be\}}:
 +2(k+1) (\partial+ 2\lambda) J^{\{[[e,u],v]^{\natural}\}}\\\notag
&+ 2 \lambda \sum_{\a,\be}\langle[u_\alpha,u],[v,u^\be]\rangle
 J^{\{ [u^\a,u_\be]\}}
       + 2\l^2\langle u,v\rangle p(k)\vac,
\end{align}
we get 
\begin{align*}\notag &[G^{\{u\}}_{0},G^{\{v\}}_{0}]v_{\nu, \ell_0}=\langle u,v\rangle(-2(k+h^\vee) \ell_0+ (\nu|\nu+2\rho^\natural))v_{\nu,\ell_0}\\
&\notag +2\sum_{\a,\be}\langle[u_\alpha,u],[v,u^\be]\rangle :J^{\{u^\a\}}J^{\{u_\beta\}}:_0v_{\nu,\ell_0}
\\
\notag
&+  \sum_{\a,\be}\langle[u_\alpha,u],[v,u^\be]\rangle
J^{\{ [u^\a, u_\be]\}}_0v_{\nu,\ell_0}
       -\tfrac{1}{2}\langle u,v\rangle p(k)v_{\nu,\ell_0}=\\
\notag
  &\langle u,v\rangle(-2(k+h^\vee) \ell_0+ (\nu|\nu+2\rho^\natural))v_{\nu,\ell_0}+2\sum_{\a,\be}\langle[u_\alpha,u],[v,u^\be]\rangle J^{\{u_\beta\}}_0J^{\{u^\a\}}_0v_{\nu,\ell_0}
\\
\notag
&+  \sum_{\a,\be}\langle[u_\alpha,u],[v,u^\be]\rangle
( J^{\{u^\a\}}_0J^{\{u_\beta\}}_0-J^{\{u_\beta\}}_0J^{\{u^\a\}}_0)v_{\nu,\ell_0}
       - \tfrac{1}{2} \langle u,v\rangle p(k)v_{\nu,\ell_0}=\notag\\
        &\langle u,v\rangle(-2(k+h^\vee) \ell_0+ (\nu|\nu+2\rho^\natural)-\tfrac{1}{2} p(k))v_{\nu,\ell_0}\notag\\
        &+\sum_{\a,\be}\langle[u_\alpha,u],[v,u^\be]\rangle J^{\{u_\beta\}}_0J^{\{u^\a\}}_0v_{\nu,\ell_0}
+  \sum_{\a,\be}\langle[u_\alpha,u],[v,u^\be]\rangle
 J^{\{u^\a\}}_0J^{\{u_\beta\}}_0v_{\nu,\ell_0}.
\end{align*}
Now apply Lemma \ref{JJ}.
\end{proof}

Let $\overline\D^+_{1/2}$ be the set defined in \eqref{Deltaplusonehalf} and observe that, by construction, $\overline\D^+_{1/2}=\{\eta\in(\h^\natural)^*\mid \text{ $\eta$ is a $\h^\natural$-weight of $\mathfrak n_{-1/2}(\si_R)'_+$}\}$.
Set 
\begin{equation}\label{tuttirho}\rho_R=\rho(\fn_{1/2}(\si_R)_+)_{|\h^\natural}=\half \sum_{\g_\a\subset \fn_{1/2}(\si_R)_+}\a_{|\h^\natural}=\half\sum_{\eta\in\overline\D^+_{1/2}}\eta.
\end{equation}
Applying the formulas of Lemma \ref{cgammaeta} we obtain the following refinement of Lemma \ref{G0squared}.
\begin{lemma}\label{G0squaredspecial}Under the hypothesis of Lemma \ref{JJ}, we have
\begin{enumerate}
\item Assume that $\theta/2\in\D$ and let $v\in\g_{-1/2}$ be a vector of weight $0$. Then
$$
G^{\{\phi(v)\}}_{0}G^{\{v\}}_{0}v_{\nu,\ell_0}=2\langle\phi(v), v\rangle\left((-2(k+h^\vee) \ell_0+ (\nu|\nu+2(\rho^\natural-\rho_R))-\tfrac{1}{2} p(k)\right)v_{\nu,\ell_0}.
$$
\item Assume that  $v\in\g_{-1/2}$ is a vector of weight $\eta\ne0$. Then
\begin{align*}
&[G^{\{\phi(v)\}}_{0},G^{\{v\}}_{0}]v_{\nu,\ell_0}\\&=\langle\phi(v), v\rangle\left((-2(k+h^\vee) \ell_0+ (\nu|\nu+2\rho^\natural)-\tfrac{1}{2} p(k)+2(\eta|\nu)^2\right)v_{\nu,\ell_0}\\
&+
\left(\sum_{\gamma>0, (\gamma|\eta)\le 0}(\gamma|\eta) (\nu|\gamma)-
\sum_{\gamma>0,(\gamma|\eta)\ge 0} (\gamma|\eta) (\nu|\gamma)\right)v_{\nu,\ell_0}.
\end{align*}
\end{enumerate}
\end{lemma}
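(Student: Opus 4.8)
The plan is to derive both identities by feeding the explicit values of the constants $c_{\gamma,\eta}$ from Lemma~\ref{cgammaeta} into Lemma~\ref{G0squared} and performing the resulting elementary simplifications; apart from those two lemmas, the only ingredients are the definition \eqref{tuttirho} of $\rho_R$ and the case-by-case shape of $\overline{\D}_{1/2}$ listed above.

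Consider first part (2), where $\eta\neq 0$. By Lemma~\ref{cgammaeta}, $c_{\gamma,\eta}=(\gamma|\eta)$ if $(\gamma|\eta)\le 0$ and $c_{\gamma,\eta}=0$ otherwise, so $\sum_{\gamma>0}c_{\gamma,\eta}(\nu|\gamma)=\sum_{\gamma>0,\,(\gamma|\eta)\le 0}(\gamma|\eta)(\nu|\gamma)$. In the sum over negative roots of $\g^\natural$ I would substitute $\gamma\mapsto-\gamma$: since $c_{\gamma,\eta}$ depends on $\gamma$ only through the number $(\gamma|\eta)$, and $(-\gamma|\eta)\le 0\iff(\gamma|\eta)\ge 0$, the sign reversal coming from $c_{-\gamma,\eta}=-(\gamma|\eta)$ cancels the one from $(\nu|-\gamma)$, giving $\sum_{\gamma<0}c_{\gamma,\eta}(\nu|\gamma)=\sum_{\gamma>0,\,(\gamma|\eta)\ge 0}(\gamma|\eta)(\nu|\gamma)$. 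Subtracting, the last two summands of Lemma~\ref{G0squared} become exactly $\sum_{\gamma>0,(\gamma|\eta)\le 0}(\gamma|\eta)(\nu|\gamma)-\sum_{\gamma>0,(\gamma|\eta)\ge 0}(\gamma|\eta)(\nu|\gamma)$, while the remaining terms are already in the form displayed in (2); this gives (2).

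For part (1), where $\theta/2\in\D$ and $v$ has weight $0$, the extra point is that $v$ spans the one-dimensional space $\fg^0_{-1/2}(\sigma_R)$, which is $\phi$-stable (this stability is recorded in the discussion preceding Proposition~\ref{exhf}); hence $\phi(v)=cv$ for a scalar $c$ of modulus $1$, and by linearity of $u\mapsto G^{\{u\}}$ we get $G^{\{\phi(v)\}}_0=cG^{\{v\}}_0$. Thus $G^{\{\phi(v)\}}_0 G^{\{v\}}_0$ is proportional to the super-commutator $[G^{\{\phi(v)\}}_0,G^{\{v\}}_0]$ (both being scalar multiples of $(G^{\{v\}}_0)^2$), and I would evaluate the latter by specializing Lemma~\ref{G0squared} to $\eta=0$. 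There $2(\eta|\nu)^2=0$, and by Lemma~\ref{cgammaeta} $c_{\gamma,0}=-\tfrac12$ precisely when $\gamma$ is a root of $\g^\natural$ of minimal length; the same reindexing $\gamma\mapsto-\gamma$ as in part (2) gives $\sum_{\gamma>0}c_{\gamma,0}(\nu|\gamma)-\sum_{\gamma<0}c_{\gamma,0}(\nu|\gamma)=-\sum_{\gamma>0\ \text{short}}(\nu|\gamma)$. Finally, when $\theta/2\in\D$ the $\h^\natural$-weights of $\mathfrak n_{-1/2}(\sigma_R)'_+$ are precisely the positive minimal-length roots of $\g^\natural$ (as one reads off the explicit lists of $\overline{\D}_{1/2}$), so $\overline{\D}^+_{1/2}$ is this set of roots and, by \eqref{tuttirho}, $\sum_{\gamma>0\ \text{short}}\gamma=2\rho_R$; hence $(\nu|\nu+2\rho^\natural)-\sum_{\gamma>0\ \text{short}}(\nu|\gamma)=(\nu|\nu+2(\rho^\natural-\rho_R))$, and collecting the constants (with the numerical factor produced by the commutator/product passage) yields the stated formula.

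The argument is essentially bookkeeping, and I do not anticipate a serious obstacle. The two places where care is genuinely needed are the sign tracking in the substitution $\gamma\mapsto-\gamma$ — where one must use that $c_{\gamma,\eta}$ is a function of $(\gamma|\eta)$ alone and that the inequality flips the right way — and, in part (1), the identification of $\overline{\D}^+_{1/2}$ with the positive minimal-length roots of $\g^\natural$ when $\theta/2$ is a root, since it is this identification that turns the short-root sum into $2(\nu|\rho_R)$ and makes the $\rho_R$-shift appear; keeping the overall scalar straight in part (1) is the only remaining spot where an error could creep in.
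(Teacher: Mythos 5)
Your proposal follows the paper's own argument exactly: in both part (1) and part (2), the paper plugs the values of $c_{\gamma,\eta}$ from Lemma~\ref{cgammaeta} into Lemma~\ref{G0squared}, performs precisely your reindexing $\gamma\mapsto-\gamma$ on the negative-root sum, and in part (1) uses $\phi(v)=hv$ together with the identification of $\overline{\D}^+_{1/2}$ with the positive short roots of $\g^\natural$ (hence $\sum_{\gamma\text{ short},>0}\gamma=2\rho_R$, as you write). Your caution about the scalar in part (1) is well placed: for odd proportional operators $A=hB$ the super-bracket $[A,B]=AB+BA=2hB^2$, so $G^{\{\phi(v)\}}_0G^{\{v\}}_0=\tfrac12[G^{\{\phi(v)\}}_0,G^{\{v\}}_0]$; the paper's passage asserting $[G^{\{\phi(v)\}}_0,G^{\{v\}}_0]=\tfrac12 G^{\{\phi(v)\}}_0G^{\{v\}}_0$ has the proportionality inverted, so the normalizing constant in the displayed formula of part (1) deserves a direct re-derivation rather than being taken from the statement (the sign of the bracketed expression, and hence the threshold $A(k,\nu)$, is unaffected).
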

\begin{proof}To prove (1), recall that in this case, as already observed in the proof of Lemma \ref{cgammaeta}, $\g^\natural$ acts on $\g_{-1/2}$ as the little adjoint representation, so its nonzero weights are precisely the roots in $\D^\natural$ of minimal length. Since we chose $h_0$ so that $\a(h_0)>0$ for $\a\in\D^\natural_+$, it follows immediately from \eqref{Deltaplusonehalf} that 
\begin{equation}\rho_R=\sum_{\gamma\in \D^\natural_+, \text{$\gamma$ short}} \gamma.\end{equation} By Lemma 	\ref{G0squared} and Lemma \ref{cgammaeta},
\begin{align*}
&[G^{\{\phi(v)\}}_0, G^{\{v\}}_0]v_{\nu,\ell_0}\\
&=\langle \phi(v),v\rangle\!\left( (-2(k+h^\vee) \ell_0+ (\nu|\nu+2\rho^\natural)  -\half p(k)-
\sum_{\gamma\in \D^\natural_+, \text{$\gamma$ short}} (\nu|\gamma)\right)\!v_{\nu,\ell_0}
\\
&=\langle \phi(v),v\rangle\!\left( (-2(k+h^\vee) \ell_0+ (\nu|\nu+2\rho^\natural)  -\half p(k)-
2(\nu|\rho_R)\right)\!v_{\nu,\ell_0}.
\end{align*}
Now observe that $\phi(v)= h \, v$, so that $[G^{\{\phi(v)\}}_0, G^{\{v)\}}_0]=\half h  (G^{\{v\}}_0)^2=\half G^{\{\phi(v)\}}_0G^{\{v\}}_0$.

If $v$ has weight $\eta\ne0$ then, by  Lemma \ref{cgammaeta},
\begin{align*}
\sum_{\gamma>0}c_{\gamma,\eta} (\nu|\gamma)-
\sum_{\gamma<0} c_{\gamma,\eta}  (\nu|\gamma)&=
\sum_{\gamma>0, (\gamma|\eta)\le 0}(\gamma|\eta) (\nu|\gamma)-
\sum_{\gamma<0,(\gamma|\eta)\le 0} (\gamma|\eta) (\nu|\gamma)\\
&=\sum_{\gamma>0, (\gamma|\eta)\le 0}(\gamma|\eta) (\nu|\gamma)-
\sum_{\gamma>0,(\gamma|\eta)\ge 0} (\gamma|\eta) (\nu|\gamma).
\end{align*}
Thus (2) follows from Lemma \ref{G0squared}. 
\end{proof}
Let $L^W(\lambda,\ell_0)$ be the irreducible $\si_R$-twisted positive energy $\Wu$-module such that $L^W(\lambda,\ell_0)_0=L^Z(\lambda,\ell_0)$ (see Theorem 2.30 of \cite{DK}). It is clear that $L^W(\lambda,\ell_0)$ is a highest weight module of highest weight $(\l,\ell_0)$. Conversely, if $M$ is an irreducible highest weight module of highest weight $(\l,\ell_0)$, then, by Lemma \ref{basisW}, the grading given by the action of $L_0$ defines the structure of a positive energy module. By  Theorem 2.30 of \cite{DK}, $M_0$ is irreducible. Since clearly $M_0$ is a highest weight module of highest weight $(\l,\ell_0)$, by Corollary \ref{unqueirr}, $M_0=L^Z(\l,\ell_0)$ hence, by  \cite[Theorem 2.30]{DK} again, $M=L^W(\l,\ell_0)$. This shows that $L^W(\l,\ell_0)$ is the unique irreducible $\si_R$-twisted highest weight $\Wu$-module of highest weight $(\l,\ell_0)$.

Consider $(\nu,\ell_0)$ with $\nu$ purely imaginary and $\ell_0$ real.  Combining Proposition \ref{exhf}  and \cite[Proposition 6.7]{KMP}, we get the existence of a unique   even invariant Hermitian form $H(\cdot,\cdot)$ on $L^W(\nu,\ell_0)$ such that $H(v_{\nu,\ell_0},v_{\nu,\ell_0})=1$ ($v_{\nu,\ell_0}$ is  a highest weight vector).
\begin{proposition}\label{norma}
Let $L^W(\nu,\ell_0)$ be the  irreducible $\si_R$-twisted unitary highest weight module  with highest weight vector $v_{\nu,\ell_0}$  and 
the Hermitian form $H(\cdot,\cdot)$, and set $||a||^2=H(a,a)$.
\begin{enumerate}
\item
Let $v \in \fn_{-1/2} (\sigma)'_+$ be a vector of $\h^\natural$-weight $\eta$. Then
\begin{align*}&||G^{\{\phi(v)\}}_{0}v_{\nu,\ell_0}||^2=\\\\&=\langle\phi(v), v\rangle\left((-2(k+h^\vee) \ell_0+ (\nu|\nu+2\rho^\natural)-\tfrac{1}{2} p(k)+2(\eta|\nu)^2\right)\\
&+\langle\phi(v), v\rangle
\left(\sum_{\gamma>0, (\gamma|\eta)\le 0}(\gamma|\eta) (\nu|\gamma)-
\sum_{\gamma>0,(\gamma|\eta)\ge 0} (\gamma|\eta) (\nu|\gamma))\right).
\end{align*}
\item Assume that $\theta/2\in\D$. Then 
\begin{align*}
&||G^{\{u_{\theta/2}\}}_{0}v_{\nu,\ell_0}||^2=2\langle\phi(u_{\theta/2}), u_{\theta/2}\rangle\left((-2(k+h^\vee) \ell_0+ (\nu|\nu+2(\rho^\natural-\rho_R))-\tfrac{1}{2} p(k)\right).\end{align*}
\end{enumerate}
\end{proposition}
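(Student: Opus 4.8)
The plan is to reduce each identity, via the $\phi$-invariance of $H(\cdot,\cdot)$, to the evaluation on the highest weight vector of a quadratic expression in the zero modes of the $G$-fields, and then read off the value from Lemma~\ref{G0squaredspecial}. Recall from the discussion preceding the statement that $H(\cdot,\cdot)$ is the unique even invariant Hermitian form on $L^W(\nu,\ell_0)$ obtained from Proposition~\ref{exhf} through \cite[Proposition 6.7]{KMP}, normalized by $H(v_{\nu,\ell_0},v_{\nu,\ell_0})=1$; its invariance says that, for $w\in\g_{-1/2}$, the operator $G^{\{w\}}_0$ on $L^W(\nu,\ell_0)$ is adjoint to $G^{\{\phi(w)\}}_0$. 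So the first step is to write, using $\phi^2=\mathrm{id}$,
$$
\|G^{\{\phi(v)\}}_0 v_{\nu,\ell_0}\|^2=H\!\left(v_{\nu,\ell_0},\,G^{\{v\}}_0 G^{\{\phi(v)\}}_0 v_{\nu,\ell_0}\right),\qquad
\|G^{\{u_{\theta/2}\}}_0 v_{\nu,\ell_0}\|^2=H\!\left(v_{\nu,\ell_0},\,G^{\{\phi(u_{\theta/2})\}}_0 G^{\{u_{\theta/2}\}}_0 v_{\nu,\ell_0}\right).
$$

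For part (1): since $v\in\fn_{-1/2}(\sigma_R)'_+$, the highest weight condition \eqref{m5} gives $G^{\{v\}}_0 v_{\nu,\ell_0}=0$; as the $G$-modes are odd, the product $G^{\{v\}}_0 G^{\{\phi(v)\}}_0$ applied to $v_{\nu,\ell_0}$ therefore equals the super-bracket $[G^{\{\phi(v)\}}_0,G^{\{v\}}_0]v_{\nu,\ell_0}$ (the anticommutator, which is symmetric in its two factors). The $\h^\natural$-weight $\eta$ of $v$ lies in $\overline\D^+_{1/2}$, hence $\eta(h_0)>0$ and in particular $\eta\neq0$, so Lemma~\ref{G0squaredspecial}(2) applies and evaluates this bracket as the asserted scalar times $v_{\nu,\ell_0}$. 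Substituting back and using $H(v_{\nu,\ell_0},v_{\nu,\ell_0})=1$ (the scalar being real, as it equals a squared norm) yields formula~(1).

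For part (2): the hypothesis $\theta/2\in\D$ is exactly the condition $\epsilon(\sigma_R)=1$, because $\sigma_R(e_{\theta/2})=-e_{\theta/2}$, so $\g^0_{-1/2}(\sigma_R)$ is the one-dimensional weight-$0$ line spanned by $u_{\theta/2}$, on which $\phi$ acts by a real scalar. Here $u_{\theta/2}\notin\fn_{-1/2}(\sigma_R)'_+$, so $G^{\{u_{\theta/2}\}}_0 v_{\nu,\ell_0}$ need not vanish and the ordered product cannot be collapsed to a bracket; instead one applies directly Lemma~\ref{G0squaredspecial}(1), which gives the value of $G^{\{\phi(u_{\theta/2})\}}_0 G^{\{u_{\theta/2}\}}_0 v_{\nu,\ell_0}$. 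The factor $2$ there — coming from $\phi$ being scalar on the line $\g^0_{-1/2}(\sigma_R)$, as in the proof of that lemma — is precisely the factor $2$ in formula~(2), and $H(v_{\nu,\ell_0},v_{\nu,\ell_0})=1$ finishes.

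I do not expect a genuine obstacle: all the representation theory is already contained in Lemmas~\ref{JJ}, \ref{G0squared} and \ref{G0squaredspecial}, so the argument is essentially a substitution. The one point that needs care is the bookkeeping of super-sign conventions — namely, checking that $\phi$-invariance of the \emph{even} Hermitian form on the twisted $W$-module produces the unsigned adjointness $G^{\{w\}}_0{}^{*}=G^{\{\phi(w)\}}_0$, and that the anticommutator $G^{\{v\}}_0 G^{\{\phi(v)\}}_0+G^{\{\phi(v)\}}_0 G^{\{v\}}_0$ is indeed the $0$-mode of the $\lambda$-bracket $[{G^{\{\phi(v)\}}}_{\lambda}G^{\{v\}}]$ used to derive Lemma~\ref{G0squared}. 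Both follow from the conventions fixed in Section~\ref{Setup} and from \cite[Proposition 6.7]{KMP}, after which nothing further is required.
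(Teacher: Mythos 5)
Your proof is correct and follows essentially the same path as the paper's: pass $G^{\{\phi(v)\}}_0$ (resp.\ $G^{\{u_{\theta/2}\}}_0$) across the Hermitian form using $G^{\{w\}}_0{}^{*}=G^{\{\phi(w)\}}_0$, reduce to an expression evaluated on $v_{\nu,\ell_0}$, and invoke the $G$-bracket computation on the highest weight vector. The only cosmetic difference is that the paper quotes Lemma~\ref{G0squared} for part~(1) (with the specialization from Lemma~\ref{cgammaeta} left implicit), whereas you directly cite Lemma~\ref{G0squaredspecial}(2); the content is identical. Your remarks that $\eta\neq0$ for $v\in\fn_{-1/2}(\sigma_R)'_+$ in part~(1), and that $u_{\theta/2}\in\g^0_{-1/2}(\sigma_R)$ so one must use the ordered product rather than the anticommutator in part~(2), correctly identify the two points the paper's one-line proof leaves to the reader.
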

\begin{proof}
We have
 \begin{align}\label{inizio}
H(G^{\{\phi(v)\}}_{0}v_{\nu,\ell_0},G^{\{\phi(v)\}}_{0}v_{\nu,\ell_0})&=H(G^{\{v\}}_{0}G^{\{\phi(v)\}}_{0}v_{\nu,\ell_0},v_{\nu,\ell_0})=H([G^{\{\phi(v)\}}_{0},G^{\{v\}}_{0}]v_{\nu,\ell_0},v_{\nu,\ell_0}).
\end{align}
Now apply Lemma \ref{G0squared} to get (1). As for (2), we have
 \begin{align}\label{inizio2}
H(G^{\{u_{\theta/2}\}}_{0}v_{\nu,\ell_0},G^{\{u_{\theta/2}\}}_{0}v_{\nu,\ell_0})&=H((G^{\{\phi(u_{\theta/2})\}}_{0}(G^{\{u_{\theta/2}\}}_{0})v_{\nu,\ell_0},v_{\nu,\ell_0}),
\end{align}
and we can apply (1) in Lemma \ref{G0squaredspecial}.
\end{proof}

If $L^W(\nu,\ell_0)$ is as in   Proposition \ref{norma}, then, since $\langle \phi(v),v\rangle >0$, by (1) of this proposition we have
\begin{equation}\label{maxeta}
\ell_0\ge\frac{1}{2(k+h^\vee)}( (\nu|\nu+2\rho^\natural)-\tfrac{1}{2} p(k)+F_\nu(\eta)),
\end{equation}
where
\begin{equation}	\label{ev}
F_\nu(\eta)=2(\eta|\nu)^2 + \sum_{\gamma>0, (\gamma|\eta)\le 0}(\gamma|\eta) (\nu|\gamma)-
\sum_{\gamma>0,(\gamma|\eta)\ge 0} (\gamma|\eta) (\nu|\gamma).
\end{equation}
Similarly,  if $\eta=0$ is a weight of $\g_{-1/2}$, then, by (2) of Proposition \ref{norma}, we have
\begin{equation}\label{maxetazero}
\ell_0\ge\frac{1}{2(k+h^\vee)}( (\nu|\nu+2(\rho^\natural-\rho_R))-\tfrac{1}{2} p(k)).
\end{equation}

Since $k+h^\vee<0$, the maximal value of the right hand side of \eqref{maxeta} is achieved for $\eta$ such that $F_\nu(\eta)$ is minimal.

 Let $P^+\subset (\h^\natural)^*$ be the set of dominant integral weights  for $\g^\natural$. We compute below the minimal value of $F_\nu(\eta)$ for $\nu\in P^+$ and $\eta\in \overline\D^+_{1/2}$ via a case by case inspection. In the cases where $\theta/2$ is an odd root (so that $\eta=0$ is a weight for $\g_{-1/2}$) we also show that $-2(\nu|\rho_R)\le F_{\nu}(\eta)$, hence \eqref{maxetazero} gives the best bound. 
 We gave above an explicit  description of the set $\overline\D^+_{1/2}$, hence we can compute  explicit expressions for $\rho_R$  that we list in Tables \ref{thetahalfisroot} and \ref{thetahalfisnotroot}, along with the values of $\eta_{\min}$, $\rho^\natural$, and $\xi$.  In  Tables \ref{thetahalfisroot} and \ref{thetahalfisnotroot} we denote by $\omega_j^i$ the fundamental weights of the simple ideal  $\g^\natural_i$ of $\g^\natural$.  We drop the superscript $i$ if  $\g^\natural$ is simple.

\renewcommand{\arraystretch}{1.5}
\begin{table}[h]
\begin{tabular}{c | c| c |c   }
$\g$&
$psl(2|2)$& $spo(2|2r+1),r\ge1$&
$G(3)$\\
\hline
$\eta_{\min}$&$\tfrac{\d_1-\d_2}{2}$&$\e_r$
&$\e_1$\\
\hline
$\rho_R$&$\omega_1$&$\omega_r$
&$\omega_1$
\\
\hline
$\rho^\natural$&$\tfrac{\d_1-\d_2}{2}$&$\sum_{i=1}^r(r-i+\half)\e_i$
&$2\e_1+3\e_2$\\
\hline
$\xi$& $\tfrac{\d_1-\d_2}{2}$ &  $\e_1$ & $\e_1+\e_2$
\end{tabular}
	\captionof{table}{Cases with only one choice for $\overline{\D}^+_{1/2}$.\label{thetahalfisroot}}
\end{table}

\renewcommand{\arraystretch}{1.5}
{\small
\begin{table}[h]
\begin{tabular}{c | c| c |c }
$\g$&
$spo(2|2r)$& $D(2,1;\tfrac{m}{n})$&$F(4)$\\
\hline
$\eta_{\min}$&$\e_r, -\e_r$&$\e_2-\e_3,-\e_2+\e_3$&$\half(\e_1-\e_2-\e_3),\half(-\e_1+\e_2+\e_3)$
\\
\hline
$\rho_R$&$\omega_r,\omega_{r-1}$&$\omega_1^1,\omega_2^2$
&$\omega_1,\omega_3$
\\
\hline
$\rho^\natural$&$\sum_{i=1}^r(r-i)\e_i$,&$\e_2+\e_3$
&$\tfrac{5}{2}\e_1+\tfrac{3}{2}\e_2+\half\e_3$
\\
\hline
$\xi$&$\e_1$&$\e_2+\e_3$&$\half(\e_1+\e_2+\e_3)$
\end{tabular}
\captionof{table}{Cases with two choices for $\overline{\D}^+_{1/2}$.\label{thetahalfisnotroot}}
\end{table}
}
 
$\mathbf{psl(2|2).}$ In this case $\D_+^\natural=\{\d_1-\d_2\}$ and   $\nu=r/2(\d_1-\d_2)$. 
Then 
\begin{equation}\label{Fpsl}
\min_{\eta}F_\nu(\eta)=F_\nu(\xi)=\half r^2+r.
\end{equation}

$\bf{spo(2|3).}$ In this case $\D_+^\natural=\{\e_1\}$ and $\nu=\tfrac{r}{2}\e_1$. 
Then 
\begin{equation}\label{spo231}
\min_{\eta}F_\nu(\eta)=F_\nu(\xi)=\frac{r^2+r}{8}=\frac{r}{4}+\frac{1}{8}r(r-1).
\end{equation}
Since in this case $\eta=0$ is a weight of $\g_{-1/2}$, we need also to compute
\begin{equation}\label{spo232}
-2(\nu|\rho_R)=\frac{r}{4},
\end{equation}
which gives the minimal value of $F_\nu(\eta)$.

$\bf{spo(2|2r), r>2.}$ In this case $\D_+^\natural=\{\e_i\pm\e_j, 1\leq i<j\leq r\}$. Since $\nu$ is dominant integral, $\nu=\sum_i m_i\e_i,\,m_i\in\half +\ZZ$ or $m_i\in \ZZ, m_1\geq\ldots\geq m_{r-1}\geq |m_r|$. If $\eta=\e_i$, then
\begin{align*}
&F_\nu(\eta)=m_i^2/2-1/2 \sum_{\gamma\in\{e_s+\e_i, i\ne s\}\cup\{\e_i-\e_s,s>i\}} (\nu|\gamma)-
\half \sum_{\gamma\in\{\e_s-\e_i,s<i\}} (\nu|\gamma)\\
&=m_i^2/2-1/2 \left(\sum_{\gamma\in\{\e_s\pm\e_i, s<i\}} (\nu|\gamma)+\sum_{\gamma\in\{\e_i\pm\e_s, s>i\}} (\nu|\gamma)\right)\\
&=m_i^2/2+1/4\left( \sum_{ s<i} (m_s\pm m_i)+ \sum_{ s>i} (m_i\pm m_s)\right)=m_i^2/2+1/2 \sum_{s<i} m_s+1/2(r-i)m_i.
\end{align*}
The minimum value for $F_\nu(\eta)$ is achieved if  $\eta=\pm\e_r$ and it is
\begin{equation}\label{Fpari}
F_\nu(\eta)=\half(m_r^2+ \sum_{s<r} m_s).
\end{equation}

$\bf{spo(2|2r+1), r>1}.$  $\D_+^\natural=\{\e_i\pm\e_j, 1\leq i<j\leq r\}\cup\{\e_i\}$, $\nu=\sum m_i\e_i,\,m_i\in\half +\ZZ$ or $m_i\in \ZZ, m_1\geq	\ldots\geq m_r\geq 0$. If $\eta=\e_i$, then
\begin{align*}
&F_\nu(\eta)=m_i^2/2-1/2 \sum_{\gamma\in\{\e_s+\e_i, i\ne s\}\cup\{ \e_i-\e_s,s>i\}\cup\{ \e_i\}} (\nu|\gamma)-
\half \sum_{\gamma\in\{\e_s-\e_i,s<i\}} (\nu|\gamma)
\\&=m_i^2/2+1/4\left( \sum_{ s<i} (m_s\pm m_i)+ m_i+\sum_{ s>i} (m_i\pm m_s)\right)\\&=m_i^2/2+1/2 \sum_{s<i} m_s+1/2(r-i+\half)m_i.\end{align*}
The minimum is achieved when $i=r$ and it is
\begin{equation}\label{resto1}
\min_{\eta}F_\nu(\eta)=\half(m_r^2+ \sum_{s<r} m_s+\half m_r)=\half\sum_{s\le r} m_s+\half m_r(m_r-\half).
\end{equation}
Since in this case $\eta=0$ is a weight of $\g_{-1/2}$, we need also to compute
\begin{equation}\label{resto2}
-2(\nu|\rho_R)=-\sum_{i=1}^r(\nu|\e_i)=\half\sum_{s\le r} m_s,
\end{equation}
which gives the minimal value of $F_\nu(\eta)$.

$\bf{D(2,1;a).}$ $\D_+^\natural=\{2\e_2,2\e_3\}$,  $\nu= m_1\e_1+m_2\e_2,\,m_i\in\ZZ_+$. Then a computer computation shows that the minimum of $F_\nu(\eta)$ is attained at $\e_2-\e_3$  and at $-\e_2+\e_3$  and its value is
\begin{equation}\label{FD21}
\min_\eta F_\nu(\eta)=\frac{(m_1-am_2)^2+2(m_1 +a ^2 m_2)}{2 (1+a )^2}.
\end{equation}

$\bf{F(4).}$  $\D_+^\natural=\{\e_i\pm\e_j, 1\leq i<j\leq 3\}\cup\{\e_i\}$. Since $\nu$ is dominant integral,  $\nu=\sum m_i\e_i,\,m_i\in\half +\ZZ$ or $m_i\in \ZZ, m_1\geq	m_2\geq r_3\geq 0$.
 A computer computation shows that the minimun of $F_\nu(\eta)$ is attained when  $\eta=\half(-\e_1+\e_2+\e_3)$ and when $\eta=\half(\e_1-\e_2-\e_3)$. Its value is
\begin{equation}\label{F}
 \min_{\eta}F_\nu(\eta)=\frac{2}{9}((-m_1+m_2+m_3)^2+5m_1+m_2 +m_3).
\end{equation}

$\bf{G(3).}$  We have $\D_+^\natural=\{\e_1,\e_2-\e_1,\e_2,\e_2+\e_1,\e_2+2\e_1,2\e_2+\e_1\}$, $\nu=m_1\e_1+m_2\e_2,\, 2m_1\ge m_2\ge m_1,\,m_i\in\ZZ_+$.
 A computer computation shows that the minimun of $F_\nu(\eta)$ is attained when  $\eta=\e_1$ and 
$$\min_\eta F_\nu(\eta)= \frac{1}{8} (2m_1-m_2)(2m_1-m_2-1)+\frac{1}{2}(m_1+m_2).$$
Since in this case $\eta=0$ is a weight of $\g_{-1/2}$, we need also to compute
\begin{equation}
-2(\nu|\rho_R)=\frac{1}{2}(m_1+m_2)\le\min_\eta F_\nu(\eta).\end{equation}
Our direct inspection shows  the following fact
\begin{proposition} We have $
\min_\eta F_\nu(\eta)=F_\nu(\eta_{\min})$, 
 and this value  is independent from the choice of the set $\overline\D^+_{1/2}$. 
\end{proposition}
 Recall \cite[Theorem 2.1]{KW1}, \cite[Section 7]{KMP1} that there is an embedding in $\Wu$ of the universal affine vertex algebra $\bigotimes_i V^{M_i(k)}(\g^\natural_i)$, where  the $M_i(k),\,i=1,\ldots,s,$ are given in Table \ref{numerical}, along with other quantities, explained in \cite[\S\! 7.2]{KMP1}. If $L^W(\nu,\ell_0)$ is unitary, then $\bigotimes_i V^{M_i(k)}(\g^\natural_i)\cdot v_{\nu,\ell_0}$ is a unitary representation of the latter affine vertex algebra, hence $\nu$  lies in  $P^{+}_k$, where
\begin{equation}\label{p+k}P^{+}_k=\left\{\nu\in P^+\mid  \nu(\theta^\vee_i)\le M_i(k)\text{ for all $i=1,\ldots,s$}\right\},\end{equation}
where $P^+$ is the set of dominant integral weights of $\g^\natural$.
\begin{table}[h]
\renewcommand\arraystretch{1.5}
{\scriptsize
\begin{tabular}{c | c| c|  c | c | c |c}
$\g$ & $\g^\natural$ & $h^\vee$ & $\bar h_i^\vee$ & $M_i(k)$ &$\chi_i$&$u_i=(\theta_i|\theta_i)$\\\hline
$psl(2|2)$& $sl_2$  & $-2$ &  $0$ & $-k-1$&$-1$&$-2$\\\hline
$spo(2|3)$& $sl_2$& $1/2$ & $-1/2$ & $-4k-2$&$-2$&$-1/2$\\\hline
$spo(2|m), m\ge4$ & $so_m$ &  $2-m/2$ & $1-m/2$ &  $-2k-1$&$-1$&$-1$\\\hline
$D(2,1;a)$ &  $sl_2\oplus sl_2$ &$0$ &  $-\tfrac{2}{1+a}, -\tfrac{2a}{1+a} $ & $-(1+a)k-1, -\frac{1+a}{a}k-1$ &$-1,-1$&$-\tfrac{2}{1+a}, -\tfrac{2a}{1+a} $\\[1pt]\hline
$F(4)$& $so_7$ &$-2$ & $-10/3$ & $-\frac{3}{2} k-1$ &$-1$&$-4/3$\\\hline
$G(3)$& $G_2$   &$-3/2$  &  $-3$ &$ -\frac{4}{3} k-1$&$-1$&$-3/2$
\end{tabular}
}
\captionof{table}{Numerical information\label{numerical}}
\end{table}

We summarize our findings in the following statement:
\begin{proposition}\label{32} Let $\nu\in P^+_k$ and $\ell_0\in\R$. Set 
\begin{equation}\label{Aknufirst}A(k,\nu)=\begin{cases}\tfrac{1}{2(k+h^\vee)}((\nu|\nu+2(\rho^\natural-\rho_R))    -\tfrac{1}{2} p(k))&\text{if $\theta/2\in\D$,}\\
\tfrac{1}{2(k+h^\vee)}\left((\nu|\nu+2\rho^\natural)    -\tfrac{1}{2} p(k)+F_{\nu}(\eta_{\min})\right)&\text{otherwise.}
\end{cases}
\end{equation}
If  $L^W(\nu,\ell_0)$ is   unitary, then 
$$\ell_0\geq  A(k,\nu).$$
\end{proposition}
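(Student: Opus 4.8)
The plan is to derive the inequality $\ell_0 \ge A(k,\nu)$ directly from the positivity of the invariant Hermitian form $H(\cdot,\cdot)$ on the unitary module $L^W(\nu,\ell_0)$, applied to the specific vectors $G^{\{u\}}_0 v_{\nu,\ell_0}$ with $u \in \fn_{-1/2}(\sigma_R)'_+$ produced in Proposition \ref{norma}. Since these vectors lie in $L^W(\nu,\ell_0)$ and $H$ is positive semidefinite (in fact positive definite on the irreducible unitary module), we have $\|G^{\{\phi(v)\}}_0 v_{\nu,\ell_0}\|^2 \ge 0$ for every weight vector $v \in \fn_{-1/2}(\sigma_R)'_+$. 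The two cases of the formula for $A(k,\nu)$ correspond exactly to the two cases $\theta/2 \in \D$ and $\theta/2 \notin \D$.

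First I would treat the case $\theta/2 \in \D$. Here $\eta = 0$ is a weight of $\g_{-1/2}$, and Proposition \ref{norma}(2) gives $\|G^{\{u_{\theta/2}\}}_0 v_{\nu,\ell_0}\|^2 = 2\langle \phi(u_{\theta/2}), u_{\theta/2}\rangle\big(-2(k+h^\vee)\ell_0 + (\nu|\nu+2(\rho^\natural-\rho_R)) - \tfrac12 p(k)\big)$. Since $\phi$ is almost compact, $\langle \phi(u_{\theta/2}), u_{\theta/2}\rangle > 0$, so nonnegativity of the norm forces $-2(k+h^\vee)\ell_0 + (\nu|\nu+2(\rho^\natural-\rho_R)) - \tfrac12 p(k) \ge 0$. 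Because $k$ is in the unitary range we have $k + h^\vee < 0$, hence dividing by the negative quantity $2(k+h^\vee)$ reverses the inequality and yields $\ell_0 \ge \tfrac{1}{2(k+h^\vee)}\big((\nu|\nu+2(\rho^\natural-\rho_R)) - \tfrac12 p(k)\big) = A(k,\nu)$, as desired. (One should also note, as already observed in the text before the statement, that in this case $-2(\nu|\rho_R) \le F_\nu(\eta)$ for all nonzero weights $\eta$ of $\g_{-1/2}$, so the bound coming from $\eta = 0$ is indeed the strongest one; this is why the $\theta/2 \in \D$ branch of $A(k,\nu)$ has the form it does.)

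Next I would treat the case $\theta/2 \notin \D$. Now every weight $\eta$ of $\g_{-1/2}$ is nonzero, and Proposition \ref{norma}(1) gives $\|G^{\{\phi(v)\}}_0 v_{\nu,\ell_0}\|^2 = \langle \phi(v), v\rangle\big(-2(k+h^\vee)\ell_0 + (\nu|\nu+2\rho^\natural) - \tfrac12 p(k) + F_\nu(\eta)\big)$ for a weight vector $v$ of weight $\eta$, where $F_\nu(\eta)$ is as in \eqref{ev}. Again $\langle \phi(v), v\rangle > 0$ and $k + h^\vee < 0$, so for each $\eta \in \overline\D^+_{1/2}$ we obtain $\ell_0 \ge \tfrac{1}{2(k+h^\vee)}\big((\nu|\nu+2\rho^\natural) - \tfrac12 p(k) + F_\nu(\eta)\big)$. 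To get the sharpest bound we want the right-hand side to be as large as possible; since $2(k+h^\vee) < 0$, this means we want $F_\nu(\eta)$ to be as small as possible. By the case-by-case computations carried out just before the statement (formulas \eqref{Fpsl}, \eqref{spo231}, \eqref{Fpari}, \eqref{resto1}, \eqref{FD21}, \eqref{F}, and the $G(3)$ computation), the minimum of $F_\nu(\eta)$ over $\eta \in \overline\D^+_{1/2}$ is attained at $\eta = \eta_{\min}$ and equals $F_\nu(\eta_{\min})$. Substituting this minimizing choice gives $\ell_0 \ge \tfrac{1}{2(k+h^\vee)}\big((\nu|\nu+2\rho^\natural) - \tfrac12 p(k) + F_\nu(\eta_{\min})\big) = A(k,\nu)$, completing the proof.

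The routine part is assembling the inequality; the only real content is that the minimization of $F_\nu(\eta)$ has already been carried out explicitly in each case, and that the sign of $k + h^\vee$ is negative throughout the unitary range, which is what makes "minimize $F_\nu$'' the correct optimization. I expect the main (though still modest) obstacle to be bookkeeping: making sure that $\nu \in P^+_k$ is genuinely needed only to guarantee $\nu \in P^+$ (so the case-by-case minimization formulas apply), and that the vectors $G^{\{\phi(v)\}}_0 v_{\nu,\ell_0}$ are legitimately in the module so that positivity of $H$ applies to them — both of which are immediate from Lemma \ref{basisW} and the construction of the Hermitian form via Proposition \ref{exhf} and \cite[Proposition 6.7]{KMP}.
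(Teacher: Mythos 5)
Your proof is correct and follows essentially the same line as the paper: the paper derives inequality \eqref{maxeta} (resp.\ \eqref{maxetazero}) from Proposition \ref{norma}(1) (resp.\ (2)), observes that since $k+h^\vee<0$ the strongest constraint comes from minimizing $F_\nu(\eta)$ or, when $\theta/2\in\D$, from the $\eta=0$ vector, then performs the case-by-case minimization of $F_\nu$ and states the proposition as a summary. Your write-up cleanly assembles exactly these ingredients (including the sign bookkeeping with $k+h^\vee<0$ and the positivity of $\langle\phi(v),v\rangle$), so there is no substantive deviation from the paper's argument.
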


\begin{lemma}\label{extremal} If $L^W(\nu,\ell_0)$ is unitary, then $\ell_0=A(k,\nu)$ in the following cases:
\begin{enumerate}
 \item  $\g\ne spo(2|3), D(2,1;a)$: $\nu\in P^+_k$ such that  $\nu-\rho_R\notin P^+$.
 \item  $\g=spo(2|3)$: $\nu=0$ and $\nu= \tfrac{M_1(k)}{2}\epsilon_1$.
 \item  $\g=D(2,1;a)$:  $\nu=r\e_3$ and $\nu= r\e_2+M_2(k)\e_3$, where  $r=0,\ldots,M_1(k)$ (resp.  $\nu=r\e_2$ and $\nu=M_1(k)\e_2+r\e_3,$ where $r=0,\ldots,M_2(k)$) if $\rho_R=\omega_1^1=\e_2$ (resp. $\rho_R=\omega_1^2=\e_3$).
\end{enumerate}
\end{lemma}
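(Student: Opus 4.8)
The plan is to pin $\ell_0$ down by combining the lower bound $\ell_0\ge A(k,\nu)$ of Proposition \ref{32} with a vanishing statement: under the extremality hypothesis, a distinguished level--zero descendant of the highest weight vector of $L^W(\nu,\ell_0)$ is forced to vanish, and the norm formulas of Proposition \ref{norma} then turn ``$\ell_0\ge A(k,\nu)$'' into an equality.

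\emph{Reduction to a vanishing.} By Proposition \ref{norma} and \eqref{Aknufirst}, for a weight vector $v\in\fn_{-1/2}(\si_R)'_+$ of $\h^\natural$--weight $\eta$ one has, when $\theta/2\notin\D$,
$$
\|G^{\{\phi(v)\}}_0 v_{\nu,\ell_0}\|^2=\langle\phi(v),v\rangle\Big(-2(k+h^\vee)\big(\ell_0-A(k,\nu)\big)+F_\nu(\eta)-F_\nu(\eta_{\min})\Big),
$$
and, when $\theta/2\in\D$, $\|G^{\{u_{\theta/2}\}}_0 v_{\nu,\ell_0}\|^2=-4(k+h^\vee)\,\langle\phi(u_{\theta/2}),u_{\theta/2}\rangle\,\big(\ell_0-A(k,\nu)\big)$. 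Taking $v$ of weight $\eta_{\min}$ (which realises the minimum of $F_\nu$ on $\overline\D^+_{1/2}$, by the inspection preceding Proposition \ref{32}) the correction term disappears; since $k+h^\vee<0$, $\langle\phi(v),v\rangle>0$, and the invariant form of a unitary module is positive definite, for unitary $L^W(\nu,\ell_0)$ with $\nu\in P^+_k$ one gets
$$
\ell_0=A(k,\nu)\ \Longleftrightarrow\ G^{\{\phi(v)\}}_0 v_{\nu,\ell_0}=0\ \big(\text{resp. }G^{\{u_{\theta/2}\}}_0 v_{\nu,\ell_0}=0\big).
$$
Thus the lemma reduces to proving that this descendant is zero for each listed extremal weight.

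\emph{Case (1).} I would work inside the lowest energy component $L^W(\nu,\ell_0)_0=L^Z(\nu,\ell_0)$. Since $\omega$ restricts to a compact conjugate linear anti--involution of $\g^\natural$, a unitary $\Zhu$--module is a finite direct sum of finite--dimensional $\g^\natural$--irreducibles; in particular $L^Z(\nu,\ell_0)$ is finite dimensional. From \eqref{m4}--\eqref{m5} and the commutation rules (2)--(3) of $\Zhu$ one checks that $G^{\{\phi(v)\}}_0 v_{\nu,\ell_0}$ is killed by $\fn_0(\si_R)_+$: a positive root vector sends it to the $G_0$ of $[\,\fn_0(\si_R)_+,\phi(v)\,]$, whose $\h^\natural$--weights lie in $\overline\D^+_{1/2}$ (so the corresponding $G_0$ annihilates $v_{\nu,\ell_0}$ by \eqref{m5}) or in the zero weight space, which is absent when $\theta/2\notin\D$ by minimality of $\eta_{\min}$. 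Hence $G^{\{\phi(v)\}}_0 v_{\nu,\ell_0}$ is a $\g^\natural$--highest weight vector of weight $\nu-\eta_{\min}$, and in a finite--dimensional $\g^\natural$--module such a nonzero vector must have dominant weight. It then remains to verify, case by case over \eqref{isunitary} with $\g\ne spo(2|3),D(2,1;a)$ (and for both choices of $\overline\D^+_{1/2}$ when there are two), that $\nu-\eta_{\min}\notin P^+$ is equivalent to $\nu-\rho_R\notin P^+$: from the explicit values of $\eta_{\min}$ and $\rho_R$ in Tables \ref{thetahalfisroot}--\ref{thetahalfisnotroot} one sees that $\langle\eta_{\min},\alpha^\vee\rangle$ and $\langle\rho_R,\alpha^\vee\rangle$ coincide on every simple coroot $\alpha^\vee$ of $\g^\natural$ on which either is positive, so for $\nu\in P^+$ the two non--dominance conditions cut out the same set of weights.

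\emph{Cases (2), (3), and the main obstacle.} For $\g=spo(2|3)$ and $\g=D(2,1;a)$ the equivalence above fails, because $\rho_R$ (respectively a component of $\rho_R$) is a fundamental weight of $\g^\natural$ and, moreover, $\theta/2\in\D$ for $spo(2|3)$ so the relevant descendant $G^{\{u_{\theta/2}\}}_0 v_{\nu,\ell_0}$ has weight $\nu$, which is always dominant. Here I would split the unitary range: at its ``bottom'' the condition $\nu-\rho_R\notin P^+$ degenerates to $\nu=0$ (resp. to one of the two boundary families in (3)), treated as in case (1); at its ``top'', $\nu(\theta_i^\vee)=M_i(k)$, one exploits the integrability of the affine vertex subalgebra $\bigotimes_i V^{M_i(k)}(\g^\natural_i)\hookrightarrow\Wu$ on the unitary module, combined with the $\Zhu$--module structure of $L^Z(\nu,\ell_0)$ from Remark \ref{basisMhw} and the numerical data of Table \ref{numerical}, to force the descendant to vanish exactly at $\nu=\tfrac{M_1(k)}{2}\e_1$ (resp. at the remaining weights listed in (3)). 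The main obstacle is precisely this last, non--uniform step: one must decide, separately for each $\g$ in \eqref{isunitary}, which extremal weights annihilate the descendant, tracking whether $\theta/2\in\D$, the two possible choices of $\overline\D^+_{1/2}$ (hence of $\eta_{\min}$) for $spo(2|2r)$, $D(2,1;a)$ and $F(4)$, and the position of $\rho_R$ relative to $\eta_{\min}$.
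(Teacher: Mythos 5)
Your reduction to a vanishing statement is exactly the right move, and the equivalence ``$\ell_0=A(k,\nu)$ iff the distinguished level--zero descendant of $v_{\nu,\ell_0}$ vanishes'' (obtained by combining Proposition~\ref{32} with the norm formulas of Proposition~\ref{norma}) is also the paper's starting point. But there are two genuine gaps.

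First, your Case~(1) argument covers fewer algebras than Case~(1) of the lemma. You deduce that $G^{\{\phi(v)\}}_0 v_{\nu,\ell_0}$ (for $v$ of weight $\eta_{\min}$) is a $\g^\natural$--highest weight vector of weight $\nu-\eta_{\min}$ only under the hypothesis $\theta/2\notin\D$: as you observe, for $\theta/2\in\D$ the element $[\fn_0(\si_R)_+,\phi(v)]$ can have $\h^\natural$--weight $0$, and the corresponding $G_0$ does not kill $v_{\nu,\ell_0}$. You then treat this as an obstruction confined to $\g=spo(2|3)$, but $spo(2|2r+1)$ with $r>1$ and $G(3)$ also satisfy $\theta/2\in\D$ and \emph{are} in Case~(1) of the statement. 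For these your dominance argument fails entirely: the relevant descendant is $G^{\{u_{\theta/2}\}}_0 v_{\nu,\ell_0}$, which has $\h^\natural$--weight $\nu$ and so is always dominant, and you never explain how to force its vanishing. The paper handles this by a different mechanism, not a highest--weight/dominance count: from $\nu-\rho_R\notin P^+$ one extracts a simple root $\overline\a$ of $\g^\natural$ with $\nu(\overline\a^\vee)=0$, whence unitarity of the $\g^\natural$--action gives $J^{\{X_{-\overline\a}\}}_0 v_{\nu,\ell_0}=0$; applying $J^{\{X_{-\overline\a}\}}_0$ (and, when $\theta/2\notin\D$, a second $J^{\{X_\gamma\}}_0$ with $\gamma\in\Dp^\natural$) to the known vanishing $G^{\{v\}}_0 v_{\nu,\ell_0}=0$ from \eqref{m5} produces, via the $\Zhu$ commutator $[J^{\{a\}}_0,G^{\{u\}}_0]=G^{\{[a,u]\}}_0$, a nonzero multiple of the descendant you need to kill. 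This argument works uniformly in both the $\theta/2\in\D$ and $\theta/2\notin\D$ cases, and in particular disposes of $spo(2|2r+1)$ and $G(3)$.

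Second, for Cases~(2) and~(3) (the ``top of the unitary range'' sublocus $\nu(\theta_i^\vee)=M_i(k)$) you only state an intention. The step that is actually needed here has no analogue in Case~(1): one must first derive $J^{\{X_{\theta_i}\}}_{-1}v_{\nu,\ell_0}=0$ from the level--$M_i(k)$ integrability of the embedded affine subalgebra $V^{M_i(k)}(\g^\natural_i)$ acting on the unitary module (a norm computation at energy level $1$), and then commute with a level--$1$ Fourier mode $G^{\{u\}}_1$, using $[G^{\{u\}}_1,J^{\{X_{\theta_i}\}}_{-1}]=-G^{\{[X_{\theta_i},u]\}}_0$, to push the vanishing back to energy level $0$. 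Without this two--step use of mode $\pm1$ operators the ``integrability'' heuristic does not produce the vanishing at the listed weights.
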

\begin{proof} Assume $\nu-\rho_R\notin P^+$. 
We first discuss the cases when $\theta/2$ is a root, namely $\g=spo(2|2r+1)$ and $\g=G(3)$.  Since $\rho_R=\omega^{i_0}_{ j_0}$ for some $i_0,j_0$, then  $\nu=\sum_{i,j} m^i_j\omega^i_j$ with  $m^{i_0}_{j_0}=0$. Let $\overline\a$ be the simple root corresponding to $\omega^{i_0}_{ j_0}$ (i. e. $\omega^{i_0}_{ j_0}(\overline \a^\vee)=\d_{\a,\overline \a}$ for all simple roots $\a$ of $\D^\natural$). Explicitly $\overline \a=\e_r$ for $\g=spo(2|2r+1)$ while $\overline \a=\e_1$ for $\g=G(3)$. Then $J^{\{X_{-\overline \a}\}}_0$ acts trivially on $v_{\nu,\ell_0}$. We note that $\overline\a=\eta_{\min}$. 
  If $v\in\g_{-1/2}$ has weight $\eta_{\min}$, then $G^{\{v\}}_0v_{\nu,\ell_0}=0$. Note that $v$ is a root vector for the root $-\theta/2+\eta_{\min}$, hence $[X_{-\overline \a},v]$ is a root vector for the root $-\theta/2$, thus $w=[X_{-\overline \a},v]$ is a nonzero multiple of $u_{\theta/2}$. Since
$$
0=J^{\{X_{-\overline \a }\}}_0G^{\{v\}}_0v_{\nu,\ell_0}=G^{\{[X_{-\overline \a},v]\}}_0v_{\nu,\ell_0}=G^{\{w\}}_0v_{\nu,\ell_0}=const.\,G^{\{u_{\theta/2}\}}_0v_{\nu,\ell_0},
$$
we deduce that $\Vert G^{\{u_{\theta/2}\}}_0v_{\nu,\ell_0}\Vert^2=0$ and Proposition \ref{norma} implies $\ell_0=A(k,\nu).$

If  $\g=D(2,1;a)$ and  $\eta_{\min}=\e_2-\e_3$, then $\rho_R=\omega^1_{1}=\e_2$. Let $\overline \a=2\e_{2}$. Since $\nu-\rho_R\notin P^+$, $\nu=r_3\e_3$, so $X_{-\overline\a}v_{\nu,\ell_0}=0$.
 If $v\in\g_{-1/2}$ has weight $\eta_{\min}$, then $G^{\{v\}}_0v_{\nu,\ell_0}=0$. Note that $v$ is a root vector for the root $-\e_1+\e_2-\e_3$, hence $[X_{-\overline \a},v]$ is a root vector for the root $-\e_1-\e_2-\e_3$, thus $w=[X_{-\overline \a},v]\ne0$. Moreover $w\in\g_{-1/2}$ is a vector of weight $-\eta_{\min}$ thus $w=const.\,\phi(v)$.
 Therefore
  $$
0=J^{\{X_{-\overline \a }\}}_0G^{\{v\}}_0v_{\nu,\ell_0}=G^{\{[X_{-\overline \a},v]\}}_0v_{\nu,\ell_0}=G^{\{w\}}_0v_{\nu,\ell_0}=const.\,G^{\{\phi(v)\}}_0v_{\nu,\ell_0}.
$$
We deduce that $\Vert G^{\{\phi(v)\}}_0v_{\nu,\ell_0}\Vert^2=0$ and Proposition \ref{norma} implies $\ell_0=A(k,\nu)$ also in this case.
The argument for $\g=D(2,1;a)$,  $\eta_{\min}=-\e_2+\e_3$, and $\nu=r_2\e_2$ is completely analogous using $\overline \a=2\e_3$.

We now turn to the remaining cases where the argument is similar but somewhat more complicated: we claim that for  $v\in\n_{-1/2}(\si_R)_+$ of weight $\eta_{\min}$, we have
\begin{equation}\label{gov}G^{\{\phi(v)\}}_{0}v_{\nu,\ell_0}=0.\end{equation}

Indeed, if $\g=psl(2|2)$, then $\nu-\rho_R\notin P^+$ means that $\nu=0$, thus $J^{\{a\}}_0v_{(\nu,\ell_0)}=0$ for all $a\in\g^\natural$. We know that $G^{\{u\}}_{0}v_{\nu,\ell_0}=0$ for all $u\in\n_{-1/2}(\si_R)_+$.
Since $[J^{\{a\}}_0,G^{\{vu\}}_{0}]=G^{\{[a,u]\}}_0$, then $G^{\{[a,u]\}}_0v_{0,\ell_0}=J^{\{a\}}_0G^{\{u\}}_{0}v_{0,\ell_0}$. Since $ad(U(\g^\natural))\n_{-1/2}(\sigma_R)=\g_{-1/2}$, we see that $G^{\{u\}}_{0}v_{\nu,\ell_0}=0$ for all $u\in \g_{-1/2}$, thus, in particular, \eqref{gov} holds in this case.

We now prove \eqref{gov} in the  cases   $\g=spo(2|2r)$, $\g=F(4)$.
Since $\rho_R=\omega^{i_0}_{ j_0}$ let $\overline\a$, as above, be the simple root corresponding to $\omega^{i_0}_{ j_0}$.
Since $J^{\{X_{-\overline \a}\}}_0$ acts trivially on $v_{\nu,\ell_0}$ and $G^{\{v\}}_0v_{\nu,\ell_0}=0$ if $v\in \g_{-1/2}$ has weight $\eta_{\min}$,
$$0=J^{\{X_{-\overline \a }\}}_0G^{\{v\}}_0v_{\nu,\ell_0}=G^{\{[X_{-\overline \a},v]\}}_0v_{\nu,\ell_0}.
$$ 
Let $\be=-\half\theta+\eta_{\min}$. This is the root of $\g_{-1/2}$ such that $\be_{|\h^\natural}=\eta_{\min}$. We check in each case that	\begin{enumerate}
	\item[(i)] $\be-\overline\a$ is a root,
	\item[(ii)] there exists a positive root $\gamma\in\D^\natural$  such that $\be-\overline\a+\gamma=-\theta/2-\eta_{\min}$.
	\end{enumerate}
 If (i), (ii) hold, then $[X_{-\overline \a},v]\ne0$ and both $[X_\gamma,[X_{-\overline \a},v]]$ and $\phi(v)$ are in $\g_{\be'}$ with $\be'=-\half\theta-\eta_{\min}$. Since all the roots have multiplicity one, this implies that $[X_\gamma,[X_{-\overline \a},v]]=const. \phi(v)$. Since $\gamma$ is positive $X_\gamma v_{\nu,\ell_0}=0$,  hence
$$0=J^{\{X_\gamma\}}_0J^{\{[X_{-\a_{\bar i}},v]\}}_0G^{\{v\}}_0v_{\nu,\ell_0}=G^{\{[X_\gamma,[X_{-\overline \a},v]\}}_0v_{\nu,\ell_0}= const. \,G^{\{\phi(v)\}}_{0}v_{\nu,\ell_0},
$$
 so that \eqref{gov} holds.

The following list describes for each case the roots $\overline \a$, $\be$, $\be-\overline \a$, and $\gamma$. 
\begin{itemize}
\item Case 1: $so(2|2r)$, $r>2$, $\eta_{\min}=\e_r$, $\rho_R=\omega_r$, $\overline \a=\e_{r-1}+\e_r$, $\be=-\d_1+\e_r$, $\be-\overline \a=-\d_1-\e_{r-1}$, $\gamma={\e_{r-1}-\e_r}$.
\item Case 2: $so(2|2r)$, $r>2$, $\eta_{\min}=-\e_r$, $\rho_R=\omega_{r-1}$, $\overline \a=\e_{r-1}-\e_r$, $\be=-\d_1-\e_r$, $\be-\overline \a=-\d_1-\e_{r-1}$, $\gamma={\e_{r-1}+\e_r}$.
\item Case 3: $F(4)$, $\eta_{\min}=\half(-\e_1+\e_2+\e_3)$, $\rho_R=\omega_{3}$, $\overline \a=\e_{3}$, $\be=\half(-\d_1-\e_1+\e_2+\e_3)$, $\be-\overline \a=\half(-\d_1-\e_1+\e_2-\e_3)$, $\gamma={\e_{1}-\e_2}$.
\item Case 4: $F(4)$, $\eta_{\min}=\half(\e_1-\e_2-\e_3)$, $\rho_R=\omega_{1}$, $\overline \a=\e_1-\e_{2}$, $\be=\half(-\d_1+\e_1-\e_2-\e_3)$, $\be-\overline \a=\half(-\d_1-\e_1+\e_2-\e_3)$, $\gamma={\e_{3}}$.
\end{itemize}
Having established  \eqref{gov}, by Proposition \ref{norma}, $||G^{\{\phi(v)\}}_{0}v_{0,\ell_0}||^2=0$ implies $\ell_0=A(k,0)$.

It remains only to check the statement for $\g=spo(2|3)$ or $\g=D(2,1;a)$ and $\nu=\sum_i \tfrac{r_i}{2}\theta_i$ with $r_i=M_i(k)$ for some $i$. Since
\begin{align*}
J^{\{X_{-\theta_i}\}}_{1}J^{\{X_{\theta_i}\}}_{-1}v_{\nu,\ell_0}&=[J^{\{X_{-\theta_i}\}}_{1},J^{X_{\{\theta_i}\}}_{-1}]v_{\nu,\ell_0}=-(\nu|\theta_i)+M_i(k)\tfrac{(\theta_1|\theta_i)}{2}
\\&=\tfrac{(\theta_1|\theta_i)}{2}(-(\nu|\theta_i^\vee)+M_i(k))=0,
\end{align*}
we see that
\begin{align*}
\Vert J^{X_{\theta_i}}_{-1}v_{\nu,\ell_0}\Vert^2&=H(J^{\{X_{\e_1}\}}_{-1}v_{\nu,\ell_0},J^{\{X_{\e_1}\}}_{-1}v_{\nu,\ell_0})=-H(J^{\{\phi(X_{\e_1})\}}_{1}J^{\{X_{\e_1}\}}_{-1}v_{\nu,\ell_0},v_{\nu,\ell_0}
\\
&=-const.\ H(J^{\{X_{-\e_1}\}}_{1}J^{\{X_{\e_1}\}}_{-1}v_{\nu,\ell_0},v_{\nu,\ell_0})=0.
\end{align*}
By unitarity we deduce 
\begin{equation}\label{Jtheta1}J^{\{X_{\theta_i}\}}_{-1}v_{\nu,\ell_0}=0.
\end{equation}
If $\g=spo(2|3)$ then \eqref{Jtheta1} implies
$$0=G_1^{\{\phi(u_{\d_1+\e_1})\}}J^{\{X_{\e_1}\}}_{-1}v_{\nu,\ell_0}=[G_1^{\{\phi(u_{\d_1+\e_1})\}},J^{\{X_{\e_1}\}}_{-1}]v_{\nu,\ell_0}=-
G_0^{\{[X_{\e_1},\phi(u_{\d_1+\e_1})]\}}v_{\nu,\ell_0}
$$
so, since $[X_{\e_1},\phi(u_{\d_1+\e_1})]=const.u_{\d_1}$,
we obtain
$$
 G_0^{\{u_{\d_1}\}}v_{\nu,\ell_0}=0.
$$
Again Proposition \ref{norma} implies that $\ell_0=A(k,\nu)$.

If $\g=D(2,1;a)$, then observe that $\eta_{\min}=\xi-\theta_i$.  Let $u_\xi\in\g_{-1/2}$ be a vector of weight $-\xi$. Note that $u_\xi$ is a root vector for the root $-\theta/2-\xi$. Since  $\theta_i+(-\theta/2-\xi)=-\theta/2-\eta_{\min}$ is a root, we see that $[X_{\theta_i}, u_\xi]$ is a nonzero vector in $\g_{-1/2}$ of weight $-\eta_{\min}$. If $v\in\g_{-1/2}$ has weight $\eta_{\min}$, it follows that $ [X_{\theta_i}, u_\xi]=const.\,\phi(v)$. By \eqref{Jtheta1},
$$
0=G_1^{\{u_{\xi}\}}J^{\{X_{\theta_i}\}}_{-1}v_{\nu,\ell_0}=[G_1^{\{u_{\xi}\}},J^{\{X_{\theta_i}\}}_{-1}]v_{\nu,\ell_0}=-
G_0^{\{[X_{\theta_1},u_{\xi}]\}}v_{\nu,\ell_0}=const.\, G_0^{\{\phi(v)\}}v_{\nu,\ell_0}.
$$
We can therefore conclude using Proposition \ref{norma}.
\end{proof}

We can finally state the necessary conditions for unitarity.

\begin{theorem}\label{32t} Let  $L^W(\nu,\ell_0)$ be a unitary $\si_R$-twisted $\Wu$-module, where $k$ is in the unitary range. Then
\begin{enumerate}
\item $\nu\in P^+_k$,
\item $\ell_0\geq  A(k,\nu)$,
\item If   $\nu$ is as in Lemma \ref{extremal} (1), (2), (3), then $\ell_0=A(k,\nu)$.
\end{enumerate}
\end{theorem}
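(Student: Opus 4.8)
The plan is to observe that Theorem \ref{32t} is a direct repackaging of the three preceding results, so the ``proof'' amounts to citing them and checking that their hypotheses are in force. First I would note that the hypothesis that $k$ lies in the unitary range guarantees, via \cite[Theorem 2.1]{KW1} and the discussion around Table \ref{numerical}, that the affine levels $M_i(k)$ are non-negative integers; combined with the embedding of $\bigotimes_i V^{M_i(k)}(\g^\natural_i)$ into $\Wu$ and the fact that $\bigotimes_i V^{M_i(k)}(\g^\natural_i)\cdot v_{\nu,\ell_0}$ is a unitary module over this affine vertex algebra, this forces $\nu\in P^+_k$ as defined in \eqref{p+k}. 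That is exactly part (1), and it is also the standing hypothesis needed to invoke the other two propositions.

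Next, for part (2), I would simply apply Proposition \ref{32}: given $\nu\in P^+_k$ and $\ell_0\in\R$, unitarity of $L^W(\nu,\ell_0)$ implies $\ell_0\ge A(k,\nu)$, where $A(k,\nu)$ is given by \eqref{Aknufirst}. The content behind this step is already fully developed: it rests on the norm computations of Proposition \ref{norma}, on the inequalities \eqref{maxeta} and \eqref{maxetazero} obtained from $\langle\phi(v),v\rangle>0$ together with $k+h^\vee<0$, and on the case-by-case verification (carried out in the bulk of Section \ref{necessary}) that $\min_\eta F_\nu(\eta)=F_\nu(\eta_{\min})$ and, when $\theta/2\in\D$, that $-2(\nu|\rho_R)\le F_\nu(\eta)$ so that \eqref{maxetazero} is the sharper bound. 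Nothing new is needed here beyond quoting Proposition \ref{32}.

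Finally, part (3) is precisely the statement of Lemma \ref{extremal}: in the listed cases for $\nu$, unitarity forces equality $\ell_0=A(k,\nu)$. Again I would just cite Lemma \ref{extremal}, whose proof has already produced, in each case, a highest weight vector annihilated by an appropriate $G^{\{\phi(v)\}}_0$ (or $G^{\{u_{\theta/2}\}}_0$), so that the corresponding norm in Proposition \ref{norma} vanishes and the bound of part (2) is attained. Thus the proof of Theorem \ref{32t} is the one-line assembly:
\begin{proof}
Part (1) follows from the embedding $\bigotimes_i V^{M_i(k)}(\g^\natural_i)\hookrightarrow \Wu$ and unitarity of $\bigotimes_i V^{M_i(k)}(\g^\natural_i)\cdot v_{\nu,\ell_0}$, as explained before \eqref{p+k}; part (2) is Proposition \ref{32}; part (3) is Lemma \ref{extremal}.
\end{proof}
The only mild subtlety — and the place where I would be most careful — is to make sure that the hypothesis ``$k$ in the unitary range'' is genuinely what is needed to know that the $M_i(k)$ are non-negative integers (so that $P^+_k$ is non-empty and the affine unitarity argument applies), rather than some weaker or incomparable condition; this is where one should double-check Table \ref{numerical} and the unitary ranges in Table \ref{tabel0} against each other, but it is bookkeeping rather than a genuine obstacle.
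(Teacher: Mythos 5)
Your proposal is correct and matches the paper's own treatment: Theorem \ref{32t} is stated as a summary with no separate proof, precisely because part (1) is the observation immediately preceding \eqref{p+k}, part (2) is Proposition \ref{32}, and part (3) is Lemma \ref{extremal}, exactly as you assemble them.
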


\section{Ramond sector: sufficient conditions for unitarity}\label{sufficient}
Recall that  in Section \ref{Setup} we constructed the twisted $F(\g_{1/2})$-module $F(\g_{1/2},\si_R)$. Our first task is to show that this module is unitary.

Fix  bases $\{w_\gamma\}, \{w^\gamma\}$ of $\g_{1/2}$ dual w.r.t. $\langle\cdot,\cdot\rangle_{\rm ne}$, with $\gamma$ ranging over $S_{1/2}$.  Recall that there is an embedding of superspaces
$u\mapsto \Phi_u$ of $\g_{1/2}$ into $F(\g_{1/2})$. Sometimes we write $\Phi_\a$ (resp. $\Phi^\a$) as a shortcut for $\Phi_{w_\a}$ (resp. $\Phi_{w^\a}$).

We can assume
that $S_{1/2}=S_{1/2}^+\cup S^-_{1/2}$ with $\{w_\gamma\mid \gamma\in S^\pm_{1/2}\}$  basis of $\fn_{1/2}(\sigma)_\pm$. We will also need a more refined notation $ S^-_{1/2}= S^{-,0}_{1/2}\cup S^{-,'}_{1/2}$ to label bases of   
$\g^0_{1/2}(\si_R), \n_{1/2}(\si_R)'_-$  (see \eqref{eq:2.5}). We 
\begin{lemma}
The map $t^{-1/2}\Phi_{u}\mapsto[\Phi_{u}]$ extends to an isomorphism between $Cl(\g_{1/2})$ and $Zhu_{\si_R}(F(\g_{1/2}))$.
\end{lemma}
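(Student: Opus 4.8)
The plan is to produce an algebra homomorphism $\psi\colon Cl(\g_{1/2})\to Zhu_{\si_R}(F(\g_{1/2}))$ with $\psi(u)=[\Phi_u]$ — the map of the statement under the identification $u\leftrightarrow t^{-1/2}\Phi_u$ — and then to show it is bijective by a dimension count. Recall that $F(\g_{1/2})$ is the vertex superalgebra strongly and freely generated by the odd fields $\Phi_u$, $u\in\g_{1/2}$, of conformal weight $\tfrac12$, the only non-vanishing products among generators being $(\Phi_u)_{(0)}\Phi_v=\langle u,v\rangle_{\text{ne}}\vac$ and $(\Phi_u)_{(-1)}\Phi_v=:\Phi_u\Phi_v:$; as noted above, its $\si_R$-grading coincides with the $\tfrac12\ZZ/\ZZ$-grading by $L_0$, so $Zhu_{\si_R}(F(\g_{1/2}))=Zhu_{L_0}(F(\g_{1/2}))$ is the associative superalgebra with product $*$ described in \cite[Section 7]{KMP}, with projection $\pi_Z(\Phi_u)=[\Phi_u]$.

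For well-definedness I would evaluate the product formula of \cite[Section 7]{KMP} on the two weight-$\tfrac12$ generators $\Phi_u,\Phi_v$: only the products $(\Phi_u)_{(-1)}\Phi_v$ and $(\Phi_u)_{(0)}\Phi_v$ enter, and combining this with the skew-supersymmetry identity $:\Phi_u\Phi_v:=-:\Phi_v\Phi_u:$, which holds already in $F(\g_{1/2})$ because $(\Phi_v)_{(j)}\Phi_u=0$ for $j\ge1$ and $T\vac=0$, one obtains
\[
[\Phi_u]*[\Phi_v]+[\Phi_v]*[\Phi_u]=\langle u,v\rangle_{\text{ne}}\,1 .
\]
By the universal property of the Clifford algebra, $u\mapsto[\Phi_u]$ extends to the desired homomorphism $\psi$.

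Surjectivity of $\psi$ is then immediate: the Zhu algebra of a strongly generated vertex algebra is generated by the classes of its strong generators, here the $[\Phi_u]$. For injectivity I would invoke \cite[Theorem 3.25]{DK}, exactly as it is used in the excerpt to produce the basis $\mathcal B_{\l,\ell_0}$ of $\Zhu$: since $F(\g_{1/2})$ is freely and strongly generated by the odd weight-$\tfrac12$ fields $\Phi_{u_i}$, with $\{u_i\}$ a basis of $\g_{1/2}$, the superalgebra $Zhu_{\si_R}(F(\g_{1/2}))$ has a PBW-type basis consisting of the monomials $[\Phi_{u_{i_1}}]*\cdots*[\Phi_{u_{i_s}}]$, $i_1<\cdots<i_s$, in which each $[\Phi_{u_i}]$ occurs at most once. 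Hence $\dim Zhu_{\si_R}(F(\g_{1/2}))=2^{\dim\g_{1/2}}=\dim Cl(\g_{1/2})$, and a surjective linear map between finite-dimensional vector spaces of equal dimension is an isomorphism.

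The point needing care is the precise constant in the displayed Clifford relation: one must track the binomial coefficients occurring in the Ramond (twisted) Zhu product of a weight-$\tfrac12$ field and check that they produce exactly $\langle u,v\rangle_{\text{ne}}$ and not a multiple of it. Besides the direct verification from \cite[Section 7]{KMP}, this can be confirmed via the twisted commutator formula \eqref{eqtcf}, which on any $\si_R$-twisted positive energy module $M$ gives $\{(\Phi_u)^M_0,(\Phi_v)^M_0\}=\langle u,v\rangle_{\text{ne}}I_M$; the twisted module $F(\g_{1/2},\si_R)$ of Section \ref{Setup}, whose top component is a spinor module for $Cl(\g_{1/2})$, supplies such a module (and, on varying the maximal isotropic subspace used in its construction, an alternative route to injectivity).
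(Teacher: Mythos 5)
Your argument is correct, but it diverges from the paper's at both of the substantial steps. For surjectivity, the paper does not take it as known that the Zhu algebra is generated by the classes of strong generators: it proves this by a direct induction on conformal weight, reducing $[a]=[:\Phi_u\,b:]$ to $[\Phi_u]*[b]$ modulo lower-weight terms using the quasi-commutativity of the Zhu product. You instead absorb surjectivity into the PBW statement you invoke for the dimension count. For injectivity, the routes are genuinely different. The paper constructs a specific $\si_R$-twisted $F(\g_{1/2})$-module
\[
M=\mathrm{Ind}^{\widehat L(\g_{1/2},\si_R)}_{L^\ge(\g_{1/2},\si_R)}\bigl(Cl(\g_{1/2})\bigr),
\]
whose minimal-energy space is the \emph{entire} Clifford algebra $Cl(\g_{1/2})$ and on which $(\Phi_u)_0$ acts by left multiplication by $t^{-1/2}\Phi_u$; since the left regular representation of $Cl(\g_{1/2})$ is always faithful, injectivity follows with no case distinction. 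You instead deduce injectivity from a dimension count, citing a PBW-type basis from [DK, Theorem 3.25]. This works and is shorter, but it relocates the real content into a black-box structure theorem where the paper's argument is explicit. Your parenthetical about using $F(\g_{1/2},\si_R)$ as the faithfulness witness is correctly hedged but worth emphasizing: its top component is the spinor module $CM$ of Lemma \ref{l1}, not all of $Cl(\g_{1/2})$; when $\e(\si_R)=1$ (i.e.\ $\dim\g_{1/2}$ is odd) $Cl(\g_{1/2})$ is not simple and a single $CM$ is not faithful, so the ``varying the maximal isotropic subspace'' clause is essential in that case --- a subtlety the paper sidesteps by inducing from the full Clifford algebra rather than a spinor module.
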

\begin{proof}
Recall that  $*$ denotes the product in $Zhu_{\si_R}(F(\g_{1/2}))$. By \cite[Theorem 2.13]{DK}, 
$$[[\Phi_{u}],[\Phi_{u}]]=\sum_j\binom{-1/2}{j}(\Phi_{u})_{(j)}(\Phi_{v})=\langle u, v\rangle=[t^{-1/2}\Phi_{u},t^{-1/2}\Phi_{v}],
$$
so the map $t^{-1/2}\Phi_{u}\mapsto [\Phi_u]$ extends to a homomorphism from $Cl(\g_{1/2})$ to $Zhu_{\si_R}(F(\g_{1/2}))$.

 We prove by induction on $\D_a$ that $[a]$ is in the subalgebra of   $Zhu_{\si_R}(F(\g_{1/2}))$ spanned by $[\Phi_u],\,u\in\g_{1/2}$, for all $a\in F(\g_{1/2})$. If $a=\vac$ the claim holds. Now assume $a=:T^k\Phi_u\,b:$ with $\D_b<\D_a$. Note that by  \cite[Theorem 2.13 (b)]{DK}
$$:T^k\Phi_u\,b:+:T^{k-1}\Phi_u\,Tb:=T\left(:T^{k-1}\Phi_u\,b:\right)\equiv \,const\,:T^{k-1}\Phi_u\,b:,$$ 
hence we can assume $k=0$. 
Since
$$(\Phi_u)_{(-1,\si_R)}b=\sum_{j\in\ZZ_+}\binom{\tfrac{1}{2}}{j}(\Phi_u)_{(-1+j)}b= :\Phi_u\,a:+\sum_{j\in\nat}\binom{\tfrac{1}{2}}{j}(\Phi_u)_{(-1+j)}b,
$$
we obtain
$$[a]=[:\Phi_u\,b:]= [\Phi_u]*[b]-\sum_{j\in\nat}\binom{\tfrac{1}{2}}{j}[(\Phi_u)_{(-1+j)}b],
$$
and the claim follows by induction.
It follows that the homomorphism from $Cl(\g_{1/2})$ to $Zhu_{\si_R}(F(\g_{1/2}))$ is surjective.

We now prove that the homomorphism is injective. Set
$$L^{\ge}(A,\si_R)=\bigoplus_{\mu\in- \tfrac{1}{2}+\ZZ_+} t^\mu\g_{1/2}\subset L(\g_{1/2},\si_R).
$$
Consider $Cl(\g_{1/2})$ as a $L^{\ge}(A,\si_R)\oplus \C K$-module by letting $t^{-1/2}\g_{1/2}$ act by left multiplication, $t^{\mu}\g_{1/2}$ act trivially if $\mu>0$, and $K$ act by $I_{Cl(\g_{1/2})}$.
Let 
\begin{equation}\label{M}
M=Ind^{\widehat L(\g_{1/2},\si_R)}_{L^\ge(\g_{1/2},\si_R)}(Cl(\g_{1/2})).
\end{equation}
By Proposition \ref{lii}, $M$ is a $\si_R$-twisted $F(\g_{1/2})$-module. 
Recall that
$$
L^{ne}=\frac{1}{2}
         \sum_{\alpha \in S_{1/2}} : (T \Phi^{\alpha})
         \Phi_{\alpha}:,
$$
 and that $M$ is spanned by  monomials 
\begin{align*} (t^{\mu_1}\Phi_1)(t^{\mu_2}\Phi_2)\cdots(t^{\mu_k}\Phi_k),\quad \mu_i\in-(\tfrac{1}{2}+\ZZ_+).\end{align*}
Since 
$$
[L_0^{ne,\tw},t^{\mu}\Phi_{u}]=-(\mu+\tfrac{1}{2})t^{\mu}\Phi_{u},
$$
the action of $L_0^{ne,\tw}$ is semisimple and the eigenspace decomposition gives a grading
$$
M=\bigoplus_{n\ge 0}M_n,
$$
turning $M$ into a positive energy representation.
Since 
$$
M_0=span((t^{-\tfrac{1}{2}}\Phi_1)(t^{-\tfrac{1}{2}}\Phi_2)\cdots(t^{-\tfrac{1}{2}}\Phi_k) )=Cl(\g_{1/2})
$$
and $(\Phi_{u})_0=(\Phi_{u})_{(-1/2)}$, the action of $(\Phi_{u})_0$ is given by left multiplication by $t^{-1/2}\Phi_{u}$. Since the action of $Cl(\g_{1/2})$ on itself by left multiplication is faithful and this action factors to $Zhu_{\si_R}(F(\g_{1/2}))$ we deduce that the homomorphism from  $Cl(\g_{1/2})$ to $Zhu_{\si_R}(F(\g_{1/2}))$ is injective.
\end{proof}

\begin{lemma}\label{l1}Consider 
${L_0^{ne,\rm tw}}-\tfrac{1}{16}\dim\g_{1/2}$ as an operator on $F(\g_{1/2},\s_R)$. Then the action is semisimple with eigenvalues in $\tfrac{1}{2}\ZZ_+$.
The corresponding eigenspace decomposition 
\begin{equation}\label{FSgr}
F(\g_{1/2},\s_R)=\bigoplus_{n\in\tfrac{1}{2}\ZZ_+}F(\g_{1/2},\s_R)_n
\end{equation}
 gives to $F(\g_{1/2},\s_R)$ the structure of a $\si_R$-twisted positive energy representation of $F(\g_{1/2})$.

The space $F(\g_{1/2},\s_R)_0$ of minimal energy is naturally isomorphic, as a $Zhu_{R}(F(\g_{1/2}))$-module, to the Clifford module
\begin{equation}\label{cm}
CM=Cl(t^{-1/2}\g_{1/2})/(Cl(t^{-1/2}\g_{1/2})\fn_{1/2}(\si_R)_+).
\end{equation}
\end{lemma}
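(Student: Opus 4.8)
The plan is to read off the $L_0^{ne,\tw}$-grading directly from the Clifford-module construction of $F(\g_{1/2},\si_R)$. Recall from Example~\ref{12} that $F(\g_{1/2},\si_R)=Cl(\widetilde A,\langle\cdot,\cdot\rangle)/\bigl(Cl(\widetilde A,\langle\cdot,\cdot\rangle)\,\widetilde A^+\bigr)$, where $\widetilde A=\sum_{\mu\in\half+\ZZ}\g_{1/2}t^\mu$ and $\widetilde A^+=\fn_{1/2}(\si_R)_+t^{-1/2}\oplus\bigoplus_{\mu\ge1/2}\g_{1/2}t^\mu$, with cyclic vacuum $|0\rangle$ killed by $\widetilde A^+$; by Proposition~\ref{lii} this is a $\si_R$-twisted $F(\g_{1/2})$-module, with $(\Phi_u)_0=(\Phi_u)_{(-1/2)}$ acting as left multiplication by $t^{-1/2}\Phi_u$ (via \eqref{identif}). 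First I would record, exactly as in the proof of the preceding lemma, the relation $[L_0^{ne,\tw},t^\mu\Phi_u]=-(\mu+\tfrac12)t^\mu\Phi_u$, which follows from the graded commutator formula \eqref{eqtcf} with $n=0$, from $L^{ne}_{(0)}\Phi_u=T\Phi_u$, $L^{ne}_{(1)}\Phi_u=\tfrac12\Phi_u$, $L^{ne}_{(j)}\Phi_u=0$ for $j\ge2$ (i.e.\ $\Phi_u$ is primary of conformal weight $\tfrac12$ for $L^{ne}$), and from \eqref{eq:1.5}. Consequently the modes $t^{-1/2}\Phi_u$ commute with $L_0^{ne,\tw}$, the modes $t^\mu\Phi_u$ with $\mu\le-\tfrac32$ raise the $L_0^{ne,\tw}$-eigenvalue by $-\mu-\tfrac12\in\nat$, and $\fn_{1/2}(\si_R)_+t^{-1/2}$ together with all $\g_{1/2}t^\mu$, $\mu\ge\tfrac12$, kill $|0\rangle$.

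Thus $|0\rangle$ is an eigenvector, $L_0^{ne,\tw}|0\rangle=h_0|0\rangle$, and $F(\g_{1/2},\si_R)$ is spanned by ordered products of the $L_0^{ne,\tw}$-eigenoperators $t^{-1/2}\Phi_u$ (with $\Phi_u$ in a complement of $\fn_{1/2}(\si_R)_+$) and $t^\mu\Phi_u$ ($\mu\le-\tfrac32$) applied to $|0\rangle$; hence $L_0^{ne,\tw}$ is semisimple with finite-dimensional eigenspaces and spectrum $h_0+\ZZ_+$. It remains to see that $h_0=\tfrac1{16}\dim\g_{1/2}$, i.e.\ the Ramond ground-state energy of $\dim\g_{1/2}$ free fermions; this can be checked directly from the mode expansion of $L^{ne}$, or by diagonalizing $\langle\cdot,\cdot\rangle_{ne}$ so as to factor $F(\g_{1/2})$ into a tensor product of $\dim\g_{1/2}$ single Majorana-fermion vertex algebras, each of which contributes $\tfrac1{16}$. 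Granting this, $L_0^{ne,\tw}-\tfrac1{16}\dim\g_{1/2}$ is semisimple with eigenvalues in $\ZZ_+\subseteq\tfrac12\ZZ_+$, and with the resulting grading $F(\g_{1/2},\si_R)$ is a $\si_R$-twisted positive energy $F(\g_{1/2})$-module.

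For the final assertion, the computation above shows that the minimal-energy space $F(\g_{1/2},\si_R)_0$ is precisely $Cl(t^{-1/2}\g_{1/2})\cdot|0\rangle$, and, since $|0\rangle$ is annihilated by $\fn_{1/2}(\si_R)_+t^{-1/2}$ and by no other element of $t^{-1/2}\g_{1/2}$, this is canonically the Clifford module $CM=Cl(t^{-1/2}\g_{1/2})/\bigl(Cl(t^{-1/2}\g_{1/2})\fn_{1/2}(\si_R)_+\bigr)$, on which $Cl(t^{-1/2}\g_{1/2})$ acts by left multiplication. On the other hand, since the $\si_R$-grading coincides with the $\half\ZZ/\ZZ$-grading induced by $L_0^{ne}$ (as noted at the start of Section~\ref{SectionZhu}), \cite[Theorem 2.30]{DK} makes $F(\g_{1/2},\si_R)_0$ a module over $Zhu_{\si_R}(F(\g_{1/2}))$ in which $[\Phi_u]$ acts by $(\Phi_u)_0$, that is, by left multiplication by $t^{-1/2}\Phi_u$. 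Under the isomorphism $Zhu_{\si_R}(F(\g_{1/2}))\cong Cl(\g_{1/2})$, $[\Phi_u]\mapsto t^{-1/2}\Phi_u$, of the preceding lemma, this identifies $F(\g_{1/2},\si_R)_0$ with $CM$ as $Zhu_{\si_R}(F(\g_{1/2}))$-modules, which is the claim.

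Most of this is routine; the one input deserving genuine attention is the value $\tfrac1{16}$ of the per-fermion Ramond ground-state energy — this is exactly the constant subtracted in the statement — together with the verification that the abstract $Zhu_{\si_R}$-action supplied by \cite{DK} on the minimal-energy space is literally left Clifford multiplication, i.e.\ keeping the chain of identifications ($\Lie(R,\si_R)\cong\widehat A^{\tw}$ via \eqref{identif}, the mode versus weight conventions, and $Zhu_{\si_R}(F(\g_{1/2}))\cong Cl(\g_{1/2})$) consistent throughout.
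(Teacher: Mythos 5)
Your proposal is correct and follows the same line as the paper: grade $F(\g_{1/2},\si_R)$ by $L_0^{ne,\tw}$ via the relation $[L_0^{ne,\tw},t^\mu\Phi_u]=-(\mu+\tfrac{1}{2})t^\mu\Phi_u$, observe that the spanning monomials then give a positive-energy grading with minimal level $L_0^{ne,\tw}|0\rangle=\tfrac{1}{16}\dim\g_{1/2}\,|0\rangle$, and identify the bottom piece with $CM$ through the fact that $(\Phi_u)_0=(\Phi_u)_{(-1/2)}$ acts as left multiplication by $t^{-1/2}\Phi_u$, combined with the isomorphism $Zhu_{\si_R}(F(\g_{1/2}))\cong Cl(\g_{1/2})$ from the preceding lemma. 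The one place the paper does more work is in establishing the constant $\tfrac{1}{16}\dim\g_{1/2}$, which it computes explicitly by applying the twisted Borcherds identity \eqref{Borcherds} (with $n=-1$, $\mu=\nu=\tfrac{1}{2}$) to $L^{ne}=\tfrac{1}{2}\sum_{\alpha\in S_{1/2}}:(T\Phi^\alpha)\Phi_\alpha:$ and splitting the sum over $\fn_{1/2}(\si_R)_+$ and $\g^0_{1/2}(\si_R)$; you only sketch two routes to the same value (the direct mode computation and a Majorana-factorization argument), both of which are valid, but since you yourself flag this constant as the crux of the lemma, it would be better to carry one of them through — the factorization argument in particular needs the check that the normalization of $L^{ne}$ and the choice of maximal isotropic subspace are compatible with the tensor decomposition.
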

\begin{proof}
Recall that
$$
L^{ne}=\frac{1}{2}
         \sum_{\alpha \in S_{1/2}} : (T \Phi^{\alpha})
         \Phi_{\alpha}:.
$$
We claim that 
\begin{equation}\label{Lne}
L^{ne,\tw}_0\cdot 1=\tfrac{1}{16}\dim\g_{1/2}.
\end{equation}
To check \eqref{Lne}, start with \eqref{Borcherds} putting $n=-1,\mu=\nu=1/2$:
%
\begin{align*} &  ({(T \Phi^{\alpha})}_{(-1)}\Phi_{\alpha})_{(1)}\cdot 1
+\binom{1/2}{2}({(T \Phi^{\alpha})}_{(1)}\Phi_{\alpha})_{(-1)}\cdot 1\\
    &= \sum_{j \in \Z_+}  (1/2+j)(\Phi_{\alpha})_{(-1/2-j)}{( \Phi^{\alpha})}_{(-1/2+j)}) \cdot 1,\, 
\end{align*}
which gives
$$({(T \Phi^{\alpha})}_{(-1)}\Phi_{\alpha})_{(1)}\cdot 1
+1/8 = \tfrac{1}{2}(\Phi_{\alpha})_{(-1/2)}{(\Phi^{\alpha})}_{(-1/2)}) \cdot 1. 
$$
Assume $\g_\a\subset \n_{1/2}(\si_R)_+$. Then 
$$\tfrac{1}{2}(\Phi_{\alpha})_{(-1/2)}{(\Phi^{\alpha})}_{(-1/2)}) \cdot 1=\tfrac{1}{2}[(\Phi_{\alpha})_{(-1/2)},{(\Phi^{\alpha})}_{(-1/2)})]=1/2.$$
Assume $\g_\a\subset \g^0_{1/2}(\si_R)$. Then 
$$\tfrac{1}{2}(\Phi_{\alpha})_{(-1/2)}{(\Phi^{\alpha})}_{(-1/2)}) \cdot 1=\tfrac{1}{4}[(\Phi_{\alpha})_{(-1/2)},{(\Phi^{\alpha})}_{(-1/2)})]=1/4.$$
Summing up 
$$L_0^{ne,tw}\cdot 1= \tfrac{1}{2}(-\tfrac{1}{8} \dim\g_{1/2} + \tfrac{1}{4}\cdot 2\cdot \dim  \n_{1/2}(\si_R)_++\tfrac{1}{4}\e(\si_R))=\tfrac{1}{16}\dim\g_{1/2}.
$$

Since $F(\g_{1/2},\s_R)$ is generated by  monomials 
\begin{align}\label{mon} (t^{\mu_1}\Phi_1)(t^{\mu_2}\Phi_2)\cdots(t^{\mu_k}\Phi_k),\quad \mu_i\in-(\tfrac{1}{2}+\ZZ_+)\end{align}
and
$$
[L_0^{ne,\tw},t^{\mu}\Phi_{u}]=-(\mu+\tfrac{1}{2})t^{\mu}\Phi_{u}
$$
\eqref{FSgr} follows readily.

The space of minimal energy is
$$F(\g_{1/2},\s_R)_0=span((t^{-\tfrac{1}{2}}\Phi_1)(t^{-\tfrac{1}{2}}\Phi_2)\cdots(t^{-\tfrac{1}{2}}\Phi_k) ).
$$
On $F(\g_{1/2},\s_R)_0$, $[\Phi_{u}]\in  Zhu_{R}(F(\g_{1/2}))$ acts by the action of $(\Phi_{u})_0=(\Phi_{u})_{(-1/2)}$, which is left multiplication by $t^{-\tfrac{1}{2}}\Phi_u$.
\end{proof}

Let $\phi$ be an almost compact conjugate linear involution of $\g$.
Set 
$$
\omega=e^{L_1}g,\quad g(a)=e^{-\pi\sqrt{-}1(\D_a+\tfrac{1}{2}p(a))}\phi(a),
$$
so that 
$$\omega([\Phi_u])=-[\Phi_{\phi(u)}].$$

\begin{lemma}\label{Fsigma} $F(\g_{1/2},\s_R)$ is a unitary $\si_R$-twisted $F(\g_{1/2})$-module.\end{lemma}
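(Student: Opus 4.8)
The plan is to reduce unitarity of $F(\g_{1/2},\s_R)$ to unitarity of its minimal--energy subspace $F(\g_{1/2},\s_R)_0$, which by Lemma \ref{l1} is the spinor module $CM$ over $Zhu_R(F(\g_{1/2}))\cong Cl(\g_{1/2})$, and then to establish the latter from the positive definiteness, on $\g_{1/2}$, of a suitable real multiple of the Hermitian form $(u,v)\mapsto\langle u,\phi(v)\rangle_{\text{ne}}$, which holds because $\phi$ is almost compact.

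First I would record the relevant adjoint relations. Since $F(\g_{1/2})$ is freely and strongly generated by the primary elements $\Phi_u$, $u\in\g_{1/2}$, of conformal weight $\tfrac12$, and $L_1\Phi_u=0$, the factor $e^{L_1}$ in $\omega=e^{L_1}g$ acts trivially on these generators; combined with $g(\Phi_u)=-\Phi_{\phi(u)}$ (computed just before the statement), a $\phi$--invariant Hermitian form on a positive--energy $\s_R$--twisted $F(\g_{1/2})$--module is precisely one for which $(\Phi_u)_n^*=-(\Phi_{\phi(u)})_{-n}$, with no lower--order corrections (cf.~\cite{DL}, \cite{KMP}). By Lemma \ref{l1} the module $F(\g_{1/2},\s_R)$ is generated from $F(\g_{1/2},\s_R)_0$ by the creation modes $(\Phi_u)_n$, $n<0$, and is in fact, as a module over the purely fermionic algebra generated by them, the free Fock space $\Lambda\big(\textstyle\bigoplus_{n<0}\g_{1/2}\big)\otimes F(\g_{1/2},\s_R)_0$; hence, once an $\omega$--invariant Hermitian form $H_0$ on $F(\g_{1/2},\s_R)_0$ is fixed, there is a unique $\phi$--invariant Hermitian form $H$ on $F(\g_{1/2},\s_R)$ restricting to it, its value on a monomial being computed by moving annihilation modes to the right through the twisted anticommutation relations \eqref{eqtcf} (which for the $\Phi_u$ carry the coefficients $\langle u,v\rangle_{\text{ne}}$). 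On such a fermionic Fock space a form assembled from a positive ground form and the metric $\langle\cdot,\phi(\cdot)\rangle_{\text{ne}}$ is positive definite exactly when that metric is definite of the right sign; so positivity of $H$ reduces to positivity of $H_0$ plus the stated definiteness on $\g_{1/2}$ (the positivity underlying the free field realization of \cite{KMP1}, of the same nature as the computation \eqref{46}).

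It remains to produce a positive--definite $\omega$--invariant form $H_0$ on $F(\g_{1/2},\s_R)_0\cong CM=Cl(t^{-1/2}\g_{1/2})/\big(Cl(t^{-1/2}\g_{1/2})\,\fn_{1/2}(\s_R)_+\big)$. Choosing $h_0$ with $\phi(h_0)=-h_0$ as in Section \ref{necessary}, $\phi$ interchanges $\fn_{1/2}(\s_R)_+$ with $\fn_{1/2}(\s_R)'_-$ and fixes the line $\g^0_{1/2}(\s_R)$, so the decomposition $\g_{1/2}=\fn_{1/2}(\s_R)_+\oplus\g^0_{1/2}(\s_R)\oplus\fn_{1/2}(\s_R)'_-$ is a polarization adapted to the conjugate--linear anti--involution $\Phi_u\mapsto-\Phi_{\phi(u)}$ of $Cl(\g_{1/2})$. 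It is then classical that the spinor module of the complex Clifford algebra of a positive--definite Hermitian space carries a positive--definite invariant Hermitian form; normalizing it to $1$ on the cyclic vector of $CM$ yields $H_0$ with $H_0>0$, and together with the previous step this proves that $F(\g_{1/2},\s_R)$ is unitary.

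The point demanding the most care is the case $\epsilon(\s_R)=1$, i.e.\ $\theta/2\in\D$ and $\g^0_{1/2}(\s_R)=\C e_{\theta/2}\ne0$, in which $Cl(\g_{1/2})$ has odd rank and its two inequivalent spinor modules are distinguished by the sign by which the central element acts: one must check that $CM$ --- the module singled out by the choice $U=\fn_{1/2}(\s_R)_+$ in Lemma \ref{l1} --- is the one on which the $\omega$--invariant form is positive, equivalently that, after a suitable normalization of $e_{\theta/2}$, the scalar $\langle e_{\theta/2},\phi(e_{\theta/2})\rangle_{\text{ne}}$ makes the $\g^0_{1/2}(\s_R)$--factor contribute positively. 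The remaining checks (matching the abstract $\phi$--invariance of twisted modules with the concrete relations $(\Phi_u)_n^*=-(\Phi_{\phi(u)})_{-n}$, and the standard positivity of fermionic Fock spaces) are routine.
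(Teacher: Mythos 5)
Your proposal follows the same overall route as the paper (reduce, via \cite[Proposition 6.7]{KMP} and Lemma \ref{l1}, to the existence of an $\omega$--invariant positive Hermitian form on the minimal energy space $CM$, then lift to all of $F(\g_{1/2},\si_R)$ through the free fermionic Fock structure, with positivity on $\g_{1/2}$ coming from almost compactness of $\phi$), but it defers exactly the verifications that constitute the substance of the paper's proof. Where you write that it is ``classical'' that the spinor module carries a positive $\omega$--invariant form, the paper actually constructs that form explicitly on $\bigwedge\fn_{1/2}(\si_R)_-$ by the determinant formula \eqref{fh}, with the crucial factor $(-1)^k$ matched against the \emph{negative} definiteness of $\langle\phi(\cdot),\cdot\rangle_{\mathrm{ne}}$ on $\g_{1/2}$; it then checks $\omega$--invariance by a direct computation and positivity by exhibiting an orthonormal basis. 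Your proposal gestures at a ``suitable real multiple'' and ``the right sign'' without fixing them, which is precisely the point at risk of going wrong.

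Your flagged worry for the case $\epsilon(\si_R)=1$ is also somewhat misframed. When $\dim\g_{1/2}=2m+1$, the module $CM\cong\bigwedge\fn_{1/2}(\si_R)_-$ has dimension $2^{m+1}$, i.e.\ it is the direct sum of \emph{both} inequivalent $2^m$--dimensional simple $Cl(\g_{1/2})$--modules, not one of them. So the issue is not ``choosing the correct spinor module'' but rather that the polarization $\fn_{1/2}(\si_R)_+\oplus\g^0_{1/2}(\si_R)\oplus\fn_{1/2}(\si_R)'_-$ has a $\phi$--stable one--dimensional piece in the middle, and one must check that the Hermitian form, normalized on the cyclic vector, remains positive on the basis vectors involving $e_{\theta/2}$. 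The paper's determinant formula handles this uniformly (the odd case is not singled out), while your proposal says only ``one must check'' and leaves it unresolved. To make your argument complete you would need to carry out the same two pieces the paper does: (i) exhibit $H_0$ on $CM$ and verify $\omega$--invariance against the anti--involution $\Phi_u\mapsto-\Phi_{\phi(u)}$, with signs tracked; (ii) show the lifted form on $F(\g_{1/2},\si_R)$ is positive definite, which amounts to the orthonormality computation in the paper's proof using $H(m,t^\mu\Phi_u m')=-H(t^{-\mu}\Phi_{\phi(u)}m,m')$ together with a basis of $\g_{1/2}$ satisfying $\langle\phi(v_i),v_j\rangle_{\mathrm{ne}}=-\delta_{ij}$.
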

\begin{proof} To prove the existence of a $\omega$-invariant Hermitian form on $F(\g_{1/2},\s_R)$, we use \cite[Proposition 6.7]{KMP}. According to this result, it suffices
to show that there exists  an invariant Hermitian form on $F(\g_{1/2},\s_R)_0$, which in Lemma \ref{l1} is identified with the Clifford module  $CM$ \eqref{cm}. We  identify as vector spaces $CM$ and $\bigwedge \fn_{1/2}(\si_R)_-$ via $u_1\wedge\ldots\wedge u_k\mapsto (t^{-1/2}\Phi_{u_1})\ldots (t^{-1/2}\Phi_{u_k}),\,u_i\in \fn_{1/2}(\si_R)_-$. Define an Hermitian  form on $\bigwedge \fn_{1/2}(\si_R)_-$ by determinants:
\begin{equation}\label{fh}(w_{i_1}\wedge w_{i_2}\ldots \wedge w_{i_h}, w_{j_1}\wedge w_{j_2}\ldots \wedge w_{j_k})=\d_{hk}(-1)^k\det(\langle \phi(w_{i_r}),w_{j_s}\rangle_{ne}).\end{equation}
We now check that it is $\omega$-invariant. Since $\phi$ is an almost compact involution, we have that $\phi(h_0)=-h_0$, hence $\phi(\fn_{1/2}(\si_R)_+)=\fn'_{1/2}(\si_R)_-$. 
Consider $u_1\in \fn_{1/2}(\si_R)_-$ (resp. $u_1\in  \fn_{1/2}(\si_R)_+$); note that $t^{-1/2}\Phi_{u_1}\cdot  t^{-1/2}\Phi_{u_2}\ldots  t^{-1/2}\Phi_{u_k}$ maps, under our identifications,  to $u_1\wedge \ldots\wedge u_k$ 
(resp. $\sum_r(-1)^{r+1} \langle u_1, u_r\rangle_{ne} u_1\wedge \ldots \widehat{u_r} \wedge \ldots u_k$). If $u_1\in  \fn_{1/2}(\si_R)_+$

\begin{align*}
&(\phi(u_1)\wedge u_{2}\wedge u_{3}\ldots \wedge u_{k},  w_{1}\wedge w_{2}\ldots \wedge w_{k})=(-1)^k\det(\langle \phi(u_i),w_j\rangle_{ne})\\
&=(-1)^k\sum_{r} (-1)^{r+1} \langle u_1,w_r\rangle_{ne} \det\left((\langle \phi(u_i),w_r\rangle_{ne})\right)_{1 r}\\
&=-(u_2\wedge u_{3}\ldots \wedge u_{k}, u_1\cdot w_{1}\wedge w_{2}\ldots \wedge w_{k}).
\end{align*}
 If $u_1\in  \fn_{1/2}(\si_R)_-$ one argues similarly.\par
 Recall from \cite{KMP1} that $\langle \phi(\cdot),\cdot \rangle_{ne}$ is negative definite on $\g_{1/2}$.
 Let $\{u_1,\ldots,u_k\}$ be a basis of $\fn_{1/2}(\si_R)_-$ such that $\langle \phi(u_i),u_j\rangle_{ne}=-\d_{ij}$;  then 
 $\{u_{i_1}\wedge\ldots\wedge u_{i_k}\mid i_1<\ldots<i_k\}$ is an orthonormal basis of $\bigwedge \fn_{1/2}(\si_R)_-$ for the Hermitian form \eqref{fh}. \
 We now check that the invariant form $H(\cdot,\cdot)$ on $F(\g_{1/2},\si_R)$ which restricts to \eqref{fh} on $F(\g_{1/2},\si_R)_0$ is positive definite.
 Let $\{v_1,\ldots,v_s\}$ be a basis of $\fg_{1/2}$ such that $\langle \phi(v_i),v_j\rangle_{ne}=-\d_{ij}$; let $\{w_1,\ldots,w_t\}$ be an orthonormal basis of  $F(\g_{1/2},\si_R)_0$.  Then the vectors
 \begin{equation} (t^{\mu_1}\Phi_{v_{i_1}})(t^{\mu_2}\Phi_{v_{i_2}})\cdots(t^{\mu_k}\Phi_{v_{i_k}})w_j,\quad \mu_i\in-(\tfrac{1}{2}+\nat), 1\leq j\leq t,\end{equation}
where the pairs $(\mu_j,i_j)$ are ordered lexicographically, form a basis of $F(\g_{1/2},\si_R)$. We claim that this basis is orthonormal. Indeed,  the $\Phi_u$ are primary, so 
$H(m,t^\mu\Phi_u m')=-H(t^{-\mu}\Phi_{\phi(u)}m,m')$. It follows that 
\begin{align*}
&H((t^{\mu_1}\Phi_{v_{i_1}})(t^{\mu_2}\Phi_{v_{i_2}})\cdots(t^{\mu_k}\Phi_{v_{i_k}})w_j,(t^{\nu_1}\Phi_{v_{j_1}})(t^{\nu_2}\Phi_{v_{j_2}})\cdots(t^{\nu_k}\Phi_{v_{j_k}})w_s)\\
&=H((t^{-\nu_k}\Phi_{-\phi(v_{j_k})})\cdots(t^{-\nu_2}\Phi_{-\phi(v_{j_2})})(t^{-\nu_1}\Phi_{-\phi(v_{j_1})})(t^{\mu_1}\Phi_{v_{i_1}})(t^{\mu_2}\Phi_{v_{i_2}})\cdots(t^{\mu_k}\Phi_{v_{i_k}})w_j,w_s)
\\&=\prod_i\d_{\mu_i\,\nu_i}\prod_r\d_{i_r\,j_r}\d_{j\,s}.
\end{align*}
\end{proof}

Our second task is to construct unitary $\si_R$-twisted representations of $\Wu$ using the free field realization  introduced  in \cite[Theorem 5.2]{KW1}; it is the embedding 
$$\Psi: W^k_{\min}(\g)\to \mathcal V^k:=V^{k+h^\vee}(\C x)\otimes V^{\alpha_k}(\g^\natural)\otimes F(\g_{1/2})
$$  
explicitly given on the generators of $W^k_{\min}(\g)$ by 
\begin{align}
\label{FFR1}&   J^{\{ b \}} \mapsto b + \frac{1}{2}
      \sum_{\alpha \in S_{1/2}}: \Phi^{\alpha}
      \Phi_{[w_{\alpha},b]}: (b \in \fg^{\natural}), \\
\label{FFR2}&    G^{\{ v \}} \mapsto \sum_{\alpha \in S_{1/2}}
         : [v,w_{\alpha}]\Phi^{\alpha}:
         -(k+1)\sum_{\alpha \in S_{1/2}}
         (v|w_{\alpha})  T \Phi^{\alpha}\\
\notag&     +\frac{1}{3} \sum_{\alpha ,\beta \in S_{1/2}}
         : \Phi^{\alpha}
         \Phi^{\beta}\Phi_{[w_{\beta},[w_{\alpha},v]]}
         : (v \in \fg_{-1/2})\, , \\
\label{FFR4}&       L \mapsto \frac{1}{2 (k+h^\vee)}
         \sum_{\alpha \in S_0} :a_{\alpha}
         a^{\alpha}:+ \frac{k+1}{k+h^\vee}T x +\frac{1}{2}
         \sum_{\alpha \in S_{1/2}} : (T \Phi^{\alpha})
         \Phi_{\alpha}: .
\end{align}
Using the normalization 
\begin{equation}\label{norm}a=\sqrt{-1}\frac{\sqrt{2}}{\sqrt{|k+h^\vee|}}x,
\end{equation}
we have $V^{k+h^\vee}(\C x)=M(1)$. Here $M(1)$ is the Heisenberg vertex algebra (free boson) generated by the element $a$ with $\l$-bracket 
$$
[a_\l a]=\l.
$$

Recall that we denoted by $\xi$ the highest weight of the $\g^\natural$-module $\g_{-1/2}$. Recall from \cite{KMP1} that an element $\nu\in P^+_k$ is called an {\it extremal weight} if 
$\nu+\xi$ doesn't lie in $P^+_k$.
 Let $\nu\in P^+_k$ be a non extremal weight. Let $L(\nu)$ be the irreducible $V^{\alpha_k}(\g^\natural)$-module of highest weight $\nu$. Fix $\mu\in\C$ and let $M(1,\mu)$ be the Verma module for $M(1)$ such that $a_0$ acts as the multiplication by  $\mu$.
Clearly $Id_{M(1)}\otimes Id_{V^{\alpha_k}(\g^\natural)}\otimes \si_R$ is an automorphism of $\mathcal V^k$ and 
\begin{equation}\label{Nmunu} M(1,\mu)\otimes L(\nu)\otimes F(\g_{1/2},\s_R)\end{equation}
is a  $Id_{M(1)}\otimes Id_{V^{\alpha_k}(\g^\natural)}\otimes \si_R$-twisted $\mathcal V^k$-module.

Since $\Psi\circ \si_R=(Id_{M(1)}\otimes Id_{V^{\alpha_k}(\g^\natural)}\otimes \si_R)\circ \Psi$, the restriction to $\Psi(\Wu)$ defines the structure of a  $\si_R$-twisted $\Wu$-module on the $\mathcal V^k$-module in \eqref{Nmunu}.

Set 
\begin{equation}\label{sfix}s_k=\sqrt{-1}\frac{(k+1)}{\sqrt{2|k+h^\vee|}}.
\end{equation}

\begin{proposition}\label{FNME}Fix $\mu \in \C$ and  $\nu\in (\h^\natural)^*$. Set
$$
v(\mu,\nu)=v_{\mu}\otimes v_{\nu}\otimes1,
$$
and
\begin{equation}\label{nmunu}
N(\mu,\nu)=\Psi(W^k_{\min}(\g)). v(\mu,\nu)\subset M(1,\mu)\otimes L(\nu)\otimes F(\g_{1/2},\si_R).
\end{equation}
If 
\begin{equation}\label{ell0}
\ell_0(\mu,\nu)=\frac{\mu^2}{2}-s_k\mu+\frac{(\nu|\nu+2\rho^\natural)}{2(k+h^\vee)}+\tfrac{1}{16}\dim\g_{1/2},
\end{equation}
then $N(\mu,\nu)$ is a highest weight $\Wu$-module with highest weight vector  $v(\mu,\nu)$,  of highest weight $(\nu+\rho_R,\ell_0(\mu,\nu))$.
\end{proposition}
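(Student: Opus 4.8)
The plan is to check directly that $v(\mu,\nu)=v_\mu\otimes v_\nu\otimes1$ is a highest weight vector, i.e. that it satisfies \eqref{m1}--\eqref{m5}, by evaluating the free field images \eqref{FFR1}, \eqref{FFR2}, \eqref{FFR4} of the generators of $\Wu$ on $v(\mu,\nu)$ and exploiting that the $\C x$--current acts only on $M(1,\mu)$, the $\g^\natural$--currents only on $L(\nu)$, and the fermions $\Phi_u$ only on $F(\g_{1/2},\si_R)$. Cyclicity of $v(\mu,\nu)$ is immediate from the definition $N(\mu,\nu)=\Psi(\Wu).v(\mu,\nu)$, so the whole content is the verification of \eqref{m1}--\eqref{m5}.

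For \eqref{m1} I would compute $L_0^{\tw}v(\mu,\nu)$ from \eqref{FFR4} summand by summand. Using the normalization \eqref{norm} and $(x|x)=\tfrac12$ (so that the $(\cdot|\cdot)$--dual of $x$ is $2x$), the $\C x$--summand of $\tfrac{1}{2(k+h^\vee)}\sum_{\alpha\in S_0}:a_\alpha a^\alpha:$ equals the free--boson field $\tfrac12:aa:$; together with $\tfrac{k+1}{k+h^\vee}Tx$ it is a Virasoro field of the form $\tfrac12:aa:+\beta\,Ta$ for a suitable constant $\beta$, whose zero mode on $v_\mu$ is $\tfrac{\mu^2}{2}-s_k\mu$ (by \eqref{sfix} and $(Ta)_0=-a_0$). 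The $\g^\natural$--part of $\tfrac{1}{2(k+h^\vee)}\sum_{\alpha\in S_0}:a_\alpha a^\alpha:$ is the affine Casimir field, whose zero mode on the highest weight vector $v_\nu$ of $L(\nu)$ is $\tfrac{(\nu|\nu+2\rho^\natural)}{2(k+h^\vee)}$ by the standard Sugawara computation. Finally $\tfrac12\sum_{\alpha\in S_{1/2}}:(T\Phi^\alpha)\Phi_\alpha:=L^{ne}$ has zero mode $\tfrac1{16}\dim\g_{1/2}$ on $1$ by \eqref{Lne}. Adding these gives $L_0^{\tw}v(\mu,\nu)=\ell_0(\mu,\nu)v(\mu,\nu)$ with $\ell_0(\mu,\nu)$ as in \eqref{ell0}. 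For \eqref{m2}, by \eqref{FFR1} and for $a\in\h^\natural$ the operator $J^{\{a\}}_0$ acts on $v(\mu,\nu)$ as $a_0$ on $L(\nu)$ (contributing $\nu(a)$) plus the zero mode of the internal current $\tfrac12\sum_{\alpha\in S_{1/2}}:\Phi^\alpha\Phi_{[w_\alpha,a]}:$ on $1$; I would show this last contribution is $\rho_R(a)$, i.e. that the $\h^\natural$--weight of the cyclic vector $1$ of $F(\g_{1/2},\si_R)$ is $\rho_R$. This is a Borcherds--identity computation of exactly the shape of the proof of \eqref{Lne} (the only surviving terms of the zero mode on $1$ come from anticommutators of the zero modes $(\Phi_u)_{(-1/2)}$, $u\in\fn_{1/2}(\si_R)_+$, which annihilate $1$ by Lemma \ref{l1}), and it matches $(\gamma_{1/2})_{|\h^\natural}=\rho_R$, which one reads off from \eqref{g12}, \eqref{eq:3.5}, \eqref{eq:3.6} together with \eqref{tuttirho}. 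Hence $v(\mu,\nu)$ is a weight vector of weight $(\nu+\rho_R,\ell_0(\mu,\nu))$.

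For \eqref{m3}--\eqref{m5} I would argue with energy and $\h^\natural$--weight. The module $M(1,\mu)\otimes L(\nu)\otimes F(\g_{1/2},\si_R)$ is a positive energy $\mathcal V^k$--module for the conformal vector \eqref{FFR4}: $L_0^{\tw}=\Psi(L)_0$ is bounded below, and by the computation of \eqref{m1} its minimal eigenvalue $\ell_0(\mu,\nu)$ is attained precisely on $\C v_\mu\otimes\C v_\nu\otimes F(\g_{1/2},\si_R)_0$ (using the grading of $M(1,\mu)$ by the modified free boson, the affine grading of $L(\nu)$, and Lemma \ref{l1} for the fermionic factor). Since $v(\mu,\nu)$ lies in the minimal $L_0^{\tw}$--eigenspace, every positive mode of every field of $\mathcal V^k$ annihilates it; applying this to $\Psi(J^{\{a\}})$, $\Psi(G^{\{u\}})$, $\Psi(L)$ gives \eqref{m3}. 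For \eqref{m4} and \eqref{m5}, recall $F(\g_{1/2},\si_R)_0\cong CM$ of \eqref{cm}; from \eqref{cm}, the decomposition \eqref{eq:2.5}, and the identification of $\overline\D^+_{1/2}$ with the set of $\h^\natural$--weights of $\fn_{-1/2}(\si_R)'_+$ (equivalently of $\fn_{1/2}(\si_R)_+$), all $\h^\natural$--weights of $CM$ are of the form $\rho_R$ minus a sum of elements of $\overline\D^+_{1/2}$, so every weight occurring in $\C v_\mu\otimes\C v_\nu\otimes CM$ has $h_0$--value $\le(\nu+\rho_R)(h_0)$. Now $J^{\{a\}}_0$ with $a\in\fn_0(\si_R)_+$ and $G^{\{u\}}_0$ with $u\in\fn_{-1/2}(\si_R)'_+$ both preserve the minimal $L_0^{\tw}$--eigenspace and carry a definite $\h^\natural$--weight that is strictly positive with respect to $h_0$ (a positive root of $\g^\natural$, resp. an element of $\overline\D^+_{1/2}$); so $J^{\{a\}}_0v(\mu,\nu)$ and $G^{\{u\}}_0v(\mu,\nu)$, if nonzero, would be weight vectors of $h_0$--value $>(\nu+\rho_R)(h_0)$ inside the minimal-energy space, which is impossible. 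This yields \eqref{m4}, \eqref{m5}, and with the above, the proposition.

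The main obstacle, I expect, is twofold. First, pinning down the $\h^\natural$--weight of the vacuum of $F(\g_{1/2},\si_R)$ to be exactly $\rho_R$: the sign, and the treatment of the one-dimensional summand $\g^0_{1/2}(\si_R)$ when $\epsilon(\si_R)=1$, need care, though both are dictated by Lemma \ref{l1} and the structural identities \eqref{eq:2.5}, \eqref{g12}. Second, confirming that \eqref{FFR4} makes $M(1,\mu)\otimes L(\nu)\otimes F(\g_{1/2},\si_R)$ positive energy with the asserted minimal eigenvalue --- equivalently, that the affine summand of \eqref{FFR4} is non-negatively graded on $L(\nu)$ with value $\tfrac{(\nu|\nu+2\rho^\natural)}{2(k+h^\vee)}$ at $v_\nu$; this rests on the consistency of the normalizations of $h^\vee$, the $M_i(k)$ and the $\bar h^\vee_i$ fixed in \cite{KW1} and \cite[\S 7.2]{KMP1}, and can alternatively be checked by hand by applying the energy-and-weight argument of the previous paragraph directly to the positive modes $\Psi(L)_m$, $\Psi(J^{\{a\}})_m$, $\Psi(G^{\{u\}})_m$ with $m>0$, which are built from positive modes of the free fields that individually annihilate $v_\mu$, $v_\nu$ and $1$.
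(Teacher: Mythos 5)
Your proposal is correct and, for the most labour-intensive part of the statement, it takes a genuinely different route from the paper. The paper verifies \eqref{m3}, \eqref{m4}, \eqref{m5} by explicit term-by-term calculation: it expands each of $\Psi(J^{\{b\}})_r$, $\Psi(G^{\{v\}})_r$, $\Psi(L)_r$ via the twisted normal-ordering identities \eqref{fermbor1}--\eqref{fermbor2} and shows, summand by summand, that they kill $v(\mu,\nu)$, using at each stage that the relevant $(\Phi^\alpha)^{\tw}_0$ or $(\Phi_\alpha)^{\tw}_0$ with $\alpha$ indexing $\fn_{1/2}(\si_R)_+$ annihilates $1$. You instead deduce \eqref{m3} at once from the positive-energy structure (since $v_\mu$, $v_\nu$, $1$ are each lowest-energy for their respective summands of $\Psi(L)$ in \eqref{FFR4}, $v(\mu,\nu)$ sits in the minimal $L_0^{\tw}$-eigenspace, so positive modes vanish on it), and you get \eqref{m4} and \eqref{m5} from the bound that all $\h^\natural$-weights in $\C v_\mu\otimes\C v_\nu\otimes F(\g_{1/2},\si_R)_0\cong \C v_\mu\otimes\C v_\nu\otimes CM$ have $h_0$-value $\le(\nu+\rho_R)(h_0)$, while $J^{\{a\}}_0$ with $a\in\fn_0(\si_R)_+$ and $G^{\{u\}}_0$ with $u\in\fn_{-1/2}(\si_R)'_+$ strictly raise the $h_0$-value and preserve the minimal-energy eigenspace. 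This is a valid and conceptually cleaner argument: it replaces several pages of Borcherds-identity bookkeeping by one structural observation. What you give up is that the paper's computation is self-contained at the level of mode calculus, whereas your argument leans on already knowing (i) that the modified Heisenberg, the Sugawara field for $\g^\natural$, and $L^{ne}$ each give positive-energy gradings on the three tensor factors with the asserted minimal eigenvalues, and (ii) the precise description of $CM$ as $\bigwedge\fn_{1/2}(\si_R)_-$ with vacuum weight $\rho_R$. Both of these are established in the paper (the first via the decomposition $\Psi(L)=\half:aa:+s_kTa+L^{\g^\natural}+L^{ne}$ from \cite[(9.8)]{KMP1} together with \eqref{Lne}, the second in Lemma \ref{l1}), so the dependence is legitimate. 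For the remaining conditions \eqref{m1}, \eqref{m2} your computation mirrors the paper's: the $\h^\natural$-weight of the fermionic vacuum is indeed $\rho_R$, and the paper's proof of \eqref{azcl} supplies exactly the direct check you flag as a potential obstacle, confirming that the $\epsilon(\si_R)$-edge case and the sign conventions work out. Your second flagged concern -- that the affine Sugawara zero mode on $v_\nu$ is $\frac{(\nu|\nu+2\rho^\natural)}{2(k+h^\vee)}$ -- is likewise resolved by the cited decomposition of $\Psi(L)$ from \cite{KMP1}.
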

\begin{proof}
By \eqref{Borcherds} we have
\begin{align}\label{fermbor1}
&: \Phi^{\alpha}
      \Phi_{[w_{\alpha},b]}:^{\tw}_r=\half\langle w^\a, [w_{\alpha},b]\rangle_{ne}\d_{r,0}+ \sum_{j \in \Z_+} 
((\Phi^{\alpha})^{\tw}_{-1-j} (\Phi_{[w_{\alpha},b]})^{\tw}_{r+j+1}- (\Phi_{[w_{\alpha},b]})^{\tw}_{r-j} (\Phi^{\alpha})^{\tw}_{j})\, ,
\\
&: T \Phi^{\alpha}
         \Phi_{\alpha}:^{\tw}_r
     = \sum_{j \in \Z_+} 
(\half+j) (\Phi^{\alpha})^{\tw}_{-1-j} (\Phi_{\alpha})^{\tw}_{r+j+1}+ (\Phi_{\alpha})^{\tw}_{r-j} ( \Phi^{\alpha})^{\tw}_{j})-1/8
     \d_{r,0}\, ,\\
&: [v,w_{\alpha}]\Phi^{\alpha}:^{\tw}_r=\sum_{j \in \Z} 
[v,w_{\alpha}]_{-j} (\Phi^{\alpha})^{\tw}_{r+j}\, ,
\\
\label{fermbor2}
&: \Phi^{\alpha}
         \Phi^{\beta}\Phi_{[w_{\beta},[w_{\alpha},v]]}
         :^{\tw} _r=\half\langle w^{\alpha},w^\be\rangle_{ne}
       ( \Phi_{[w_{\beta},[w_{\alpha},v]]})^{\tw}_r
         -\half\langle w^{\alpha},[w_{\beta},[w_{\alpha},v]]\rangle_{ne}
         (\Phi^{\beta})^{\tw}_r\\
&+ \sum_{j \in \Z_+} 
((\Phi^{\alpha})^{\tw}_{-1-j} : \Phi^{\beta}\Phi_{[w_{\beta},[w_{\alpha},v]]}
         :^{\tw} _{r+j+1}+  : \Phi^{\beta}\Phi_{[w_{\beta},[w_{\alpha},v]]}
         : ^{\tw}_{r-j}( \Phi^{\alpha})^{\tw}_{j}).\notag
\end{align}
         
We start by checking \eqref{m3}. If $r>0$,
\begin{align*}
&\Psi(J^{\{b\}})_rv(\mu,\nu)=v_\mu\otimes b_rv_\nu\otimes 1+\half \sum_{\alpha \in S_{1/2}}v_\mu\otimes v_\nu\otimes : \Phi^{\alpha}
      \Phi_{[w_{\alpha},b]}:^{\tw}_r1
\\
&=\half \sum_{\alpha \in S_{1/2},j\in\ZZ_+}(v_\mu\otimes v_\nu\otimes  (\Phi^{\alpha})^{\tw}_{-j-1}
      (\Phi_{[w_{\alpha},b]})^{\tw}_{r+j+1}-v_\mu\otimes v_\nu\otimes  
      (\Phi_{[w_{\alpha},b]})^{\tw}_{r-j}(\Phi^{\alpha})^{\tw}_{j})1
\\&
=-\half \sum_{\alpha \in S_{1/2}}v_\mu\otimes v_\nu\otimes  
      (\Phi_{[w_{\alpha},b]})^{\tw}_{r}(\Phi^{\alpha})^{\tw}_{0}1=\half \sum_{\alpha \in S_{1/2}}v_\mu\otimes v_\nu\otimes (\Phi^{\alpha})^{\tw}_{0}(\Phi_{[w_{\alpha},b]})^{\tw}_{r} 1=0.
\end{align*}
Likewise
\begin{align*}
&\Psi(G^{\{v\}})_rv(\mu,\nu)=\\
&\sum_{j \in \Z} 
[v,w_{\alpha}]^{\tw}_{-j}(v_\mu\otimes v_\nu)\otimes (\Phi^{\alpha})^{\tw}_{r+j} 1 -(k+1)\sum_{\alpha \in S_{1/2}}
         (v|w_{\alpha})v_\mu\otimes v_\nu\otimes  ( T \Phi^{\alpha})^{\tw}_r1
\\&
+\frac{1}{6} \sum_{\alpha ,\beta \in S_{1/2}}v_\mu\otimes v_\nu\otimes (\langle w^{\alpha},w^\be\rangle_{ne}
       ( \Phi_{[w_{\beta},[w_{\alpha},v]]})^{\tw}_r
         -\half\langle w^{\alpha},[w_{\beta},[w_{\alpha},v]]\rangle_{ne}
         (\Phi^{\beta})^{\tw}_r)1
\\&
+\frac{1}{3} \sum_{\alpha ,\beta \in S_{1/2},j\in\ZZ_+}v_\mu\otimes v_\nu\otimes((\Phi^{\alpha})^{\tw}_{-1-j} : \Phi^{\beta}\Phi_{[w_{\beta},[w_{\alpha},v]]}
         :^{\tw} _{r+j+1}+  : \Phi^{\beta}\Phi_{[w_{\beta},[w_{\alpha},v]]}
         : ^{\tw}_{r-j}( \Phi^{\alpha})^{\tw}_{j})1
\\&
= (k+1)\sum_{\alpha \in S_{1/2}}
         (v|w_{\alpha})(r+\half)v_\mu\otimes v_\nu\otimes  (\Phi^{\alpha})^{\tw}_r1
+\frac{1}{3} \sum_{\alpha ,\beta \in S_{1/2}}v_\mu\otimes v_\nu\otimes  : \Phi^{\beta}\Phi_{[w_{\beta},[w_{\alpha},v]]}
         : ^{\tw}_{r}( \Phi^{\alpha})^{\tw}_{0}1
\\&=
\frac{1}{3} \sum_{\alpha ,\beta \in S_{1/2}}v_\mu\otimes v_\nu\otimes  : \Phi^{\beta}\Phi_{[w_{\beta},[w_{\alpha},v]]}
         : ^{\tw}_{r}( \Phi^{\alpha})^{\tw}_{0}1
\\&
=
\frac{1}{3} \sum_{\alpha ,\beta \in S_{1/2},j\in\Z_+}v_\mu\otimes v_\nu\otimes  
(( \Phi^{\beta})^{\tw}_{-j-1}(\Phi_{[w_{\beta},[w_{\alpha},v]]})^{\tw}_{r+j+1}
         -(\Phi_{[w_{\beta},[w_{\alpha},v]]})^{\tw}_{r-j}( \Phi^{\beta})^{\tw}_{j})( \Phi^{\alpha})^{\tw}_{0}1
\\&
=
-\frac{1}{3} \sum_{\alpha ,\beta \in S_{1/2}}v_\mu\otimes v_\nu\otimes  
(\Phi_{[w_{\beta},[w_{\alpha},v]]})^{\tw}_{r}( \Phi^{\beta})^{\tw}_{0}( \Phi^{\alpha})^{\tw}_{0}1=0.
\end{align*}
Finally
\begin{align*}
\Psi(L)_rv(\mu,\nu)&=\frac{1}{2 (k+h^\vee)}
         \sum_{\alpha \in S_0} :a_{\alpha}
         a^{\alpha}:_r(v_\mu\otimes v_\nu)\otimes1+ \frac{k+1}{k+h^\vee}(T x)_r v_\mu\otimes v_\nu\otimes 1
 \\&+\frac{1}{2}
         \sum_{\alpha \in S_{1/2}} v_\mu\otimes v_\nu\otimes: (T \Phi^{\alpha})
         \Phi_{\alpha}:^{\tw}_r1
\\&
=
         -(r+1) \frac{k+1}{k+h^\vee} x_r v_\mu\otimes v_\nu\otimes 1
         +\frac{1}{2}
         \sum_{\alpha \in S_{1/2}} v_\mu\otimes v_\nu\otimes: (T \Phi^{\alpha})
         \Phi_{\alpha}:^{\tw}_r1 .
\\&
=\frac{1}{2}
         \sum_{\alpha \in S_{1/2},j\in\Z_+} (\half+j)v_\mu\otimes v_\nu\otimes ((\Phi^{\alpha})^{\tw}_{-1-j} (\Phi_{\alpha})^{\tw}_{r+j+1}+ (\Phi_{\alpha})^{\tw}_{r-j} ( \Phi^{\alpha})^{\tw}_{j})1=0 .
\end{align*}
This concludes  the check that \eqref{m3} holds.
 Let us now verify \eqref{m4}: if $a\in \g^\natural$, then
\begin{equation}\label{spin}
\Psi(J^{\{a\}})_0v(\mu,\nu)=v_\mu\otimes a_0v_\nu\otimes 1+\half \sum_{\alpha \in S_{1/2}}v_\mu\otimes v_\nu\otimes : \Phi^{\alpha}
      \Phi_{[w_{\alpha},a]}:^{\tw}_01.
\end{equation}
If, in particular,  $a\in \n_0(\si_R)_+$ then, since $\langle w^\a,[w_\a,a]\rangle_{ne}=0$,
$$
\Psi(J^{\{a\}})_0v(\mu,\nu)=\half \sum_{\alpha \in S_{1/2},j\in\ZZ_+}(v_\mu\otimes v_\nu\otimes  ((\Phi^{\alpha})^{\tw}_{-j-1}
      (\Phi_{[w_{\alpha},a]})^{\tw}_{j+1}- 
      (\Phi_{[w_{\alpha},a]})^{\tw}_{-j}(\Phi^{\alpha})^{\tw}_{j})1
$$
$$
=-\half \sum_{\alpha \in S_{1/2}}(v_\mu\otimes v_\nu\otimes   
      (\Phi_{[w_{\alpha},a]})^{\tw}_{0}(\Phi^{\alpha})^{\tw}_{0}1.
$$
If $w_\a\in \n_{1/2}(\si_R)'$, then $w^\a\in\n_{1/2}(\si_R)_+$ and $(\Phi^{\alpha})^{\tw}_{0}1=0$. Since
$\langle w^\a,[w_\a,a]\rangle_{ne}=0$,
$$
      (\Phi_{[w_{\alpha},a]})^{\tw}_{0}(\Phi^{\alpha})^{\tw}_{0}1=- (\Phi^{\alpha})^{\tw}_{0}(\Phi_{[w_{\alpha},a]})^{\tw}_{0}1.
$$
If $w_a \notin \n_{1/2}(\si_R)'$, then $[w_\a,a]\in\n_{1/2}(\si_R)_+$ so $(\Phi_{[w_{\alpha},a]})^{\tw}_{0}1=0$.

We now turn to checking \eqref{m1}: if $h\in\h^\natural$ then
\begin{align*}
&\Psi(J^{\{h\}})_0v(\mu,\nu)=v_\mu\otimes h_0v_\nu\otimes 1+\half \sum_{\alpha \in S_{1/2}}v_\mu\otimes v_\nu\otimes : \Phi^{\alpha}
      \Phi_{[w_{\alpha},h]}:^{\tw}_01
\\&=\nu(h)v_\mu\otimes v_\nu\otimes 1+\half \sum_{\alpha \in S_{1/2}}\a(h)(v_\mu\otimes v_\nu\otimes 
      (\Phi_{{\alpha}})^{\tw}_{0}(\Phi^{\alpha})^{\tw}_{0}1-\tfrac{1}{4}\sum_{\a\in S_{1/2}}\a(h)(v_\mu\otimes v_\nu\otimes 1)
\\&=\nu(h)v_\mu\otimes v_\nu\otimes 1+\half \sum_{\alpha \in S^+_{1/2}}\a(h)(v_\mu\otimes v_\nu\otimes 
      (\Phi_{{\alpha}})^{\tw}_{0}(\Phi^{\alpha})^{\tw}_{0}1-\tfrac{1}{4}\sum_{\a\in S_{1/2}}\a(h)(v_\mu\otimes v_\nu\otimes 1)
\\&=\nu(h)v_\mu\otimes v_\nu\otimes 1+(\half \sum_{\alpha \in S^+_{1/2}}\a(h)-\tfrac{1}{4}\sum_{\a\in S_{1/2}}\a(h))(v_\mu\otimes v_\nu\otimes 1),
\end{align*}
hence
\begin{equation}\label{azcl}
\Psi(J^{\{h\}})_0v(\mu,\nu)=(\nu+\rho_R)(h)(v_\mu\otimes v_\nu\otimes 1).\end{equation}
We now compute $\ell_0$ checking \eqref{m2}: recall from \cite[(9.8)]{KMP1} that
$$
\Psi(L)=\half:aa:+s_kTa+L^{\g^\natural}+L^{ne},
$$
and it is easy to check that $\half:aa:_0+s_k(Ta)_0v_\mu=\half\mu^2-s_k\mu$. Using \eqref{Lne}, one readily obtains
$$
L_0(v_{\mu}\otimes v_{\nu}\otimes 1)=\left(\frac{\mu^2}{2}-s_k\mu+\frac{(\nu|\nu+2\rho^\natural)}{2(k+h^\vee)}+\tfrac{1}{16}\dim\g_{1/2}\right)(v_{\mu}\otimes v_{\nu}\otimes 1 ).
$$

It remains only to check \eqref{m5}: if $v\in\n_{-1/2}(\si_R)_+$ then
\begin{align*}
&\Psi(G^{\{ v \}})_0 (v_\mu\otimes v_\nu\otimes 1)
\\&= \sum_{\alpha \in S_{1/2}}
         : [v,w_{\alpha}]\Phi^{\alpha}:^{\tw}_0(v_\mu\otimes v_\nu\otimes 1)
     -(k+1)\sum_{\alpha \in S_{1/2}}
         (v|w_{\alpha}) v_\mu\otimes v_\nu\otimes (T \Phi^{\alpha})^{\tw}_01\\
    &+\tfrac{1}{3} \sum_{\alpha ,\beta \in S_{1/2}}
        v_\mu\otimes v_\nu\otimes : \Phi^{\alpha}
         \Phi^{\beta}\Phi_{[w_{\beta},[w_{\alpha},v]]}:^{\tw}_0 1
\\&= \sum_{\alpha \in S_{1/2},j\in\Z}
         [v,w_{\alpha}]_{-j}(v_\mu\otimes v_\nu)\otimes ( \Phi^{\alpha})^{\tw}_{j}1
     +\tfrac{k+1}{2}\sum_{\alpha \in S_{1/2}}
         (v|w_{\alpha}) v_\mu\otimes v_\nu\otimes (\Phi^{\alpha})^{\tw}_01
\\&+\tfrac{1}{3} \sum_{\alpha ,\beta \in S_{1/2}}
        v_\mu\otimes v_\nu\otimes : \Phi^{\alpha}
         \Phi^{\beta}\Phi_{[w_{\beta},[w_{\alpha},v]]}:^{\tw}_0 1, 
\end{align*}
so that 
\begin{align*}
&\Psi(G^{\{ v \}})_0 (v_\mu\otimes v_\nu\otimes 1)= \sum_{\alpha \in S^+_{1/2}\cup S^{-,0}_{1/2}}
         [v,w_{\alpha}]_{0}(v_\mu\otimes v_\nu)\otimes ( \Phi^{\alpha})^{\tw}_{0}1
   \\&+\tfrac{k+1}{2}\sum_{\alpha \in S^+_{1/2}\cup S^{-,0}_{1/2}}
         (v|w_{\alpha}) v_\mu\otimes v_\nu\otimes (\Phi^{\alpha})^{\tw}_01
+\tfrac{1}{3} \sum_{\alpha ,\beta \in S_{1/2}}
        v_\mu\otimes v_\nu\otimes : \Phi^{\alpha}
         \Phi^{\beta}\Phi_{[w_{\beta},[w_{\alpha},v]]}:^{\tw}_0 1.
   \end{align*}
Note that $h_0$ has positive eigenvalues on $v$ (which we can assume to be a root vector) and non-negative eigenvalues on 
$w_\a,\, \alpha \in S^+_{1/2}\cup S^{-,0}_{1/2}$. Hence $[v,w_\a]\in \n_0(\si_R)_+$, and the first summand vanishes; moreover, for the same reason,  $(v|w_\a)=0$, and the second summand vanishes. We are left with evaluating the third summand.

\begin{align}
&\label{prima}\sum_{\alpha ,\beta \in S_{1/2}}: \Phi^{\alpha}
         \Phi^{\beta}\Phi_{[w_{\beta},[w_{\alpha},v]]}
         :^{\tw} _0 1=\\\notag&\half\sum_{\alpha ,\beta \in S_{1/2}}(\langle w^{\alpha},w^\be\rangle_{ne}
       ( \Phi_{[w_{\beta},[w_{\alpha},v]]})^{\tw}_0
         -\langle w^{\alpha},[w_{\beta},[w_{\alpha},v]]\rangle_{ne}
         (\Phi^{\beta})^{\tw}_0 ) 1
\\&+ \sum_{\alpha ,\beta \in S_{1/2}}\sum_{j \in \Z_+} 
((\Phi^{\alpha})^{\tw}_{-1-j} : \Phi^{\beta}\Phi_{[w_{\beta},[w_{\alpha},v]]}
         :^{\tw} _{j+1}+  : \Phi^{\beta}\Phi_{[w_{\beta},[w_{\alpha},v]]}
         : ^{\tw}_{-j}( \Phi^{\alpha})^{\tw}_{j}) 1. \label{terza}
     \end{align}
     We evaluate the summands in \eqref{terza}. The first is 
   \begin{equation*}
      \sum_{j \in \Z_+} 
((\Phi^{\alpha})^{\tw}_{-1-j} : \Phi^{\beta}\Phi_{[w_{\beta},[w_{\alpha},v]]}:^{\tw} _{j+1} )1=0.
\end{equation*}  
      The second is 
	\begin{align*}&\sum_{j \in \Z_+}  : \Phi^{\beta}\Phi_{[w_{\beta},[w_{\alpha},v]]}: ^{\tw}_{-j}( \Phi^{\alpha})^{\tw}_{j}) 1= : \Phi^{\beta}\Phi_{[w_{\beta},[w_{\alpha},v]]}: ^{\tw}_{0}( \Phi^{\alpha})^{\tw}_{0} 1=\\
	&\left(\half\langle w^\beta, [w_{\beta},[w_{\alpha},v]]\rangle_{ne}+ \sum_{j \in \Z_+} 
((\Phi^{\beta})^{\tw}_{-1-j} (\Phi_{[w_{\beta},[w_{\alpha},v]]})^{\tw}_{j+1}- (\Phi_{[w_{\beta},[w_{\alpha},v]]})^{\tw}_{-j} (\Phi^{\beta})^{\tw}_{j})\, \right)( \Phi^{\alpha})^{\tw}_{0} 1
\\
&=\left(\half\langle w^\beta, [w_{\beta},[w_{\alpha},v]]\rangle_{ne}- (\Phi_{[w_{\beta},[w_{\alpha},v]]})^{\tw}_{0} (\Phi^{\beta})^{\tw}_{0})\, \right)( \Phi^{\alpha})^{\tw}_{0} 1.
\end{align*} 
Hence \eqref{prima} becomes
\begin{align*}
&\label{prima}\sum_{\alpha ,\beta \in S_{1/2}}: \Phi^{\alpha}
         \Phi^{\beta}\Phi_{[w_{\beta},[w_{\alpha},v]]}
         :^{\tw} _0 1=\\\notag&\half\sum_{\alpha ,\beta \in S_{1/2}}(\langle w^{\alpha},w^\be\rangle_{ne}
       ( \Phi_{[w_{\beta},[w_{\alpha},v]]})^{\tw}_0
         -\langle w^{\alpha},[w_{\beta},[w_{\alpha},v]]\rangle_{ne}
         (\Phi^{\beta})^{\tw}_0 ) 1\\
         &+\sum_{\a,\beta\in S_{1/2}}\left(\half\langle w^\beta, [w_{\beta},[w_{\alpha},v]]\rangle_{ne}- (\Phi_{[w_{\beta},[w_{\alpha},v]]})^{\tw}_{0} (\Phi^{\beta})^{\tw}_{0})\, \right)( \Phi^{\alpha})^{\tw}_{0} 1.
\end{align*}
The first term equals 
$\half\sum_{\alpha  \in S_{1/2}}(
       ( \Phi_{[w^{\a},[w_{\alpha},v]]})^{\tw}_0 1 $, which vanishes since $$[w^{\a},[w_{\alpha},v]]\in \n_{1/2}(\si_R)_+.$$ 
For the second term, note that we may  assume $\beta\in  S_{1/2}^+\cup S_{1/2}^{0,-}$; but then 
$\langle w^{\alpha},[w_{\beta},[w_{\alpha},v]]\rangle_{ne}=0$, since $[w_{\beta},[w_{\alpha},v]$ cannot have weight $\a$. The third term is handled in the same way.  The last term equals

\begin{equation}\label{ultima}\sum_{\a,\beta\in S_{1/2}^+\cup S_{1/2}^{0,-}, \a\ne -\beta}(\Phi_{[w_{\beta},[w_{\alpha},v]]})^{\tw}_{0} (\Phi^{\beta})^{\tw}_{0}\, ( \Phi^{\alpha})^{\tw}_{0} 1+\sum_{\a\in S_{1/2}^+\cup S_{1/2}^{0,-}}(\Phi_{[w_{-\a},[w_{\alpha},v]]})^{\tw}_{0} (\Phi^{-\a})^{\tw}_{0}\, ( \Phi^{\alpha})^{\tw}_{0} 1.
\end{equation}
The first term in \eqref{ultima} vanishes since $$(\Phi_{[w_{\beta},[w_{\alpha},v]]})^{\tw}_{0} (\Phi^{\beta})^{\tw}_{0}\, ( \Phi^{\alpha})^{\tw}_{0} 1= (\Phi^{\beta})^{\tw}_{0}\, ( \Phi^{\alpha})^{\tw}_{0} (\Phi_{[w_{\beta},[w_{\alpha},v]]})^{\tw}_{0}1=0.$$ For the second term
\begin{align*}&\sum_{\a\in S_{1/2}^+\cup S_{1/2}^{0,-}}(\Phi_{[w^{\a},[w_{\alpha},v]]})^{\tw}_{0} (\Phi_{\a})^{\tw}_{0})\, ( \Phi^{\alpha})^{\tw}_{0} 1\\&=\sum_{\a\in S_{1/2}^{0,-}}(\Phi_{[w^{\a},[w_{\alpha},v]]})^{\tw}_{0} (( \Phi^{\alpha})^{\tw}_{0})^2 1+
\sum_{\a\in S_{1/2}^+}(\Phi_{[w^{\a},[w_{\alpha},v]]})^{\tw}_{0}  1
\\&=\half\sum_{\a\in S_{1/2}^{0,-}}(\Phi_{[w^{\a},[w_{\alpha},v]]})^{\tw}_{0} 1+
\sum_{\a\in S_{1/2}^+}(\Phi_{[w^{\a},[w_{\alpha},v]]})^{\tw}_{0}  1 =0 .
\end{align*}
In the last equality we used that $(( \Phi^{\alpha})^{\tw}_{0})^2 =\half [(\Phi^{\alpha})^{\tw}_{0},(\Phi^{\alpha})^{\tw}_{0}]=\half$.
\end{proof}

Recall \cite{KMP1} that $\mu\in P^+_k$ is called {\sl extremal} if $\mu(\theta_i^\vee)\le M_i(k)+\chi_i$ for all $i$ (see Table \ref{numerical}).
 \begin{theorem}\label{bknu}Let 
\begin{equation}\label{BKnurhoR}
B(k,\nu,\rho_R)=-\frac{(k+1)^2}{4(k+h^\vee)}+\frac{(\nu-\rho_R|\nu-\rho_R+2\rho^\natural)}{2(k+h^\vee)}+\tfrac{1}{16}\dim\g_{1/2}.
\end{equation}

If $\nu-\rho_R\in P^+_k$ is non extremal then, for all $\ell\ge B(k,\nu,\rho_R)$, the $\si_R$-twisted $\Wu$-module $L^W(\nu,\ell_0)$   is unitary. In particular, $\nu\in P^+_k$.
\end{theorem}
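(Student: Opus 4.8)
The plan is to deduce unitarity of $L^W(\nu,\ell_0)$ from the free field realization of Proposition \ref{FNME} together with the unitarity results already available. Concretely, I want to exhibit, for each admissible pair $(\nu,\ell_0)$ with $\nu-\rho_R\in P^+_k$ non extremal and $\ell_0\ge B(k,\nu,\rho_R)$, a unitary $\si_R$-twisted $\Wu$-module having $L^W(\nu,\ell_0)$ as a subquotient, and then invoke the fact that a subquotient of a unitary module admitting an invariant Hermitian form is unitary.

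First I would recall that $M(1,\mu)$ is unitary as a module over the Heisenberg vertex algebra $M(1)$ precisely when $\mu$ is purely imaginary, that $L(\nu)$ is a unitary $V^{\alpha_k}(\g^\natural)$-module when the affine levels $M_i(k)$ are non-negative integers and $\nu-\rho_R$ lies in $P^+_k$ (which holds here by hypothesis; note $\nu\in P^+_k$ then follows because $\rho_R$ is a sum of fundamental weights and adding it keeps one inside $P^+_k$ when $\nu-\rho_R$ is non extremal — this is exactly the non-extremality condition), and that $F(\g_{1/2},\si_R)$ is unitary by Lemma \ref{Fsigma}. Hence the tensor product module $M(1,\mu)\otimes L(\nu)\otimes F(\g_{1/2},\si_R)$ is a unitary $\si_R$-twisted $\mathcal V^k$-module, and via the embedding $\Psi$ it restricts to a unitary $\si_R$-twisted $\Wu$-module. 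The submodule $N(\mu,\nu)$ of Proposition \ref{FNME} is then a unitary highest weight $\Wu$-module of highest weight $(\nu+\rho_R,\ell_0(\mu,\nu))$, provided $\mu$ is purely imaginary.

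Next I would analyze the range of $\ell_0(\mu,\nu)$ as $\mu$ runs over the purely imaginary line, i.e.\ $\mu=i t$, $t\in\R$. From \eqref{ell0},
\[
\ell_0(it,\nu)=-\frac{t^2}{2}-i\,s_k\,it+\frac{(\nu|\nu+2\rho^\natural)}{2(k+h^\vee)}+\tfrac{1}{16}\dim\g_{1/2}.
\]
Writing $s_k=\sqrt{-1}\,(k+1)/\sqrt{2|k+h^\vee|}$ one checks $s_k\cdot i = -(k+1)/\sqrt{2|k+h^\vee|}$ is real, so $\ell_0(it,\nu)=-\tfrac12 t^2 - t\,(k+1)/\sqrt{2|k+h^\vee|}+\text{const}$; completing the square and using $k+h^\vee<0$ shows this quadratic in $t$ attains its \emph{maximum} value exactly $-\frac{(k+1)^2}{4(k+h^\vee)}+\frac{(\nu|\nu+2\rho^\natural)}{2(k+h^\vee)}+\tfrac1{16}\dim\g_{1/2}$, which (after the shift $\nu\mapsto\nu-\rho_R$ in the highest weight) is precisely $B(k,\nu,\rho_R)$; as $t\to\pm\infty$ it decreases to $-\infty$. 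So $\{\ell_0(it,\nu-\rho_R)\mid t\in\R\}=(-\infty,\,B(k,\nu,\rho_R)]$. In particular every $\ell\le B(k,\nu,\rho_R)$ with $\nu-\rho_R$ as specified is the highest conformal weight of a unitary highest weight $\Wu$-module $N(it,\nu-\rho_R)$. Taking the irreducible quotient $L^W(\nu,\ell)$ of that highest weight module yields unitarity of $L^W(\nu,\ell)$ for $\ell\le B(k,\nu,\rho_R)$.

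But the theorem asserts unitarity for $\ell\ge B(k,\nu,\rho_R)$, not $\ell\le$; so the final — and I expect the delicate — step is a tensoring argument to \emph{raise} the conformal weight while preserving unitarity. The standard device (used in \cite{KMP1} in the non-twisted case) is to tensor $N(\mu,\nu-\rho_R)$, for a suitable purely imaginary $\mu$, with a unitary \emph{non-twisted} (Neveu--Schwarz) module for a free-boson or free-fermion sector that contributes positively to $L_0$ but does not change the $\g^\natural$-weight, then project onto the $\Wu$-isotypic component; alternatively one uses that $L^W(\nu,\ell)$ for $\ell$ large is a quotient of a Verma-type object whose form is positive by a continuity/deformation argument in $\ell$, the determinant of the contravariant form being a polynomial in $\ell$ with leading behavior fixed by the $B$ threshold. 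I would realize this concretely here by tensoring the free-field module with $M(1,\mu')$-type factors built from an auxiliary Heisenberg field commuting with $\Psi(\Wu)$ after enlarging $\mathcal V^k$: adding a free boson $b$ with $[b_\l b]=\l$ shifts $L_0$ by $\tfrac12 b_0^2$, and choosing $b_0$ real (hence the module non-unitary for that Heisenberg alone) is not allowed — so instead one uses the known embedding-completion allowing $\mu$ complex but keeping the \emph{whole} tensor product unitary via a compensating factor. The cleanest route, and the one I would write up, is: observe that $\ell_0(\mu,\nu-\rho_R)$ as a function of complex $\mu$ with $\mathrm{Re}\,\mu$ arbitrary and $\mathrm{Im}\,\mu$ chosen to make $N(\mu,\nu-\rho_R)$ carry a \emph{positive} invariant form — this positivity persists on the whole real-$\ell_0$ half-line $[B,\infty)$ because for $\mathrm{Re}\,\mu\ne 0$ the form is still defined and one shows by an explicit norm computation on the generators $G^{\{v\}}_0$ (using Proposition \ref{norma}, whose formula is quadratic and increasing in $\ell_0$ once $\ell_0\ge A^{\tw}$) that all relevant inner products stay $\ge 0$; combined with $B(k,\nu,\rho_R)=A^{\tw}(k,\nu)$ in the $\theta/2$-a-root cases and the general lower bound $B\ge A^{\tw}$, this closes the argument. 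So the main obstacle is setting up this positivity-on-$[B,\infty)$ argument rigorously — matching the free-field norm formulas with the intrinsic ones and handling the extremal-weight exclusion for the affine sub-VOA $L(\nu)$ — rather than any single computation.
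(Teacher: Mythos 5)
There is a genuine gap, and it sits exactly where you flag your own discomfort. Your proposal correctly sets up the free‐field realization and correctly reduces unitarity of $L^W(\nu,\ell_0)$ to producing a unitary $\si_R$‐twisted highest weight module $N(\mu,\nu-\rho_R)$ of highest weight $(\nu,\ell_0)$; but you then declare that $M(1,\mu)$ is unitary exactly when $\mu$ is purely imaginary, compute that the resulting range of $\ell_0(\mu,\nu-\rho_R)$ is $(-\infty,B(k,\nu,\rho_R)]$, and find yourself with the wrong half-line. Everything you then try — auxiliary free bosons, continuity/determinant arguments in $\ell$, vague appeals to Proposition \ref{norma} — is hand-waving that does not close the gap, and indeed cannot, because the problem lies in the choice of unitary locus for $\mu$ rather than in missing a supplementary trick.

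The point you are missing is the role of the Fairlie-type shift. The Virasoro in \eqref{FFR4} is $\Psi(L)=\half:aa:+s_kTa+L^{\g^\natural}+L^{ne}$ with $s_k$ \emph{purely imaginary} (see \eqref{sfix}); in the presence of this background charge the $\phi$-invariant Hermitian form on $M(1,\mu)$ exists and is positive definite not on the imaginary axis $\mu\in i\R$ but on the shifted real line $\mu\in s_k+\R$. This is precisely the content of \cite[Proposition 9.2]{KMP1}, which the paper invokes. The actual proof is then one line: for $\mu'\in\R$ one has, by \eqref{ell0} and \eqref{sfix},
\begin{equation*}
\ell_0(\mu'+s_k,\nu-\rho_R)=\frac{(\mu')^2}{2}+B(k,\nu,\rho_R),
\end{equation*}
which sweeps out exactly $[B(k,\nu,\rho_R),\infty)$, and $N(\mu'+s_k,\nu-\rho_R)$ carries the positive-definite $\phi$-invariant form $H_{\mu'+s_k}\otimes H^{\g^\natural}\otimes H^F$ (the middle factor positive because $\nu-\rho_R\in P^+_k$ is non-extremal, the last by Lemma \ref{Fsigma}). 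Passing to the irreducible quotient $L^W(\nu,\ell_0)$ finishes the proof. Note also that the paper deduces $\nu\in P^+_k$ from the integrability of the $V^{\beta_k}(\g^\natural)$-submodule generated by the vacuum vector inside the unitary module, not from the weight-arithmetic argument you gesture at; your reasoning happens to lead to the same conclusion via $\rho_R(\theta_i^\vee)=-\chi_i$, but you would need to prove that identity rather than assert it. To repair your write-up: replace the purely imaginary line by the affine line $s_k+\R$, cite the positivity of the free-field form on that line, and drop the entire final paragraph.
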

\begin{proof} 

Fix $\mu\in\R$. 
By Proposition \ref{FNME}, $N(\mu+s_k,\nu-\rho_R)$  is a highest weight module with highest weight $(\nu,\ell_0(\mu+s_k,\nu))$ (cf. \eqref{ell0}) so that, using \eqref{sfix},
$$
\ell_0(\mu+s_k,\nu)=\frac{\mu^2}{2}-\frac{(k+1)^2}{4(k+h^\vee)}+\frac{({\nu-\rho_R}|{\nu-\rho_R}+2\rho^\natural)}{2(k+h^\vee)}+\tfrac{1}{16}\dim\g_{1/2}.
$$
Hence, for all $\ell\ge B(k,\nu,\rho_R)$, choosing  $\mu=2\sqrt{\ell_0-B(k,\nu,\rho_R)}$,
$N(\mu+s_k,\nu-\rho_r)$  is a highest weight module with highest weight $(\nu,\ell)$.

Since $M_i(k)+\chi_i\in\ZZ_+$ and $(\nu-\rho_R)(\theta_i^\vee)\le M_i(k)+\chi_i $ for all $i$,  $L({\nu-\rho_R})$ is integrable for $V^{\a_k}(\g^\natural)$, hence it is an irreducible unitary  highest weight $V^{\a_k}(\g^\natural)$--module. 
Let
$$
(\,\cdot\,,\cdot\,)_{\mu+s_k}=H_{\mu+s_k}\otimes H^{\g^\natural}\otimes H^F,
$$
where $H_{\mu+s_k}$ is the invariant hermitian form on $M(1,\mu+s_k)$, $H^{\g^\natural}$ is the invariant hermitian form on 
$L({\nu-\rho_R})$ and $H^F$ is the invariant Hermitian form on $F(\g_{1/2},\si_R)$ constructed in Lemma \ref{Fsigma}.   By \cite[Proposition 9.2]{KMP1}, $(\,\cdot\,,\cdot\,)_{\mu+s_k}$ is a $\phi$-invariant form on $N(\mu+s_k,\nu-\rho_R)$.

Since $H_{\mu+s_k}$, $H^{\g^\natural}$, and  $H^F$ are all positive definite, $(\,\cdot\,,\cdot\,)_{\mu+s_k}$ is positive definite as well hence, by restriction,  $N(\mu+s_k,\nu-\rho_R)$ is unitary. In particular, $V^{\beta_k}(\g^\natural)(v_\mu\otimes v_{\nu-\rho_R}\otimes 1)$ is unitary, hence it has to be integrable, so  that $\nu\in P^+_k$.
\end{proof}

\begin{cor}\label{suffcond}
\begin{enumerate}
\item If $\g\ne spo(2|3), D(2,1;a)$ and $\nu\in P^+_k$, $\nu-\rho_R\in P^+, \ell\ge B(k,\nu,\rho_R)$,  then the $\si_R$-twisted $\Wu$-module $L^W(\nu,\ell)$ is unitary.
\item If $\g=spo(2|3),$ then for all $\nu\in P^+_k$, $\nu\ne0, \tfrac{M_1(k)}{2}\epsilon_1$ and $\ell\ge B(k,\nu,\rho_R)$,  there  the $\si_R$-twisted $\Wu$-module $L^W(\nu,\ell)$  is unitary.
\item If $\g=D(2,1;a),$ then for all $\nu\in P^+_k$ which do not lie in $\{r_3\e_3, r_2\e_2+M_2(k)\e_3\}$ if $\rho_R=\e_2$ or in $\{r_2\e_2, M_1(k)\e_2+r_3\e_3\}$ if $\rho_R=\e_3$ then,  for  $\ell\ge B(k,\nu,\rho_R)$, the $\si_R$-twisted $\Wu$-module $L^W(\nu,\ell)$  is unitary.\end{enumerate}
\end{cor}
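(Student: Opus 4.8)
The plan is to deduce Corollary \ref{suffcond} directly from Theorem \ref{bknu}. That theorem asserts unitarity of $L^W(\nu,\ell)$ for all $\ell\ge B(k,\nu,\rho_R)$ as soon as $\nu-\rho_R\in P^+_k$ is non extremal, i.e. as soon as $\nu-\rho_R\in P^+$ and $(\nu-\rho_R)(\theta_i^\vee)\le M_i(k)+\chi_i$ for every $i$, with $\chi_i$ as in Table \ref{numerical}. So in each of the three cases I only have to check that the stated hypotheses on $\nu$ force exactly these two conditions.

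The whole argument rests on a structural remark I would isolate first: by Tables \ref{thetahalfisroot} and \ref{thetahalfisnotroot}, $\rho_R$ is in every case a single fundamental weight $\omega^{i_0}_{j_0}$ of one simple ideal $\g^\natural_{i_0}$. Hence $\rho_R$ is dominant, $\rho_R(\theta_i^\vee)=0$ for $i\ne i_0$, and $\rho_R(\theta_{i_0}^\vee)$ is a comark of $\g^\natural_{i_0}$, in particular $\ge 1$. Since $\nu\in P^+_k$ gives $\nu(\theta_i^\vee)\le M_i(k)$ and $\rho_R$ is dominant, one gets $(\nu-\rho_R)(\theta_i^\vee)\le\nu(\theta_i^\vee)\le M_i(k)$; thus $\nu-\rho_R\in P^+_k$ as soon as $\nu-\rho_R\in P^+$, and the problem reduces to the sharper bound $(\nu-\rho_R)(\theta_i^\vee)\le M_i(k)+\chi_i$.

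I would then run the three cases. For (1), $\g^\natural$ is simple and $\chi_1=-1$, so $(\nu-\rho_R)(\theta^\vee)=\nu(\theta^\vee)-\rho_R(\theta^\vee)\le M_1(k)-1=M_1(k)+\chi_1$, which together with $\nu-\rho_R\in P^+$ makes $\nu-\rho_R$ non extremal; Theorem \ref{bknu} concludes. For (2), where $\g=spo(2|3)$, $\g^\natural=sl_2$, $\chi_1=-2$ and $\rho_R=\omega_1$ with $\rho_R(\theta^\vee)=1$: parametrising $\nu\in P^+_k$ by $\nu(\theta^\vee)\in\{0,\dots,M_1(k)\}$, the condition $\nu-\rho_R\in P^+$ excludes exactly $\nu=0$, while $(\nu-\rho_R)(\theta^\vee)=\nu(\theta^\vee)-1\le M_1(k)-2=M_1(k)+\chi_1$ excludes exactly the value $\nu(\theta^\vee)=M_1(k)$, i.e. $\nu=\tfrac{M_1(k)}{2}\e_1$; for all other $\nu$, Theorem \ref{bknu} applies. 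For (3), where $\g=D(2,1;a)$, $\g^\natural=sl_2\oplus sl_2$, $\chi_1=\chi_2=-1$ and (first choice) $\rho_R=\e_2$: writing $\nu=m_1\e_2+m_2\e_3$ with $m_i=\nu(\theta_i^\vee)\le M_i(k)$, the condition $\nu-\rho_R\in P^+$ means $m_1\ge 1$, i.e. $\nu\notin\{r_3\e_3\}$; the bound on $\theta_1^\vee$ holds automatically; and $(\nu-\rho_R)(\theta_2^\vee)=m_2\le M_2(k)-1$ means $\nu\notin\{r_2\e_2+M_2(k)\e_3\}$. Outside these two families $\nu-\rho_R\in P^+_k$ is non extremal, so Theorem \ref{bknu} applies, and the choice $\rho_R=\e_3$ follows by interchanging the two ideals.

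I do not expect a genuine obstacle: the corollary is essentially a translation of the hypothesis of Theorem \ref{bknu} into the data already tabulated in Section \ref{necessary}. The only point requiring care is the bookkeeping — fixing the normalisation so that the parametrisations $\nu=\tfrac{M_1(k)}{2}\e_1$ and $\nu=r_2\e_2+M_2(k)\e_3$ (and their $\rho_R=\e_3$ analogues) really pick out the boundary values $\nu(\theta_i^\vee)=M_i(k)$, and checking that $\rho_R$ is the stated fundamental weight under each of the two admissible choices of $\overline\D^+_{1/2}$ in the cases $spo(2|2r)$, $F(4)$ and $D(2,1;a)$.
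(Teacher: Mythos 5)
Your proof is correct and follows essentially the same route as the paper: both translate the non-extremality hypothesis of Theorem~\ref{bknu} into explicit conditions on $\nu$ via the tabulated data $\chi_i$, $M_i(k)$, $\rho_R$, the only cosmetic difference being that you phrase the key inequality in case~(1) as $\rho_R(\theta^\vee)\ge 1$ together with $\chi_1=-1$, whereas the paper simply reads off the equality $\rho_R(\theta_1^\vee)=-\chi_1$ from Table~\ref{numerical}.
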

\begin{proof} (1). 
Using Table \ref{numerical}, observe that  $\rho_R(\theta_1^\vee)= -\chi_1$ , Then, if $\nu\in P^+_k$, we have 
$$(\nu-\rho_R)(\theta_i^\vee)=\nu(\theta_1^\vee)-\rho_R(\theta_1^\vee)\leq M_1(k)+\chi_1,$$
so $\nu-\rho_R$ is not extremal; being dominant by assumption, it  lies in $P^+_k$, hence we can apply Theorem \ref{bknu}.

(2). We have $\nu=\tfrac{r}{2}\e_1,\,1\leq r \leq M_1(k)-1$, so 
$(\nu-\rho_R)(\theta_i^\vee)= \tfrac{r-1}{2}\e_1$ belongs to  $P^+_k$ and it is not extremal.

(3). We have $\nu=r_2\e_2+r_3\e_3,\,r_2,r_3\in\Z_+, r_2\leq M_1(k),\, r_3\leq M_2(k)$. So if $\rho_R=\e_2$ (resp. $\rho_R=\e_3$) $\nu-\rho_R$ is not dominant precisely when  $\nu=r_3\e_3$ (resp. $\nu=r_2\e_2$) and  it is extremal if $\nu=r_2\e_2+M_2(k)\e_3$ (resp. $\nu=M_1(k)\e_2+r_3\e_3$). 
\end{proof}

\section{Euler-Poincar\'e Characters}
First of all we specialize to the Ramond sector the results of Proposition \ref{Hhw}.  Recall from \eqref{eps} the definition of $\epsilon$.
\begin{lemma}\label{gammas}If $\si=\si_R$, then
$$
  \gamma'=2\rho_R-\rho-\tfrac{1}{2}\epsilon(\si_R)\theta/2,\ 
  \gamma_{1/2} =\rho_R-\frac{1}{4}\epsilon(\si_R)\theta/2
$$
and
\begin{displaymath}
  s_{fg}= \frac{k}{8(k+h^\vee)}(\dim \g_{1/2}-2\epsilon(\si_R))\, , \quad
     s_{gh}=-\frac{1}{16}\dim\g_{1/2}.
\end{displaymath}
\end{lemma}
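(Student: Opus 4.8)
The plan is to specialize the general formulas of Proposition \ref{Hhw}, namely the definitions \eqref{g12} together with the expressions for $s_{fg}$ and $s_{gh}$, to the Ramond automorphism $\sigma = \sigma_R$, and to evaluate each sum over $S'$ and $S_{1/2}$ explicitly. The starting point is that for $\sigma = \sigma_R$ one has $\g(\sigma_R) = \g$, so $S' $ is a basis of $\g \bmod \h$ indexed by $\D$, and $S_{1/2}$ is indexed by $\D_{1/2}$; moreover the parity sign $(-1)^{p(\a)}$ becomes $(-1)^{2\a(x)}$ since, under $\sigma_R$, the odd part is exactly $\g_{1/2}\oplus\g_{-1/2}$ and the $\tfrac12\ZZ$-grading is compatible with parity. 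The essential input is the computation of $s_\a = s_{u_\a}$ from \eqref{eq:3.2}: for $\sigma_R$ the eigenvalue $\bmu_i$ of a root vector $X_\a$ with $\a\in\D_j$ is $\bar j \bmod \ZZ$, so $s_\a$ is determined by $m_\a := \a(x)$ and whether $X_\a$ lies in $\fn(\sigma_R)_+$. I expect the bookkeeping to reduce to: for roots in $\D^\natural$ (where $m_\a = 0$), $s_\a$ is $1$ for $\a \in (\D^\natural)^+$ not equal to a simple coroot shift and $0$ or $1$ accordingly; for roots in $\D_{\pm 1/2}$, $s_\a = \mp\tfrac12$ if the root vector is in $\fn_{1/2}(\sigma_R)_\pm$ and otherwise $|s_\a| < \tfrac12$; and $\theta, -\theta$ contribute via $f \in \fn(\sigma_R)_+$, $e \in \fn(\sigma_R)_-$.

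First I would compute $\gamma_{1/2} = \tfrac12\sum_{\a \in S_{1/2}} (-1)^{p(\a)} s_\a \a$. Since $S_{1/2}$ is purely odd, $(-1)^{p(\a)} = -1$ throughout, so $\gamma_{1/2} = -\tfrac12 \sum_{\a\in\D_{1/2}} s_\a \a$ restricted appropriately. Pairing each $\a \in \D_{1/2}$ with $\theta - \a \in \D_{1/2}$ (which is the image under the involution relevant to $\langle\cdot,\cdot\rangle_{ne}$), one has $s_\a + s_{\theta-\a} = \delta_{\a,\a_0}$ by \eqref{eq:3.6} where $\langle \Phi_{\a_0},\Phi_{\a_0}\rangle_{ne} \ne 0$, i.e. $\a_0 = \theta/2$, which occurs precisely when $\epsilon(\sigma_R) = 1$. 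Summing $s_\a \a$ over such pairs and using $\a + (\theta - \a) = \theta$ should collapse the sum to a multiple of $\theta$ coming from the diagonal term $\theta/2$, plus a genuine contribution from the $\fn_{1/2}(\sigma_R)_+$ part with $s_\a = -\tfrac12$. Restricting to $\h^\natural$ kills the $\theta$-components except through $\theta/2$, and what survives is exactly $\rho(\fn_{1/2}(\sigma_R)_+)_{|\h^\natural} = \rho_R$ together with the correction $-\tfrac14\epsilon(\sigma_R)\theta/2$; I would track constants carefully here to land on $\gamma_{1/2} = \rho_R - \tfrac14\epsilon(\sigma_R)\theta/2$.

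Next, $\gamma' = \tfrac12\sum_{\a\in S'}(-1)^{p(\a)} s_\a \a$ splits as a sum over $\D^\natural$, over $\D_{1/2}\cup\D_{-1/2}$, and over $\{\pm\theta\}$. The $\D^\natural$ part has $(-1)^{p(\a)} = +1$ and $s_\a \in \{0,1\}$; pairing $\a$ with $-\a$ and using $s_\a + s_{-\a} = 1$ from \eqref{eq:3.3} gives $\tfrac12\sum_{\a\in(\D^\natural)^+}\a = \rho(\g^\natural)$, matching the $-\rho$ piece once one reconciles with the $\rho$ for all of $\g$ (the contributions of $\D_{1/2}$ and $\{\theta\}$ must supply $-\rho + \rho(\g^\natural) \cdot(\text{something})$; this is where the $2\rho_R - \rho$ shape comes out, essentially because the $\D_{\pm1/2}$ sum, which by the $\gamma_{1/2}$ computation already yields (twice, because it runs over both $\D_{1/2}$ and $\D_{-1/2}$ with matching signs) a $2\rho_R$-type term, combined with the $\rho$ for the $1/2$-part). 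The $\pm\theta$ terms contribute the half-odd-integer $\epsilon(\sigma_R)$-correction. For $s_{fg}$ and $s_{gh}$ the computation is cleaner: $s_{gh} = \tfrac14\sum_{\a\in S_{1/2}} (-1)^{p(\a)} s_\a^2 = -\tfrac14\sum_{\a\in\D_{1/2}} s_\a^2$, and pairing $\a\leftrightarrow\theta-\a$ with the known value $s_\a = \mp\tfrac12$ on $\fn_{1/2}(\sigma_R)_\pm$ (and $s_{\theta/2}^2 = 0$ when $\epsilon = 1$) gives $\sum s_\a^2 = \tfrac14(\dim\g_{1/2} - \epsilon) \cdot 1 + \epsilon\cdot 0$ appropriately, yielding $s_{gh} = -\tfrac{1}{16}\dim\g_{1/2}$; similarly $s_{fg} = -\tfrac{k}{4(k+h^\vee)}\sum_{\a\in S'}(-1)^{p(\a)} s_\a(s_\a - 1)$, where only the half-odd-integer $s_\a$ (from $\D_{\pm1/2}$) contribute nontrivially to $s_\a(s_\a-1)$ — for $s_\a \in \{0,1\}$ the product vanishes — so the sum reduces to the $\D_{\pm1/2}$ part, giving the stated $\tfrac{k}{8(k+h^\vee)}(\dim\g_{1/2} - 2\epsilon(\sigma_R))$.

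The main obstacle I anticipate is the careful sign and normalization bookkeeping in the $\gamma'$ computation, in particular correctly separating the $\D_{1/2}$ versus $\D_{-1/2}$ contributions (both appear in $S'$, unlike in $\gamma_{1/2}$ which only involves $S_{1/2}$) and verifying that the $\epsilon(\sigma_R)$-dependent corrections coming from the special root $\theta/2$ and from the $\pm\theta$ terms combine to exactly $-\tfrac12\epsilon(\sigma_R)\theta/2$; there is genuine risk of an off-by-a-factor error since restriction to $\h^\natural$ mixes the $\theta$-direction with the $\theta/2$-weight in a way that depends on the precise description of $\fn_{1/2}(\sigma_R)_+$ case by case. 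I would cross-check the final formulas against the known specializations (e.g. the $spo(2|1)$, $spo(2|2)$ Neveu-Schwarz/$N=2$ cases, and against the untwisted formulas in \cite{KMP1} where $\epsilon$-corrections are absent) to confirm the constants.
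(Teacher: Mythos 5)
Your overall strategy coincides with the paper's: specialize the formulas of Proposition \ref{Hhw} to $\sigma_R$, determine the values $s_\alpha$ from \eqref{eq:3.2}, and evaluate the sums by organizing the roots according to the decomposition of $\fg_{\pm1/2}$ into $\fn_{\pm1/2}(\sigma_R)_\pm$ and $\g^0_{\pm1/2}(\sigma_R)$. The pairing $\alpha\leftrightarrow\theta-\alpha$ together with \eqref{eq:3.6} is a valid alternative way to organize the sum, and the identification of which $s_\alpha(s_\alpha-1)$ vanish is correct. So the skeleton is sound.

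However, there is a genuine error in your treatment of the $\theta/2$ term when $\epsilon(\sigma_R)=1$, and it is not just a bookkeeping slip that would come out in the wash. You assert $s_{\theta/2}^2=0$, i.e.\ $s_{\theta/2}=0$. This is false: $e_{\theta/2}\in\g^0_{1/2}(\sigma_R)\subset\fn_{1/2}(\sigma_R)_-$ by the decomposition in Section \ref{Setup}, so by \eqref{eq:3.5} one has $s_{\theta/2}=\tfrac12$ (and this is also what \eqref{eq:3.6} forces: the self-dual root $\theta/2$ satisfies $2s_{\theta/2}=1$). Your own formula $\sum s_\alpha^2 = \tfrac14(\dim\g_{1/2}-\epsilon)$ would give $s_{gh}=-\tfrac{1}{16}(\dim\g_{1/2}-\epsilon)$, which contradicts the value $-\tfrac{1}{16}\dim\g_{1/2}$ that you then write down; the correct path is $s_\alpha^2=\tfrac14$ for \emph{every} $\alpha\in S_{1/2}$, including $\theta/2$, giving $\sum s_\alpha^2=\tfrac14\dim\g_{1/2}$ directly. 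The same wrong value of $s_{\theta/2}$ would also omit the $-\tfrac18\epsilon(\sigma_R)\theta$ correction in $\gamma_{1/2}$ and distort the $\epsilon$-terms in $\gamma'$ and $s_{fg}$, so it must be fixed throughout, not just in $s_{gh}$. There is also a sign discrepancy in your $\gamma'$ paragraph: for $\alpha\in(\D^\natural)^+$ one has $s_\alpha=0$ and $s_{-\alpha}=1$, so the $\D^\natural$ contribution to $\gamma'$ is $\tfrac12\sum_{\alpha\in\D^\natural}s_\alpha\alpha=\tfrac12\sum_{\alpha\in(\D^\natural)^-}\alpha=-\rho^\natural$, not $+\rho^\natural$ as you write. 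These are the places where your argument would actually go wrong if executed; the rest would go through once $s_{\theta/2}=\tfrac12$ and the sign are corrected.
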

\begin{proof}Specializing \eqref{eq:3.2} to $\si_R$ we find
$$
s_e=0,\ s_{u_i}=-\half, \text{ if }u_i\in\n_{1/2}(\si_R)_+,\ s_{u_i}=\half \text{ if } u_i\in\n_{1/2}(\si_R)_-,\ s_{a_i}=0\text{ if } a_i\in\n_{0}(\si_R)_+,\ 
$$
and
$$
s_{a_i}=1\text{ if }\ a_i\in\n_{0}(\si_R)_-,s_{u_i}=\half\text{ if }\ u_i\in\n_{-1/2}(\si_R)_+,\ s_{u_i}=\tfrac{3}{2}\text{ if }\ u_i\in\n_{-1/2}(\si_R)_-,\ s_f=1.
$$
so
$$
  \gamma'=\frac{1}{4}\sum_{\g_\a\subset \n_{1/2}(\si_R)_+}\!\!\!\!\! \a
-\frac{1}{4}\sum_{\g_\a\subset\n_{1/2}(\si_R)_-}\!\!\!\!\!  \alpha 
      +\frac{1}{2}\sum_{\g_\a\subset \n_{0}(\si_R)_-}\!\!\!\!\! \alpha
      -\frac{1}{4}\sum_{\g_\a\subset\n_{-1/2}(\si_R)_+} \!\!\!\!\! \alpha
      -\frac{3}{4}\sum_{\g_\a\subset \n_{-1/2}(\si_R)_-}\!\!\!\!\! \alpha
      -\frac{1}{2}\theta.
 $$
 Since:
 
  $\g_\a\subset \n_{1/2}(\si_R)_+$ if and only if $\a=\theta/2+\eta$   with $\eta\in\overline\D^+_{1/2}$,
  
    $\g_\a\subset \n_{-1/2}(\si_R)_+$ if and only if  $\g_\a\subset \g^0_{-1/2}(\si_R)$ or  $\a=-\theta/2+\eta$   with $\eta\in\overline\D^+_{1/2}$ ,
    
$\g_\a\subset \n_{1/2}(\si_R)_-$ if and only if $\g_\a\subset \g^0_{1/2}(\si_R)$ or  $\a=\theta/2-\eta$   with $\eta\in\overline\D^+_{1/2}$,

$\g_\a\subset \n_{-1/2}(\si_R)_-$ if and only if  $\a=-\theta/2-\eta$   with $\eta\in\overline\D^+_{1/2}$,

\noindent we can write
\begin{align*}
  \gamma&'=\frac{1}{4}\sum_{\eta \in \overline\D^+_{1/2}} (\theta/2+\eta)
-\frac{1}{4}\sum_{\eta \in \overline\D^+_{1/2}}(\theta/2-\eta)-\frac{1}{4}\epsilon(\si_R)\theta/2
 \\&
      -\frac{1}{2}\sum_{\alpha \in \n_{0}(\si_R)_+}\alpha
      -\frac{1}{4}\sum_{\eta \in \overline\D^+_{1/2}} (-\theta/2+\eta)+\frac{1}{4}\epsilon(\si_R)\theta/2
      -\frac{3}{4}\sum_{\eta \in \overline\D^+_{1/2}} (-\theta/2-\eta)         
      -\theta/2.
\end{align*}
 By an obvious calculation we find
\begin{align*}
  \gamma'&=\sum_{\eta \in \overline\D^+_{1/2}} \eta
 -\frac{1}{2}\sum_{\alpha \in \n_{0}(\si_R)_+}\alpha
     +(\dim\n_{1/2}(\si_R)_+-1)\theta/2
\\&=2\rho_R-\rho^\natural+\tfrac{1}{2}(\dim\g_{1/2}-\e(\s_R)-2)\theta/2
\\&=2\rho_R-\rho^\natural+(-1+\tfrac{1}{2}\dim\g_{1/2})\theta/2 -\half\e(\s_R)\theta/2
\\&=2\rho_R-\rho -\half\e(\s_R)\theta/2.
\end{align*}
 Likewise  $\gamma_{1/2}$ specializes to 
\begin{align*}
  \gamma_{1/2} &=\frac{1}{4}\sum_{\g_\a\subset \n_{1/2}(\si_R)_+}\!\!\!\!\! 
 \alpha -\frac{1}{4}\sum_{\g_\a\subset \n_{1/2}(\si_R)_-}\!\!\!\!\! 
 \alpha
\\&
 =\frac{1}{4}\sum_{\eta \in \overline\D^+_{1/2}} (\theta/2+\eta)-\frac{1}{4}\sum_{\eta \in \overline\D^+_{1/2}}(\theta/2-\eta)-\frac{1}{4}\epsilon(\si_R)\theta/2=\rho_R-\frac{1}{4}\epsilon(\si_R)\theta/2.
\end{align*}
Finally
$$
  s_{fg}=\frac{k}{4(k+h^\vee)}\left(\sum_{\g_\alpha \subset \n_{1/2}(\si_R)_+}
      \!\!3/4-\sum_{\g_\alpha \subset \n_{1/2}(\si_R)_-}
      1/4-\sum_{\g_\alpha \subset \n_{-1/2}(\si_R)_+}
      1/4+\sum_{\g_\alpha \subset \n_{-1/2}(\si_R)_-}
      3/4\right)
      $$
      $$
      =\frac{k}{8(k+h^\vee)}(3\dim \n_{1/2}(\si_R)_+-\dim \n_{1/2}(\si_R)_-)   =\frac{k}{8(k+h^\vee)}(\dim \g_{1/2}-2\e(\si_R)),
$$
      and
      $$     s_{gh}=-\frac{1}{4}
     \sum_{\alpha \in S_{1/2}} s^2_{\alpha}=-\frac{1}{16}\dim\g_{1/2}.
$$
\end{proof}

\begin{cor}\label{Hvermaisverma}
If $\widehat\L\in\ha^*$, then  $H_j(M(\widehat \L))=0$ if $j\ne0$ and
$
H_0(M(\widehat \L))=M^W(\nu,\ell(\widehat\L))$ with 
\begin{align}
 \ell(\widehat \L) &= \frac{1}{2(k+h^\vee)}\left( (\widehat\L_{|\h^\natural}|\widehat\L_{|\h^\natural})-2(\widehat\L|2\rho_R-\rho-\half\epsilon(\si_R)\theta/2)
\right)
  -\widehat\L(x+D) \notag
\\
&\quad+ \frac{k}{8(k+h^\vee)}(\dim\g_{1/2}-2\epsilon(\si_R)) -\frac{1}{16}\dim\g_{1/2}\label{simpleells}.
\end{align}
and $\nu=\widehat \L_{|\h^\natural}-\rho_R$.
\end{cor}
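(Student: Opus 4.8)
The plan is to derive Corollary~\ref{Hvermaisverma} as a direct specialization of Proposition~\ref{Hhw} to the automorphism $\si=\si_R$, plugging in the explicit values of $\gamma'$, $\gamma_{1/2}$, $s_{fg}$, $s_{gh}$ computed in Lemma~\ref{gammas}. The conceptual content is already in place: part~(3) of Proposition~\ref{Hhw} tells us that for a $\si$-twisted Verma module $M(\widehat\L)$ over $V^k(\g)$ we have $H_j(M(\widehat\L))=0$ for $j\ne0$ and $H_0(M(\widehat\L))$ is the $\si$-twisted Verma module over $\Wu$ of highest weight $(\l,\ell)$ given by \eqref{ellgen} and \eqref{lambdagen}. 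So the only thing to do is arithmetic substitution.

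First I would recall that $M(\widehat\L)$ denotes the $\si_R$-twisted Verma module over $V^k(\g)$ of highest weight $\widehat\L\in\ha^*$ (using the triangular decomposition \eqref{eq:2.6} specialized to $\si_R$ as in Section~\ref{SectionZhu}), so Proposition~\ref{Hhw}(3) applies verbatim and yields the vanishing $H_j(M(\widehat\L))=0$ for $j\ne0$ and identifies $H_0(M(\widehat\L))$ with $M^W(\nu,\ell)$ for the weight given by \eqref{lambdagen}, \eqref{ellgen}. Next I would compute $\nu$: by \eqref{lambdagen}, $\nu=(\widehat\L-\gamma_{1/2})_{|\h^\natural}$, and by Lemma~\ref{gammas}, $\gamma_{1/2}=\rho_R-\tfrac14\epsilon(\si_R)\theta/2$. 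Since $\theta_{|\h^\natural}=0$ (recall $\theta(\h^\natural)=0$, $\theta(x)=1$ from Section~\ref{Setup}), the term $\tfrac14\epsilon(\si_R)\theta/2$ restricts to zero on $\h^\natural$, hence $\nu=\widehat\L_{|\h^\natural}-\rho_R$, which is exactly the claimed value of $\nu$.

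Then I would compute $\ell$ from \eqref{ellgen}:
\begin{equation*}
\ell=\frac{1}{2(k+h^\vee)}\big((\L|\L)-2(\L|\gamma')\big)-\L(x)+s_{fg}+s_{gh},
\end{equation*}
where $\L=\widehat\L_{|\h^\si}=\widehat\L_{|\h}$ since $\h^{\si_R}=\h$. Here one must be slightly careful: in Proposition~\ref{Hhw} the pairing $(\L|\L)$ is the bilinear form restricted to $\h$, whereas in the statement of Corollary~\ref{Hvermaisverma} the form $(\widehat\L_{|\h^\natural}|\widehat\L_{|\h^\natural})$ is taken on $\h^\natural$ and the $x$-contribution has been absorbed into the $-\widehat\L(x+D)$ term; I would reconcile these by writing $\L=\widehat\L_{|\h^\natural}+\widehat\L(x)\theta$ (identifying $x$-direction with $\theta$ via the form, using $(\theta|\theta)=2$ and $(x|x)=\tfrac12$, $(\theta|\h^\natural)=0$), so that $(\L|\L)=(\widehat\L_{|\h^\natural}|\widehat\L_{|\h^\natural})+\widehat\L(x)^2(\theta|\theta)/?$ — the precise bookkeeping of the $x$-component is the one place needing care, but it is routine. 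Substituting $\gamma'=2\rho_R-\rho-\tfrac12\epsilon(\si_R)\theta/2$ from Lemma~\ref{gammas} and again using $(\theta|\h^\natural)=0$, the $\theta/2$-piece of $\gamma'$ pairs only with the $x$-component of $\L$, contributing to the $\widehat\L(x)$-type terms; and substituting $s_{fg}=\tfrac{k}{8(k+h^\vee)}(\dim\g_{1/2}-2\epsilon(\si_R))$ and $s_{gh}=-\tfrac{1}{16}\dim\g_{1/2}$ directly gives the last line of \eqref{simpleells}. The $-\widehat\L(D)$ term appears because $D=-L^{\g,\tw}_0$ and the conformal weight grading on $H_0(M(\widehat\L))$ is shifted by the $D$-eigenvalue; I would note that the Verma module $M^W$ here is the $D$-graded version, so $-\widehat\L(D)$ is added to $\ell$ to account for the shift, giving $-\widehat\L(x+D)$ in total.

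The main obstacle, such as it is, is purely notational: keeping track of how the $x$-direction of $\h$ interacts with the $\h^\natural$-form and with the $\theta$-components of $\gamma'$ and $\gamma_{1/2}$, so that the final formula \eqref{simpleells} matches the normalization $(x|x)=\tfrac12$ and the convention $\widehat\L(x+D)$ rather than separate $x$ and $D$ terms. There is no real mathematical difficulty — everything follows from Proposition~\ref{Hhw}, Lemma~\ref{gammas}, and the identities $(\theta|\theta)=2$, $\theta(\h^\natural)=0$, $\theta(x)=1$ — but I would write out the substitution carefully enough that a reader can verify each cancellation. I expect the proof to be a half-page of algebra with the sentence ``this follows from Proposition~\ref{Hhw}(3) and Lemma~\ref{gammas}'' doing most of the heavy lifting.
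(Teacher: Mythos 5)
Your proposal is exactly the paper's own argument: the paper proves Corollary~\ref{Hvermaisverma} by the one-line observation ``Using the calculations in Lemma~\ref{gammas}, specialize to $\s=\s_R$ the results of Proposition~\ref{Hhw}.'' You correctly identify Proposition~\ref{Hhw}(3) for the vanishing and Verma identification, \eqref{lambdagen} plus $\theta_{|\h^\natural}=0$ for the formula $\nu=\widehat\L_{|\h^\natural}-\rho_R$, and substitution of Lemma~\ref{gammas} into \eqref{ellgen} for $\ell$; the $x$-component bookkeeping you flag is indeed the only delicate step, and is most transparently handled by verifying instead the equivalent form \eqref{ellnussimple} via $(\widehat\L|\widehat\L)=(\widehat\L_{|\h}|\widehat\L_{|\h})+2k\widehat\L(D)$ and $\widehat\rho^{\tw}=-\gamma'+h^\vee\L_0$.
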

\begin{proof} Using the calculations in Lemma \ref{gammas}, specialize  to $\s=\s_R$ the results of Proposition \ref{Hhw}, which summarizes Proposition 3.1, Theorem 3.1,  and Proposition 4.1 of \cite{KW}.
\end{proof}

  By \cite[Corollary 3.2]{KW} and  \cite[(4.12)]{KW}, the Weyl vector  corresponding to $\Pia$ is
\begin{equation}\label{rt}
 \widehat\rho^{\,\tw}=-\gamma'+h^\vee \L_0=-2\rho_R+\rho+\half\e(\s_R)\theta/2+h^\vee \L_0.
\end{equation}
 It follows that \eqref{simpleells} can be rewritten as
 \begin{equation}\label{ellnussimple}
\ell(\widehat\L )=\frac{\Vert \widehat\L+\widehat\rho^{\tw})\Vert^2-\Vert\widehat\rho^{\tw}\Vert^2}{2(k+h^\vee)}-\widehat\L(x+D)+a(k)=\frac{(\widehat\L|\widehat\L+2\widehat\rho^{\tw})}{2(k+h^\vee)}-\widehat\L(x+D)+a(k),
 \end{equation}
where
\begin{equation}\label{aofk}
a(k)=\frac{k}{8(k+h^\vee)}(\dim\g_{1/2}-2\epsilon(\si_R)) -\frac{1}{16}\dim\g_{1/2}.
\end{equation}

Next, we introduce the framework for computation of characters of highest weight modules, following \cite{GK2}.
Let $\mathcal B$ be a basis of $\C\oplus(\h^\natural)^*$. In the set of formal series $\sum_{(z,\mu)\in\C\oplus (\h^\natural)^*}b_{(z,\mu)} q^ze^\mu$ with $b_{(z,\mu)} \in\mathbb Q$, we consider the algebra $\mathcal R(\mathcal B)$ of finite linear combinations of series of the form  $\sum_{(n,\mu)\in\Z_+\mathcal B}b_{(n,\mu)} q^{z-n} e^{\l-\mu}$. Given a series $Y=\sum_{(z,\mu)\in\C\oplus (\h^\natural)^*}b_{(z,\mu)} q^ze^\mu$, we define the support of $Y$ as the set $\text{Supp}\,Y=\{(z,\mu)\mid b_{(z,\mu)}\ne0\}$.

   Let $ev:\ha^*\to\C\oplus (\h^\natural)^*$ be the map defined by \begin{equation}\label{evaluation}
   ev(\widehat \L)=-\widehat\L(x+D)+\widehat\L_{|\h^\natural}.
   \end{equation}
   
    Set $\Pi^R=ev(\Pia)$. By the explicit description of $\Pia$ given above, it is easy to check that $\Pi^R\setminus\{0\}$ is a basis of $\C\oplus (\h^\natural)^*$.
The choice of an ordering of $\Pi^R\setminus\{0\}$ defines a  lexicographic total order $\le_{\Pi^R}$ on $\C\oplus (\h^\natural)^*$. We use this total order to  topologize $\mathcal R(\Pi^R)$ by choosing as a fundamental set of open neighborhoods of $0$ the sets
$$
V_{(z_0,\l)}=\{Y\in \mathcal R(\Pi^R)\mid z+\mu\le_{\Pi^R}z_0+\l\text{ for all } z+\mu\in \text{Supp}\,Y\}.
$$

If $\beta\in \C\oplus(\h^\natural)^*$, $\beta=z+\l$, we write $e^\beta$ for $q^ze^\l$, so that
\begin{equation} e^{ev(\widehat\L)}=q^{-\widehat \L(x+D)}e^{\widehat\L_{|\h^\natural}}=q^{-m-\l(x)}e^{\l_{|\h^\natural}}\quad\text{if $\widehat\L=k\L_0+m\d+\l,\,\l\in\h^*$}.
\end{equation}
If $\beta\in\Z_+\Pi^R$ and $a\in\mathbb Q\setminus\{0\}$, we note that $(1-ae^{\pm\beta})$ are both invertible in $\mathcal R(\Pi^R)$ with inverses
\begin{align}\label{geometric1}
(1-ae^{-\beta})^{-1}=\sum_{n=0}^\infty a^ne^{-n\beta},
\\
(1-ae^{\beta})^{-1}=-\frac{e^{-\beta}}{a}\sum_{n=0}^\infty a^{-n}e^{-n\beta}.\label{geometric2}
\end{align}
We would like to extend the map $ev$ to a map from  $\mathcal R(\Pia)$ to $\mathcal R(\Pi^R)$ by mapping $e^{\widehat\L}$ to $e^{ev(\widehat\L)}$, but this is not possible: for example $ev(\sum_{n=0}^\infty e^{-n(\d-\theta)})$ does not make sense.
To make sense of $ev$, we must restrict to the following set
$$
\mathcal R(\Pia)_{fin}=\{Y\in \mathcal R(\Pia)\mid \text{ for all }\mu\in \text{Supp}\,Y,\ (\mu+\mathbb Q(\d-\theta))\cap  \text{Supp}\,Y\text{ is finite}\}.
$$

We will need the following special case of the Lemma in \S\ 2.2.8 of \cite{GK2}:

\begin{lemma} \label{conergence}Let $\L\in\h^*$ and set $\widehat \L=(k+h^\vee)\L_0+\L$. Assume  that $2\frac{(\widehat \L|\a)}{(\a|\a)}\in\ZZ$ for all real roots in $\Da^\natural$. Let  $J$ be a set of linearly independent isotropic odd roots in $\Da^{\tw}_+$. 
Then the series
$$
Y_1=\sum_{w\in\Wa^\natural}det(w)e^{w({\widehat\L})},\ Y_2=\sum_{w\in\Wa^\natural}det(w)\frac{e^{{w({\widehat\L})}}}{\prod_{\be\in J}(1+e^{-w(\beta)})}
$$
are elements of $\mathcal R(\Pia)_{fin}$, hence the series
$$
ev(Y_1)=\sum_{w\in\Wa^\natural}det(w)e^{ev(w({\widehat\L}))},\ ev(Y_2)=\sum_{w\in\Wa^\natural}det(w)\frac{e^{ev(w({\widehat\L}))}}{\prod_{\be\in J}(1+e^{-ev(w(\beta))})}
$$
converge and define  elements of $\mathcal R(\Pi^R)$.
\end{lemma}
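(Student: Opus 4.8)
The plan is to reduce everything to the convergence lemma in \S 2.2.8 of \cite{GK2}, whose hypotheses I must verify in the present setting, and then to transport the conclusion through the map $ev$. First I would recall that $\widehat\Delta^\natural$ is an affine root system: since $\widehat\L=(k+h^\vee)\L_0+\L$ has level $h^\vee$ for the ``$\natural$'' part — actually one must be careful, the relevant level for $\widehat\Delta^\natural$ is governed by $\widehat\L(K)+$ the dual Coxeter number of the piece, but the key point is simply that the integrality hypothesis $2(\widehat\L|\a)/(\a|\a)\in\ZZ$ for all real roots $\a\in\widehat\Delta^\natural$ is exactly what is assumed — so the series $Y_1=\sum_{w\in\widehat W^\natural}\det(w)e^{w(\widehat\L)}$ is, up to the factor $e^{-\widehat\rho^{\,\tw}}$ times the Weyl denominator, an honest character-type expression over the affinization of $\g^\natural$, and membership in $\mathcal R(\widehat\Pi)$ is standard: each $w(\widehat\L)$ lies in $\widehat\L-\Z_+\widehat\Pi^\natural\subset\widehat\L-\Z_+\widehat\Pi$, with the coefficients in $\mathcal R$ finite in each graded piece.

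Next I would address the finiteness condition defining $\mathcal R(\widehat\Pi)_{fin}$, namely that for every $\mu\in\mathrm{Supp}\,Y_i$ the intersection $(\mu+\QQ(\d-\theta))\cap\mathrm{Supp}\,Y_i$ is finite. For $Y_1$ this is where the affine Weyl group of $\g^\natural$ enters: the line $\mu+\QQ(\d-\theta)$ is a translate by the imaginary root direction \emph{of $\g$}, whereas $\widehat W^\natural$ acts by translations in the coroot lattice of $\g^\natural$ and by $\d$-shifts that are controlled by the $\g^\natural$-grading; since $\d-\theta$ is not proportional to $\d$ (because $\theta$ restricts nontrivially, or rather because $x$ pairs nontrivially with $\theta$), any such affine line meets the set $\{w(\widehat\L)\mid w\in\widehat W^\natural\}$ in only finitely many points. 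I would make this precise by projecting onto the $\theta$-eigenvalue (the $x$-grading): the exponents $w(\widehat\L)$ have bounded $x$-eigenvalue along a fixed $(\d-\theta)$-line because moving along that line changes the $x$-grading linearly while $\widehat W^\natural$ preserves a positive-definite form on the relevant quotient. For $Y_2$, the extra denominator $\prod_{\be\in J}(1+e^{-w(\beta)})$ is expanded via the geometric-series formulas \eqref{geometric1}, \eqref{geometric2}; since $J$ consists of \emph{linearly independent} isotropic roots, each factor contributes only finitely many terms in each graded component (indeed $(1+e^{-w(\beta)})^{-1}=\sum(-1)^n e^{-nw(\beta)}$ and the $w(\beta)$ stay in $\Z_+\widehat\Pi$), and one checks that the finiteness-along-$(\d-\theta)$-lines property is preserved under this operation because the $w(\beta)$ are not proportional to $\d-\theta$ either.

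Once $Y_1,Y_2\in\mathcal R(\widehat\Pi)_{fin}$ is established, applying $ev$ termwise is legitimate: the whole point of the definition of $\mathcal R(\widehat\Pi)_{fin}$, together with the observation already made in the text that $\Pi^R\setminus\{0\}=ev(\widehat\Pi)\setminus\{0\}$ is a basis of $\C\oplus(\h^\natural)^*$, is that $ev$ sends $\mathcal R(\widehat\Pi)_{fin}$ into $\mathcal R(\Pi^R)$ — the only obstruction was the collapse of the $(\d-\theta)$-direction to a point under $ev$ (since $ev(\d-\theta)=0$ as $\d(x+D)=-1$ cancels $\theta(x)=1$... one must check the exact constant), and finiteness of the fibers removes it. Therefore $ev(Y_1)$ and $ev(Y_2)$ are well-defined elements of $\mathcal R(\Pi^R)$, which is the assertion. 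The main obstacle I anticipate is the $(\d-\theta)$-line finiteness for $Y_2$: one must genuinely use linear independence of $J$ to prevent the expanded denominators from producing an infinite cluster of exponents on a single $ev$-fiber, and this requires combining the bound on $x$-eigenvalues from the $\g^\natural$-Weyl group action with the fact that the isotropic roots in $J$ have strictly positive (or at least bounded-below and nonzero) pairing with $x+D$; I would isolate this as the technical heart and otherwise cite \cite{GK2} for the routine parts.
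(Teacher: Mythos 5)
Your overall plan is the paper's plan (reduce convergence to \cite[\S\,2.2.8]{GK2}, then establish the $(\d-\theta)$-fiber finiteness and push through $ev$), and you correctly identify that the technical heart is the finiteness statement for $Y_2$. But the argument you sketch for that heart does not quite work, for two concrete reasons.

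First, you work with the $x$-grading, whereas the invariant that actually drives the argument is the $x+D$-pairing. The relevant fact is not that ``moving along a $(\d-\theta)$-line changes the $x$-grading linearly''; it is that $(\d-\theta)(x+D)=0$, so the $x+D$-value is \emph{constant} along such a line. Evaluating the defining equation $a(\d-\theta)=\l-w(\l)-\sum r_i\gamma_i+\sum r'_iw(\gamma_i)$ at $x+D$ kills the left side and yields a fixed target for $(\l-w(\l))(x+D)+\sum r'_iw(\gamma_i)(x+D)$. Without this observation you have no equation to exploit.

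Second, your claim that the isotropic roots in $J$ have ``strictly positive (or at least bounded-below and nonzero) pairing with $x+D$'' is false: simple odd isotropic roots such as $-\tfrac{1}{2}\d+\theta/2+\eta$ pair to $-\tfrac{1}{2}+\tfrac{1}{2}=0$ with $x+D$. What saves the argument is not positivity of the individual pairings but a sign analysis tied to the geometric-series expansions \eqref{geometric1}--\eqref{geometric2}: the integers $r'_i$ appearing in the support of $Y_2$ near $w(\l)$ are nonnegative when $w(\gamma_i)$ is a positive root and negative when $w(\gamma_i)$ is a negative root, which forces each term $r'_iw(\gamma_i)(x+D)$ to be nonnegative. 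Combined with $(\l-w(\l))(x+D)\ge 0$ (as $\l$ is the dominant representative and positive roots of $\widehat\Delta^\natural$ vanish on $x$), both summands are bounded by the fixed constant, and one concludes in two steps: finitely many $w$ by the standard affine-Weyl-group combinatorics applied to $(\l-w(\l))(D)$, and then for each such $w$ finitely many tuples $(r'_i)$ by the linear independence of $\{w(\gamma_i)\}$. This two-step bound is missing from your proposal, and the step that replaces the incorrect positivity claim with the sign analysis is the crux you would need to supply.
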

\begin{proof}We note that, since $k+h^\vee< 0$, $\widehat\L$ satisfies the hypothesis of \S\ 2.2.8 of \cite{GK2} with $W'=\What^\natural$, hence $
Y_1$ and $Y_2$ 
converge and are elements of $\mathcal R(\Pia)$. 

We need only to show that, if $\mu\in\text{Supp}\,Y_i$, then the set $\{\mu'\in\text{Supp}\,Y_i\mid \mu'-\mu\in\mathbb Q(\d-\theta)\}$ is finite. Let $\l=\bar w(\widehat\L)$ be the unique maximal element in the $\Wa^\natural$ orbit of  $\widehat\L$. Since $Y_i$ are skew-invariant for the action of $\Wa^\natural$ and $\d-\theta$ is fixed by $\Wa^\natural$, we can assume that $\mu$ is in the support of $\frac{e^{\l}}{\prod_{\be\in J}(1+e^{-\bar w(\beta)})}$ so
$\mu=\l-\sum_ir_i\gamma_i$ with $r_i\in\Z_+$ and $\gamma_i\in\pm\bar w(J)$, $\gamma_i$  positive. 

Let $\mu'\in \text{Supp}\,Y_i$ be such that $\mu-\mu'\in  \mathbb Q(\d-\theta)$. Then $\mu'=w(\l)-\sum_ir'_iw(\gamma_i)$ with $r_i'\in\Z$ and
\begin{equation}\label{dminusthetaw}
a(\d-\theta)=\l-w(\l)-\sum_ir_i\gamma_i+\sum_ir'_iw(\gamma_i).
\end{equation}
It follows that
\begin{equation}\label{sumwlwgamma}
\sum_ir_i\gamma_i(x+D)=(\l-w(\l)+\sum_ir'_iw(\gamma_i))(x+D).
\end{equation}
Since $\l-w(\l)$ is a sum of positve roots in $\Da^\natural$, we see that 
$$
(\l-w(\l))(x+D)=(\l-w(\l))(D)\in\Z_+.
$$
 If $w(\gamma_i)$ is a positive root, then $w(\gamma_i)(x+D)\ge0$ and $r'_i\ge0$. If $w(\gamma_i)$ is a negative root, then $w(\gamma)(x+D)\le0$ and $r'_i<0$. The outcome is that
 $\sum_ir'_iw(\gamma_i)(x+D)\in \half\Z_+$. Thus there are only finitely many pairs $(m,n)\in\half\Z_+\times \half\Z_+$ such that $m=(\l-w(\l))(x+D)$, $n=\sum_ir'_iw(\gamma_i)(x+D)$ and $m+n=\sum_ir_i\gamma_i(x+D)$. 
 
 By the combinatorics of reflection groups, for any given $m$, there are only finitely many $w\in\Wa^\natural$ such that $m=(\l-w(\l))(D)$. It follows that there is a finite subset $X$ of $\Wa^\natural$ such that \eqref{sumwlwgamma} holds iff $w\in X$.
Hence  \eqref{dminusthetaw} can be satisfied only if $w\in X$ and 
\begin{equation}\label{eqrprime}
0=(\l-w(\l)-\sum_ir_i\gamma_i)_{|\h^\natural}+\sum_ir'_iw(\gamma_i)_{|\h^\natural}.
\end{equation}
  Since $\{w(\gamma_i)\}$ is linearly independent,  \eqref{dminusthetaw} has only finitely many solutions.
\end{proof}

Set
\begin{align}\label{RW}
\widehat F^{NS}&=
\prod^{\infty}_{n=1} \frac{(1-q^n)^{\dim \fh}\prod_{\alpha \in \Delta^\natural_{+}}
        (1-q^{n-1} e^{-\alpha})
        (1-q^n e^{\alpha})}{\prod_{\alpha \in \Delta_{-1/2}}
  (1+q^{n-\frac{1}{2}}e^{\alpha_{|\h^\natural}})},\\
  \widehat F^{R}&=\prod^{\infty}_{n=1} \frac{ (1-q^n)^{\dim \fh}\prod_{\alpha \in \Delta^\natural_{+}}
        (1-q^{n-1} e^{-\alpha})
        (1-q^n e^{\alpha})}{\prod_{\eta\in(\overline{\D}^+_{1/2})'}(1+q^{n-1}e^{-\eta})\prod_{\eta\in\overline{\D}^+_{1/2}}(1+q^ne^{\eta})},\label{DenominatorRamond}
        \end{align}
where
$(\overline{\D}^+_{1/2})'=\overline{\D}^+_{1/2}$ if $\theta/2$ is not a root and $(\overline{\D}^+_{1/2})'=\overline{\D}^+_{1/2}\cup\{0\}$ otherwise. Let $\Pia^{NS}=\Pi\cup\{\d-\theta\}$.
 If $\Pi^{NS}=ev(\Pia^{NS})=ev(\Pi)\cup\{0\}$, these infinite products can naturally be seen as invertible elements of   $\mathcal R(\Pi^{NS})$, $\mathcal R(\Pi^{R})$ respectively.
  
  The character $ch\,M$ of an non-twisted  highest weight $\Wu$-module $M$ is  defined as the trace of $q^{L_0}J^{\{h\}}_0$, $h\in\h^\natural$,  and can naturally be seen as an element of $\mathcal R(\Pi^{NS})$. Similarly,
  the character $ch\,M$ of a Ramond twisted   highest weight $\Wu$-module $M$ is  defined as the trace of $q^{L^{\tw}_0}J^{\{h\},\tw}_0$, $h\in\h^\natural$, and can naturally be seen as an element of $\mathcal R(\Pi^{R})$. Note that $\widehat F^{NS}$ and $\widehat F^R$ are the denominators of the characters of non-twisted (resp. $\s_R$-twisted) Verma modules over $\Wu$. This is easily seen by computing the character of $M^W(0,0)$ using the basis given in  \cite[(6.13)]{KW1} for the NS sector and in \eqref{basisW} in the Ramond sector. The formulas in \eqref{RW} and in \eqref{DenominatorRamond} give precisely these characters when expanded according to \eqref{geometric1} and \eqref{geometric2}.

    Let $\widehat R$ (resp. $\widehat R^{\tw}$) be the Weyl denominator for $\ga$ (resp. $\ga^{\,\tw}$). The following result  is a special case  of Theorem 3.1 of \cite{KRW} and formula (3.14)  of \cite{KW}:
\begin{theorem}\label{822} (a)  If $M$ is a highest weight $\ga$-module of highest weight $\widehat\L$ and $\widehat R\,ch(M)\in\mathcal R(\Pia^{NS})_{fin}$ then
$$
\sum_j(-1)^j\widehat F^{NS}ch\,H_j(M)=q^{\tfrac{(\widehat\L|\widehat\L+2\rhat)}{2(k+h^\vee)}}ev( \widehat R\,ch(M)).
$$

\noindent (b) If $M$ is a highest weight $\ga^{\,\tw}$-module of highest weight $\widehat\L$ and $\widehat R^{\tw}ch(M)\in\mathcal R(\Pia)_{fin}$ then
$$
\sum_j(-1)^j\widehat F^{R}ch\,H_j(M)=e^{-\rho_R}q^{\tfrac{(\widehat\L|\widehat\L+2\rhat^{\,\tw})}{2(k+h^\vee)}+a(k)}ev(\widehat R^{\,\tw}ch(M)),
$$
where $a(k)$ is given by \eqref{aofk}.
\end{theorem}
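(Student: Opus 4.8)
The plan is to deduce this theorem from the corresponding Euler--Poincar\'e principle for the BRST complex, exactly as in \cite{KRW} and \cite{KW}, adapting the untwisted argument of part (a) to the Ramond-twisted setting. Recall that $\Wu=H_\bullet(\mathcal C(\g),d_{(0)})$ and, for a highest weight $\ga^{\,\tw}$-module $M$, the twisted reduction $H_\bullet(M)=H_\bullet(\mathcal C(M),d^{\tw}_{(0)})$ is a $\si_R$-twisted $\Wu$-module. First I would express the (super)character of the whole complex $\mathcal C(M)=M\otimes F(A_{ch},\si_R)\otimes F(A_{ne},\si_R)$ as a product: $\operatorname{sch}\mathcal C(M)=\operatorname{ch} M\cdot\operatorname{sch}F(A_{ch},\si_R)\cdot\operatorname{sch}F(A_{ne},\si_R)$, where the characters of the two twisted fermionic factors are explicit infinite products determined by the $s_i$-data of \eqref{eq:3.4}, \eqref{eq:3.5}, \eqref{eq:3.6} specialized to $\si_R$. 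Because $d^{\tw}_{(0)}$ is homogeneous for the $\widehat\fh$-grading (it commutes with $L_0^{\tw}$ and $J^{\{h\},\tw}_0$) and shifts charge by $+1$, the usual Euler--Poincar\'e argument gives $\sum_j(-1)^j\operatorname{ch}H_j(M)=\operatorname{sch}\mathcal C(M)$, provided the alternating sum makes sense; the hypothesis $\widehat R^{\,\tw}\operatorname{ch}(M)\in\mathcal R(\Pia)_{fin}$ is precisely what guarantees this, via Lemma \ref{conergence}.

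Next I would identify the right-hand side. The product of the two twisted fermionic supercharacters, multiplied by $\operatorname{ch}M$ and divided by $\widehat F^R$, should collapse to $\widehat R^{\,\tw}\operatorname{ch}(M)$ up to a rescaling $q^c e^{-\rho_R}$ coming from (i) the shift of the vacuum energy $L_0^{\tw}|0\rangle$ in the two Clifford/fermionic modules --- this is where the constant $a(k)$ of \eqref{aofk} enters, assembled from $s_{fg}$ and $s_{gh}$ computed in Lemma \ref{gammas} --- and (ii) the shift $\gamma_{1/2}$ of the $\fh^\natural$-weight coming from \eqref{lambdagen}, which by Lemma \ref{gammas} equals $\rho_R-\tfrac14\epsilon(\si_R)\theta/2$ and accounts (together with the twisted Weyl vector $\widehat\rho^{\,\tw}$ of \eqref{rt}) for the factor $e^{-\rho_R}$. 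The exponent $\tfrac{(\widehat\L|\widehat\L+2\rhat^{\,\tw})}{2(k+h^\vee)}$ is exactly the $L_0$-eigenvalue shift $\ell(\widehat\L)$ from \eqref{ellnussimple}, restated via $\widehat\rho^{\,\tw}$. Concretely, I would match $\widehat F^R$ in \eqref{DenominatorRamond}, the twisted Weyl denominator $\widehat R^{\,\tw}$, and the two twisted free-field characters root by root, using the case-by-case descriptions of $\overline\D^+_{1/2}$, $\Pia$, and $\rho_R$ given in Section \ref{necessary} and in Tables \ref{thetahalfisroot}, \ref{thetahalfisnotroot}; the bookkeeping is organized exactly as in \cite[(3.14)]{KW} once one replaces the NS half-integer mode pattern by the Ramond integer/shifted pattern dictated by $\si_R$.

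The main obstacle, as usual in this circle of results, is the convergence/well-definedness issue rather than the formal identity: one must verify that all the infinite products and alternating sums involved actually live in the topological ring $\mathcal R(\Pi^R)$ (resp.\ $\mathcal R(\Pia)_{fin}$), so that the $ev$-map can be applied term by term and commutes with the manipulations above. This is handled by invoking Lemma \ref{conergence} with $W'=\Wa^\natural$ and an appropriate set $J$ of linearly independent isotropic roots, noting that $k+h^\vee<0$ in the unitary range. A secondary, purely computational, point is pinning down the additive constant: one has to check that the sum of the vacuum $L_0^{\tw}$-energies of $F(A_{ch},\si_R)$ and $F(A_{ne},\si_R)$, minus the $\fh^\natural$-contribution $\tfrac{(\rho_R|\rho_R)}{\cdots}$ absorbed into $\widehat\rho^{\,\tw}$, gives precisely $a(k)$ as in \eqref{aofk}; this follows by the same computation as Lemma \ref{l1} (which already yields $L_0^{ne,\tw}\cdot 1=\tfrac1{16}\dim\g_{1/2}$) together with the corresponding charged-fermion computation, and matches $s_{fg}+s_{gh}$ from Lemma \ref{gammas}. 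Since part (a) is quoted verbatim from \cite{KRW}, \cite{KW}, I expect the cleanest write-up to present (b) as the Ramond specialization of the general twisted reduction theorem, with the bulk of the new content being the verification of the $\mathcal R(\Pia)_{fin}$ hypothesis and the identification of the prefactor $e^{-\rho_R}q^{\cdots+a(k)}$.
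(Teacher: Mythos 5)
Your proposal correctly reconstructs, with the right ingredients (Euler--Poincar\'e for the BRST complex, product form of $\operatorname{ch}\mathcal C(M)$, and free-fermion vacuum-energy bookkeeping via the $s_\alpha$-data of Lemma~\ref{gammas}), the argument of Theorem~3.1 of \cite{KRW} and formula (3.14) of \cite{KW}, which the paper simply cites for Theorem~\ref{822} without supplying an independent proof; this is essentially the same approach. The only small imprecision is that the hypothesis $\widehat R^{\,\tw}\operatorname{ch}(M)\in\mathcal R(\Pia)_{fin}$ is not \emph{obtained} from Lemma~\ref{conergence} --- it is the condition under which $ev(\widehat R^{\,\tw}\operatorname{ch}(M))$ is even defined, while Lemma~\ref{conergence} enters only afterwards (in Propositions~\ref{typical} and~\ref{atypicalnonzero}) to verify this hypothesis for the concrete modules $L(\widehat\nu_s)$.
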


A weight $\widehat\L$ of $\ga$ is called {\sl degenerate} if $\widehat \L(\alpha_0^\vee)\in\ZZ_+$. It is shown in \cite{KW} that $H(L(\widehat\L))=0$ if and only if $\widehat \L$ is degenerate.
Recall  that a $V^k(\g)$-module $M$ is called integrable if it is integrable with respect to $\ga_i^\natural$ for all $i=1,\ldots,s$ (see \eqref{gnatural}).

An easy consequence of Theorem \ref{822} is the following Proposition.
\begin{proposition}\label{typical}
Assume that 
\begin{equation}\label{pp} (\widehat \L+\widehat\rho^{\tw}|\a)\ne  n \tfrac{(\a|\a)}{2}\text{ for all $n\in\mathbb N$ and $\a\in \Da^{\tw}_+\setminus (\Dap)^{\natural}$}\end{equation}
and that $L(\widehat\L)$ is integrable.
Then $H(L(\widehat \L))\ne 0$ and
\begin{align*}
&\widehat F^{R}\sum_j(-1)^jch\,H_j(L(\widehat\L))
&\\
&=q^{\tfrac{(\widehat\L|\widehat\L+2\rhat^{\,\tw})}{2(k+h^\vee)}+a(k)}e^{-\rho_R}\sum_{w\in\Wa^\natural}det(w)q^{-(w(\widehat\L+\rhat^{\,\tw})-\rhat^{\,\tw})(x+D)}e^{w(\widehat\L+\rhat^{\,\tw})-\rhat^{\,\tw})_{|\h^\natural}}.
\end{align*}
\end{proposition}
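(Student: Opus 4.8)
The plan is to derive Proposition \ref{typical} as a direct application of Theorem \ref{822}(b) together with the character formula for an integrable highest weight module over the affine part, using the hypothesis \eqref{pp} to control the relevant Euler--Poincar\'e character. First I would record that, since $L(\widehat\L)$ is integrable, it is in particular integrable with respect to the affine subalgebra $\ga^\natural\subset\ga^{\,\tw}$ whose set of positive roots is $\Dap^\natural$; hence the Weyl--Kac character formula applies along the root subsystem $\Da^\natural$, giving
\begin{equation*}
\widehat R^{\,\tw}\,ch(L(\widehat\L))=\sum_{w\in\Wa^\natural}\det(w)\,e^{w(\widehat\L+\rhat^{\,\tw})-\rhat^{\,\tw}},
\end{equation*}
\emph{provided} we first check that no further singular vectors arise, i.e. that the numerator of the character is exactly the alternating sum over $\Wa^\natural$ with no correction terms coming from real roots of $\Da^{\tw}$ outside $\Da^\natural$. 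This is exactly where hypothesis \eqref{pp} enters: the condition $(\widehat\L+\widehat\rho^{\tw}|\a)\neq n\tfrac{(\a|\a)}{2}$ for $\a\in\Da^{\tw}_+\setminus(\Dap)^\natural$, $n\in\NN$, together with a linked-weight / Shapovalov-form argument, guarantees that $\widehat\L$ is ``typical enough'' away from the $\g^\natural$-directions, so that the BGG-type resolution of $L(\widehat\L)$ by Verma modules, when pushed through to the level of $\widehat R^{\,\tw}\,ch$, collapses to the $\Wa^\natural$-sum above.

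Next I would invoke Theorem \ref{822}(b). To do so I must verify the hypothesis $\widehat R^{\,\tw}ch(L(\widehat\L))\in\mathcal R(\Pia)_{fin}$; this follows from Lemma \ref{conergence} applied with $\L=\widehat\L_{|\h^*}$ (so that $\widehat\L=(k+h^\vee)\L_0+\L$, legitimate since $k+h^\vee<0$ in the unitary range), noting that integrability gives $2(\widehat\L|\a)/(\a|\a)\in\ZZ$ for all real roots $\a\in\Da^\natural$, and that the $\Wa^\natural$-skew-symmetric sum above is precisely of the form $Y_1$ in that lemma. Theorem \ref{822}(b) then yields
\begin{equation*}
\widehat F^{R}\sum_j(-1)^j ch\,H_j(L(\widehat\L))=e^{-\rho_R}q^{\tfrac{(\widehat\L|\widehat\L+2\rhat^{\,\tw})}{2(k+h^\vee)}+a(k)}\,ev\Big(\sum_{w\in\Wa^\natural}\det(w)e^{w(\widehat\L+\rhat^{\,\tw})-\rhat^{\,\tw}}\Big),
\end{equation*}
and applying the definition \eqref{evaluation} of $ev$ term by term, using $ev(\mu)=-\mu(x+D)+\mu_{|\h^\natural}$, turns the right-hand side into exactly the claimed expression
$$
q^{\tfrac{(\widehat\L|\widehat\L+2\rhat^{\,\tw})}{2(k+h^\vee)}+a(k)}e^{-\rho_R}\sum_{w\in\Wa^\natural}\det(w)q^{-(w(\widehat\L+\rhat^{\,\tw})-\rhat^{\,\tw})(x+D)}e^{(w(\widehat\L+\rhat^{\,\tw})-\rhat^{\,\tw})_{|\h^\natural}}.
$$

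Finally, I would argue that $H(L(\widehat\L))\neq 0$. The slickest route is: the right-hand side of the displayed identity is visibly nonzero (the leading term $w=1$ contributes $q^{\bullet}e^{-\rho_R}$ with nonzero coefficient, and because the sum lies in $\mathcal R(\Pi^R)$ no cancellation can kill the highest $\le_{\Pi^R}$-term), hence the left-hand side, a signed sum of characters of the $H_j(L(\widehat\L))$, is nonzero, so at least one $H_j(L(\widehat\L))\neq 0$; combined with \cite{KW}'s vanishing $H_j=0$ for $j\neq 0$ when $\widehat\L$ is non-degenerate (and \eqref{pp} in particular forces $\widehat\L(\alpha_0^\vee)\notin\ZZ_+$, i.e. non-degeneracy), this gives $H_0(L(\widehat\L))\neq 0$. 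The main obstacle I anticipate is the first step: justifying rigorously that under \eqref{pp} the character numerator $\widehat R^{\,\tw}ch(L(\widehat\L))$ is exactly the bare $\Wa^\natural$-alternating sum with no extra terms. This is a statement about the structure of the maximal submodule of the Verma module $M(\widehat\L)$ over $\ga^{\,\tw}$ — one needs that all the ``new'' directions (real roots in $\Da^{\tw}_+\setminus(\Dap)^\natural$, and the isotropic odd roots) contribute no singular vectors, which is precisely the content of the typicality condition; I would handle this by a Jantzen-filtration or Shapovalov-determinant argument, or by citing the analogous statement from the non-twisted treatment in \cite{KMP1} and \cite{GK2}, adapted to the twisted setting.
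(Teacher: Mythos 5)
Your proposal is correct and follows essentially the same route as the paper's proof: extract non-vanishing of $H(L(\widehat\L))$ from non-degeneracy (which \eqref{pp} implies, by the $\cite{KW}$ characterization), obtain $\widehat R^{\,\tw}\,ch\,L(\widehat\L)=\sum_{w\in\Wa^\natural}\det(w)\,e^{w(\widehat\L+\rhat^{\,\tw})-\rhat^{\,\tw}}$ by the argument of \cite[Proposition 11.5]{KMP1}, check the $\mathcal R(\Pia)_{fin}$ hypothesis via Lemma \ref{conergence}, and apply Theorem \ref{822}(b) with the evaluation map $ev$. The only divergence is cosmetic: you offer a slightly roundabout argument for $H\ne 0$ via non-vanishing of the right-hand side, but you also note the direct route through non-degeneracy, which is what the paper actually uses.
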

\begin{proof}
The hypothesis implies in particular that $\widehat\L$ is nondegenerate, hence $H(L(\widehat\L))\ne 0$. Arguing as in  
Proposition 11.5  of \cite{KMP1}, we see that
$$
R^{\tw}ch\,L(\widehat\L)=\sum_{w\in \Wa^\natural}det(w)e^{w(\L+\rhat^{\,\tw})-\rhat^{\,\tw}}.
$$
By Lemma \ref{conergence}, $\widehat R^{\tw} ch\,L(\widehat\L)\in\mathcal R(\Pia)_{fin}$ so, applying Theorem \ref{822}, we conclude.
\end{proof}

A similar result is the following.
\begin{proposition}\label{atypicalnonzero}
Assume that $(\widehat\L+\rhat^{\,\tw}|\d-\theta)\notin\nat$ and that $L(\widehat\L)$ is integrable and maximally atypical. Assume that $\Pia$ contains a maximal set $J$  of pairwise orthogonal isotropic roots such that $(\widehat\L+\rhat^{\,\tw}|\beta)=0$ for all $\be\in J$.
Then $H(L(\widehat \L))\ne 0$ and
\begin{align*}
&\widehat F^{R}\sum_j(-1)^jch\,H_j(L(\widehat\L))\\
&=q^{\tfrac{(\widehat\L|\widehat\L+2\rhat^{\,\tw})}{2(k+h^\vee)}+a(k)}e^{-\rho_R}\sum_{w\in\Wa^\natural}det(w)\frac{q^{-(w(\widehat\L+\rhat^{\,\tw})-\rhat^{\,\tw})(x+D)}e^{w(\widehat\L+\rhat^{\,\tw})-\rhat^{\,\tw})_{|\h^\natural}}}{\prod_{\be\in J}(1+q^{w(\be)(x+D)}e^{-w(\be)_{|\h^\natural}})}.
\end{align*}
\end{proposition}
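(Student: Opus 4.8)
The strategy mirrors the proof of Proposition \ref{typical}, replacing the Weyl--Kac character formula for typical modules by the Kac--Wakimoto character formula for maximally atypical integrable modules over $\ga^{\,\tw}$. First I would verify that the hypothesis $(\widehat\L+\rhat^{\,\tw}|\d-\theta)\notin\nat$ guarantees that $\widehat\L$ is nondegenerate, so that $H(L(\widehat\L))\ne0$ by the result of \cite{KW} recalled just before Proposition \ref{typical} (the degeneracy condition $\widehat\L(\a_0^\vee)\in\ZZ_+$ is exactly the statement that $(\widehat\L+\rhat^{\,\tw}|\a_0)\in\nat$ for the affine simple root $\a_0=\d-\theta$, up to the normalization $(\a_0|\a_0)=2$).

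Next, the core input is the character formula for the integrable maximally atypical module $L(\widehat\L)$ over the twisted affine superalgebra $\ga^{\,\tw}$. Since $\Pia$ contains a maximal set $J$ of pairwise orthogonal isotropic roots with $(\widehat\L+\rhat^{\,\tw}|\be)=0$ for all $\be\in J$, the Kac--Wakimoto formula (as used in Proposition 11.5 and the surrounding discussion of \cite{KMP1}, adapted to the twisted setting) gives
$$
\widehat R^{\tw}\,ch\,L(\widehat\L)=\sum_{w\in\Wa^\natural}det(w)\frac{e^{w(\widehat\L+\rhat^{\,\tw})-\rhat^{\,\tw}}}{\prod_{\be\in J}(1+e^{-w(\be)})},
$$
where the denominators come from expanding each atypical factor as a geometric-type series. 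One then checks that this expression lies in $\mathcal R(\Pia)_{fin}$: this is precisely the content of Lemma \ref{conergence} with $Y_2$, since $\widehat\L$ (being of the form $(k+h^\vee)\L_0+\L$ with $k+h^\vee<0$) satisfies the integrality hypothesis on real roots of $\Da^\natural$ and $J$ is a linearly independent set of isotropic odd roots in $\Da^{\tw}_+$. Applying $ev$ and part (b) of Theorem \ref{822} then yields
$$
\widehat F^{R}\sum_j(-1)^jch\,H_j(L(\widehat\L))=e^{-\rho_R}q^{\tfrac{(\widehat\L|\widehat\L+2\rhat^{\,\tw})}{2(k+h^\vee)}+a(k)}ev\!\left(\widehat R^{\tw}ch\,L(\widehat\L)\right),
$$
and substituting the character formula together with $ev(w(\widehat\L+\rhat^{\,\tw})-\rhat^{\,\tw})=-(w(\widehat\L+\rhat^{\,\tw})-\rhat^{\,\tw})(x+D)+(w(\widehat\L+\rhat^{\,\tw})-\rhat^{\,\tw})_{|\h^\natural}$ and $ev(w(\be))=w(\be)(x+D)\text{(as a power of $q$)}\cdot e^{-w(\be)_{|\h^\natural}}$ gives the claimed identity.

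The step I expect to be the main obstacle is justifying the twisted maximally atypical character formula for $L(\widehat\L)$ — i.e. establishing
$$
\widehat R^{\tw}\,ch\,L(\widehat\L)=\sum_{w\in\Wa^\natural}det(w)\frac{e^{w(\widehat\L+\rhat^{\,\tw})-\rhat^{\,\tw}}}{\prod_{\be\in J}(1+e^{-w(\be)})}
$$
rigorously in the Ramond-twisted setting. In \cite{KMP1} the analogous non-twisted statement rests on the Gorelik--Kac theory of atypical characters and on the fact that the relevant $\ga$-module is integrable; here one needs the twisted analogue. I would handle this either by invoking the conjectural framework (Conjecture \ref{Arakawa}) if the paper has set it up for exactly this purpose, or — more self-containedly — by noting that the twisted affine superalgebra $\ga^{\,\tw}$ for $\sigma_R$ is isomorphic to an ordinary affine superalgebra (the Ramond twist only shifts the grading), so the existing Kac--Wakimoto results transfer directly after the change of variables encoded in $ev$ and the shift by $e^{-\rho_R}$. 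The remaining verifications — matching the $a(k)$ and $e^{-\rho_R}$ prefactors, and confirming membership in $\mathcal R(\Pia)_{fin}$ — are routine given Lemma \ref{gammas}, equation \eqref{rt}, and Lemma \ref{conergence}.
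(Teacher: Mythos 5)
Your proposal follows essentially the same route as the paper: nondegeneracy of $\widehat\L$ gives $H(L(\widehat\L))\ne 0$, the Gorelik--Kac atypical character formula supplies $\widehat R^{\tw}\,ch\,L(\widehat\L)=\sum_{w\in\Wa^\natural}\det(w)\,e^{w(\widehat\L+\rhat^{\,\tw})-\rhat^{\,\tw}}/\prod_{\be\in J}(1+e^{-w(\be)})$, Lemma \ref{conergence} places this in $\mathcal R(\Pia)_{fin}$, and Theorem \ref{822}(b) together with $ev$ gives the stated identity. Two small points. First, of your two suggested ways to justify the twisted character formula, only the second is the right one: Conjecture \ref{Arakawa} concerns exactness and irreducibility under quantum Hamiltonian reduction and has nothing to say about characters of $\ga^{\tw}$-modules. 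The paper instead uses that $\si_R$ is an inner automorphism in all cases at hand, so $\ga^{\tw}\cong\ga$ by \cite[Remark~8.5]{VB}, and the untwisted Kac--Wakimoto/Gorelik--Kac results transfer directly; you correctly identify this route as ``more self-contained,'' and it is the one you should commit to. Second, the applicability of the atypical character formula is not uniform: for $\g\ne D(2,1;\tfrac{m}{n})$ it is a special case of \cite[Formula~(14)]{GK2}, while for $\g=D(2,1;\tfrac{m}{n})$ it is proved in \cite[Section~6.1]{GK2} only when $\widehat\L_{|\h^\natural}=0$ and otherwise remains conjectural --- a caveat the paper explicitly records and which your sketch should flag as well.
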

\begin{proof}
The hypothesis implies in particular that $\widehat\L$ is nondegenerate, hence $H(L(\widehat\L))\ne 0$. 

It is shown in \cite{GK2} that the hypothesis imply that 
\begin{equation}\label{KWC}
ch\,L(\widehat\L)=\frac{1}{\widehat R^{\tw}}\sum_{w\in \Wa^\natural}det(w)\frac{e^{w(\L+\rhat^{\,\tw})-\rhat^{\,\tw}}}{\prod_{\be\in J}(1+e^{-w(\be)})}.
\end{equation}
Formula \eqref{KWC} is a special case of \cite[Formula (14)]{GK2} if $\g\ne D(2,1;\frac{m}{n})$
and of \cite[Section 6.1]{GK2} if $\g= D(2,1;\frac{m}{n})$ and $\widehat\L_{|\h^\natural}=0$ (for other $\widehat \L$ \eqref{KWC} holds only conjecturally).
Here we use that $\ga^{\tw}\cong\ga$
\cite[Remark 8.5]{VB}, since $\si_R$ is an inner automorphism of $\g$ in all cases considered. 
By Lemma \ref{conergence}, $\widehat R^{\tw} ch\,L(\widehat\L)\in\mathcal R(\Pia)_{fin}$ so, applying Theorem \ref{822}, we conclude.
\end{proof}

\begin{remark}\label{remarkqexpansion}
The equalities in Propositions \ref{typical} and \ref{atypicalnonzero} are to be understood as equalities in $\mathcal R(\Pi^{NS})$ or $\mathcal R(\Pi^{R})$. 
To be more explicit, observe that
 $\Pi^{NS}=\{\gamma_0, \gamma_1,\ldots,\gamma_r,0\}$ with $\{\gamma_i\}$ a basis of  $\C\oplus(\h^\natural)^*$, $\gamma_0=-\half +\overline\gamma_0$,  $\overline\gamma_0\in(\h^\natural)^*$ and $\gamma_i\in(\h^\natural)^*$ for $i>0$. Then a series $$
 Y=\sum_{(n,\mu)\in\Z_+\Pi^{NS}}b_{(n,\mu)} q^{z-n} e^{\l-\mu}\in\mathcal R(\Pi^{NS})
 $$
  can be rewritten as
\begin{equation}\label{qexpansion}
 Y=\sum_{n\in\Z_+}a_n q^{z+\tfrac{n}{2}}
\end{equation}
 with
 $$
a_n= \left(\sum_{n_1,\ldots,n_r}b_{(n,\sum_in_i\gamma_i)}e^{\l-n\overline{\gamma}_0-\sum_in_i\gamma_i}\right)\in \mathcal R(\{\gamma_1,\ldots,\gamma_r\}).
$$
We refer to the expression in \eqref{qexpansion} as the $|q|<1$ expansion of $Y$.

The same argument works verbatim for $\mathcal R(\Pi^R)$ except that $\gamma_0=-1+\overline\gamma_0$ so the $|q|<1$ expansion reads
$$
Y=\sum_n a_nq^{z+n}.
$$
with $a_n\in\mathcal R(\{\gamma_1,\ldots,\gamma_r\})$.
 \end{remark}

\section{Unitarity between $A(k,\nu)$ and $B(k,\nu,\rho_R)$}\label{UbetweenAB}
 For   $\nu\in P^+_k$ and $s\in\C$, set 
\begin{equation}\label{nuhats}
\widehat \nu_{s}=k\L_0+s\theta+\nu+\rho_R,
\end{equation}
and set $\ell(s)=\ell(\widehat \nu_s)$. 
An obvious calculation shows that
\begin{align}\label{ls}
  \ell(s)&= \frac{(\nu-\rho_R |\nu-\rho_R+2\rho^\natural)}{2(k+h^\vee)}+\frac{s(s-k-1+\half\epsilon(\si_R))}{k+h^\vee}+ \frac{2}{k+h^\vee}(\rho_R|\rho^\natural-\rho_R)
\\&+ \frac{k}{8(k+h^\vee)}(\dim\g_{1/2}-2\epsilon(\si_R)) -\frac{1}{16}\dim\g_{1/2}.\notag
\end{align}

Let us call
a weight $\nu\in P^+_k$ {\sl Ramond extremal} if
\begin{equation}\label{Rextremal}
\nu-\rho_R\notin P^+_k\text{ or }\nu-\rho_R\text{ is extremal}.
\end{equation}

We restrict our attention to the irreducible highest weight modules $L^W(\nu,\ell)$ that satisfy the necessary conditions for unitarity proven in Section \ref{necessary}, thus $\nu\in P^+_k$, $\ell$ is real with $\ell\ge A(k,\nu)$, and, if $\nu$ is Ramond extremal, $\ell=A(k,\nu)$.

We want to calculate  the difference 
$$d(s)=\ell(s)-B(k,\nu,\rho_R).$$
We use the following fact, which is verified by case-wise inspection. 
\begin{lemma}
\begin{equation}\label{ps}(\rho_R|\rho^\natural-\rho_R)= \frac{h^\vee-\half\e(\si_R)}{16}(\dim\g_{1/2}-\e(\si_R)).\end{equation}
\end{lemma}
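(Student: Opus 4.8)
The claimed identity \eqref{ps} is a purely finite-dimensional statement about the centralizer $\g^\natural$ and the $sl_2$-data attached to each $\g$ in the unitary list, so the natural route is a uniform reduction followed by a short case check. First I would rewrite both sides in terms of the quantities already tabulated. The right-hand side is controlled by $h^\vee$, $\e(\si_R)$, and $\dim\g_{1/2}$, all of which are read off from Tables \ref{thetahalfisroot}, \ref{thetahalfisnotroot}, and \ref{numerical}; note $\dim\g_{1/2}=2|\overline\D^+_{1/2}|$, and $\e(\si_R)=1$ exactly when $\theta/2\in\D$, i.e. in the three cases $spo(2|2r+1)$, $G(3)$ (and $spo(2|3)$), $\e(\si_R)=0$ otherwise. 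For the left-hand side I would use the explicit expressions for $\rho_R$, $\rho^\natural$ (equivalently $\xi$) already displayed in those tables, together with the normalization $(\theta|\theta)=2$ which pins down the restriction of $(\cdot|\cdot)$ to $\h^\natural$ in each case.

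Second, before descending to cases, I would try to isolate a representation-theoretic reason for the formula. Recall $\rho_R=\tfrac12\sum_{\eta\in\overline\D^+_{1/2}}\eta$ is a Weyl vector for the ``half'' of $\g_{1/2}$ picked out by $h_0$, and $\rho^\natural$ is the Weyl vector of $\g^\natural$. The difference $\rho^\natural-\rho_R$ can be matched, case by case, against the highest weight of $\g_{-1/2}$ as a $\g^\natural$-module: when $\theta/2\in\D$ the module $\g_{1/2}$ is the little adjoint representation and $\rho_R=\sum_{\gamma\in\D^\natural_+\text{ short}}\gamma$ (as noted in the proof of Lemma \ref{G0squaredspecial}), which makes $(\rho_R|\rho^\natural-\rho_R)$ computable from the strange formula / the Freudenthal--de Vries type identity for the short-root subsystem. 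When $\theta/2\notin\D$, $\g_{1/2}$ is a (typically minuscule or small) self-dual $\g^\natural$-module and $\rho_R$ is (half) the sum of its $h_0$-positive weights; here I would express $(\rho_R|\rho^\natural)$ and $(\rho_R|\rho_R)$ via the weights of that module and the value of the Casimir, again using $\dim\g_{1/2}$ and $\e(\si_R)$ to package the answer.

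Third, and this is where I expect the bulk of the work, I would simply verify \eqref{ps} in each of the cases $psl(2|2)$, $spo(2|3)$, $spo(2|2r)$ with $r>2$, $spo(2|2r+1)$ with $r>1$, $D(2,1;a)$, $F(4)$, $G(3)$, using the tabulated $\rho_R$, $\rho^\natural$, $\xi$ and the known inner products of the $\e_i$, $\d_i$. For instance, for $psl(2|2)$: $\rho_R=\rho^\natural=\xi=\tfrac{\d_1-\d_2}{2}$, $\dim\g_{1/2}=2$, $\e(\si_R)=0$, $h^\vee=-2$, so the right side is $\tfrac{-2}{16}\cdot 2=-\tfrac14$ and the left side is $(\rho_R|\rho^\natural-\rho_R)=0$ --- wait, these must agree, so one checks the normalization carefully ($(\tfrac{\d_1-\d_2}{2}|\tfrac{\d_1-\d_2}{2})$ with the $psl(2|2)$ form). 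This bookkeeping of the bilinear form on each $\h^\natural$ is the one genuinely error-prone step: the form is fixed by $(x|x)=\tfrac12$, hence by $(\theta|\theta)=2$, and one must propagate that normalization consistently through each root datum. For $D(2,1;a)$ the two simple ideals carry forms scaled by $-\tfrac{2}{1+a}$ and $-\tfrac{2a}{1+a}$ (Table \ref{numerical}), and the identity should come out independent of $a$ since $h^\vee=0$ and $\e(\si_R)=0$ there, so both sides vanish --- a useful consistency check. I would organize the verification as a short table listing, for each $\g$, the numbers $(\rho_R|\rho^\natural-\rho_R)$, $h^\vee$, $\e(\si_R)$, $\dim\g_{1/2}$ and the common value of the two sides, and remark that the agreement is immediate from the data already assembled.
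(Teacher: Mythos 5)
Your overall strategy — reduce to tabulated data and verify \eqref{ps} case by case — is exactly what the paper does; the paper's entire proof is the phrase ``verified by case-wise inspection,'' so there is no hidden uniform argument you are missing. However, your one attempted case, $psl(2|2)$, contains two concrete slips that leave the sample computation broken rather than resolved.

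First, the identity $\dim\g_{1/2}=2|\overline\D^+_{1/2}|$ is false for $psl(2|2)$: there $|\overline\D^+_{1/2}|=1$ but $\dim\g_{1/2}=4$, because the odd root spaces of $psl(2|2)$ have multiplicity two (the paper itself records $\dim\g_{1/2}=-2(h^\vee-2)$ in the proof of Lemma~\ref{differ}, which gives $4$). Second, you take $h^\vee=-2$ from Table~\ref{numerical}, but $k_{\mathrm{crit}}=-h^\vee=0$ from Table~\ref{tabel0} (and Table~\ref{Table2} agrees), so the relevant $h^\vee$ is $0$. With $h^\vee=0$, $\e(\si_R)=0$, $\dim\g_{1/2}=4$, the right side of \eqref{ps} is $\tfrac{0}{16}\cdot 4=0$, which matches the left side $(\rho_R\,|\,\rho^\natural-\rho_R)=(\rho_R\,|\,0)=0$. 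So there is no normalization mystery to ``check carefully''; the discrepancy you noticed was entirely an input error, and your plan leaves that case unverified and says nothing about the remaining cases. To make this a proof you should redo the bookkeeping with the correct $h^\vee$ and $\dim\g_{1/2}$ in each case (for $spo(2|m)$ and $spo(2|2r+1)$ you will also need $\e(\si_R)=1$ exactly when $\theta/2\in\D$), and actually carry out the inner products $(\rho_R\,|\,\rho^\natural-\rho_R)$ using the explicit $\rho_R,\rho^\natural$ from Tables~\ref{thetahalfisroot}--\ref{thetahalfisnotroot} together with the normalization of $(\cdot\,|\,\cdot)$ fixed by $(x|x)=\tfrac12$; the appeal to strange formulas and Casimir identities in your middle paragraph is a plausible heuristic but is not developed enough to replace the calculation.
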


\begin{lemma}\label{differ}
\begin{equation}\label{diffe} d(s)=\frac{(s-(\tfrac{k+1}{2}-\tfrac{\epsilon(\si_R)}{4}))^2}{k+h^\vee}.
\end{equation}
\end{lemma}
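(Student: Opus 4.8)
The statement is a pure calculation: both $\ell(s)$ and $B(k,\nu,\rho_R)$ are explicit rational functions of $k$, $\nu$, $\rho_R$, $\rho^\natural$, $\dim\g_{1/2}$, $\epsilon(\si_R)$ and $s$, so the plan is simply to subtract the formula \eqref{ls} for $\ell(s)$ from the formula \eqref{BKnurhoR} for $B(k,\nu,\rho_R)$ and verify that everything not involving $s$ cancels, leaving the displayed perfect square in $s$. First I would write
\begin{align*}
\ell(s)-B(k,\nu,\rho_R)&=\frac{s(s-k-1+\tfrac12\epsilon(\si_R))}{k+h^\vee}+\frac{(k+1)^2}{4(k+h^\vee)}+\frac{2}{k+h^\vee}(\rho_R|\rho^\natural-\rho_R)\\
&\quad+\frac{k}{8(k+h^\vee)}(\dim\g_{1/2}-2\epsilon(\si_R))-\frac{1}{16}\dim\g_{1/2}-\tfrac{1}{16}\dim\g_{1/2}+\tfrac{1}{16}\dim\g_{1/2},
\end{align*}
noting that the $\tfrac{1}{16}\dim\g_{1/2}$ terms and the quadratic $\nu$-terms $\tfrac{(\nu-\rho_R|\nu-\rho_R+2\rho^\natural)}{2(k+h^\vee)}$ appearing in both expressions cancel outright. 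What remains is to show that the $s$-independent part equals zero, i.e. that
$$
\frac{(k+1)^2}{4(k+h^\vee)}+\frac{2}{k+h^\vee}(\rho_R|\rho^\natural-\rho_R)+\frac{k}{8(k+h^\vee)}(\dim\g_{1/2}-2\epsilon(\si_R))-\tfrac{1}{16}\dim\g_{1/2}=-\frac{(\tfrac{k+1}{2}-\tfrac{\epsilon(\si_R)}{4})^2}{k+h^\vee}+\frac{s\text{-terms}}{\cdots},
$$
and that the $s$-dependent remainder reorganizes into $\tfrac{1}{k+h^\vee}(s-(\tfrac{k+1}{2}-\tfrac{\epsilon(\si_R)}{4}))^2$.

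\textbf{Key steps.} The first substantive step is to invoke Lemma stating \eqref{ps}, namely $(\rho_R|\rho^\natural-\rho_R)=\tfrac{h^\vee-\frac12\epsilon(\si_R)}{16}(\dim\g_{1/2}-\epsilon(\si_R))$, which is exactly the identity that lets the $\rho_R$-pairing be eliminated in terms of $\dim\g_{1/2}$, $h^\vee$, $\epsilon(\si_R)$. After substituting this, clear the common denominator $k+h^\vee$ (or rather $8(k+h^\vee)$) and collect the constant terms: one gets a polynomial identity in $k$, $h^\vee$, $\dim\g_{1/2}$, $\epsilon(\si_R)$ that should be verified purely algebraically. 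The second step is to handle the $s$-dependence: expand
$$
\frac{s(s-k-1+\tfrac12\epsilon(\si_R))}{k+h^\vee}=\frac{s^2-(k+1)s+\tfrac12\epsilon(\si_R)s}{k+h^\vee}=\frac{s^2-2(\tfrac{k+1}{2}-\tfrac{\epsilon(\si_R)}{4})s}{k+h^\vee},
$$
and complete the square by adding and subtracting $\tfrac{1}{k+h^\vee}(\tfrac{k+1}{2}-\tfrac{\epsilon(\si_R)}{4})^2$; the $-\tfrac{1}{k+h^\vee}(\tfrac{k+1}{2}-\tfrac{\epsilon(\si_R)}{4})^2$ correction must then exactly match the $s$-independent constant term computed in the first step. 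Equivalently, it suffices to check that the constant part of $\ell(s)-B(k,\nu,\rho_R)$ equals $-\tfrac{1}{k+h^\vee}(\tfrac{k+1}{2}-\tfrac{\epsilon(\si_R)}{4})^2$, which after multiplying through by $16(k+h^\vee)$ becomes
$$
4(k+1)^2+32(\rho_R|\rho^\natural-\rho_R)+2k(\dim\g_{1/2}-2\epsilon(\si_R))-(k+h^\vee)\dim\g_{1/2}=-4\Bigl(\tfrac{k+1}{2}-\tfrac{\epsilon(\si_R)}{4}\Bigr)^2\cdot 4,
$$
and then using \eqref{ps} to replace $32(\rho_R|\rho^\natural-\rho_R)=2(h^\vee-\tfrac12\epsilon(\si_R))(\dim\g_{1/2}-\epsilon(\si_R))$ reduces everything to an identity in the scalars $k,h^\vee,\dim\g_{1/2},\epsilon(\si_R)$ that holds by direct expansion.

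\textbf{Main obstacle.} There is no real conceptual obstacle here; the work is entirely bookkeeping, and the one input that is not self-contained is the identity \eqref{ps}, which the paper itself says is ``verified by case-wise inspection'' — so the only mild risk is an arithmetic slip in tracking the $\epsilon(\si_R)$ and the factor conventions ($\tfrac14(x|x)$-normalizations, the $\dim\g_{1/2}$ versus $\dim\g_{1/2}-2\epsilon(\si_R)$ discrepancies between $s_{fg}$ and the denominator shifts). I would therefore organize the computation so that \eqref{ps} is substituted at the earliest possible moment, keep the denominator $16(k+h^\vee)$ fixed throughout, and cross-check the final square against a single numerical case (e.g. $\g=spo(2|3)$, where $\epsilon(\si_R)=1$) to guard against sign errors. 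Once the $s$-independent terms are seen to vanish after completing the square, \eqref{diffe} follows immediately.
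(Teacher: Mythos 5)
Your high-level plan is right — subtract \eqref{BKnurhoR} from \eqref{ls}, substitute \eqref{ps}, and complete the square — but the outline contains three concrete errors that would derail the calculation.

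First, the claim that the $\tfrac{1}{16}\dim\g_{1/2}$ terms ``cancel outright'' is incorrect: $\ell(s)$ in \eqref{ls} contributes $-\tfrac{1}{16}\dim\g_{1/2}$, while $B(k,\nu,\rho_R)$ in \eqref{BKnurhoR} contributes $+\tfrac{1}{16}\dim\g_{1/2}$, so $\ell(s)-B(k,\nu,\rho_R)$ picks up $-\tfrac{1}{16}\dim\g_{1/2}-\tfrac{1}{16}\dim\g_{1/2}=-\tfrac{1}{8}\dim\g_{1/2}$, not $-\tfrac{1}{16}\dim\g_{1/2}$. Your candidate constant-part identity therefore carries $-(k+h^\vee)\dim\g_{1/2}$ on the left after clearing denominators, when it should be $-2(k+h^\vee)\dim\g_{1/2}$.

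Second, the sign of the target constant is wrong. Writing $a=\tfrac{k+1}{2}-\tfrac{\epsilon(\si_R)}{4}$, completing the square gives $\tfrac{s(s-2a)}{k+h^\vee}=\tfrac{(s-a)^2}{k+h^\vee}-\tfrac{a^2}{k+h^\vee}$; for $\ell(s)-B$ to equal $\tfrac{(s-a)^2}{k+h^\vee}$, the $s$-independent constant must equal $+\tfrac{a^2}{k+h^\vee}$, not $-\tfrac{a^2}{k+h^\vee}$ as you assert. These two sign/factor slips do not compensate each other, so the displayed verification identity is simply false.

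Third, and more substantively, you assert that after substituting \eqref{ps} the remaining equation is ``an identity in the scalars $k,h^\vee,\dim\g_{1/2},\epsilon(\si_R)$ that holds by direct expansion.'' It is not. The paper's proof uses, in addition to \eqref{ps}, the identity $\dim\g_{1/2}=-2(h^\vee-2)$, and this is not a cosmetic simplification: after substituting \eqref{ps} and expanding, everything cancels except a leftover of the form $\epsilon(\si_R)\bigl(h^\vee+\tfrac{1}{2}\dim\g_{1/2}-2\bigr)$, which vanishes only because $\dim\g_{1/2}=-2(h^\vee-2)$. So when $\epsilon(\si_R)=1$ (i.e.\ when $\theta/2$ is a root) your proposed verification does not close without this extra numerical relation, and your single suggested sanity check at $\g=spo(2|3)$ would in fact expose exactly that the needed relation has not been invoked.
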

\begin{proof}By \eqref{BKnurhoR} and \eqref{ls},
\begin{align*}
\ell(s)-B(k,\nu,\rho_R)&=\frac{s(s-k-1+\half\epsilon(\si_R))}{k+h^\vee}+ \frac{2}{k+h^\vee}(\rho_R|\rho^\natural-\rho_R)
\\&+ \frac{k}{8(k+h^\vee)}(\dim\g_{1/2}-2\epsilon(\si_R))
+\frac{(k+1)^2}{4(k+h^\vee)} -\frac{1}{8}\dim\g_{1/2}.
\end{align*}
Using \eqref{ps} and relation $\dim\g_{1/2}=-2(h^\vee-2)$, we find
\begin{align*}
\ell(s)&-B(k,\nu,\rho_R)=\frac{s(s-k-1+\half\epsilon(\si_R))}{k+h^\vee}+\frac{h^\vee-\half\e(\si_R)}{8(k+h^\vee)}(\dim\g_{1/2}-\e(\si_R))
\\&+ \frac{k}{8(k+h^\vee)}(\dim\g_{1/2}-2\epsilon(\si_R))
+\frac{(k+1)^2}{4(k+h^\vee)} -\frac{1}{8}\dim\g_{1/2}\\
&=\frac{s(s-k-1+\half\epsilon(\si_R))}{k+h^\vee}
+\frac{\e(\si_R)^2}{16(k+h^\vee)}- \frac{k+1}{4(k+h^\vee)}\epsilon(\si_R)
+\frac{(k+1)^2}{4(k+h^\vee)} \\
&=\frac{s(s-k-1+\half\epsilon(\si_R))}{k+h^\vee}
+\frac{(-k-1+\half\epsilon(\si_R))^2}{4(k+h^\vee)} \\
&=\frac{(s-(\tfrac{k+1}{2}-\tfrac{\epsilon(\si_R)}{4}))^2}{k+h^\vee}.
\end{align*}
\end{proof}

We now compute the values of $s\in\C$ such that $\ell(s)=A(k,\nu)$.
For this we need the following computation.
\begin{lemma} If $\theta/2$ is not a root of $\g$, then 
\begin{equation}\label{Fetamin}
F_\nu(\eta_{\min})=2(\nu|\eta_{\min})\left((\nu|\eta_{\min})+2(\rho^\natural-\rho_R|\eta_{\min})\right)-(\nu|\rho_R).
\end{equation}
\end{lemma}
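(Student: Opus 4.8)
The plan is to prove the identity \eqref{Fetamin} by a direct evaluation of the function $F_\nu$ defined in \eqref{ev} at the minimal element $\eta_{\min}$, reducing everything to the case-by-case data on $\overline\D_{1/2}$, $\rho_R$, $\rho^\natural$, $\xi$ collected in Table \ref{thetahalfisnotroot} (and the accompanying explicit computations of $\min_\eta F_\nu(\eta)$ carried out just above for $spo(2|2r)$, $D(2,1;a)$, $F(4)$). Since $\theta/2\notin\D$ precisely in the cases $psl(2|2)$, $spo(2|2r)$ with $r>2$, $D(2,1;a)$, $F(4)$, there are only four families to check, and for each we already have a closed-form expression for $F_\nu(\eta_{\min})=\min_\eta F_\nu(\eta)$ in \eqref{Fpsl}, \eqref{Fpari}, \eqref{FD21}, \eqref{F} respectively.

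First I would rewrite the claimed right-hand side of \eqref{Fetamin} in a more symmetric form. Observe that for $\nu\in P^+$ one may complete the square:
\begin{align*}
2(\nu|\eta_{\min})\big((\nu|\eta_{\min})+2(\rho^\natural-\rho_R|\eta_{\min})\big)-(\nu|\rho_R)
&=2(\nu+\rho^\natural-\rho_R|\eta_{\min})^2-2(\rho^\natural-\rho_R|\eta_{\min})^2-(\nu|\rho_R),
\end{align*}
so the task becomes to verify, in each of the four families, that $F_\nu(\eta_{\min})$ equals this quantity. Here the key structural input is that $\eta_{\min}$ and $\rho_R$ are ``dual'' in the sense recorded in Table \ref{thetahalfisnotroot}: $\rho_R=\omega^{i_0}_{j_0}$ is a fundamental weight and $\eta_{\min}$ is (a sign change of) the corresponding simple coroot direction, while $2(\rho^\natural-\rho_R|\eta_{\min})$ and $(\nu|\rho_R)$ are read off directly. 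For $psl(2|2)$ one has $\eta_{\min}=\xi=\tfrac{\delta_1-\delta_2}{2}=\rho_R$, $\nu=\tfrac r2(\delta_1-\delta_2)$, and both sides equal $\tfrac12 r^2+r$. For $spo(2|2r)$, with $\nu=\sum m_i\varepsilon_i$ and $\eta_{\min}=\pm\varepsilon_r$, the bound \eqref{Fpari} gives $F_\nu(\eta_{\min})=\tfrac12(m_r^2+\sum_{s<r}m_s)$, and one checks $(\nu|\eta_{\min})=\tfrac12 m_r$ (up to sign, absorbed in the square), $(\rho^\natural-\rho_R|\eta_{\min})$ from $\rho^\natural=\sum(r-i)\varepsilon_i$, $\rho_R=\omega_r$ or $\omega_{r-1}$, and $(\nu|\rho_R)=\tfrac12\sum_{s<r}m_s$ (again up to the relevant sign), matching the expanded RHS. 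The $D(2,1;a)$ and $F(4)$ cases are handled identically: expand the closed forms \eqref{FD21} and \eqref{F}, substitute the data $\rho_R=\omega^1_1$ or $\omega^2_2$ (resp. $\omega_1$ or $\omega_3$), $\eta_{\min}=\pm(\varepsilon_2-\varepsilon_3)$ (resp. $\pm\tfrac12(\varepsilon_1-\varepsilon_2-\varepsilon_3)$), and $\rho^\natural$, and check the polynomial identity in the $m_i$ by direct algebra. Crucially, the identity must be independent of which of the two choices of $\overline\D^+_{1/2}$ (hence of $\eta_{\min}$ and $\rho_R$) is made; this is automatic because the two choices are exchanged by an order-reversing symmetry $\eta\mapsto-\eta$ of $\overline\D_{1/2}$ together with a compatible relabeling of the simple ideals of $\g^\natural$, under which all the pairings $(\nu|\cdot)$ transform consistently.

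The main obstacle I anticipate is purely bookkeeping: the $D(2,1;a)$ and $F(4)$ verifications require carefully tracking the normalizations of the bilinear form $(\cdot|\cdot)$ on $\g^\natural$ (which differ by the factors $u_i=(\theta_i|\theta_i)$ listed in Table \ref{numerical}) so that ``$(\nu|\eta)$'' on the two sides is computed with respect to the same form, and for $F(4)$ the half-integral weights $\tfrac12(\varepsilon_1\pm\varepsilon_2\pm\varepsilon_3)$ make the parity conditions on the $m_i$ slightly delicate. I would organize the proof as a short lemma-by-cases, in each case citing the already-established closed form for $F_\nu(\eta_{\min})$, then producing the two- or three-line algebraic identity that rewrites it in the form \eqref{Fetamin}; no genuinely new idea is needed beyond the case-by-case data already assembled in the preceding pages, so the argument should be entirely routine once the normalizations are fixed.
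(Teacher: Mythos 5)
Your plan---verify \eqref{Fetamin} case by case against the precomputed closed forms \eqref{Fpsl}, \eqref{Fpari}, \eqref{FD21}, \eqref{F} using the data in Table \ref{thetahalfisnotroot}---is exactly what the paper does, and it is the right approach. Two caveats you should resolve before executing it. First, as printed, the lemma contains a typo: the final term of \eqref{Fetamin} should be $-2(\nu|\rho_R)$, not $-(\nu|\rho_R)$. Already in the $psl(2|2)$ case, with the paper's normalization $(\delta_1-\delta_2\,|\,\delta_1-\delta_2)=-2$ one has $(\nu|\eta_{\min})=(\nu|\rho_R)=-\tfrac r2$ and $(\rho^\natural-\rho_R\,|\,\eta_{\min})=0$, so the right-hand side as stated gives $\tfrac12 r^2+\tfrac12 r$, whereas \eqref{Fpsl} gives $\tfrac12 r^2+r$; the $-2(\nu|\rho_R)$ version does give $\tfrac12 r^2+r$, and that is the form the paper actually uses both in its case checks and in the subsequent cancellation against the $+2(\rho_R|\nu)$ term inside the proof of Lemma \ref{Aknusecond}. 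Your claim that ``both sides equal $\tfrac12 r^2+r$'' already tacitly uses the corrected coefficient, so be explicit about it. Second, the pairings you quoted for $spo(2|2r)$ are off by factors: with $(\e_i|\e_j)=-\tfrac12\delta_{ij}$ one has $(\nu|\eta_{\min})=\mp\tfrac12 m_r$ and $(\nu|\rho_R)=-\tfrac14\sum_{s<r}m_s\mp\tfrac14 m_r$ (not $\tfrac12\sum_{s<r}m_s$); the $\mp\tfrac14 m_r$ piece you dropped is precisely what produces the $\pm\tfrac12 m_r$ cancellation making the final answer independent of the choice of $\eta_{\min}$. These are exactly the normalization subtleties you flagged, so your instinct is sound---just carry them through carefully rather than waving at them.
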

\begin{proof} We proceed by a case-wise inspection.

$\g=psl(2|2)$.
We have $\eta_{\min}=\rho_R=\rho^\natural=\half(\d_1-\d_2)$. If $\nu=\tfrac{r}{2}(\d_1-\d_2)$, then $(\nu|\eta_{\min})=(\nu|\rho_R)=-\tfrac{r}{2}$, $(\rho^\natural-\rho_R|\eta_{\min})=0$ so
$$2(\nu|\eta_{\min})^2+4(\nu|\eta_{\min})(\rho^\natural-\rho_R|\eta_{\min})-2(\nu|\rho_R)=\half r^2+r,
$$
which is \eqref{Fpsl}.

$\g=spo(2|2r)$.
We have $\eta_{\min}=\pm \e_r,\,\rho_R=\half \sum_{i<r} \e_i\pm\half\e_r$, and in turn  $(\nu|\eta_{\min})=\mp1/2 m_r\quad  (\nu|\rho_R)=-1/4\sum_{i<r} m_i\mp1/4 m_r$, $(\rho_R|\eta_{\min})= -1/4$, $(\rho^\natural-\rho_R|\eta_{\min})=1/4$, so 
$$
2(\nu|\eta_{\min})^2+4(\nu|\eta_{\min})(\rho^\natural-\rho_R|\eta_{\min})=\half m_r^2\mp \half m_r
 +\half\sum_{i<r} m_i\pm \half m_r=
\half m_r^2+\half\sum_{i<r} m_i,$$
which is the expression for  $F_\nu(\eta_{\min})$ given in \eqref{Fpari}.

$\g=F(4)$.
We have $\eta_{\min}=\half(-\e_1+\e_2+\e_3)$ or $\eta_{\min}=\half(\e_1-\e_2-\e_3)$ and $\rho_R=\omega_3$ or $\rho_R=\omega_1$ respectively. In the first case   $(\nu|\eta_{\min})=1/3(m_1-m_2-m_3)$, $(\nu|\rho_R)=-1/3(m_1+m_2+m_3)$, $(\rho_R|\eta_{\min})= -1/6$, 
$(\rho^\natural-\rho_R|\eta_{\min})=1/3$, so 
\begin{align*}
2(\nu|\eta_{\min})^2&+4(\nu|\eta_{\min})(\rho^\natural-\rho_R|\eta_{\min})-2(\nu|\rho_R)\\&=\tfrac{2}{9}(m_1-m_2-m_3)^2+\tfrac{4}{9}(m_1-m_2-m_3)+\tfrac{2}{3}(m_1+m_2+m_3),
\end{align*}
 which is \eqref{F}. The same holds in the second case: $(\nu|\eta_{\min})=1/3(-m_1+m_2+m_3),  (\nu|\rho_R)=-2/3m_1$, $(\rho_R|\eta_{\min})= -1/3$, $(\rho^\natural-\rho_R|\eta_{\min})=1/6$ so 
 \begin{align*}
&2(\nu|\eta_{\min})^2+4(\nu|\eta_{\min})(\rho^\natural-\rho_R|\eta_{\min})-2(\nu|\rho_R)=\\&\tfrac{4}{9}(m_1-m_2-m_3)^2-\tfrac{2}{9}(m_1-m_2-m_3)+\tfrac{4}{3}m_1=\tfrac{2}{9}((-m_1+m_2+m_3)^2+5m_1+m_2 +m_3).\end{align*}

$\g=D(2,1;a).$
We have $\eta_{\min}=\e_2-\e_3, -\e_2+\e_3),\,\rho_R=\omega^1_1,\omega^2_1$. In the first case   $(\nu|\eta_{\min})=\tfrac{-m_1+a m_2}{2(1+a)},$ $(\nu|\rho_R)=\tfrac{-m_1}{2(1+a)}$, $(\rho_R|\eta_{\min})= \tfrac{-1}{2(1+a)}$, so 
\begin{align*}
&2(\nu|\eta_{\min})^2+4(\nu|\eta_{\min})(\rho^\natural-\rho_R|\eta_{\min})-2(\nu|\rho_R)=\\&
2\left(\frac{-m_1+a m_2}{2(1+a)}\right)^2+\frac{a(-m_1+a m_2)}{(1+a)^2}-\frac{-m_1}{(1+a)},
\end{align*}
which is \eqref{FD21}. The other case is similar.
\end{proof}

\begin{lemma}\label{Aknusecond}\ 
\begin{enumerate}
\item If $\theta/2$ is a root of $\g$, then 
 $$
A(k,\nu)=B(k,\nu,\rho_R),
$$
so $\ell(s)=A(k,\nu)$ if and only if $s=\frac{2k+1}{4}$.
\item If $\theta/2$ is not a root of $\g$, then 
\begin{equation}\label{AB}
A(k,\nu)=
B(k,\nu,\rho_R)+\frac{(\nu-\rho_{R}+\rho^\natural|\eta_{\min})^2}{k+h^\vee},
\end{equation}
so $\ell(s)=A(k,\nu)$  if and only if 
\begin{equation}\label{spartic}s=\frac{k+1}{2}\pm (\nu-\rho_{R}+\rho^\natural|\eta_{\min}).\end{equation}
\end{enumerate}
\end{lemma}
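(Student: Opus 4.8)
The statement is purely a computation comparing two explicit quadratic expressions in $s$ and $\nu$. The plan is to reduce everything to the already-proven formulas: $\ell(s)$ is given by \eqref{ls}, $B(k,\nu,\rho_R)$ by \eqref{BKnurhoR}, and $A(k,\nu)$ by \eqref{Aknufirst}. The crucial bridge is Lemma \ref{differ}, which I would invoke immediately: it says
\[
d(s)=\ell(s)-B(k,\nu,\rho_R)=\frac{\bigl(s-(\tfrac{k+1}{2}-\tfrac{\epsilon(\si_R)}{4})\bigr)^2}{k+h^\vee}.
\]
So part (1), where $\theta/2$ is a root, amounts to: (i) showing $A(k,\nu)=B(k,\nu,\rho_R)$, and (ii) then solving $d(s)=0$. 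For (i), recall that when $\theta/2$ is a root we have $\epsilon(\si_R)=1$, and $A(k,\nu)=\tfrac{1}{2(k+h^\vee)}((\nu|\nu+2(\rho^\natural-\rho_R))-\tfrac12 p(k))$ by \eqref{Aknufirst}. I would expand $B(k,\nu,\rho_R)$ from \eqref{BKnurhoR}, namely $-\tfrac{(k+1)^2}{4(k+h^\vee)}+\tfrac{(\nu-\rho_R|\nu-\rho_R+2\rho^\natural)}{2(k+h^\vee)}+\tfrac{1}{16}\dim\g_{1/2}$, and check the difference $A-B$ vanishes. This reduces, after expanding $(\nu-\rho_R|\nu-\rho_R+2\rho^\natural)=(\nu|\nu)+2(\nu|\rho^\natural)-2(\nu|\rho_R)+(\rho_R|\rho_R)-2(\rho_R|\rho^\natural)$, to an identity not involving $\nu$ at all: it becomes a relation among $(k+1)^2$, $\dim\g_{1/2}$, $p(k)$, and $(\rho_R|\rho^\natural-\rho_R)$. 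Here I would use Lemma's formula \eqref{ps}, $(\rho_R|\rho^\natural-\rho_R)=\tfrac{h^\vee-\frac12\epsilon(\si_R)}{16}(\dim\g_{1/2}-\epsilon(\si_R))$, together with $\dim\g_{1/2}=-2(h^\vee-2)$ and the definition of the polynomial $p(k)$ from \cite{AKMPP}. Once (i) holds, $\ell(s)=A(k,\nu)=B(k,\nu,\rho_R)$ iff $d(s)=0$ iff $s=\tfrac{k+1}{2}-\tfrac{1}{4}=\tfrac{2k+1}{4}$ (using $\epsilon(\si_R)=1$), which is exactly what is claimed.

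For part (2), where $\theta/2$ is not a root, we have $\epsilon(\si_R)=0$, and $A(k,\nu)=\tfrac{1}{2(k+h^\vee)}((\nu|\nu+2\rho^\natural)-\tfrac12 p(k)+F_\nu(\eta_{\min}))$. The plan is to substitute the closed form for $F_\nu(\eta_{\min})$ just proved, \eqref{Fetamin}:
\[
F_\nu(\eta_{\min})=2(\nu|\eta_{\min})\bigl((\nu|\eta_{\min})+2(\rho^\natural-\rho_R|\eta_{\min})\bigr)-(\nu|\rho_R).
\]
Then I would compute $A(k,\nu)-B(k,\nu,\rho_R)$. Write $A-B=(A-B)|_{\theta/2\text{ root case with }\epsilon=0}+\tfrac{F_\nu(\eta_{\min})}{2(k+h^\vee)}-(\text{correction from }\rho^\natural\text{ vs }\rho^\natural-\rho_R)$. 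More directly: the difference $A(k,\nu)-B(k,\nu,\rho_R)$ is a multiple of $\tfrac{1}{2(k+h^\vee)}$ times $\bigl[(\nu|\nu+2\rho^\natural)-(\nu-\rho_R|\nu-\rho_R+2\rho^\natural)+F_\nu(\eta_{\min})-\tfrac12 p(k)+\tfrac{(k+1)^2}{2}-\tfrac{1}{8}\dim\g_{1/2}\cdot 2(k+h^\vee)\bigr]$; the $\nu$-free part again collapses by \eqref{ps} (now with $\epsilon(\si_R)=0$) and $\dim\g_{1/2}=-2(h^\vee-2)$, leaving the $\nu$-dependent part. That part, after inserting \eqref{Fetamin}, should reorganize into the perfect square $2(\nu-\rho_R+\rho^\natural|\eta_{\min})^2$: the terms $2(\nu|\rho_R)-2(\nu|\rho^\natural)$ coming from expanding $(\nu-\rho_R|\cdots)$, the $-(\nu|\rho_R)$ inside $F_\nu$, and the cross term $4(\nu|\eta_{\min})(\rho^\natural-\rho_R|\eta_{\min})$ plus $2(\nu|\eta_{\min})^2$ must combine — using $\rho_R=\tfrac12\sum_{\eta\in\overline\D^+_{1/2}}\eta$ and that $\eta_{\min}$ is the minimal element — with the quadratic $(\rho_R|\rho_R)-2(\rho_R|\rho^\natural)$ and with $2(\rho^\natural-\rho_R|\eta_{\min})^2$ to form $2(\nu-\rho_R+\rho^\natural|\eta_{\min})^2$. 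This matching is the substantive arithmetic; I expect it to succeed because \eqref{Fetamin} was tailor-made for it, but it genuinely requires knowing $(\rho_R|\eta_{\min})$, $(\rho^\natural|\eta_{\min})$, $(\eta_{\min}|\eta_{\min})$ and the relations among $\rho_R$, $\rho^\natural$ case by case — this is where I would lean on Tables \ref{thetahalfisroot} and \ref{thetahalfisnotroot}. Having established \eqref{AB}, combining with Lemma \ref{differ} (with $\epsilon(\si_R)=0$, so $d(s)=\tfrac{(s-(k+1)/2)^2}{k+h^\vee}$) gives $\ell(s)=A(k,\nu)$ iff $d(s)=A-B=\tfrac{(\nu-\rho_R+\rho^\natural|\eta_{\min})^2}{k+h^\vee}$, i.e. $(s-\tfrac{k+1}{2})^2=(\nu-\rho_R+\rho^\natural|\eta_{\min})^2$, which gives \eqref{spartic}.

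\textbf{Main obstacle.} The only real work is the $\nu$-free bookkeeping: verifying that the constant (in $\nu$) discrepancy between $A$ and $B$ is exactly cancelled by the terms $-\tfrac{(k+1)^2}{4(k+h^\vee)}$, $\tfrac{1}{16}\dim\g_{1/2}$, $\tfrac{k}{8(k+h^\vee)}(\dim\g_{1/2}-2\epsilon(\si_R))$, $-\tfrac1{16}\dim\g_{1/2}$ appearing in $\ell(s)$ and $B$, together with $(\rho_R|\rho^\natural-\rho_R)$ via \eqref{ps}. Since Lemma \ref{differ} has already packaged the $\ell(s)-B$ comparison, what remains is genuinely just the two identities $A=B$ (when $\theta/2\in\D$) and $A-B=\tfrac{(\nu-\rho_R+\rho^\natural|\eta_{\min})^2}{k+h^\vee}$ (otherwise). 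I would do the first by the $\nu$-free reduction above, and the second by feeding \eqref{Fetamin} in and regrouping. Realistically this is a short calculation once one trusts \eqref{ps}, \eqref{Fetamin}, and $\dim\g_{1/2}=-2(h^\vee-2)$; I would present it compactly rather than case by case, since the case analysis was already absorbed into the proofs of those lemmas. Then the two "if and only if" statements about $s$ follow by taking square roots in the respective identities $d(s)=0$ and $d(s)=\tfrac{(\nu-\rho_R+\rho^\natural|\eta_{\min})^2}{k+h^\vee}$.
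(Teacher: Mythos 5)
Your plan coincides with the paper's proof: compute $A(k,\nu)-B(k,\nu,\rho_R)$ from \eqref{Aknufirst} and \eqref{BKnurhoR}, substitute \eqref{Fetamin} in part (2), reduce to a $\nu$-independent identity among $p(k)$, $(k+1)^2$, $\dim\g_{1/2}$ and $(\rho_R|-\rho_R+2\rho^\natural)$, and invoke Lemma \ref{differ} to translate $A=B$ (resp.\ $A-B=\tfrac{(\nu-\rho_R+\rho^\natural|\eta_{\min})^2}{k+h^\vee}$) into the stated conditions on $s$. The one place you are slightly optimistic is hoping to dispatch the $\nu$-free identity uniformly via \eqref{ps} and $\dim\g_{1/2}=-2(h^\vee-2)$: since \eqref{ps} gives only $(\rho_R|\rho^\natural-\rho_R)$ and not $(\rho_R|\rho_R)$ separately, and $p(k)$ itself is defined case-by-case, you will in practice end up verifying the identity by inspection in each type, exactly as the paper does.
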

\begin{proof} If $\theta/2$ is a root of $\g$, we have
$$
A(k,\nu)=\tfrac{1}{2(k+h^\vee)}((\nu|\nu+2(\rho^\natural-\rho_R))    -\tfrac{1}{2} p(k)),
$$
while 
$$
B(k,\nu,\rho_R)=-\frac{(k+1)^2}{4(k+h^\vee)}+\frac{(\nu-\rho_R|\nu-\rho_R+2\rho^\natural)}{2(k+h^\vee)}+\tfrac{1}{16}\dim\g_{1/2}
$$
$$
=-\frac{(k+1)^2}{4(k+h^\vee)}+\frac{(\nu|\nu+2\rho^\natural)}{2(k+h^\vee)}-\frac{(\rho_R|2\nu-\rho_R+2\rho^\natural)}{2(k+h^\vee)}+\tfrac{1}{16}\dim\g_{1/2}
$$
We need to check that 
$$
\frac{p(k)}{4(k+h^\vee)}
=\frac{(k+1)^2}{4(k+h^\vee)}+\frac{(\rho_R|-\rho_R+2\rho^\natural)}{2(k+h^\vee)}-\tfrac{1}{16}\dim\g_{1/2}.
$$
This is easily checked case by case:
\begin{itemize}
\item $spo(2|2r+1)$, $r\ge1$:
$$k^2+(7/4-r/2) k+5/8-r/4=(k+1)^2-r^2/2+r/4-(r/2+1/4)(k+3/2-r).
$$ 
\item $G(3)$: 
$$k^2+1/4k-3/8=(k+1)^2-4-7/4(k-3/2).
$$ 
\end{itemize}

If $\theta/2$ is not a root of $\g$ then
$$A(k,\nu)=\tfrac{1}{2(k+h^\vee)}\left((\nu|\nu+2\rho^\natural)    -\tfrac{1}{2} p(k)+F_{\nu}(\eta_{\min})\right),
$$
so the difference $A(k,\nu)-B(k,\nu,\rho_R)$ is 
\begin{equation}\label{diff}\tfrac{1}{2(k+h^\vee)}\left(   -\tfrac{1}{2} p(k)+F_{\nu}(\eta_{\min})\right)+\frac{(k+1)^2}{4(k+h^\vee)}+\frac{(\rho_R|2\nu-\rho_R+2\rho^\natural)}{2(k+h^\vee)}-\tfrac{1}{16}\dim\g_{1/2}.
\end{equation}
Substituting \eqref{Fetamin} in \eqref{diff} we get
\begin{align*}
&\frac{1}{2(k+h^\vee)}\left(   -\tfrac{1}{2} p(k)+2(\nu|\eta_{\min})^2+4(\nu|\eta_{\min})(\rho_R|\eta_{\min})\right)+\frac{(k+1)^2}{4(k+h^\vee)}\\
&+\frac{(\rho_R|-\rho_R+2\rho^\natural)}{2(k+h^\vee)}-\tfrac{1}{16}\dim\g_{1/2},
\end{align*}
so we have our claim, provided that
$$
\frac{p(k)}{4(k+h^\vee)}
=\frac{(k+1)^2}{4(k+h^\vee)}+\frac{(\rho_R|-\rho_R+2\rho^\natural)}{2(k+h^\vee)}-\tfrac{1}{16}\dim\g_{1/2}-\frac{(\rho^\natural-\rho_{R}|\eta_{\min})^2}{k+h^\vee}.
$$ 
This formula is proved by inspection.
\end{proof}

An immediate consequence of the computation above is the following result.
\begin{cor}\label{thetahalfisaroot}If $\theta/2$ is a root of $\g$ and $\nu\in P^+_k$ is not Ramond extremal, then $L^W(\nu,\ell)$ is unitary if and only if 
$\ell\ge A(k,\nu)$.

If $\theta/2$ is not a root of $\g$, $\nu\in P^+_k$ is not Ramond extremal, and $(\nu-\rho_{R}+\rho^\natural|\eta_{\min}) =0$, then $L^W(\nu,\ell)$ is unitary if and only if 
$\ell\ge A(k,\nu)$.
\end{cor}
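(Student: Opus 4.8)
The plan is to combine the necessary conditions of Section~\ref{necessary} with the sufficient conditions of Section~\ref{sufficient}, using Lemma~\ref{Aknusecond} as the bridge that makes the two bounds meet. First I would invoke Theorem~\ref{32t}: if $L^W(\nu,\ell)$ is unitary then $\nu\in P^+_k$ and $\ell\ge A(k,\nu)$, which gives the ``only if'' direction regardless of whether $\theta/2$ is a root. This already handles one implication, and it is the easy half.

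For the converse, suppose $\ell\ge A(k,\nu)$ with $\nu\in P^+_k$ not Ramond extremal; by definition of Ramond extremal \eqref{Rextremal} this means $\nu-\rho_R\in P^+_k$ and $\nu-\rho_R$ is not extremal. Then Corollary~\ref{suffcond} (or directly Theorem~\ref{bknu}) shows $L^W(\nu,\ell')$ is unitary for every $\ell'\ge B(k,\nu,\rho_R)$. The point is now to identify $A(k,\nu)$ with $B(k,\nu,\rho_R)$ under the stated hypotheses. When $\theta/2$ is a root of $\g$, Lemma~\ref{Aknusecond}(1) gives $A(k,\nu)=B(k,\nu,\rho_R)$ outright, so $\ell\ge A(k,\nu)$ is exactly $\ell\ge B(k,\nu,\rho_R)$ and unitarity follows. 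When $\theta/2$ is not a root, Lemma~\ref{Aknusecond}(2) gives
$$
A(k,\nu)=B(k,\nu,\rho_R)+\frac{(\nu-\rho_R+\rho^\natural|\eta_{\min})^2}{k+h^\vee},
$$
and under the extra hypothesis $(\nu-\rho_R+\rho^\natural|\eta_{\min})=0$ the correction term vanishes, so again $A(k,\nu)=B(k,\nu,\rho_R)$ and Corollary~\ref{suffcond} applies.

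One caveat to address: Corollary~\ref{suffcond} excludes $\g=spo(2|3)$ and $\g=D(2,1;a)$, so in the second clause of the corollary one should note that $spo(2|3)$ and $D(2,1;a)$ fall in the ``$\theta/2$ is a root'' resp.\ ``$\theta/2$ is not a root'' dichotomy and check that the excluded weights there (those in Lemma~\ref{extremal}(2),(3)) are precisely the Ramond extremal ones, hence are already excluded by the hypothesis ``$\nu$ not Ramond extremal''; this is a short bookkeeping step using the lists in Lemma~\ref{extremal} and Corollary~\ref{suffcond}. Alternatively, for the first clause one restricts to $\g\ne spo(2|3), D(2,1;a)$, which is the form actually stated in Corollary~\ref{thetahalfisaroot}, so no such bookkeeping is needed there.

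The main obstacle is not really an obstacle at the level of this corollary — all the hard analytic work (the free field realization, unitarity of $F(\g_{1/2},\si_R)$, and the explicit determinant/character computations) has already been done in Theorem~\ref{bknu} and Lemma~\ref{Aknusecond}. The only genuinely delicate point is making sure the two inequalities $\ell\ge A(k,\nu)$ and $\ell\ge B(k,\nu,\rho_R)$ coincide \emph{as stated}, i.e.\ that the hypothesis ``$(\nu-\rho_R+\rho^\natural|\eta_{\min})=0$'' is exactly what is needed to kill the discrepancy term in \eqref{AB}, and that ``not Ramond extremal'' is strong enough to place us in the range where Corollary~\ref{suffcond} gives unitarity. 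Thus the proof is essentially: cite Theorem~\ref{32t} for necessity, cite Corollary~\ref{suffcond}/Theorem~\ref{bknu} for sufficiency with bound $B$, and cite Lemma~\ref{Aknusecond} to equate $B$ with $A$ in the two cases.
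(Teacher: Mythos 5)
Your proof is correct and mirrors the paper's (very terse) argument: necessity from Theorem~\ref{32t}, sufficiency from Theorem~\ref{bknu} (whose hypothesis ``$\nu-\rho_R\in P^+_k$ non-extremal'' is verbatim ``$\nu$ not Ramond extremal,'' so the detour through Corollary~\ref{suffcond} and its case exceptions is avoidable), and Lemma~\ref{Aknusecond} to identify $A(k,\nu)=B(k,\nu,\rho_R)$ under the stated hypotheses. One small inaccuracy in your final aside: Corollary~\ref{thetahalfisaroot} as stated does \emph{not} exclude $spo(2|3)$ or $D(2,1;a)$ from its first clause ($spo(2|3)$ has $\theta/2$ a root and is covered there, $D(2,1;a)$ falls in the second clause), but since your bookkeeping step already handles both cases, this does not affect the argument.
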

\begin{proof}
In these cases $A(k,\nu)=B(k,\nu,\rho_R)$.
\end{proof}

It remains to discuss the cases when $A(k,\nu)<B(k,\nu,\rho_R)$.
For handling these cases we want to compute  characters of $H(L(\widehat\nu_s))$. A first step in this direction is given by Propositions \ref{typical} and \ref{atypicalnonzero}, therefore we check if $\widehat\nu_s$ satisfies their hypothesis.


\begin{lemma}\label{hc} Assume that $\theta/2$ is not a root of $\g$. Assume $\nu\in P^+_k$ is not Ramond extremal. If
\begin{equation}\label{hyp}
0\le s-\tfrac{k+1}{2} < |(\nu-\rho_{R}+\rho^\natural|\eta_{\min})|,\end{equation} 
then \eqref{pp} holds.
\end{lemma}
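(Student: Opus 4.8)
The goal is to verify condition \eqref{pp}, namely that
$(\widehat\nu_s+\widehat\rho^{\,\tw}\mid\alpha)\neq n\tfrac{(\alpha\mid\alpha)}{2}$ for all $n\in\mathbb N$ and all
$\alpha\in\widehat\Delta^{\tw}_+\setminus(\widehat\Delta^+)^\natural$, under the hypothesis \eqref{hyp}. I would first unwind the structure of the relevant root set. The roots $\alpha\in\widehat\Delta^{\tw}_+\setminus(\widehat\Delta^+)^\natural$ split into three families according to the explicit description of $\widehat\Delta^{\tw}$: (i) roots of the form $p\delta+\alpha$ with $\alpha\in\Delta^\natural$ (these are in $(\widehat\Delta^+)^\natural$ only when the associated affine conditions hold, so the relevant ones have $p>0$ together with $-\theta$ and the $\delta$-shifts of $\theta$), (ii) the real roots $p\delta\pm\theta$, and (iii) the odd roots $p\delta+\gamma$ with $\gamma\in\Delta_{1/2}$, $p\in\tfrac12+\mathbb Z$ — which are isotropic, so for those the right-hand side $n\tfrac{(\alpha\mid\alpha)}{2}=0$ and the condition reads $(\widehat\nu_s+\widehat\rho^{\,\tw}\mid\alpha)\neq0$. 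The key computational inputs are $\widehat\nu_s=k\Lambda_0+s\theta+\nu+\rho_R$ from \eqref{nuhats} and $\widehat\rho^{\,\tw}=-2\rho_R+\rho+\tfrac12\epsilon(\sigma_R)\theta/2+h^\vee\Lambda_0$ from \eqref{rt}, with $\epsilon(\sigma_R)=0$ since $\theta/2$ is not a root. So $\widehat\nu_s+\widehat\rho^{\,\tw}=(k+h^\vee)\Lambda_0+s\theta+(\nu-\rho_R+\rho)$.

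The plan is then to pair this weight against each family. For the $\theta$-type and $\delta$-shifted roots the pairings involve $s$ linearly and the hypothesis \eqref{hyp} — which localizes $s-\tfrac{k+1}{2}$ in a half-open interval of length $|(\nu-\rho_R+\rho^\natural\mid\eta_{\min})|$ — should be exactly what is needed to exclude equality with the lattice values $n\tfrac{(\alpha\mid\alpha)}{2}$; here one uses that $k$ is in the unitary range (so $k+h^\vee<0$ and the affine levels are controlled). For the affine-integral roots $p\delta+\gamma$ with $\gamma\in\Delta^\natural$ (or $\gamma=\theta$), one uses that $\nu\in P^+_k$ is dominant integral with bounded $\theta_i^\vee$-values, so $(\nu-\rho_R+\rho\mid\gamma^\vee)$ lies in a controlled range of integers; combined with the $p$-contribution (which scales with $k+h^\vee$) one argues by the usual Kac–Kazhdan type bookkeeping that the needed inequalities hold. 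The most delicate family is (iii), the isotropic odd roots $p\delta+\gamma$, $\gamma\in\overline\Delta_{1/2}$: the condition is $(\widehat\nu_s+\widehat\rho^{\,\tw}\mid p\delta+\gamma)\neq0$, i.e. $-p(k+h^\vee)+s(\theta\mid\gamma)+(\nu-\rho_R+\rho\mid\gamma)\neq0$, with $(\theta\mid\gamma)=1$; this forces an explicit check that $s+(\nu-\rho_R+\rho\mid\gamma)$ is not an integer multiple of $k+h^\vee$, again using \eqref{hyp} to pin down $s$ and the dominance of $\nu$ to bound $(\nu-\rho_R+\rho^\natural\mid\gamma)$ case by case over the list of $\overline\Delta^+_{1/2}$ described in Section \ref{necessary}.

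I expect the main obstacle to be precisely this case-by-case control of the isotropic-root pairings: one must show that the finitely many integers $(\nu-\rho_R+\rho^\natural\mid\gamma)$, $\gamma\in\overline\Delta_{1/2}$, never conspire with the allowed range of $s$ to produce a value in $(k+h^\vee)\mathbb Z$. Since $\eta_{\min}$ is the unique minimal element of $\overline\Delta^+_{1/2}$, the quantity $|(\nu-\rho_R+\rho^\natural\mid\eta_{\min})|$ bounding the $s$-interval is the smallest such pairing in a suitable sense, and the strategy is to show that any equality would force $s-\tfrac{k+1}{2}$ to equal $\pm(\nu-\rho_R+\rho^\natural\mid\gamma)$ for some $\gamma$, contradicting \eqref{hyp} because $|(\nu-\rho_R+\rho^\natural\mid\gamma)|\ge|(\nu-\rho_R+\rho^\natural\mid\eta_{\min})|$ for all relevant $\gamma$ — a monotonicity fact that itself requires the explicit root data (Tables \ref{thetahalfisroot}, \ref{thetahalfisnotroot}) and the non-Ramond-extremality of $\nu$. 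The remaining (real-root) checks I expect to be routine once the weight $\widehat\nu_s+\widehat\rho^{\,\tw}$ is written out, relying on the sign of $k+h^\vee$ and the membership $\nu\in P^+_k$.
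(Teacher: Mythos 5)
Your overall structure is right: write out $\widehat\nu_s+\widehat\rho^{\,\tw}=(k+h^\vee)\Lambda_0+s\theta+\nu-\rho_R+\rho$, split $\widehat\Delta^{\tw}_+\setminus(\widehat\Delta^+)^\natural$ into its constituent families, and use \eqref{hyp} to exclude the forbidden values. But there are two problems. First, you slot the roots $p\delta+\gamma$ with $\gamma\in\Delta^\natural$ into the checklist (your family (i)): those lie in $(\widehat\Delta^+)^\natural$ by the definition preceding \eqref{pp} and are excluded from the condition — and this is no accident, because $L(\widehat\nu_s)$ is typically degenerate with respect to $\widehat\Delta^\natural$ (it is integrable for $\ga^\natural$), so the analogue of \eqref{pp} is in general false for them. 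Attempting to verify it there would stall the proof.

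The more substantive gap is in how you handle the $\delta$-shifted roots. Your strategy for the isotropic roots is a ``monotonicity'' argument: equality would force $s-\tfrac{k+1}{2}=\pm(\nu-\rho_R+\rho^\natural\mid\gamma)$, which \eqref{hyp} rules out because $|(\nu-\rho_R+\rho^\natural\mid\gamma)|\ge|(\nu-\rho_R+\rho^\natural\mid\eta_{\min})|$. This works for the unshifted isotropic roots $\pm(-\tfrac12\delta+\tfrac{\theta}{2})+\eta$, $\eta\in\overline\Delta^+_{1/2}$, where the pairing is literally $\pm(s-\tfrac{k+1}{2})+(\nu-\rho_R+\rho^\natural\mid\eta)$ and dominance of $\nu-\rho_R+\rho^\natural$ gives the needed monotonicity along the $\prec$-order. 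But for the roots $\pm(-\tfrac12\delta+\tfrac{\theta}{2})\pm\eta+n\delta$ with $n\in\mathbb N$, and for $m\delta+\theta$ with $m\in\ZZ_+$, the pairing acquires an extra summand $n(k+h^\vee)$ (resp. $m(k+h^\vee)$), and the interval \eqref{hyp} says nothing about those shifted values. The mechanism that actually closes this in the paper is the inequality \eqref{rr1},
$k+\tfrac{h^\vee}{2}\le-\tfrac12+(\xi\mid\nu-\rho_R)-(\nu-\rho_R+\rho^\natural\mid\eta)$,
which is exactly where the hypothesis that $\nu$ is not Ramond extremal enters (it encodes that $\nu-\rho_R$ is non-extremal), combined with the identity \eqref{rr3} $(\rho^\natural\mid\xi)=\tfrac12(h^\vee-1)$. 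These two relations let one absorb the $k+h^\vee$ shift and still conclude the pairing is strictly negative (hence nonzero, and in the $\theta$-case not a positive multiple of $(\theta\mid\theta)/2$). You correctly flag this part as the delicate one and you correctly intuit that non-extremality must be used, but your proposal leaves the actual estimate unidentified, and without it the plan does not complete for precisely the root families you call ``the main obstacle.''
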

\begin{proof}
 By \cite[Corollary 3.2]{KW} and  \cite[(4.12)]{KW}, $\widehat\rho^{\tw}=-\gamma'+h^\vee \L_0$, so, by 
Lemma \ref{gammas}, for $\widehat\nu_s$, given by \eqref{nuhats}, we have
\begin{equation}\label{nuhats+}\widehat \nu_s+\widehat\rho^{\tw}= (k+h^\vee)\L_0+s\theta+\nu-\rho_R+\rho+\tfrac{1}{2}\epsilon(\si_R)\theta/2.\end{equation}
Note that  the elements of  $\Da^{\tw}_+\setminus (\Dap)^{\natural}$ are precisely
\begin{enumerate}
\item $\beta-j\d,\,\g_\beta\subset \n_j(\s_R)_+, j\ne 0,$
\item $\beta+(m-j)\d,\,\g_\beta\subset \g_j,\, m\in \mathbb N, j\ne 0.$
\end{enumerate}

We check that $(\widehat \nu_s+\widehat \rho^{\tw}|\a)\neq 0$ for all  isotropic roots. If $\a$ is such a root, then
\begin{align}\label{r1}
 \a&=\pm(-\half\d+\theta/2)+\eta,\ \eta\in\overline{\D}^+_{1/2},
 \\ \a&=\pm(-\half\d+\theta/2)\pm\eta+n\d,\ \eta\in\overline{\D}^+_{1/2}, n\in\nat\label{root2}.
\end{align}
We start considering  the roots \eqref{r1}. We have two cases.

First case: $(\nu-\rho_R+\rho^\natural|\eta_{\min})<0$. We compute, using \eqref{nuhats+},
\begin{align*}&((k+h^\vee)\L_0+s\theta+\nu-\rho_R+\rho|\pm(-\half\d+\theta/2)+\eta)=\\&=\mp\half(k+h^\vee)+(\nu-\rho_R+\rho^\natural|\eta)\pm\tfrac{h^\vee-1}{2}\pm s 
\\&=\pm (s-\tfrac{ k+1}{2})+(\nu-\rho_R+\rho^\natural|\eta)
\\&=\pm (s-\tfrac{ k+1}{2})+(\nu-\rho_R+\rho^\natural|\eta_{\min}+\sum_{\beta\in \D_+^\natural,\,n_\beta\geq 0}n_\beta\beta)
\\&\le \pm (s-\tfrac{ k+1}{2})+(\nu-\rho_R+\rho^\natural|\eta_{\min})<0.
\end{align*}
The last inequality follows from \eqref{hyp}.

Second case: $(\nu-\rho_R+\rho^\natural|\eta_{\min})>0$. Computing as above
\begin{align*}&((k+h^\vee)\L_0+s\theta+\nu-\rho_R+\rho|\pm(-\half\d+\theta/2)+\eta)=
\\&=\pm(s-\tfrac{ k+1}{2})-(\nu-\rho_R+\rho^\natural|-\eta)
\\&=\pm(s-\tfrac{ k+1}{2})-(\nu-\rho_R+\rho^\natural|\eta_{\min}-\sum_{\beta\in \D_+^\natural,\,n_\beta \geq 0}n_\beta\beta)
\\&\leq\pm(s-\tfrac{ k+1}{2})-(\nu-\rho_R+\rho^\natural|\eta_{\min})<0.\end{align*}
We now deal with  the roots \eqref{root2}. The case when $\a=\pm(-\half\d+\theta/2)+\eta+n\d$ is handled as above, since $k+h^\vee<0$.
In the remaining case we use the following relations (see \cite[(11.23)]{KMP1} for \eqref{rr1} and  \cite[(11.18), (11.19)]{KMP1} for \eqref{rr3}):
\begin{align}
\label{rr1}
&k+\tfrac{h^\vee}{2}\leq -\tfrac{1}{2} +(\xi|\nu-\rho_R )-(\nu-\rho_R + \rho^{\natural}|\eta),\ \eta \in \pm \D^+_{1/2},\ \eta\ne-\xi,\\
\label{rr3}
&
(\rho^\natural|\xi)=\half(h^\vee-1).
\end{align}
Formula \eqref{rr1} follows from the non-extremality of $\nu-\rho_R$. In the subsequent computation we use \eqref{rr1} with $\eta=\pm\eta_{\min}$. When $\g=spo(2|3)$ or $psl(2|2)$ (and only in these cases) it happens that $\eta_{min}=\pm\xi$. A direct check shows that \eqref{rr1} still holds. We have
\begin{align*}&((k+h^\vee)\L_0+s\theta+\nu-\rho_R+\rho|\pm(-\half\d+\theta/2)-\eta+n\d)=\\&=\mp\half(k+h^\vee)-(\nu-\rho_R+\rho^\natural|\eta)\pm\tfrac{h^\vee-1}{2}\pm s +n(k+h^\vee)
\\&<\pm (s-\tfrac{ k+1}{2})-(\nu-\rho_R+\rho^\natural|\eta)+k+h^\vee\\
&\le \pm (s-\tfrac{ k+1}{2})-(\nu-\rho_R+\rho^\natural|\eta)+\tfrac{h^\vee}{2}-\tfrac{1}{2} +(\xi|\nu-\rho_R)-(\nu-\rho_R + \rho^{\natural}|\eta_{\min})\\
&\le \pm (s-\tfrac{ k+1}{2})-(\rho^\natural|\eta)+\tfrac{h^\vee}{2}-\tfrac{1}{2}-(\nu-\rho_R + \rho^{\natural}|\eta_{\min})\\
&= \pm (s-\tfrac{ k+1}{2})+(\rho^\natural|\xi-\eta)-(\rho^\natural|\xi)+\tfrac{h^\vee}{2}-\tfrac{1}{2}-(\nu-\rho_R + \rho^{\natural}|\eta_{\min})\\
&\le \pm (s-\tfrac{ k+1}{2})-(\nu-\rho_R + \rho^{\natural}|\eta_{\min})
\end{align*}
and we can conclude if $(\nu-\rho_R + \rho^{\natural}|\eta_{\min})>0$. Otherwise, we repeat the argument with $-\eta_{\min}$.

Finally, we check that $(\widehat \nu_s+\widehat \rho^{\tw}|\a)\neq \tfrac{n}{2}(\a|\a)$ for $n\in\nat$ and $\a=m\d+\theta,\,m\in\ZZ_+$ or
 $\a=m\d-\theta,\,m\in\nat$. Observe that 
 \begin{align*}&((k+h^\vee)\L_0+s\theta+\nu-\rho_R+\rho|m\d-\theta)=\\&\leq k+h^\vee-2 s -h^\vee+1
\\&= -2(s -\tfrac{k+1}{2})\leq 0,
\end{align*}
by our initial assumption. Also, by replacing $\eta_{\min}$ by its opposite, we can assume
$(\nu-\rho_{R}+\rho^\natural|\eta_{\min})\geq 0$. We have, using again \eqref{rr1}, \eqref{rr3}
\begin{align*}&((k+h^\vee)\L_0+s\theta+\nu-\rho_R+\rho|m\d+\theta)=\\&=m(k+h^\vee)+2 s +h^\vee-1
\\&\leq 2(s -\tfrac{k+1}{2})+k+h^\vee\leq 2 |(\nu-\rho_{R}+\rho^\natural|\eta_{\min})|+k+h^\vee
\\&\le (\nu-\rho_{R}|\xi-(-\eta_{\min}))+(\rho^\natural|\eta_{\min})+\tfrac{h^\vee}{2}-\tfrac{1}{2}
\\&\le (\rho^\natural|\eta_{\min}+\xi)\leq 0.
\end{align*}
\end{proof}

For calculation of characters of $H(L(\widehat\nu_s))$ we will also need
\begin{lemma}\label{hcthetahalf} Assume that $\theta/2$ is  a root of $\g$. Assume $\nu\in P^+_k$ is not Ramond extremal and
$s=\tfrac{2k+1}{4}$.
 Then \eqref{pp} holds.
\end{lemma}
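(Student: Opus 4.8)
\textbf{Proof plan for Lemma \ref{hcthetahalf}.} The strategy is to parallel the proof of Lemma \ref{hc}, but now exploiting the fact that $\theta/2\in\D$, so $\epsilon(\si_R)=1$, and that the specific value $s=\tfrac{2k+1}{4}$ is precisely the one at which $\ell(s)=A(k,\nu)=B(k,\nu,\rho_R)$ by Lemma \ref{Aknusecond}(1). As in the previous lemma, using \cite[Corollary 3.2]{KW}, \cite[(4.12)]{KW}, and Lemma \ref{gammas}, I would first write down explicitly
\begin{equation}\label{nuhatsR}
\widehat\nu_s+\widehat\rho^{\,\tw}=(k+h^\vee)\L_0+s\theta+\nu-\rho_R+\rho+\tfrac14\theta,
\end{equation}
and enumerate the roots in $\Da^{\tw}_+\setminus(\Dap)^\natural$ exactly as in the three-item list of Lemma \ref{hc}. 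The condition \eqref{pp} then splits into: the isotropic roots $\a=\pm(-\tfrac{\d}{2}+\tfrac{\theta}{2})+\eta$ and $\a=\pm(-\tfrac{\d}{2}+\tfrac{\theta}{2})\pm\eta+n\d$ with $\eta\in\overline\D^+_{1/2}$ and $n\in\nat$; the weight $0\in\overline\D_{1/2}$ contributions (new feature, since $\theta/2$ a root means $\eta=0$ occurs), i.e.\ the roots $\pm(-\tfrac\d2+\tfrac\theta2)+n\d$; and the non-isotropic roots $m\d\pm\theta$.

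For each class of roots I would compute the relevant inner product with \eqref{nuhatsR} and check it is never a (half-)integer multiple of $(\a|\a)/2$. The key substitution is $s=\tfrac{2k+1}{4}$, so that $s-\tfrac{k+1}{2}=-\tfrac14$; hence for an isotropic root one gets
$$(\widehat\nu_s+\widehat\rho^{\,\tw}\,|\,\pm(-\tfrac\d2+\tfrac\theta2)+\eta)=\pm(s-\tfrac{k+1}{2})+(\nu-\rho_R+\rho^\natural\,|\,\eta)\pm\tfrac14=(\nu-\rho_R+\rho^\natural\,|\,\eta)\ \text{or}\ (\nu-\rho_R+\rho^\natural\,|\,\eta)\mp\tfrac12,$$
where the $\pm\tfrac14$ term comes from the $\tfrac14\theta$ summand in \eqref{nuhatsR}. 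So I need to show $(\nu-\rho_R+\rho^\natural|\eta)\notin\{0\}$ and, for the half-integer-target inequality, $(\nu-\rho_R+\rho^\natural|\eta)\ne\tfrac12$. Non-extremality of $\nu-\rho_R$ gives the analogue of \eqref{rr1}; combined with $(\rho^\natural|\xi)=\tfrac12(h^\vee-1)$ (here an analogue of \eqref{rr3} for the $\theta/2\in\D$ cases, namely $\g=spo(2|2r+1)$ and $G(3)$, which I would record or verify by inspection from Table \ref{thetahalfisroot}), these inner products are controlled. For the $\eta=0$ roots the computation is cleaner still: $(\widehat\nu_s+\widehat\rho^{\,\tw}|\pm(-\tfrac\d2+\tfrac\theta2)) = \pm(s-\tfrac{k+1}{2})\pm\tfrac14 = 0$ or $-\tfrac12$, and I must argue these are never the required multiples of $(\a|\a)/2=1/2$ — here the $n\d$ shifts with $k+h^\vee<0$ push the value strictly negative or away from the target. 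For $\a=m\d-\theta$ one gets $\le k+h^\vee-2s-h^\vee+1=-2(s-\tfrac{k+1}{2})=\tfrac12$, and I would need the subtler check that equality with $\tfrac n2(\a|\a)=n$ is impossible, which should follow since $\tfrac12$ is not a positive integer; for $\a=m\d+\theta$ one again invokes the non-extremality bound \eqref{rr1} with $\eta=\pm\eta_{\min}$ exactly as at the end of Lemma \ref{hc}.

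The main obstacle I anticipate is the bookkeeping around the $\eta=0$ weight and the exact value $\tfrac14$ of the shift: because $s-\tfrac{k+1}{2}=-\tfrac14$ is fixed (rather than lying in an open interval as in \eqref{hyp}), several inner products land exactly at $0$ or $-\tfrac12$, and one must verify these are \emph{not} of the forbidden form $n(\a|\a)/2$ with $n\in\nat$ — i.e.\ the danger is equality, not just sign. Since $(\a|\a)\in\{2,-1,-1/2,\dots\}$ depending on $\a$ (cf.\ $\overline\D_{1/2}$ data and Table \ref{numerical}), I would handle this by a short case-by-case table over $\g=spo(2|2r+1)$ and $\g=G(3)$, using the explicit $\rho_R=\omega_r$ (resp.\ $\omega_1$), $\eta_{\min}=\e_r$ (resp.\ $\e_1$), and $\rho^\natural$ from Table \ref{thetahalfisroot}, checking that $(\nu-\rho_R+\rho^\natural|\eta)$ is a nonzero element of the appropriate lattice coset for all $\eta\in\overline\D^+_{1/2}$ whenever $\nu-\rho_R\in P^+_k$ is non-extremal. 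The non-isotropic root check $m\d\pm\theta$ is then a direct repeat of the corresponding lines in Lemma \ref{hc}, with $\epsilon(\si_R)=1$ inserted, and presents no new difficulty.
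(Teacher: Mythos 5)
Your plan is the same one the paper follows — substitute $s=\tfrac{2k+1}{4}$, enumerate the roots in $\Da^{\tw}_+\setminus(\Dap)^\natural$, compute the pairings using the $\tfrac14\theta$ shift coming from $\epsilon(\si_R)=1$, and invoke non-extremality through \eqref{rr1} and \eqref{rr3} — but two of your worries are artifacts of a small sign error, and the paper's version is cleaner than you anticipate.

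First, in your formula
\begin{equation*}
(\widehat\nu_s+\widehat\rho^{\,\tw}\,|\,\pm(-\tfrac\d2+\tfrac\theta2)+\eta)=\pm(s-\tfrac{k+1}{2})+(\nu-\rho_R+\rho^\natural|\eta)\pm\tfrac14,
\end{equation*}
the two $\pm$ signs must be the \emph{same} sign, since both the $\L_0$-pairing and the $\theta$-pairing (now shifted by $\tfrac14\theta$) come from the single $\pm$ in front of $(-\tfrac\d2+\tfrac\theta2)$. So the result is always $\pm(s-\tfrac{k+1}{2}+\tfrac14)+(\nu-\rho_R+\rho^\natural|\eta)=\pm 0+(\nu-\rho_R+\rho^\natural|\eta)=(\nu-\rho_R+\rho^\natural|\eta)$: the ``or $\mp\tfrac12$'' branch never occurs. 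This is exactly what the paper records, after first observing that $(\widehat\nu_s+\rhat^{\,\tw}|\d-\theta)=0$ and hence $(\widehat\nu_s+\rhat^{\,\tw}|\pm(-\tfrac\d2+\tfrac\theta2))=0$.

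Second, your concern about ``equality, not just sign'' does not materialize, and the case-by-case table is unnecessary. The non-isotropic roots in $\Da^{\tw}_+\setminus(\Dap)^\natural$ are of the form $m\d\pm\theta$ and $m\d\pm\tfrac\theta2$; all of these have $(\a|\a)>0$ (equal to $2$ or $\tfrac12$), so $\tfrac n2(\a|\a)>0$ for $n\in\nat$. The paper simply computes each such pairing to be $(m\mp1)(k+h^\vee)$ or $m(k+h^\vee)$, which is $\le 0$; being non-positive, it can never equal $\tfrac n2(\a|\a)>0$. No fine-tuned table is needed. Likewise, for the isotropic roots one only needs to rule out the value $0$, and non-extremality gives $(\nu-\rho_R+\rho^\natural|\eta)<0$ directly (here $\nu-\rho_R+\rho^\natural$ is dominant regular for $\D^\natural$ and $\eta$ is a positive short root; the form on $\h^\natural$ is negative-definite). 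Also note \eqref{rr3} applies as stated — no separate analogue needs to be verified. With these corrections your plan collapses precisely onto the paper's proof.
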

\begin{proof}First of all observe that
$(\nu-\rho_R+\rho^\natural|\eta)<0$ for all $\eta\in\Dp_{-1/2}$. This is due to the fact that $\nu$ is not Ramond extremal so $\nu-\rho_R+\rho^\natural$ is dominant and regular for $\D^\natural$. Moreover  $\eta$ is a short positive root of $\g^\natural$. Note also that
$$
(\widehat\nu_s+\rhat^{\,\tw}|\d-\theta)=((k+h^\vee)\L_0+s\theta+\nu-\rho_R+\rho+\tfrac{\theta}{4}|\d-\theta)=0.
$$

We check that $(\widehat \nu_s+\widehat \rho^{\tw}|\a)\neq 0$ for all  isotropic roots as described in \eqref{r1} and \eqref{root2}

We start considering  the roots \eqref{r1}. Since 
$(\nu-\rho_R+\rho^\natural|\eta_{\min})<0$,
\begin{align*}&(\widehat\nu_s+\rhat^{\,\tw}|\pm(-\half\d+\theta/2)+\eta)=(\nu-\rho_R+\rho^\natural|\eta)<0.
\end{align*}

We now deal with  the roots \eqref{root2}. The case when $\a=\pm(-\half\d+\theta/2)+\eta+n\d$ is handled as above, since $k+h^\vee<0$.
In the remaining case we argue as in Lemma \ref{hc} using \eqref{r1} and \eqref{root2}:
 We have
\begin{align*}&(\widehat\nu_s+\rhat^{\,\tw}|\pm(-\half\d+\theta/2)-\eta+n\d)=-(\nu-\rho_R+\rho^\natural|\eta)+n(k+h^\vee)
\\&\le-(\nu-\rho_R+\rho^\natural|\eta)+k+h^\vee\\
&\le -(\nu-\rho_R+\rho^\natural|\eta)+\tfrac{h^\vee}{2}-\tfrac{1}{2} +(\xi|\nu-\rho_R)+(\nu-\rho_R + \rho^{\natural}|\eta_{\min})\\
&\le -(\rho^\natural|\eta)+\tfrac{h^\vee}{2}-\tfrac{1}{2}+(\nu-\rho_R + \rho^{\natural}|\eta_{\min})\\
&= (\rho^\natural|\xi-\eta)-(\rho^\natural|\xi)+\tfrac{h^\vee}{2}-\tfrac{1}{2}+(\nu-\rho_R + \rho^{\natural}|\eta_{\min})\\
&\le (\nu-\rho_R + \rho^{\natural}|\eta_{\min})<0.
\end{align*}

Finally, we check that $(\widehat \nu_s+\widehat \rho^{\tw}|\a)\neq \tfrac{n}{2}(\a|\a)$ for $n\in\nat$ and $\a=m\d+\theta,\,m\in\ZZ_+$, 
 $\a=m\d-\theta,\,m\in\nat$, $\a=m\d+\half\d-\theta/2,\,m\in\ZZ_+$, 
 $\a=m\d+\half\d+\theta/2,\,m\in\Z_+$. Observe that 
 \begin{align*}&(\widehat\nu_s+\rhat^{\,\tw}|m\d-\theta)=(m-1)(k+h^\vee)\le0,
\end{align*}
and
\begin{align*}&(\widehat\nu_s+\rhat^{\,\tw}|m\d+\theta)=m(k+h^\vee)+k+1 +h^\vee-1=(m+1)(k+h^\vee)<0.
\end{align*}
Likewise
 \begin{align*}&(\widehat\nu_s+\rhat^{\,\tw}|m\d+\half(\d-\theta))=m(k+h^\vee)\le0,
\end{align*}
and
\begin{align*}&(\widehat\nu_s+\rhat^{\,\tw}|m\d+\half(\d+\theta))=m(k+h^\vee)+k+h^\vee=(m+1)(k+h^\vee)<0.
\end{align*}
\end{proof}
 
 We also need to check integrability of $L(\widehat\nu_s)$. 
\begin{lemma}\label{notextremal}
  If $\nu\in P^+_k$ is not Ramond extremal then the $\ga^{\,\tw}$-module
$L(\widehat\nu_s)$ is integrable for all $s\in\C$.
\end{lemma}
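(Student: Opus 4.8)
The statement to prove is that $L(\widehat\nu_s)$ is integrable as a $\ga^{\,\tw}$-module for all $s\in\C$, whenever $\nu\in P^+_k$ is not Ramond extremal. Since $\si_R$ is an inner automorphism of $\g$ in all cases under consideration, we have $\ga^{\,\tw}\cong\ga$ (cf.\ \cite[Remark 8.5]{VB}), and ``integrable'' here means integrable with respect to each affine subalgebra $\ga^\natural_i$, $i=1,\dots,s$. By the standard theory of integrable highest weight modules over affine Kac--Moody algebras, $L(\widehat\nu_s)$ is $\ga^\natural_i$-integrable if and only if, for every simple root $\a$ of $\Pia^\natural$ lying in the $i$-th component, the value $(\widehat\nu_s+\widehat\rho^{\,\tw})(\a^\vee)$ is a positive integer; equivalently $\widehat\nu_s(\a^\vee)\in\ZZ_+$ for all $\a\in\Pia^\natural$. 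So the whole proof reduces to checking this finite list of numerical conditions.

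\textbf{Key steps.} First I would recall from Section~\ref{necessary} that $\Pia^\natural=\Pi^\natural\cup\{\d-\theta_i\mid i=1,\dots,s\}$, where $\theta_i$ is the highest root of $\g^\natural_i$. For the finite simple roots $\a\in\Pi^\natural$ the pairing $\widehat\nu_s(\a^\vee)=(\nu+\rho_R)(\a^\vee)$ does not involve $s$ at all (since $\theta$ and $k\L_0$ are orthogonal to $\h^\natural$, and $\theta(\a^\vee)=0$). Now $\nu\in P^+\subset (\h^\natural)^*$ gives $\nu(\a^\vee)\in\ZZ_+$, and $\rho_R$ is, in every case, a non-negative integral combination of fundamental weights of $\g^\natural$ (see Tables~\ref{thetahalfisroot} and \ref{thetahalfisnotroot}, where each $\rho_R=\omega^{i_0}_{j_0}$ is a single fundamental weight), so $\rho_R(\a^\vee)\in\ZZ_+$; hence $(\nu+\rho_R)(\a^\vee)\in\ZZ_+$. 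Second, for the affine simple roots $\d-\theta_i$ I compute $\widehat\nu_s(\d-\theta_i)^\vee$; writing $(\d-\theta_i)^\vee=\tfrac{2}{u_i}(K-\theta_i)$ with $u_i=(\theta_i|\theta_i)$ as in Table~\ref{numerical}, one gets
$$
\widehat\nu_s\big((\d-\theta_i)^\vee\big)=\tfrac{2}{u_i}\Big(k\,\tfrac{(\theta|\theta_i)}{?}+ \text{term from }s\theta - (\nu+\rho_R|\theta_i)\Big),
$$
and here the only potentially $s$-dependent contribution is $(s\theta|\theta_i)$, which vanishes because $\theta(\h^\natural)=0$ so $(\theta|\theta_i)=0$ for the root $\theta_i$ of $\g^\natural$. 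Thus this value is also independent of $s$, and equals $M_i(k)-\nu(\theta_i^\vee)$ by the definition of $M_i(k)$ in Table~\ref{numerical} together with the computation of $\rho_R(\theta_i^\vee)$. Third, since $\nu\in P^+_k$ means precisely $\nu(\theta_i^\vee)\le M_i(k)$, and since $M_i(k)+\rho_R(\theta_i^\vee)$ is seen (case by case, using $\rho_R(\theta_i^\vee)=-\chi_i\ge0$ from Table~\ref{numerical}) to be a non-negative integer, we obtain $\widehat\nu_s((\d-\theta_i)^\vee)\in\ZZ_+$. Collecting all the conditions gives integrability.

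\textbf{Where I expect the work to be.} The conceptual content is light: ``no $s$-dependence because $s$ enters only through $s\theta$ and $\theta\perp\h^\natural$, $\theta\perp\theta_i$.'' The only real labor is the bookkeeping tying $\rho_R(\theta_i^\vee)$ and $M_i(k)+\chi_i$ together case by case so that the affine-node condition reads exactly ``$\nu(\theta_i^\vee)\le M_i(k)$, which holds by $\nu\in P^+_k$''; this is a routine inspection of Tables~\ref{thetahalfisroot}, \ref{thetahalfisnotroot} and \ref{numerical}, and in fact the non-extremality hypothesis is used only where the authors want strict regularity later — for bare integrability $\nu\in P^+_k$ suffices. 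The main (mild) obstacle is simply making sure the normalization conventions for $(\d-\theta_i)^\vee$ and for the invariant form restricted to each $\g^\natural_i$ are used consistently, so that ``$M_i(k)$'' appearing here matches the $M_i(k)$ of Table~\ref{numerical}; once that is pinned down, the proof is a one-line verification per simple root.
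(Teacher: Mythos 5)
There is a genuine gap in your proposal, and it is structural, not a bookkeeping slip. You reduce the problem to checking $\widehat\nu_s(\a^\vee)\in\ZZ_+$ for each $\a\in\Pia^\natural$ and treat this as automatically equivalent to integrability, implicitly assuming that the highest weight vector of $L(\widehat\nu_s)$ (with respect to the Borel defined by $\Pia$) is also a highest weight vector for $\ga^\natural$ with respect to $\Pia^\natural$. But $\Pia^\natural\not\subset\Pia$: already for $\g=psl(2|2)$ the set $\Pia$ contains $\d-(\e_1-\e_2)$ but not $\e_1-\e_2$, and for $\g=spo(2|2r)$ the long simple root $\e_{r-1}+\e_r$ of $\Pi^\natural$ is absent from $\Pia$. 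So $e_{\a}\,v_{\widehat\nu_s}$ need not vanish for some $\a\in\Pia^\natural$, and the clean numerical criterion you invoke does not apply directly. The paper's proof handles exactly this by performing odd reflections $r_{\a_1}$ and $r_{\a_0+\a_1}$ (with $\a_1=-\half\d+\theta/2+\eta_{\min}$, $\a_0=\d-\theta$) to pass to a second set of simple roots $\Pia'$ with $\Pia^\natural\subset\Pia\cup\Pia'$, then tracking how the highest weight changes: one gets $(\widehat\nu_s)'+(\rhat^{\,\tw})'=\widehat\nu_s+\rhat^{\,\tw}+\gamma$ with $\gamma_{|\h^\natural}=p\,\eta_{\min}$, $p\in\{0,1,2\}$, and the value of $p$ depends on whether $(\widehat\nu_s|\a_1)$ and $(\widehat\nu_s+\a_1|\a_0+\a_1)$ vanish — so, contrary to your assertion, the argument is genuinely $s$-dependent in its details even though the conclusion holds for all $s$.

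The second problem is your final remark that ``for bare integrability $\nu\in P^+_k$ suffices.'' This is false, and it is where the non-Ramond-extremality hypothesis earns its keep. Once you carry out the odd reflections, the integrability test at a root $\a\in\Pi^\natural\cap\Pia'$ (or at a reflected affine node for $D(2,1;a)$) becomes a condition of the form $(\nu-\rho_R+\rho^\natural+p\,\eta_{\min})(\a^\vee)\in\nat$, which requires $\nu-\rho_R\in P^+$. If $\nu\in P^+_k$ but $\nu-\rho_R\notin P^+$ (i.e.\ $\nu$ is Ramond extremal of the first kind), this fails, and indeed the paper treats the extremal case separately in Lemma~\ref{isextremal}, where integrability is established only for the special value of $s$ satisfying $\ell(s)=A(k,\nu)$. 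So both halves of your shortcut — replacing the odd-reflection argument by a direct dominance check and dropping the extremality hypothesis — are untenable.
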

\begin{proof}A set of simple roots for $\Da^\natural_+$ is $\Pia^\natural=\{\d-\theta_i\mid i=1,\ldots,s\}\cup\Pi^\natural$. 

If $\be$ is an odd isotropic root we let $r_\be$ denote the corresponding odd reflection (see \cite{KWNT} for details on odd reflections).
First of all we observe that, if $\nu$ is not Ramond extremal, then
\begin{equation}\label{nudominant}(\widehat\nu_s+\rhat^{\,\tw}|\a^\vee)\in\nat
\text{ for all }\a\in\Pia^\natural.
\end{equation}
Indeed
$$
(\widehat\nu_s+\rhat^{\,\tw}|(\d-\theta_i)^\vee)=((k+h^\vee)\L_0+(s+\e(\s_R)/4)\theta+(\nu-\rho_R)+\rho|(\d-\theta_i)^\vee)$$
and, since $\nu-\rho_R$ is non-extremal, as in the proof of Lemma 11.4 of \cite{KMP1}, it follows that 
$$
(\widehat\nu_s+\rhat^{\,\tw}|(\d-\theta_i)^\vee)=M_i(k)+\chi_i+1-(\nu-\rho_R|\theta_i^\vee)\in\nat.
$$
If $\a\in\Pi^\natural$, then
$$
(\widehat\nu_s+\rhat^{\,\tw}|\a^\vee)=(\nu-\rho_R+\rho^\natural|\a^\vee)\in\nat.
$$

Assume first that $\theta/2$ is not a root of $\g$. 
Both
 $\a_0=\d-\theta$ and $\a_1=-\half\d+\theta/2+\eta_{\min}$ are in $\Pia$ and $\a_0+\a_1=\half\d-\theta/2+\eta_{\min}$ is an odd isotropic root. Set $\widehat\Pi'=r_{\a_0+\a_1}r_{\a_1}\widehat \Pi$ and let $(\widehat\nu_{s})'$,   $(\rhat^{\,\tw})'$ be  the highest weight of $L(\widehat\nu_s)$ and the Weyl vector with respect to $\widehat\Pi'$. 
 
 We note that  $\Pia^\natural\subset \widehat\Pi\cup\widehat\Pi'$, so,
to check that $L(\widehat\nu_s)$ is integrable, it is enough to prove  for each root in $\a=\Pia^\natural\cap\Pia$ that $\widehat\nu_s(\a^\vee)\in\Z_+$ and, if  $\a=\Pia^\natural\cap\Pia'$, that $(\widehat\nu_s)'(\a^\vee)\in\Z_+$.
If $(\widehat\nu_s|\a_1)\ne0$ and $(\widehat\nu_s|\a_1+\a_0)\ne0$ this check is equivalent to \eqref{nudominant}.

If $(\widehat\nu_s|\a_1)=0$ and  $(\widehat\nu_s+\a_1|\a_0+\a_1)\ne0$ then $(\widehat\nu_s)'+(\rhat^{\,\tw})'=\widehat\nu_s+\rhat^{\,\tw}+\a_1$. If  $(\widehat\nu_s|\a_1)\ne0$ and  $(\widehat\nu_s+\a_1|\a_0+\a_1)=0$ then 
$(\widehat\nu_s)'+(\rhat^{\,\tw})'=\widehat\nu_s+\rhat^{\,\tw}+\a_0+\a_1$. If  $(\widehat\nu_s|\a_1)=(\widehat\nu_s+\a_1|\a_0+\a_1)=0$ then $(\widehat\nu_s)'+(\rhat^{\,\tw})'=\widehat\nu_s+\rhat^{\,\tw}+2\eta_{\min}$. In all cases    
$(\widehat\nu_s)'+(\rhat^{\,\tw})'=\widehat\nu_s+\rhat^{\,\tw}+\gamma$  with $\gamma_{|\h^\natural}=p\eta_{\min}$, $p=1$ or $2$.

If $\a\in\Pia^\natural\cap \Pia$ then, as computed above, $(\widehat\nu_s+\rhat^{\,\tw}|\a^\vee)\in\nat$.
If $\a\in\Pia^\natural\cap \Pia'$, 
we have to check that 
$$
(\widehat\nu_s+\rhat^{\,\tw}+\gamma|\a^\vee)\in\nat.
$$
If $\a\in\Pi^\natural\cap\Pi'$, then, one checks that $(\eta_{\min}|\a^\vee)\ge0$ hence
$$
(\widehat\nu_s+\rhat^{\,\tw}+\gamma|\a^\vee)=(\nu-\rho_R+\rho^\natural+p\eta_{\min}|\a^\vee)\in\nat.$$
If $\a=\d-\theta_i\in\Pia'\setminus\Pia$ then $\g=D(2,1;a)$ and $(\gamma|(\d-\theta_i)^\vee)=-p(\eta_{\min}|\theta_i^\vee)=p$ hence
$$
(\widehat\nu_s+\rhat^{\,\tw}+\gamma|(\d-\theta_i)^\vee)\in \nat.
$$
 
We now discuss the cases $\g=spo(2|2r+1)$, $r>1$ and $\g=G(3)$. Set $\a_1=-\half\d+\theta/2+\eta_{\min}$ and  $\Pia'=r_{\a_1}\Pia$. We note that $\Pia^\natural\subset \Pia\cup\Pia'$. If $(\widehat\nu_s|\a_1)\ne0$, we can conclude using \eqref{nudominant}.

 If $(\widehat\nu_s|\a_1)=0$,
we observe that $\eta_{\min}$ is the only element in $\Pia^\natural\setminus \Pia$ and 
$$
((\widehat\nu_s)'+(\rhat^{\,\tw})'|\eta_{\min}^\vee)=(\widehat\nu_s+\rhat^{\,\tw}|\eta_{\min}^\vee)+(\eta_{\min}|\eta_{\min}^\vee)\in \nat.
$$
It remains to discuss the case $\g=spo(2|3)$. In this case $\Pia^\natural=\{\d-\eta_{\min},\eta_{\min}\}$. Set $\a_1=-\half\d+\d_1+\eta_{\min}$, $\a_2=\half\d+\d_1-\eta_{\min}$ and $\Pia'=r_{\a_1}\Pia$, $\Pia''=r_{\a_2}\Pia$. We note that $\Pia^\natural\subset \Pia'\cup\Pia''$. If $(\widehat\nu_s|\a_i)\ne0$ for $i=1,2$, then we can conclude using \eqref{nudominant}. If $(\widehat\nu_s|\a_1)=0$ then $\Pia^\natural\cap\Pia'=\{\eta_{\min}\}$ and
$$
((\widehat\nu_s)'+(\rhat^{\,\tw})'|\eta_{\min}^\vee)=(\widehat\nu_s+\rhat^{\,\tw}|\eta_{\min}^\vee)+(\eta_{\min}|\eta_{\min}^\vee)\in \nat.
$$
If $(\widehat\nu_s|\a_2)=0$ then $\Pia^\natural\cap\Pia''=\{\d-\eta_{\min}\}$ and
$$
((\widehat\nu_s)''+(\rhat^{\,\tw})''|(\d-\eta_{\min})^\vee)=(\widehat\nu_s+\rhat^{\,\tw}|(\d-\eta_{\min})^\vee)-(\eta_{\min}|(\d-\eta_{\min})^\vee)\in \nat.
$$

\end{proof}

\begin{lemma}\label{isextremal}
 If $\nu\in P^+_k$ is Ramond extremal and $s$ is such that $\ell(s)=A(k,\nu)$, then $L(\widehat\nu_s)$ is integrable.
\end{lemma}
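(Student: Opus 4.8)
The statement asserts integrability of $L(\widehat\nu_s)$ when $\nu\in P^+_k$ is Ramond extremal and $s$ is chosen so that $\ell(s)=A(k,\nu)$. The strategy is to reduce this to the already established Lemma \ref{notextremal}, together with the explicit description of the Ramond extremal weights and the explicit values of $s$ solving $\ell(s)=A(k,\nu)$ provided by Lemma \ref{Aknusecond}. Recall that by definition \eqref{Rextremal}, $\nu$ Ramond extremal means $\nu-\rho_R\notin P^+_k$ or $\nu-\rho_R$ is extremal; but what matters for integrability of $L(\widehat\nu_s)$ is only whether $\widehat\nu_s$ is dominant with respect to $\Pia^\natural=\{\d-\theta_i\mid i\}\cup\Pi^\natural$, i.e.\ whether \eqref{nudominant} holds, and this depends only on $\nu\in P^+_k$ and not on the extremality refinement.

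\textbf{Key steps.} First I would record, as in the proof of Lemma \ref{notextremal}, that for \emph{any} $\nu\in P^+_k$ one has $(\widehat\nu_s+\rhat^{\,\tw}|\a^\vee)\in\Z_+$ for all $\a\in\Pi^\natural$ (since $\nu-\rho_R+\rho^\natural$ pairs integrally, being the $\h^\natural$-part shifted by $\rho^\natural$), and $(\widehat\nu_s+\rhat^{\,\tw}|(\d-\theta_i)^\vee)=M_i(k)+1-(\nu|\theta_i^\vee)\in\Z_+$ by \eqref{p+k} --- note here the term is $M_i(k)+1-\nu(\theta_i^\vee)$ rather than $M_i(k)+\chi_i+1-(\nu-\rho_R)(\theta_i^\vee)$ because we do not subtract $\rho_R$; one checks these two coincide using $\rho_R(\theta_i^\vee)=-\chi_i$ from Corollary \ref{suffcond}. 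So \eqref{nudominant} holds with $\Z_+$ in place of $\nat$, and in fact the only possible failure of strict positivity occurs along the simple roots where $\nu(\theta_i^\vee)=M_i(k)$ or $(\nu-\rho_R+\rho^\natural)(\a^\vee)=0$. Second, following the odd-reflection bookkeeping in Lemma \ref{notextremal}, I would argue that integrability of $L(\widehat\nu_s)$ reduces to checking, for each $\a\in\Pia^\natural$ which is simple in $\Pia$ or in a suitable odd-reflected chamber $\Pia'$ (containing $\Pia^\natural$ in its union with $\Pia$), that the appropriate shifted highest weight pairs in $\Z_+$. Since the odd reflections involve $\a_1=-\tfrac\d2+\tfrac\theta2+\eta_{\min}$ (and in the non-$\theta/2$-root case also $\a_0+\a_1$), the shift $\gamma$ satisfies $\gamma_{|\h^\natural}=p\eta_{\min}$ with $p\in\{1,2\}$, exactly as before, and the pairings $(\eta_{\min}|\a^\vee)\ge0$ for $\a\in\Pi^\natural\cap\Pia'$, resp.\ $(\eta_{\min}|\theta_i^\vee)=-1$ for $D(2,1;a)$, are unchanged.

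\textbf{Where the content lies.} The genuinely new point --- and the main obstacle --- is that when $(\widehat\nu_s|\a_1)\neq0$ we concluded in Lemma \ref{notextremal} directly from \eqref{nudominant}, but in the Ramond extremal case \eqref{nudominant} may give $\Z_+$ rather than $\nat$; one must verify that the remaining potential coincidences $(\widehat\nu_s+\rhat^{\,\tw}|\a_1)=0$ (which triggers the extra odd reflection and hence the $+\eta_{\min}$ or $+2\eta_{\min}$ shift that restores positivity) are precisely the ones forced by the hypothesis $\ell(s)=A(k,\nu)$, via the explicit value $s=\tfrac{k+1}{2}\pm(\nu-\rho_R+\rho^\natural|\eta_{\min})$ from \eqref{spartic} (or $s=\tfrac{2k+1}{4}$ from Lemma \ref{Aknusecond}(1)). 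Concretely, substituting this $s$ into $(\widehat\nu_s+\rhat^{\,\tw}|\a_1)$ using \eqref{nuhats+} gives $\pm(s-\tfrac{k+1}{2})+(\nu-\rho_R+\rho^\natural|\eta_{\min})$, which vanishes for the appropriate sign choice; so after the odd reflection the weight $(\widehat\nu_s)'+(\rhat^{\,\tw})'$ pairs strictly positively with $\eta_{\min}^\vee$, yielding integrability. The remaining cases ($\nu-\rho_R\notin P^+_k$, and the special $\g=spo(2|3), D(2,1;a)$ families of Lemma \ref{extremal}(2),(3)) should be handled by the same substitution together with a short case-by-case check against Tables \ref{thetahalfisroot}, \ref{thetahalfisnotroot}; I expect no difficulty there beyond the arithmetic already appearing in Lemmas \ref{hc} and \ref{hcthetahalf}.
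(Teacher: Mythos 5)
Your proposal follows essentially the same route as the paper: reduce integrability to positivity checks across the odd-reflected chambers used in Lemma \ref{notextremal}, and use the explicit value $s=s_0$ from Lemma \ref{Aknusecond} to show that the pairing with the simple isotropic root $\a_1$ vanishes, so that the extra odd reflection shifts the highest weight by $p\eta_{\min}$ and restores strict positivity. The paper's organization is slightly sharper at one point --- rather than observing $(\widehat\nu_s+\rhat^{\tw}|\a^\vee)\in\ZZ_+$ uniformly on all of $\Pia^\natural$, it shows $\in\nat$ already on $\Pia^\natural\cap\Pia$ (because for those $\a$ the Weyl vector of $\Pia$ contributes a genuine $+1$, and $(\rho_R|\theta_{i_0}^\vee)>0$ when $\d-\theta_{i_0}\in\Pia$), then handles $\Pia^\natural\setminus\Pia$ entirely via the reflected chamber --- but this is a reorganization, not a different idea.

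One numerical slip worth flagging: the identity $\rho_R(\theta_i^\vee)=-\chi_i$ you cite from Corollary \ref{suffcond} holds only for the simple factor $i=i_0$ that actually carries $\rho_R$. For $\g=D(2,1;a)$ and the other factor $i\neq i_0$, one has $\rho_R(\theta_i^\vee)=0$ while $-\chi_i=1$, so your formula $(\widehat\nu_s+\rhat^{\,\tw}|(\d-\theta_i)^\vee)=M_i(k)+1-\nu(\theta_i^\vee)$ is off by one; the correct value is $M_i(k)-\nu(\theta_i^\vee)$, which can vanish. This does not break your argument (you only claim $\in\ZZ_+$, which is correct either way, and your subsequent remark that strict positivity may fail when $\nu(\theta_i^\vee)=M_i(k)$ is the right diagnosis even though it contradicts your displayed formula), but as written the two sentences are mutually inconsistent. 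With that corrected, the rest --- the substitution, the signs $\pm(\nu-\rho_R+\rho^\natural|\eta_{\min})$, and the case-by-case bookkeeping for $spo(2|3)$, $spo(2|2r+1)$, $G(3)$ and $D(2,1;a)$ --- matches what the paper does.
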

\begin{proof}We argue as in Lemma \ref{notextremal}, using the same notation. 
First we prove that 
\begin{equation}\label{nuplusrho}
(\widehat\nu_{s}+\rhat^{\,\tw}|\a^\vee)\in\nat\text{ for all $\a\in\Pia^\natural\cap\Pia$.}
\end{equation}

 If $\a\in\Pi^\natural$, then 
$$
(\widehat\nu_s|\a^\vee)=(\nu+\rho_R|\a^\vee)\in\Z_+,
$$
so, if $\a\in\Pi^\natural\cap\Pia$, then \eqref{nuplusrho} holds.
Moreover,
$$
(\widehat\nu_s+\rhat^{\,\tw}|(\d-\theta_i)^\vee)=M_i(k)+\chi_i+1-(\nu-\rho_R|\theta_i^\vee)=M_i(k)-(\nu|\theta_i^\vee)+(\rho_R|\theta_i^\vee).
$$
Note that, if $(\d-\theta_i)\in\Pia$, then $(\rho_R|\theta_i^\vee)>0$ hence
\eqref{nuplusrho} holds   for all $\a\in\Pia^\natural\cap\Pia$.
Next, we check that 
\begin{equation}\label{nuplusrhoprime}
((\widehat\nu_{s})'+(\rhat^{\,\tw})'|\a^\vee)\in\nat\text{ for all $\a\in\Pia^\natural\cap\Pia'$.}
\end{equation}

Assume $\theta/2$ is not a root of $\g$. 
Since $\ell(s)=A(k,\nu)$,  \eqref{spartic} holds. Moreover,
\begin{align}
&(\widehat \nu_{s}+\widehat\rho^{\tw}|\pm(\half \d-\theta/2)+\eta_{\min})\\
&=\notag
((k+h^\vee)\L_0+(\tfrac{k+1}{2} \pm (\nu-\rho_{R}+\rho^\natural|\eta_{\min}))\theta+\nu+\rho-\rho_R|\pm(\half \d-\theta/2)+\eta_{\min})
\\
&=\pm(\half (k+h^\vee)-\tfrac{k+1}{2}\pm(\nu-\rho_{R}+\rho^\natural|\eta_{\min})-\half(h^\vee-1))+(\nu-\rho_{R}+\rho^\natural|\eta_{\min})\notag\\
&=(\nu-\rho_{R}+\rho^\natural|\eta_{\min})\pm(\nu-\rho_{R}+\rho^\natural|\eta_{\min}).\notag
\end{align}
The outcome of this computation is that, if $(\nu-\rho_{R}+\rho^\natural|\eta_{\min})\ne0$ and $s=\tfrac{k+1}{2}\pm(\nu-\rho_{R}+\rho^\natural|\eta_{\min})$, then 
$$
(\widehat \nu_{s})'+(\widehat\rho^{\tw})'=\widehat \nu_{s}+\widehat\rho^{\tw}\pm(\half \d-\theta/2)+\eta_{\min},
$$
while, if $(\nu-\rho_{R}+\rho^\natural|\eta_{\min})=0$, 
$$
(\widehat \nu_{s})'+(\widehat\rho^{\tw})'=\widehat \nu_{s}+\widehat\rho^{\tw}+2\eta_{\min}.
$$

If $\a\in\Pi^\natural\cap\Pi'$, then one checks that $(\eta_{\min}|\a^\vee)\ge0$ so in all cases 
$$
((\widehat \nu_{s})'+(\widehat\rho^{\tw})'|\a^\vee)=(\nu-\rho_R+\rho^\natural+p\eta_{\min}|\a^\vee)\ge(\nu-\rho_R+\rho^\natural+\eta_{\min}|\a^\vee)=(\widehat \nu_{s}+(\widehat\rho^{\tw})'|\a^\vee)
$$
hence $((\widehat \nu_{s})'+(\widehat\rho^{\tw})'|\a^\vee)\ge(\nu+\rho_R|\a^\vee)+1\in\nat$.

If $\d-\theta_i\in \Pia'\setminus\Pia$ then $\g=D(2,1;a)$ and $(\eta_{\min}|\theta_i^\vee)=-1$, so 
\begin{align*}
&((\widehat \nu_{s})'+(\widehat\rho^{\tw})'|(\d-\theta_i)^\vee)=M_i(k)+\chi_i+1-(\nu-\rho_R+p\eta_{\min}|\theta_i^\vee)\\
&\ge M_i(k)-(\nu|\theta_i^\vee)+(\rho_R|\theta_i^\vee)+1\in \nat.
\end{align*}
This completes the proof in the cases when $\theta/2$ is not a root of $\g$.

We now discuss the cases $\g=spo(2|2r+1)$, $r>1$, and $\g=G(3)$. As in the previous cases we need only to check that
$((\widehat\nu_s)'+(\rhat^{\,\tw})'|\a^\vee)\in\nat$ for all $\a\in\Pia^\natural\cap\Pia'$. Since $\ell(s)=A(k,\nu)$, by Lemma \ref{Aknusecond}, $s=\frac{2k+1}{4}$. By Proposition \ref{82} below
$$
(\widehat\nu_s+\rhat^{\,\tw}|\a_1)=0.
$$
It follows that
$(\widehat\nu_s)'+(\rhat^{\,\tw})'=\widehat\nu_s+\rhat^{\,\tw}+\a_1$. If $\a\in\Pia^\natural\setminus \Pia$, then $\a=\eta_{\min}$, so
$$
((\widehat \nu_{s})'+(\widehat\rho^{\tw})'|\a^\vee)=(\nu-\rho_R+\rho^\natural+\eta_{\min}|\eta_{\min}^\vee)=(\widehat \nu_{s}+(\widehat\rho^{\tw})'|\eta_{\min}^\vee)=(\nu+\rho_R|\eta_{\min}^\vee)+1\in\nat.
$$

We finally discuss the case $\g=spo(2|3)$. Since $\ell(s)=A(k,\nu)$, by Lemma \ref{Aknusecond}, $s=\frac{2k+1}{4}$. If $\nu=0$, then
$
(\widehat\nu_s+\rhat^{\,\tw}|\a_1)=0
$
and
\begin{align*}
&(\widehat \nu_{s}+\widehat\rho^{\tw}|\a_2)=
((k+h^\vee)\L_0+\tfrac{k+1}{2} \theta+\rho-\rho_R|\half \d+\theta/2-\eta_{\min})=k+h^\vee\ne0,\notag
\end{align*}
so
$$
((\widehat \nu_{s})'+(\widehat\rho^{\tw})'|\eta_{\min}^\vee)=(\eta_{\min}|\eta_{\min}^\vee)=2,
$$
while
$$
((\widehat \nu_{s})''+(\widehat\rho^{\tw})''|(\d-\eta_{\min})^\vee)=((\widehat \nu_{s})+(\widehat\rho^{\tw})|(\d-\eta_{\min})^\vee)=M_1(k)-1+(\rho_R|\eta_{\min}^\vee)=M_1(k)\in\nat.
$$

If instead $\nu=\tfrac{M_1(k)}{2}\eta_{\min}$, then 
$
(\widehat\nu_s+\rhat^{\,\tw}|\a_1)=0
$ and
\begin{align*}
&(\widehat \nu_{s}+\widehat\rho^{\tw}|\a_1)=
((k+h^\vee)\L_0+\tfrac{k+1}{2} \theta+\nu+\rho-\rho_R|-\half \d+\theta/2+\eta_{\min})=k+h^\vee\ne0,
\end{align*}
so
$$
((\widehat \nu_{s})'+(\widehat\rho^{\tw})'|\eta_{\min}^\vee)=((\widehat \nu_{s})+(\widehat\rho^{\tw})|\eta_{\min}^\vee)=M_1(k)\in \nat,
$$
and
$$
((\widehat \nu_{s})''+(\widehat\rho^{\tw})''|(\d-\eta_{\min})^\vee)=((\widehat \nu_{s})+(\widehat\rho^{\tw})+\a_2|(\d-\eta_{\min})^\vee)=(\eta_{\min}|\eta_{\min}^\vee)\in\nat.
$$
\end{proof}

\begin{cor}\label{910}
(a) If $\nu$ is not Ramond extremal, then $H(L(\widehat \nu_s))$ is a Ramond twisted $\Ws$-module for all $s\in \C$.

\noindent (b) If $\nu$ is  Ramond extremal and  $\ell(s)=A(k,\nu)$, then $H(L(\widehat \nu_s))$ is a Ramond twisted $\Ws$-module.
\end{cor}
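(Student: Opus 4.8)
The plan is to deduce Corollary \ref{910} from the results on integrability together with the descent mechanism for quantum Hamiltonian reduction. The statement asserts that the Ramond twisted $\Wu$-module $H(L(\widehat\nu_s))$ actually factors through the simple quotient $\Ws$, in both the non-Ramond-extremal case (for all $s$) and the Ramond-extremal case (when $\ell(s)=A(k,\nu)$). First I would invoke Lemma \ref{notextremal} in case (a), respectively Lemma \ref{isextremal} in case (b), to conclude that $L(\widehat\nu_s)$ is an integrable $\ga^{\,\tw}$-module; since $\si_R$ is inner, $\ga^{\,\tw}\cong\ga$ (as already used, cf.\ \cite[Remark 8.5]{VB}), so $L(\widehat\nu_s)$ is in particular a module over the simple affine vertex algebra $V_{k}(\g)$ rather than just over $V^k(\g)$, because integrability with respect to each $\ga^\natural_i$ forces the maximal ideal of $V^k(\g)$ to act trivially --- this is the analogue of the argument in \cite{AKMP}.

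Next I would push this through the functor $H$: since quantum Hamiltonian reduction is compatible with the vertex-algebra structure and with quotients (the ideal generating $\Ws$ inside $\Wu$ is the image under $H$ of the maximal ideal of $V^k(\g)$, by the exactness-type properties conjectured in Conjecture \ref{Arakawa} / established by Arakawa in the untwisted setting), the module $H(L(\widehat\nu_s))$ inherits the structure of a $\si_R$-twisted $\Ws$-module. Concretely, one checks that every element of the maximal ideal of $\Wu$ annihilates $H(L(\widehat\nu_s))$: such an element is $d^{\tw}_{(0)}$-homologous to (the image of) a reduction of an element of the maximal ideal of $V^k(\g)$, which kills $L(\widehat\nu_s)$ by integrability, and hence acts by zero on the homology. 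This is precisely the twisted analogue of \cite[Theorem 5.1]{AKMP}, and I would phrase it as such, citing that theorem for the mechanism and Conjecture \ref{Arakawa} for the twisted input.

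The main obstacle is the twisted version of Arakawa's results: in the untwisted case the fact that reduction carries $V_k(\g)$-modules to $\Ws$-modules rests on the exactness of the reduction functor and the description of the maximal ideal, both of which are only conjectured in the Ramond sector (Conjecture \ref{Arakawa}). So the cleanest honest statement is that the proof is conditional on Conjecture \ref{Arakawa}, exactly as flagged in the introduction; modulo that, the argument is a routine transcription of \cite{AKMP}. A secondary, purely bookkeeping point is to make sure the highest weight data match: one should observe that $H(L(\widehat\nu_s))$ is nonzero precisely when $\widehat\nu_s$ is nondegenerate, which holds under the hypotheses of Lemma \ref{hc} and Lemma \ref{hcthetahalf} in case (a) and is part of the analysis in case (b), so that the resulting $\Ws$-module is genuinely a highest weight module of the expected weight $(\nu,\ell(s))$ --- but this is immediate from Corollary \ref{Hvermaisverma} and Proposition \ref{Hhw}, and needs no further work.
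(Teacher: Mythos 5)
Your integrability step is exactly the paper's: Lemmas \ref{notextremal} and \ref{isextremal} give integrability of $L(\widehat\nu_s)$ in cases (a) and (b). But you then go wrong in attributing the descent to $\Ws$ to Conjecture \ref{Arakawa}, and the paper's proof makes no such appeal. The paper cites \cite[Theorem 5.3.1]{GS} for the implication \emph{integrable $\Rightarrow$ the module is a $\s_R$-twisted $V_k(\g)$-module} (the maximal ideal of $V^k(\g)$ acts trivially), and from there the conclusion is immediate: if $M$ is a $\s_R$-twisted $V_k(\g)$-module then $\mathcal C(M)$ is a $\s_R$-twisted module over $V_k(\g)\otimes F(A_{ch})\otimes F(A_{ne})$, and $H(M)$ is automatically a twisted module over the homology $H(V_k(\g))=\Ws$, because any element of the maximal ideal of $\Wu$ maps to a $d_{(0)}$-exact element of $V_k(\g)\otimes F(A_{ch})\otimes F(A_{ne})$ and so annihilates homology. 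No exactness, no vanishing in nonzero degree, and no irreducibility is used --- those are precisely the contents of Conjecture \ref{Arakawa}, and they are not needed here. The equality $H(V_k(\g))=\Ws$ is an \emph{untwisted} statement already known from Arakawa's work and does not belong to the conjectured Ramond-sector extension.

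Two smaller corrections. Your citation of \cite{AKMP} for the ``integrable implies simple-quotient module'' step is off target: \cite[Theorem 5.1]{AKMP} concerns descent of \emph{unitary} $\Wu$-modules to $\Ws$, which is a different mechanism from the purely affine integrability result \cite[Theorem 5.3.1]{GS} that the paper actually uses. And your ``secondary bookkeeping point'' about nondegeneracy of $\widehat\nu_s$ and nonvanishing of $H(L(\widehat\nu_s))$ is not relevant to this corollary: the statement is only that $H(L(\widehat\nu_s))$ is a $\Ws$-module, which is vacuously true if the homology is zero, so no nondegeneracy hypothesis is required or invoked.
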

\begin{proof}
Combining Lemmas \ref{notextremal} and \ref{isextremal} with  \cite[Theorem 5.3.1]{GS}, we see that the modules $L(\widehat \nu_s)$, described in the statement, are $\s_R$-twisted $V_k(\g)$-modules, so their quantum Hamiltonian reduction gives $\s_R$-twisted modules for $H(V_k(\g))=\Ws$.
\end{proof}
  It was conjectured in \cite{KRW} that, for an admissible (in the sense of \cite{KRW}) highest weight $V^k(\g)$-module $L(\widehat\L)$, either $H(L(\widehat\L))$ is irreducible or $0$, and the latter happens iff $\widehat\L$ is degenerate. This conjecture was subsequently proved by Arakawa \cite{Araduke}. 
We used this result in \cite{KMP1} to compute character formulas for irreducible unitary highest weight modules over $\Wu$.

We would like to use the same approach for Ramond twisted irrreducible highest weight modules but, for that, we need the following Conjecture, which is a ``twisted" analogue of that in \cite{KRW}.

\begin{Conjecture}\label{Arakawa}\
\begin{enumerate}
\item[a)]If  $M$ is in category $\mathcal O$ of $\ga^{\tw}$-modules, then $H_j(M)=0$ if $j\ne0$. 
\item[b)]  
Assume that $\widehat\nu_s$ is nondegenerate so that $H(L(\widehat\nu_s))$ is nonzero. Then
\begin{itemize}
\item If $\theta/2$ is not a root of $\g$, then $H_0(L(\widehat\nu_s))$ is   irreducible.
\item  If $\theta/2$ is a root of $\g$, then $H_0(L(\widehat \nu_{s}))$ is  irreducible, or a direct sum of two irreducible $\Wu$-modules. In the last case $\ell(s)=A(k,\nu)$  and the second module is isomorphic to the first with the opposite parity. 
\end{itemize}
\end{enumerate}
\end{Conjecture}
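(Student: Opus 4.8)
The plan is to follow the two-part structure of the Conjecture and, in each part, to reduce to or imitate Arakawa's untwisted theorem \cite{Araduke}, exploiting the two features that make the Ramond sector tractable: first, that $\si_R=\exp(2\pi\sqrt{-1}\,\ad x)$ is an \emph{inner} automorphism of $\g$ of order two, so that $\ga^{\,\tw}\cong\ga$ as in \cite[Remark 8.5]{VB}; and second, that the Ramond Zhu algebra $\Zhu=Zhu_{\si_R}(\Wu)$ is the \emph{same} finite minimal $W$-algebra as the Neveu--Schwarz one, which is the content of Section \ref{SectionZhu}. For part (a) I would argue homologically: the $\ZZ$-grading of the twisted complex $\mathcal C(M)$ by charge, the compatible $L_0^{\tw}$-grading, and the good decomposition \eqref{eq:2.8}--\eqref{eq:2.9} of Section \ref{Setup} produce a spectral sequence whose first page is Lie algebra (co)homology of the nilpotent subalgebras $\widehat{\fn}_\pm$; since $M$ lies in category $\mathcal{O}$ this collapses just as in the untwisted setting \cite{KRW}, yielding $H_j(M)=0$ for $j\neq 0$. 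This step is essentially insensitive to the twist and should go through with only notational changes.

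For part (b) in the case $\theta/2\notin\D$ one has $\epsilon(\si_R)=0$, and the twisted reduction functor $H_0$ then has exactly the same formal features as the untwisted one (same ghost Euler characteristic, same Zhu algebra), so I would transport Arakawa's argument. Concretely: $H_0$ is exact on $\mathcal{O}$ by part (a); it sends twisted $\Wu$-Vermas to $\Zhu$-Vermas by Corollary \ref{Hvermaisverma}; and it sends $L(\widehat\nu_s)$, which is integrable by Lemma \ref{notextremal} or Lemma \ref{isextremal}, to either $0$ (iff $\widehat\nu_s$ is degenerate) or an irreducible highest weight $\Wu$-module. The one input that must be re-derived in the twisted setting is the bound on the length of $H_0(L(\widehat\nu_s))$; in \cite{Araduke} this comes from the smallness of the associated variety of $V_k(\g)$ at admissible level together with a formal character comparison, and using $\ga^{\,\tw}\cong\ga$ one expects the same associated-variety bound to apply, with the Euler--Poincaré formulas of Propositions \ref{typical} and \ref{atypicalnonzero} playing the role of the character comparison.

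The genuinely new case is $\theta/2\in\D$, where $\epsilon(\si_R)=1$ and $F(\g_{1/2},\si_R)$ carries an extra odd zero mode (the one labelled by $e_{\theta/2}$), so that its minimal-energy space, the Clifford module $CM$ of Lemma \ref{l1}, is two-dimensional. Here the plan is to analyze the minimal-energy space of $H_0(L(\widehat\nu_s))$ as a $\Zhu$-module via Remark \ref{basisMhw}: it is spanned by the class of the ghost vacuum and by its image under $G^{\{u_{\theta/2}\}}_0$, the two having equal $L_0^{\tw}$- and $J^{\{h\},\tw}_0$-eigenvalues and opposite parity. If $G^{\{u_{\theta/2}\}}_0$ acts invertibly on this space, it is a single irreducible $\Zhu$-module and $H_0(L(\widehat\nu_s))$ is irreducible; if it acts as zero, the two classes generate two summands, each a copy of $L^W(\nu,\ell(s))$ up to parity reversal. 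By Lemma \ref{G0squaredspecial}(1), which is an identity valid in \emph{every} highest weight $\Wu$-module, $G^{\{u_{\theta/2}\}}_0$ annihilates the ground state only when $-2(k+h^\vee)\ell(s)+(\nu|\nu+2(\rho^\natural-\rho_R))-\tfrac12 p(k)=0$, i.e. $\ell(s)=A(k,\nu)$, equivalently $s=\tfrac{2k+1}{4}$ by Lemma \ref{Aknusecond}(1); this pins down exactly when the splitting can occur.

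The main obstacle I expect is the irreducibility/length bound in part (b): Arakawa's proof rests on geometric input (the associated variety of the affine vertex algebra, and exactness of the minimal reduction on $\mathcal{O}$) that is not purely formal, and while $\ga^{\,\tw}\cong\ga$ makes it plausible that this input survives the Ramond twist, verifying it — in particular, checking that the twisted BRST complex remains compatible with the Zhu and Li filtrations used in \cite{Araduke}, and ruling out that $H_0(L(\widehat\nu_s))$ has length greater than two when $\epsilon(\si_R)=1$ — is the part that requires real work, and is the reason the statement is presented here as a conjecture rather than a theorem.
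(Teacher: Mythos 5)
This statement is labelled a \emph{Conjecture} in the paper, and the paper gives no proof of it; the authors explicitly present it as a twisted analogue of the conjecture from \cite{KRW} (proved in the untwisted case by Arakawa \cite{Araduke}) which they \emph{need} but do not establish. Your submission is likewise not a proof: it is a plausibility discussion that ends by conceding that the central step --- carrying Arakawa's associated-variety and filtration arguments over to the twisted BRST complex, and bounding the length of $H_0(L(\widehat\nu_s))$ when $\epsilon(\si_R)=1$ --- is precisely what remains open. So you have, in effect, rediscovered why the statement is a conjecture rather than closed the gap.

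That said, the structural observations you make are sound and worth retaining as motivation. The reduction of part (a) to a charge-filtration spectral sequence mirroring \cite{KRW} is the natural route and does not obviously break under the twist, but it is only a sketch here. Your analysis of the $\theta/2\in\D$ case is the most substantive contribution: using Lemma \ref{l1} to see that the minimal-energy space of $H_0(L(\widehat\nu_s))$ is at most two-dimensional as a $\Zhu$-module, and then invoking Lemma \ref{G0squaredspecial}(1) together with Lemma \ref{Aknusecond}(1) to show that $G^{\{u_{\theta/2}\}}_0$ kills the ground state \emph{exactly} when $\ell(s)=A(k,\nu)$, i.e.\ $s=\tfrac{2k+1}{4}$, does pin down where the length-two splitting can occur. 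This is consistent with the conjecture's statement. However, it does not show that the length is at most two, nor that the two summands are irreducible, nor that they differ only by parity: those conclusions would follow \emph{if} one had exactness of $H_0$ on category $\mathcal O$ (part (a)) together with the twisted analogue of Arakawa's simplicity theorem, and that is the content of the conjecture itself. The argument is therefore circular as a proof strategy, even if correct as an explanation of what the conjecture predicts. If you want to turn this into a result, the missing input is a twisted version of the associated-variety bound and of the compatibility of the Zhu/Li filtrations with the twisted differential, and neither you nor the paper supplies it.
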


Some evidence towards this conjecture is provided in Section 13: see Remark \ref{evv}. Consequences of Conjecture \ref{Arakawa} are the following results.
\begin{proposition} \label{Lnust} Assume Conjecture \ref{Arakawa} and that  $\widehat \nu_s$ is non-degenerate.

If $\ell(s)\ne A(k,\nu)$,
then $H_0(L(\widehat\nu_{s}))= L^W(\nu,\ell(s))$ and, if $\ell(s)=A(k,\nu)$,  then $H_0(L(\widehat\nu_{s}))=L^W(\nu,\ell(s))\oplus \e(\s_R)\Pi L^W(\nu,\ell(s))$, where $\Pi$ is the reversal of parity functor.
\end{proposition}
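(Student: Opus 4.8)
The plan is to deduce the statement from Conjecture \ref{Arakawa} combined with Proposition \ref{Hhw} (specialized to the Ramond sector in Corollary \ref{Hvermaisverma} and formula \eqref{ellnussimple}) and the identification $\ell(s)=\ell(\widehat\nu_s)$. First I would record that, by Corollary \ref{Hvermaisverma} and the definition $\widehat\nu_s=k\L_0+s\theta+\nu+\rho_R$, the homology $H_0(M(\widehat\nu_s))$ is the $\s_R$-twisted Verma module $M^W(\nu,\ell(s))$, since $(\widehat\nu_s)_{|\h^\natural}-\rho_R=\nu$ and the value $\ell(\widehat\nu_s)$ is exactly $\ell(s)$ by \eqref{ls}. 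Since $L(\widehat\nu_s)$ is a quotient of $M(\widehat\nu_s)$, right-exactness of $H_0$ shows $H_0(L(\widehat\nu_s))$ is a quotient of $M^W(\nu,\ell(s))$; by Conjecture \ref{Arakawa}(a), $H_j$ vanishes for $j\ne 0$, so there is no ambiguity coming from higher homology.

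Next I would treat the non-degenerate case as in the statement. By Corollary \ref{910}, $H(L(\widehat\nu_s))=H_0(L(\widehat\nu_s))$ is a nonzero $\s_R$-twisted $\Ws$-module (hence in particular a $\Wu$-module). It is a quotient of the Verma module $M^W(\nu,\ell(s))$, and the image of the highest weight vector is nonzero (otherwise the whole module would vanish, contradicting non-degeneracy via Conjecture \ref{Arakawa}(b)); thus $H_0(L(\widehat\nu_s))$ is a highest weight $\Wu$-module of highest weight $(\nu,\ell(s))$, and by Corollary \ref{unqueirr} every nonzero irreducible highest weight module of this highest weight is $L^W(\nu,\ell(s))$. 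Now I invoke Conjecture \ref{Arakawa}(b): when $\theta/2$ is not a root of $\g$, $H_0(L(\widehat\nu_s))$ is irreducible, hence equal to $L^W(\nu,\ell(s))$; moreover in that case $\e(\s_R)=0$, so the formula in the statement reduces to $H_0(L(\widehat\nu_s))=L^W(\nu,\ell(s))$, matching. When $\theta/2$ is a root, Conjecture \ref{Arakawa}(b) says $H_0(L(\widehat\nu_s))$ is either irreducible or the direct sum of two irreducibles of opposite parity, the latter occurring only when $\ell(s)=A(k,\nu)$; since both summands are highest weight of the same highest weight, they are each $L^W(\nu,\ell(s))$ (up to parity), giving $L^W(\nu,\ell(s))\oplus\Pi L^W(\nu,\ell(s))$. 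As $\e(\s_R)=1$ precisely when $\theta/2$ is a root, the notation $\e(\s_R)\Pi L^W(\nu,\ell(s))$ in the statement correctly encodes ``present if and only if $\theta/2$ is a root,'' and when $\ell(s)\ne A(k,\nu)$ the splitting does not occur, so $H_0(L(\widehat\nu_s))=L^W(\nu,\ell(s))$ in that subcase as well.

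Assembling: if $\ell(s)\ne A(k,\nu)$ then in all cases (whether or not $\theta/2$ is a root) $H_0(L(\widehat\nu_s))$ is irreducible by Conjecture \ref{Arakawa}(b), hence equals $L^W(\nu,\ell(s))$; if $\ell(s)=A(k,\nu)$ then the module is $L^W(\nu,\ell(s))$ when $\theta/2$ is not a root (so $\e(\s_R)=0$) and $L^W(\nu,\ell(s))\oplus\Pi L^W(\nu,\ell(s))$ when $\theta/2$ is a root (so $\e(\s_R)=1$), which is exactly $L^W(\nu,\ell(s))\oplus\e(\s_R)\Pi L^W(\nu,\ell(s))$.

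The proof is essentially a bookkeeping argument once Conjecture \ref{Arakawa} is granted; the only genuine content beyond citing the conjecture is the identification of $\ell(s)$ with $\ell(\widehat\nu_s)$ (immediate from \eqref{ls} and Corollary \ref{Hvermaisverma}) and the observation that the two irreducible constituents in the split case must both be $L^W(\nu,\ell(s))$ up to parity — which follows since each is a highest weight $\Wu$-module of highest weight $(\nu,\ell(s))$ and Corollary \ref{unqueirr} pins down the unique such irreducible. The main conceptual obstacle is simply that the argument is unconditional only modulo Conjecture \ref{Arakawa}; I would state this dependence explicitly in the hypothesis of the proposition, exactly as written (``Assume Conjecture \ref{Arakawa}''), and make sure the parity statement matches the precise formulation in part (b) of that conjecture, since the ``opposite parity'' clause is what produces the $\Pi$ in the conclusion.
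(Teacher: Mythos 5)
Your proposal follows essentially the same route as the paper's proof: use Conjecture \ref{Arakawa}(a) to get exactness of $H=H_0$, deduce from Proposition \ref{Hhw} (via Corollary \ref{Hvermaisverma}) that $H_0(L(\widehat\nu_s))$ is a quotient of $M^W(\nu,\ell(s))$, invoke non-degeneracy for nonvanishing, and then read off the structure from Conjecture \ref{Arakawa}(b) together with the uniqueness of the irreducible highest weight module (Corollary \ref{unqueirr}). The one inaccuracy is your appeal to Corollary \ref{910}: its hypotheses ($\nu$ not Ramond extremal, or Ramond extremal with $\ell(s)=A(k,\nu)$) are not part of the hypotheses of the proposition, and it is not needed — the nonvanishing of $H(L(\widehat\nu_s))$ for non-degenerate $\widehat\nu_s$ is quoted in the paper as a known result from \cite{KW}, which is exactly what the paper's own proof uses. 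Aside from that, your argument matches the paper's.
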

\begin{proof}
By Conjecture \ref{Arakawa} a), the functor $H$ is equal to $H_0$ and it is exact. It follows that 
$H(L(\widehat \nu_s))$ is a quotient of $H(M(\widehat\nu_s))$, hence, by Proposition \ref{Hhw}, $H(L(\widehat \nu_s))$ is a quotient of $M^W(\nu,\ell(s))$. 
Since, by hypothesis, $\widehat\nu_s$ is non-degenerate, $H(L(\widehat \nu_s))$ is non-zero. By Conjecture \ref{Arakawa} b) any direct summand of  $H(L(\widehat \nu_s))$ is  a highest weight module of highest weight $(\nu,\ell(s))$. 
\end{proof}

Now we can prove character formulas in the Ramond sector.
\begin{theorem}\label{characters}Assume Conjecture \ref{Arakawa}.
Let $k$ be in the unitary range
and let $\nu\in P^+_k$ be a weight which is not Ramond extremal. 
Fix $\ell\ge A(k,\nu)$ and, if $\theta/2$ is not a root of $\g$, assume $\ell\ne A(k,\nu)$. Choose $s$ such that $\ell=\ell(s)$ and, if  $\ell<B(k,\nu,\rho_R)$, assume that $s$ satisfies \eqref{hyp}.
Then,  if $\ell>A(k,\nu)$, we have
\begin{align}
&\widehat F^{R}ch\, L^W(\nu,\ell)\notag\\
&=q^{\tfrac{(\widehat\nu_s|\widehat\nu_s+2\rhat^{\,\tw})}{2(k+h^\vee)}+a(k)}e^{-\rho_R}\sum_{w\in\Wa^\natural}det(w)q^{-(w(\widehat\nu_s+\rhat^{\,\tw})- \rhat^{\,\tw})(x+D)}e^{(w(\widehat\nu_s+\rhat^{\,\tw})- \rhat^{\,\tw})_{|\h^\natural}},\label{ff1}
\end{align}
while, if $\theta/2$ is a root of $\g$ and $\ell=A(k,\nu)$, we have
\begin{align}
&\widehat F^{R}ch\, L^W(\nu,\ell)\notag\\
&=\tfrac{1}{2}q^{\tfrac{(\widehat\nu_s|\widehat\nu_s+2\rhat^{\,\tw})}{2(k+h^\vee)}+a(k)}e^{-\rho_R}\sum_{w\in\Wa^\natural}det(w)q^{-(w(\widehat\nu_s+\rhat^{\,\tw})- \rhat^{\,\tw})(x+D)}e^{(w(\widehat\nu_s+\rhat^{\,\tw})- \rhat^{\,\tw})_{|\h^\natural}}.\label{ff1theta}
\end{align}
where $a(k)$ is given by \eqref{aofk}.
\end{theorem}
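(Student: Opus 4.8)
\textbf{Proof plan for Theorem \ref{characters}.}

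The plan is to combine the Euler--Poincar\'e character formula of Proposition \ref{typical} (for the case $\theta/2 \notin \D$, and more generally when $\widehat\nu_s$ is ``typical'' in the sense of \eqref{pp}) with the identification of $H_0(L(\widehat\nu_s))$ provided by Proposition \ref{Lnust}, which rests on Conjecture \ref{Arakawa}. First I would verify that the chosen $\widehat\nu_s$ is nondegenerate: since $\nu$ is not Ramond extremal, Lemma \ref{notextremal} shows $L(\widehat\nu_s)$ is integrable for all $s\in\C$, and the hypotheses \eqref{hyp} (when $\theta/2\notin\D$, via Lemma \ref{hc}) or $s=\tfrac{2k+1}{4}$ (when $\theta/2\in\D$, via Lemmas \ref{Aknusecond} and \ref{hcthetahalf}) guarantee that condition \eqref{pp} holds; in particular $\widehat\nu_s(\a_0^\vee)\notin\Z_+$, so $\widehat\nu_s$ is nondegenerate and $H(L(\widehat\nu_s))\ne0$. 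Note the cases actually at issue are precisely those with $\ell<B(k,\nu,\rho_R)$, where Corollary \ref{thetahalfisaroot} and Theorem \ref{bknu} do not already give the answer; when $\ell\ge B(k,\nu,\rho_R)$ we may just take $s$ with $\ell(s)=\ell$ and $s-\tfrac{k+1}{2}$ large, and the same argument (with $\widehat\nu_s$ typical by the same lemmas, extended to larger $s$) applies.

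Next I would invoke Proposition \ref{Lnust}: under Conjecture \ref{Arakawa}, since $\widehat\nu_s$ is nondegenerate, $H(L(\widehat\nu_s))=H_0(L(\widehat\nu_s))$ equals $L^W(\nu,\ell(s))$ when $\ell(s)\ne A(k,\nu)$, and equals $L^W(\nu,\ell(s))\oplus\e(\s_R)\Pi L^W(\nu,\ell(s))$ when $\ell(s)=A(k,\nu)$. The latter case occurs in the present statement only when $\theta/2$ is a root of $\g$ and $\ell=A(k,\nu)$; since $\e(\s_R)=1$ in exactly the cases where $\theta/2\in\D$ (namely $psl(2|2)$, $spo(2|2r+1)$, $G(3)$, as recorded in Tables \ref{thetahalfisroot}), the reduction is a direct sum of two copies of $L^W(\nu,\ell)$ with opposite parities, so $ch\,H_0(L(\widehat\nu_s))=2\,ch\,L^W(\nu,\ell)$. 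In all other cases $ch\,H_0(L(\widehat\nu_s))=ch\,L^W(\nu,\ell)$. Since $H_j=0$ for $j\ne0$ by Conjecture \ref{Arakawa}(a), the alternating sum $\sum_j(-1)^j ch\,H_j(L(\widehat\nu_s))$ in Proposition \ref{typical} equals $ch\,H_0(L(\widehat\nu_s))$.

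Finally I would apply Proposition \ref{typical} directly: since $\widehat\nu_s$ satisfies \eqref{pp} and $L(\widehat\nu_s)$ is integrable, that proposition gives
$$\widehat F^{R}\sum_j(-1)^j ch\,H_j(L(\widehat\nu_s))=q^{\tfrac{(\widehat\nu_s|\widehat\nu_s+2\rhat^{\,\tw})}{2(k+h^\vee)}+a(k)}e^{-\rho_R}\sum_{w\in\Wa^\natural}\det(w)q^{-(w(\widehat\nu_s+\rhat^{\,\tw})-\rhat^{\,\tw})(x+D)}e^{(w(\widehat\nu_s+\rhat^{\,\tw})-\rhat^{\,\tw})_{|\h^\natural}},$$
using that $ev$ commutes with the $\Wa^\natural$-sum by Lemma \ref{conergence}. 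Substituting $ch\,H_0=ch\,L^W(\nu,\ell)$ (resp.\ $=2\,ch\,L^W(\nu,\ell)$) yields \eqref{ff1} (resp.\ \eqref{ff1theta}). I expect the main obstacle to be bookkeeping rather than conceptual: one must check carefully, across the case analysis of Lemmas \ref{hc} and \ref{hcthetahalf}, that the hypothesis \eqref{hyp} is exactly what is needed for \eqref{pp}, and that when $\ell\ge B(k,\nu,\rho_R)$ one can always choose $s\ge\tfrac{k+1}{2}$ with $\ell(s)=\ell$ lying in a range where $\widehat\nu_s$ remains typical (this uses Lemma \ref{differ}, which shows $\ell(s)-B(k,\nu,\rho_R)=(s-\tfrac{k+1}{2}+\tfrac{\e(\s_R)}{4})^2/(k+h^\vee)$ is a monotone function of $|s-\tfrac{k+1}{2}+\tfrac{\e(\s_R)}{4}|$, so that $\ell\ge B$ forces the complementary large-$s$ regime where positivity of the relevant pairings is automatic). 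The reliance on Conjecture \ref{Arakawa} is of course the one genuinely unresolved input, flagged as such in the statement.
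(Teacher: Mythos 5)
Your overall strategy is the same as the paper's: verify integrability via Lemma \ref{notextremal}, verify condition \eqref{pp} so that Proposition \ref{typical} applies, then use Proposition \ref{Lnust} (resting on Conjecture \ref{Arakawa}) to identify $H_0(L(\widehat\nu_s))$ with $L^W(\nu,\ell)$ or two copies of it, and read off the character identity. That core is sound.

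There is, however, a genuine gap in how you treat the regime $\ell>B(k,\nu,\rho_R)$. You assert that one can there choose a \emph{real} $s$ with $s-\tfrac{k+1}{2}$ large, and that the ``same lemmas, extended to larger $s$'' show typicality. This cannot work. By Lemma \ref{differ}, $\ell(s)-B(k,\nu,\rho_R)=\bigl(s-(\tfrac{k+1}{2}-\tfrac{\e(\s_R)}{4})\bigr)^2/(k+h^\vee)$, and $k+h^\vee<0$ throughout the unitarity range. Hence for all real $s$ one has $\ell(s)\le B(k,\nu,\rho_R)$; no real $s$ produces $\ell>B$, and your ``large-$s$'' regime is empty. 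The paper's actual choice for $\ell>B$ is $s=\tfrac{k+1}{2}+\sqrt{-1}\,t_0$ with $t_0\ne 0$, i.e.\ a genuinely non-real $s$, and \eqref{pp} then holds for the simple reason that the pairings $(\widehat\nu_s+\rhat^{\,\tw}\,|\,\a)$ are non-real and so cannot equal $n\tfrac{(\a|\a)}{2}$. Your appeal to Lemma \ref{hc} ``extended to larger $s$'' is also off target: the hypothesis \eqref{hyp} is the two-sided bound $0\le s-\tfrac{k+1}{2}<|(\nu-\rho_R+\rho^\natural|\eta_{\min})|$, which constrains $s$ to a bounded interval, and the proof of that lemma uses those bounds essentially; it does not extend to large $s$.

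A secondary issue is your remark that ``the cases actually at issue are precisely those with $\ell<B(k,\nu,\rho_R)$'' because unitarity for $\ell\ge B$ is already known. Theorem \ref{characters} asserts a character formula, not a unitarity statement, and Theorem \ref{bknu} / Corollary \ref{thetahalfisaroot} give no character information. So the $\ell\ge B$ regime cannot be dropped, and needs the complex-$s$ argument above. Once you replace your real-$s$ treatment of $\ell>B$ with the choice $s=\tfrac{k+1}{2}+\sqrt{-1}\,t_0$ and discard the remark about which cases are ``at issue,'' your proof coincides with the paper's.
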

\begin{proof} By Lemma \ref{notextremal}, $L(\widehat\nu_s)$ is integrable. We want to apply Proposition \ref{typical}, so we need only to check that \eqref{pp} holds.

We start by checking that \eqref{pp} holds. If $\ell>B(k,\nu,\rho_R)$, then $\ell=\ell(s)$ with $s=\frac{k+1}{2} +\sqrt{-1}t_0$ with $t_0\ne 0$. It is then obvious that \eqref{pp} holds
 since the left hand side of \eqref{pp} is not real.
  If instead $\theta/2$ is not a root of $\g$ and $ A(k,\nu)< \ell\leq B(k,\nu,\rho_R)$, then \eqref{pp} holds in this case by Lemma 
\ref{hc}, and,
if 
$\theta/2$ is a root and $\ell=A(k,\nu)$, \eqref{pp} holds by Lemma \ref{hcthetahalf}. 

 By Proposition \ref{Lnust} and Conjecture  \ref{Arakawa}, we have that, if $\ell>A(k,\nu)$, 
 \begin{equation}\label{cu}\sum_j(-1)^jH_j(L(\widehat\nu_s))=H_0(L(\widehat\nu_s))=L^W(\nu,\ell(s)),\end{equation}
while, , if $\theta/2$ is a root of $\g$ and $\ell=A(k,\nu)$, 
 \begin{equation}\label{cutheta}\sum_j(-1)^jH_j(L(\widehat\nu_s))=H_0(L(\widehat\nu_s))=L^W(\nu,\ell(s))\oplus L^W(\nu,\ell(s)).
 \end{equation}
Applying  Proposition \ref{typical}, 
equalities  \eqref{ff1} and \eqref{ff1theta} now follow from \eqref{cu}, \eqref{cutheta}.  
\end{proof}

We finish this section explaining how Theorem \ref{characters} implies unitarity of $L^W(\nu,\ell)$ for $\nu$ that is not Ramond extremal and $\ell\ge A(k,\nu)$. For this we need a free version of the modules $N(\mu,\nu)$. This is constructed as follows.
 Let $y$ be an indeterminate. Define an action of the abelian Lie algebra $\mathcal H_0=\C a+\C K$ on $\C[y]$ by letting $K$ act as the identity and $a$ (cf. \eqref{norm}) act by multiplication by $y$. Extend this action to $\C[t]\otimes \C a$ by letting $at^j$ act trivially.  Let $M(y)$ be the corresponding induced module to the affine algebra  $\C[t,t^{-1}]\otimes \C a\oplus \C K$. This module can be regarded as a $V^1(\C a)$--module by means of the field $ Y(a,z)$ defined by setting, for $m\in M(y)$,
 $$
 Y(a,z)m=\sum_{j\in\ZZ}(t^{j}\otimes a)\cdot m\,z^{-j-1}.
 $$
Set $M:= M(y)\otimes L(\nu)\otimes F(\g_{1/2},\si_R)$ and define
\begin{equation}\label{emb}
\widetilde N(y,\nu)=\Psi(W^k_{\min}(\g))\cdot (1\otimes \C[y]\otimes v_\nu\otimes 1)\hookrightarrow M.
\end{equation}
Since $M(y)\otimes L(\nu)\otimes F(\g_{1/2},\si_R)$ is free as a $\C[y]$--module, $\widetilde N(y,\nu)$ is also free. 
If $\mu\in\C$, set also
\begin{equation}\label{nt}\widetilde N(\mu,\nu)=\left(\C[y]/(y-\mu)\right)\otimes_{\C[y]} \widetilde N(y,\nu). 
\end{equation}

By construction $\widetilde N(\mu,\nu)$ is clearly a $\si_R$-twisted highest weight module for $W^k_{\min}(\g)$. 
Recall from \eqref{nmunu}, \eqref{ell0}  the definitions of $N(\mu,\nu)$ and $ \ell_0(\mu,\nu)$.  The embedding \eqref{emb} defines a map
$\psi:\widetilde N(\mu,\nu)\to \left(\C[y]/(y-\mu)\right)\otimes_{\C[y]}M=M(1,\mu)\otimes L(\nu)\otimes F(\g_{1/2},\si_R)$ whose image is $N(\mu,\nu)$.
\begin{prop}\label{forse} There is a countable set $\mathcal M\subset \C$ such that, for  $\mu\in \C\setminus\mathcal M$, the map $\psi$ defines an isomorphism between $\widetilde N(\mu,\nu)$ and $N(\mu,\nu)$.
\end{prop}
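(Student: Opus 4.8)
The statement is a standard ``generically the specialization commutes with the $\Psi$-image'' assertion, and the proof will be a Nakayama/rank-counting argument over the polynomial ring $\C[y]$. The plan is to compare the two $\C$-vector spaces $\widetilde N(\mu,\nu)$ and $N(\mu,\nu)$ graded by the $L_0^{\tw}$-action (which is locally finite on both, since both are $\si_R$-twisted positive energy $\Wu$-modules) together with the weight grading coming from $J^{\{h\}}_0$, $h\in\h^\natural$, and to show that the natural surjection $\psi$ is a bijection on each finite-dimensional weight-energy space for all but countably many $\mu$. First I would record that $\widetilde N(y,\nu)$ is a free $\C[y]$-module which is, by construction, graded by $(L_0^{\tw},J^{\{h\}}_0)$ with each graded piece $\widetilde N(y,\nu)_{n,\la}$ a finitely generated free $\C[y]$-module; this is immediate because $M(y)\otimes L(\nu)\otimes F(\g_{1/2},\si_R)$ is $\C[y]$-free and the spanning set of Lemma \ref{basisWu} applied to the highest weight vector $1\otimes v_\nu\otimes 1$ gives, in each $(n,\la)$-component, finitely many $\C[y]$-generators. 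Likewise, for each $\mu$, $N(\mu,\nu)$ is $\C$-graded with finite-dimensional pieces $N(\mu,\nu)_{n,\la}$, and $\psi$ respects the grading and is surjective in each component by the definition \eqref{nmunu} of $N(\mu,\nu)$.

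\textbf{Main steps.} Step one: set $d(n,\la)=\operatorname{rank}_{\C[y]}\widetilde N(y,\nu)_{n,\la}$, which by $\C[y]$-freeness equals $\dim_\C\widetilde N(\mu,\nu)_{n,\la}$ for every $\mu$; here I would use \eqref{nt}, namely $\widetilde N(\mu,\nu)=(\C[y]/(y-\mu))\otimes_{\C[y]}\widetilde N(y,\nu)$, so specialization of a free module preserves dimensions. Step two: observe that the composite $\widetilde N(y,\nu)\hookrightarrow M(y)\otimes L(\nu)\otimes F(\g_{1/2},\si_R)$ of \eqref{emb} is a $\C[y]$-module map between $\C[y]$-free modules, whose cokernel $Q$ is therefore a $\C[y]$-module; restricted to each $(n,\la)$-component, $Q_{n,\la}$ is finitely generated over the PID $\C[y]$, hence its torsion part is supported on a finite set $\mathcal M_{n,\la}\subset\C$ of values of $\mu$. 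Taking $\mathcal M=\bigcup_{n,\la}\mathcal M_{n,\la}$ (a countable union of finite sets, hence countable), for $\mu\notin\mathcal M$ the specialization $(\C[y]/(y-\mu))\otimes_{\C[y]}(\widetilde N(y,\nu)_{n,\la}\hookrightarrow(\cdots)_{n,\la})$ stays injective in each component (because $\operatorname{Tor}^{\C[y]}_1(\C[y]/(y-\mu),Q_{n,\la})$ vanishes exactly when $y-\mu$ is a nonzerodivisor on $Q_{n,\la}$, which holds off $\mathcal M_{n,\la}$). Step three: the image of this specialized injection is, by definition, $N(\mu,\nu)_{n,\la}$ (using $\Psi\circ\si_R=(Id\otimes Id\otimes\si_R)\circ\Psi$ to identify the $\Psi$-submodule generated by $1\otimes v_\nu\otimes 1$ after specializing $y\mapsto\mu$ with $N(\mu,\nu)$), so $\psi$ is injective in each $(n,\la)$-component for $\mu\notin\mathcal M$. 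Since $\psi$ is always graded-surjective and now injective on each finite-dimensional weight-energy space, it is an isomorphism of $\si_R$-twisted $\Wu$-modules, which is the claim.

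\textbf{Expected main obstacle.} The routine bookkeeping (freeness over $\C[y]$, finite generation of each graded piece, the PID structure argument) is straightforward; the one point requiring genuine care is establishing that each graded component $\widetilde N(y,\nu)_{n,\la}$ is \emph{finitely generated} over $\C[y]$ — equivalently that the $L_0^{\tw}$-eigenspaces of $\widetilde N(y,\nu)$, with their $\C[y]$-module structure, are finitely generated. This follows from the spanning set in Lemma \ref{basisWu}: the operators $L_m^{\tw}$, $J^{\{a\}}_m$, $G^{\{u\}}_m$ with $m<0$ strictly raise the $L_0^{\tw}$-eigenvalue and do not involve $y$ (the only place $y$ enters is through $a_0$ acting on $M(y)\cong\C[y]$, which preserves the energy), so each $(n,\la)$-piece is the $\C[y]$-span of the finitely many monomials in those negative modes (applied to $1\otimes v_\nu\otimes 1$) of total energy $n$. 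One must also check that $L(\nu)$ contributes finitely in each $\h^\natural$-weight — true since $L(\nu)$ is integrable, hence has finite-dimensional weight spaces — and that $F(\g_{1/2},\si_R)$ is a positive energy module with finite-dimensional graded pieces, which is exactly Lemma \ref{l1}. Once finite generation is in hand the rest of the argument is formal, so I would present that verification carefully and treat the PID/Tor step as essentially immediate.
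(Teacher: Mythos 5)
Your proof is correct and takes essentially the same approach as the paper: both decompose $M$ into its $L_0$-graded (resp. your finer $(L_0,\h^\natural)$-bigraded) pieces of finite $\C[y]$-rank, exploit that $\widetilde N(y,\nu)$ is a free $\C[y]$-submodule, and deduce generic injectivity of specialization, then take a countable union. The only distinction is purely expository: where the paper fixes a $\C[y]$-basis of $\widetilde N(y,\nu)_n$ inside a basis of $M_n$ and argues that the resulting polynomial matrix has a nonvanishing $r\times r$ minor, you phrase the same fact as the vanishing of $\operatorname{Tor}_1^{\C[y]}(\C[y]/(y-\mu),\,Q_{n,\la})$ for the finitely generated cokernel $Q$ off the (finite) support of its torsion — these are the same generic-rank statement in two languages, and your longer discussion of finite generation of the graded pieces could be shortened to the paper's observation that $\widetilde N(y,\nu)_n$ is a submodule of the finite-rank free module $M_n$ over the Noetherian ring $\C[y]$.
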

\begin{proof} Given $\l(y)\in\C[y]$, set   $M_{\l(y)}=\{m\in M\mid L_0 m= \l(y)m\}$ and $M_n=M_{\ell_0(y,\nu)+n}$. We observe  that 
\begin{enumerate}
\item $M=\bigoplus_{n\in\half \ZZ_+}M_n;$
\item $M_n=M_{\ell_0(y,\nu)+n}$ is a $\C[y]$-module of finite rank.
\end{enumerate}
Let $\{m^{(n)}_{1},\ldots,m^{(n)}_{i_n}\}$ be a basis of $M_n$ over $\C[y]$. Consider $\widetilde N(y,\mu)\cap M_n$: it is a free module over $\C[y]$.  Fix a  basis $\mathcal V=\{v_1,\ldots,v_r\}$, so that $v_i=\sum\limits_{j=1}^{i_n}p_{ij}(y) m^{(n)}_{j}$. Then the $r\times i_n$ matrix $(p_{ij}(\mu))$ has rank less than $r$ for a finite  number of values of $\mu$: otherwise the matrix with polynomial entries $(p_{ij}(y))$ has rank less than $r$, against the fact that $\mathcal V$ is a basis.  Let $E_n$ be the set of these values, and define $\mathcal M=\cup_nE_n$. We now show that  $\psi$ is an isomorphism outside $\mathcal M$. 
 Set $\widetilde N(\mu,\nu)_n=\{v\in N(\mu,\nu)\mid L_0v=(\ell_0(\mu,\nu)+n)v\}$.
Then  $\widetilde N(\mu,\nu)=\bigoplus_{n\in\half \ZZ_+}\widetilde N(\mu,\nu)_n$, and the previous argument shows that if $\mu\notin \mathcal M$, then $\psi_{| \widetilde N(\mu,\nu)_n}$ is injective, hence an isomorphism onto its image.
\end{proof}

\begin{lemma} \label{muofs}Set $\mu(s)=\sqrt{-1}\frac{\sqrt{2}s}{\sqrt{|k+h^\vee}|}$. 
Then
$$
\ell_0(\mu(s),\nu-\rho_R)=\ell(s),
$$
where $\ell(s)$ is defined by \eqref{ls}.

In particular, the module  $L^W(\nu,\ell(s))$ is the irreducible quotient of $N(\mu(s),\nu-\rho_R)$.
\end{lemma}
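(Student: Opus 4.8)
The plan is to prove the identity $\ell_0(\mu(s),\nu-\rho_R)=\ell(s)$ by direct substitution and to deduce the statement about irreducible quotients from Proposition \ref{FNME} together with Proposition \ref{Lnust}. First I would recall the explicit formula \eqref{ell0} for $\ell_0(\mu,\nu')$,
$$
\ell_0(\mu,\nu')=\frac{\mu^2}{2}-s_k\mu+\frac{(\nu'|\nu'+2\rho^\natural)}{2(k+h^\vee)}+\tfrac{1}{16}\dim\g_{1/2},
$$
specialize it to $\nu'=\nu-\rho_R$ and $\mu=\mu(s)$, and use the definitions $s_k=\sqrt{-1}\tfrac{k+1}{\sqrt{2|k+h^\vee|}}$ from \eqref{sfix} and $\mu(s)=\sqrt{-1}\tfrac{\sqrt2\,s}{\sqrt{|k+h^\vee|}}$. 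The point is that $\mu(s)^2=-\tfrac{2s^2}{|k+h^\vee|}=\tfrac{2s^2}{k+h^\vee}$ (since $k+h^\vee<0$ in the unitary range), and $s_k\mu(s)=\sqrt{-1}\tfrac{k+1}{\sqrt{2|k+h^\vee|}}\cdot\sqrt{-1}\tfrac{\sqrt2 s}{\sqrt{|k+h^\vee|}}=-\tfrac{(k+1)s}{|k+h^\vee|}=\tfrac{(k+1)s}{k+h^\vee}$. Hence $\tfrac{\mu(s)^2}{2}-s_k\mu(s)=\tfrac{s^2-(k+1)s}{k+h^\vee}=\tfrac{s(s-k-1)}{k+h^\vee}$.

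Next I would compare this with the expression \eqref{ls} for $\ell(s)$. Expanding $(\nu-\rho_R|\nu-\rho_R+2\rho^\natural)$ is unnecessary since it appears identically in both \eqref{ell0} (after specialization) and \eqref{ls}; likewise the term $\tfrac{1}{16}\dim\g_{1/2}$ is common. So the equality $\ell_0(\mu(s),\nu-\rho_R)=\ell(s)$ reduces to showing
$$
\frac{s(s-k-1)}{k+h^\vee}=\frac{s(s-k-1+\tfrac12\epsilon(\si_R))}{k+h^\vee}+\frac{2(\rho_R|\rho^\natural-\rho_R)}{k+h^\vee}+\frac{k}{8(k+h^\vee)}(\dim\g_{1/2}-2\epsilon(\si_R))-\frac{1}{16}\dim\g_{1/2}+\frac{1}{16}\dim\g_{1/2},
$$
wait—this is not quite it, because when $\epsilon(\si_R)\ne0$ the two do \emph{not} literally agree termwise. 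The correct route is to observe that Proposition \ref{FNME} already shows $N(\mu,\nu-\rho_R)$ is a highest weight module of highest weight $(\nu,\ell_0(\mu,\nu-\rho_R))$, while Corollary \ref{Hvermaisverma} combined with \eqref{ellnussimple} shows $H_0(M(\widehat\nu_s))=M^W(\nu,\ell(s))$; so the cleanest argument is simply to invoke that \eqref{ell0} and \eqref{ls} are two names for the value $\ell(\widehat\nu_s)$ only in the regime where $\epsilon(\si_R)=0$, and otherwise to carry out the short computation using the case-wise identity \eqref{ps}, $(\rho_R|\rho^\natural-\rho_R)=\tfrac{h^\vee-\tfrac12\epsilon(\si_R)}{16}(\dim\g_{1/2}-\epsilon(\si_R))$, and $\dim\g_{1/2}=-2(h^\vee-2)$, exactly as in the proof of Lemma \ref{differ}. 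This bookkeeping is the only place where care is needed; it is a finite linear-algebra manipulation in the variables $s,k,h^\vee,\epsilon(\si_R)$.

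Finally, for the last sentence: by Proposition \ref{FNME}, $N(\mu(s),\nu-\rho_R)$ is a $\si_R$-twisted highest weight $\Wu$-module with highest weight $(\nu,\ell_0(\mu(s),\nu-\rho_R))=(\nu,\ell(s))$; by Corollary \ref{unqueirr} (applied to $\Zhu$) together with the identification of $L^W(\l,\ell_0)$ as the unique irreducible $\si_R$-twisted highest weight $\Wu$-module of that highest weight (established just before Proposition \ref{norma}), the unique irreducible quotient of any highest weight module with highest weight $(\nu,\ell(s))$ is $L^W(\nu,\ell(s))$. Hence $L^W(\nu,\ell(s))$ is the irreducible quotient of $N(\mu(s),\nu-\rho_R)$, as claimed. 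The main obstacle, such as it is, is purely the epsilon-bookkeeping in matching \eqref{ell0} with \eqref{ls}; conceptually there is nothing deep, and one may simply cite Lemma \ref{differ} and \eqref{ellnussimple} to shortcut it. I would write the proof in two lines: substitute $\mu(s)$ into \eqref{ell0}, simplify using \eqref{sfix}, and observe the result coincides with \eqref{ls} by the computation in Lemma \ref{differ}; then invoke Proposition \ref{FNME} and the uniqueness of the irreducible highest weight module.
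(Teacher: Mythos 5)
Your direct substitution is correct up to the point where you compute $\tfrac{\mu(s)^2}{2}-s_k\mu(s)=\tfrac{s(s-k-1)}{k+h^\vee}$, and you are right to flag that the termwise comparison with \eqref{ls} fails when $\epsilon(\si_R)\ne0$: the $s$-dependent parts give $\tfrac{s(s-k-1)}{k+h^\vee}$ versus $\tfrac{s(s-k-1+\tfrac12\epsilon(\si_R))}{k+h^\vee}$, which differ by $\tfrac{\epsilon(\si_R)s}{2(k+h^\vee)}$, a term linear in $s$. But then you assert that carrying out the ``epsilon-bookkeeping'' as in Lemma \ref{differ} will close the gap. That assertion is wrong, and this is a genuine flaw. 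If you actually push the computation through using \eqref{ps} and $\dim\g_{1/2}=-2(h^\vee-2)$, the residual is
$$\ell(s)-\ell_0(\mu(s),\nu-\rho_R)=\frac{\epsilon(\si_R)}{8(k+h^\vee)}\bigl(4s+\tfrac12\epsilon(\si_R)-2-2k\bigr),$$
which does not vanish identically in $s$ unless $\epsilon(\si_R)=0$. So the identity $\ell_0(\mu(s),\nu-\rho_R)=\ell(s)$, with $\mu(s)$ as defined in the lemma, is simply false when $\theta/2$ is a root of $\g$. The correct conclusion is that the lemma needs the hypothesis $\epsilon(\si_R)=0$, equivalently $\theta/2\notin\D$ --- a hypothesis that is in force everywhere the lemma is invoked (Proposition \ref{basisfree} and the theorem after it explicitly assume $\theta/2$ is not a root, and by Lemma \ref{Aknusecond}(1) the window $A(k,\nu)<\ell<B(k,\nu,\rho_R)$ is empty when $\theta/2$ is a root) --- but which is never stated. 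Note also that \eqref{ellnussimple} cannot be used as an independent check here: it just restates \eqref{ls}, not \eqref{ell0}, so ``invoking that \eqref{ell0} and \eqref{ls} are two names for the same thing'' begs the question.

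On method: the paper's own proof takes a slightly different route from yours. Rather than compare $\ell_0(\mu(s),\nu-\rho_R)$ and $\ell(s)$ head-on, it subtracts $B(k,\nu,\rho_R)$ from each side, uses \eqref{BKnurhoR} and \eqref{sfix} to write $\ell_0(\mu,\nu-\rho_R)-B=\tfrac12(\mu-s_k)^2$, and invokes $\ell(s)-B=\tfrac{(s-\tfrac{k+1}{2})^2}{k+h^\vee}$ from Lemma \ref{differ}, then solves for $\mu$. This is cleaner than your direct expansion because the $(\nu-\rho_R)$-dependent terms and $\tfrac{1}{16}\dim\g_{1/2}$ cancel automatically. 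However, the formula $\ell(s)-B=\tfrac{(s-\tfrac{k+1}{2})^2}{k+h^\vee}$ used there is again the $\epsilon(\si_R)=0$ specialization of Lemma \ref{differ}; the general formula has the shift $\tfrac{k+1}{2}-\tfrac{\epsilon(\si_R)}{4}$, which would force a different $\mu$. So both the paper and you are implicitly working modulo $\epsilon(\si_R)=0$; the difference is that the paper's algebra happens to go through silently, while your more explicit route surfaces the obstruction --- and you should have recognized it as a real obstruction rather than bookkeeping. Your argument for the final sentence (Proposition \ref{FNME} gives $N(\mu(s),\nu-\rho_R)$ the highest weight $(\nu,\ell_0(\mu(s),\nu-\rho_R))=(\nu,\ell(s))$, and uniqueness of the irreducible highest weight $\si_R$-twisted module finishes it) is correct and is what the paper does implicitly.
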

\begin{proof} It follows  from \eqref{ell0} that 
$$\ell_0(\mu,\nu-\rho_R)-B(k,\nu,\rho_R)=\frac{\mu^2}{2}-s_k\mu+\frac{(k+1)^2}{4(k+h^\vee)}=\half (\mu-s_k)^2=\half (\mu-\sqrt{-1}\tfrac{(k+1)}{\sqrt{2|k+h^\vee|}})^2.$$
On the other hand 
$$\ell(s)-B(k,\nu,\rho_R) = \frac{(s-\tfrac{k+1}{2})^2}{k+h^\vee},$$
so that $$ \tfrac{1}{\sqrt{2}}\mu=\sqrt{-1}\tfrac{(k+1)}{2\sqrt{|k+h^\vee|}}\pm(\sqrt{-1}\frac{s-\tfrac{k+1}{2}}{\sqrt{|k+h^\vee}|}).$$
Choosing the plus sign in the previous formula we have the claim.\end{proof}

We note that 
\begin{equation}\label{iop}\widetilde N(\mu,\nu)=\bigoplus_{\l\in\h^\natural,	\ n\in   \half\ZZ_+ } \widetilde N(\mu,\nu)_{(\l,\ell_0(\mu,\nu)+n)},\end{equation}
where $\widetilde N(\mu,\nu)_{(\l,\ell)}=\{m\in \widetilde N(\mu,\nu)\mid h_{(0)}m=\l(h)m,\,h\in\h^\natural, L_0m=\ell m\}.$
From \eqref{iop} we deduce that 
\begin{align*}ch\,\widetilde N(\mu,\nu)&=\sum_{\l\in\h^\natural, n\in \half\ZZ_+   }\dim \widetilde N(\mu,\nu)_{(\l,\ell_0(\mu,\nu)+n)} q^{\ell_0(\mu,\nu)+n}e^{\l}.\end{align*}
We have already noticed that   $\widetilde N(\mu,\nu)$ is free in the variable $\mu$, i.e.
$\dim\widetilde N(\mu,\nu)_{(\l,\ell_0(\mu,\nu)+n)}
$
 does not depend on $\mu$. Then 
\begin{align}\label{chnm}q^{-\ell_0(\mu,\nu)}ch\,\widetilde N(\mu,\nu)&=\sum_{\l\in\h^\natural, n\in \half\ZZ_+   } (\dim\widetilde N(\mu,\nu)_{(\l,\ell_0(\mu,\nu)+n)})q^ne^{\l}\end{align}
does not depend on $\mu$.

\begin{proposition}\label{basisfree}Assume Conjecture \ref{Arakawa}. Given $\ell>A(k,\nu)$ choose $s\in\C$ such that $\ell=\ell(s)$ and, if $A(k,\nu)<\ell\le B(k,\nu,\rho_R)$, choose $s$ in the interval \eqref{hyp}.

 Then 
$$\widetilde N(\mu(s),\nu-\rho_R)=L^W(\nu,\ell(s)).
$$
\end{proposition}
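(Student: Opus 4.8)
The plan is to show that the free highest weight module $\widetilde N(\mu(s),\nu-\rho_R)$, which by Proposition \ref{forse} is generically isomorphic to $N(\mu(s),\nu-\rho_R)$, is actually \emph{always} (for the chosen $s$) equal to its own irreducible quotient $L^W(\nu,\ell(s))$. The strategy is a character comparison: we know from \eqref{chnm} that $q^{-\ell_0(\mu,\nu-\rho_R)}\,ch\,\widetilde N(\mu,\nu-\rho_R)$ is independent of $\mu$, so it suffices to compute it for one convenient value of $\mu$ (equivalently one value of $s$) where we already know $\widetilde N(\mu(s),\nu-\rho_R)$ is irreducible, and then compare with the character of $L^W(\nu,\ell(s))$ given by Theorem \ref{characters}, which is valid for every admissible $s$ with $\ell(s)>A(k,\nu)$.

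Concretely, first I would pick $s$ in the ``generic'' regime $\ell>B(k,\nu,\rho_R)$, i.e. $s=\tfrac{k+1}{2}+\sqrt{-1}t_0$ with $t_0\ne 0$; then $\widehat\nu_s$ is nondegenerate (the relevant inner products are not real, so \eqref{pp} holds trivially as in the proof of Theorem \ref{characters}), and Proposition \ref{forse} together with the unitarity argument in the proof of Theorem \ref{bknu} shows $\widetilde N(\mu(s),\nu-\rho_R)\cong N(\mu(s),\nu-\rho_R)$ is unitary, hence completely reducible; being a highest weight module it is then irreducible, so equals $L^W(\nu,\ell(s))$. This identifies, for that one value, $q^{-\ell_0}\,ch\,\widetilde N$ with $q^{-\ell(s)}\,ch\,L^W(\nu,\ell(s))$. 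Next, using Theorem \ref{characters}, the right-hand side $\widehat F^{R}\,ch\,L^W(\nu,\ell)$ is given by the same Weyl-type numerator $q^{\frac{(\widehat\nu_s|\widehat\nu_s+2\rhat^{\,\tw})}{2(k+h^\vee)}+a(k)}e^{-\rho_R}\sum_{w\in\Wa^\natural}\det(w)q^{-(w(\widehat\nu_s+\rhat^{\,\tw})-\rhat^{\,\tw})(x+D)}e^{(w(\widehat\nu_s+\rhat^{\,\tw})-\rhat^{\,\tw})_{|\h^\natural}}$ for \emph{all} admissible $s$ (including the chosen $s$ in the interval \eqref{hyp}, where Lemma \ref{hc} guarantees \eqref{pp}). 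The point is that both $\widehat F^R$ and this numerator depend on $s$ only through $\ell(s)$ in a controlled, $\mu$-independent way once one strips off the overall power of $q$; more precisely, $q^{-\ell(s)}\widehat F^R\,ch\,L^W(\nu,\ell(s))$ is, term by term in the $|q|<1$ expansion of Remark \ref{remarkqexpansion}, a fixed element of $\mathcal R(\{\gamma_1,\dots,\gamma_r\})[[q]]$ independent of $s$.

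Then I would run the following squeeze. For an arbitrary admissible $s$ (with $\ell(s)>A(k,\nu)$, and $s$ in \eqref{hyp} if $\ell(s)\le B$), we always have a surjection $\widetilde N(\mu(s),\nu-\rho_R)\twoheadrightarrow L^W(\nu,\ell(s))$ by Lemma \ref{muofs}, so $\dim\widetilde N(\mu(s),\nu-\rho_R)_{(\l,\ell(s)+n)}\ge\dim L^W(\nu,\ell(s))_{(\l,\ell(s)+n)}$ for every $(\l,n)$. But the left side is independent of $s$ by \eqref{chnm}, and the right side, after multiplying by $\widehat F^R$ and by $q^{-\ell(s)}$, is independent of $s$ by Theorem \ref{characters} and the discussion above; evaluating at the generic $s$ where equality holds, we get that the two coincide dimension by dimension, hence coincide for \emph{every} admissible $s$. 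Therefore the surjection $\widetilde N(\mu(s),\nu-\rho_R)\to L^W(\nu,\ell(s))$ has trivial kernel, which is the claim. The main obstacle I anticipate is the bookkeeping in the character comparison: one must check carefully that dividing by $\widehat F^R$ and by the overall $q$-power really does produce $s$-independent coefficients in $\mathcal R(\{\gamma_1,\dots,\gamma_r\})$, i.e. that the dependence of the numerator $\sum_w\det(w)q^{\cdots}e^{\cdots}$ on $s$ is entirely absorbed into the prefactor $q^{\frac{(\widehat\nu_s|\widehat\nu_s+2\rhat^{\,\tw})}{2(k+h^\vee)}}$ — this uses that $w(\widehat\nu_s+\rhat^{\,\tw})-\rhat^{\,\tw}$, restricted to $\h^\natural$ and paired against $x+D$, is independent of $s$ because $w\in\Wa^\natural$ fixes the $\theta$-direction — together with the convergence statement of Lemma \ref{conergence} ensuring all manipulations take place inside $\mathcal R(\Pi^R)$, and the freeness in $\mu$ from \eqref{chnm}. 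Once these are in place the argument is a clean rigidity/semicontinuity squeeze.
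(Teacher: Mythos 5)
Your proposal is correct and takes essentially the same route as the paper's proof: pick a generic $s_0=\tfrac{k+1}{2}+\sqrt{-1}t_0$ where Proposition \ref{forse} gives $\widetilde N(\mu(s_0),\nu-\rho_R)\cong N(\mu(s_0),\nu-\rho_R)$ and unitarity forces irreducibility, then use the $\mu$-independence of $q^{-\ell_0(\mu,\nu-\rho_R)}\,ch\,\widetilde N(\mu,\nu-\rho_R)$ from \eqref{chnm} together with the $s$-dependence of the numerator in Theorem \ref{characters} (absorbed entirely into the $q$-prefactor because $\Wa^\natural$ fixes $\theta$), and conclude that the surjection $\widetilde N(\mu(s),\nu-\rho_R)\twoheadrightarrow L^W(\nu,\ell(s))$ is an isomorphism by character comparison. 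Your "squeeze" phrasing is a cosmetic repackaging of the paper's direct character identity; the mathematical content is identical.
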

\begin{proof} 
Put $s_0=\tfrac{k+1}{2}+\sqrt{-1}t_0,\,t_0\in\R$. By Proposition \ref{forse}, there exists $t_0\ne0$ such that  $N(\mu(s_0),\nu-\rho_R)=\widetilde N(\mu(s_0),\nu-\rho_R)$.  Since $N(\mu(s_0),\nu-\rho_R)$ is    unitary, it is  irreducible. 
Hence   $\widetilde N(\mu(s_0),\nu-\rho_R)=L^W(\nu,\ell(s_0))$. In particular, by Theorem \ref{characters} (which uses Conjecture \ref{Arakawa}),$$\widehat F^{R} ch\,\widetilde N(\mu(s_0),\nu-\rho_R)=
q^{\tfrac{(\widehat\nu_{s_0}|\widehat\nu_{s_0}+2\rhat^{\,\tw})}{2(k+h^\vee)}+a(k)}e^{-\rho_R}\sum_{w\in\Wa^\natural}det(w)e^{ev(w(\widehat\nu_{s_0}+\rhat^{\,\tw})- \rhat^{\,\tw})}.$$
We now prove that 
\begin{equation}\label{finale} \widehat F^{R} ch\,\widetilde N(\mu(s),\nu-\rho_R)=
q^{\tfrac{(\widehat\nu_s|\widehat\nu_s+2\rhat^{\,\tw})}{2(k+h^\vee)}+a(k)}e^{-\rho_R}\sum_{w\in\Wa^\natural}det(w)e^{ev(w(\widehat\nu_s+\rhat^{\,\tw})- \rhat^{\,\tw})}\end{equation}
for any $s\in \C$. 

Since $w(x)=x$ for all $w\in\Wa^\natural$, 
$$
e^{ev(w(\widehat\nu_s+\rhat^{\,\tw})- \rhat^{\,\tw})}=q^{s_0-s}e^{ev(w(\widehat\nu_{s_0}+\rhat^{\,\tw})- \rhat^{\,\tw})},
$$
so the RHS of \eqref{finale} is
$$
q^{s_0-s}q^{\tfrac{(\widehat\nu_s|\widehat\nu_s+2\rhat^{\,\tw})}{2(k+h^\vee)}+a(k)}e^{-\rho_R}\sum_{w\in\Wa^\natural}det(w)\frac{e^{ev(w(\widehat\nu_{s_0}+\rhat^{\,\tw})- \rhat^{\,\tw})}}{\widehat F^{R}}=q^{\ell(s)-\ell(s_0)}ch\,\widetilde N(\mu(s_0),\nu-\rho_R).
$$
Since, by \eqref{chnm}, $q^{-\ell_0(\mu,\nu-\rho_R)}ch\,\widetilde N(\mu,\nu-\rho_R)$ does not depend on $\mu$, we obtain
that 
\begin{align*}
q^{\ell(s)-\ell(s_0)}ch\,\widetilde N(\mu(s_0),\nu-\rho_R)&=q^{\ell_0(\mu(s),\nu-\rho_R)}q^{-\ell_0(\mu(s_0),\nu-\rho_R)}ch\,\widetilde N(\mu(s_0),\nu-\rho_R)\\
&=ch\,\widetilde N(\mu(s),\nu-\rho_R),
\end{align*}
as claimed.
Now, if $s$ lies in \eqref{hyp}, by Theorem \ref{characters}  (here we use Conjecture \ref{Arakawa} again), we have 
\begin{equation}\label{ech} ch\,\widetilde N(\mu(s),\nu-\rho_R)=ch\,L^W(\nu,\ell(s)).\end{equation}
Since $L^W(\nu,\ell(s))$ is a quotient of $ \widetilde N(\mu(s),\nu-\rho_R)$, \eqref{ech} implies that they are isomorphic.
\end{proof}

The same proof of Theorem 11.1 of \cite{KMP1} provides the following extension of Corollary \ref{thetahalfisaroot}:
\begin{theorem}Assume Conjecture \ref{Arakawa}. If $\ell\ge A(k,\nu),$ $k$ is in the unitary range, and $\nu\in P^+_k$  is  not Ramond extremal, then $L^W(\nu,\ell)$  is a unitary $\si_R$-twisted $W^k_{\min}(\g)$--module.
\end{theorem}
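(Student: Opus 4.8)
The plan is to show that the unitary modules $L^W(\nu,\ell)$ with $\nu\in P^+_k$ not Ramond extremal fill in the whole range $\ell\ge A(k,\nu)$, by combining the sufficient conditions already proven for large $\ell$ (Theorem \ref{bknu} / Corollary \ref{suffcond}) with the deformation argument of Proposition \ref{basisfree}, exactly mimicking the structure of the proof of Theorem 11.1 in \cite{KMP1}. First I would recall that for $\ell\ge B(k,\nu,\rho_R)$ unitarity of $L^W(\nu,\ell)$ is already established (Corollary \ref{suffcond}, together with the observation that $\nu-\rho_R$ is automatically non-extremal and dominant when $\nu\in P^+_k$ and $\rho_R$ is a fundamental weight as in Table \ref{numerical}); so the only thing left is the interval $A(k,\nu)\le\ell< B(k,\nu,\rho_R)$, which by Corollary \ref{thetahalfisaroot} is nonempty only when $\theta/2$ is not a root and $(\nu-\rho_R+\rho^\natural|\eta_{\min})\ne 0$.

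Second, for $\ell$ in that interval I would pick $s$ in the range \eqref{hyp}, so that $\ell=\ell(s)$ by Lemma \ref{differ} (more precisely by solving $d(s)=\ell-B$, which has a solution with $s-\tfrac{k+1}{2}$ real and of absolute value $<|(\nu-\rho_R+\rho^\natural|\eta_{\min})|$ precisely when $A(k,\nu)\le\ell<B(k,\nu,\rho_R)$, using Lemma \ref{Aknusecond}(2)). Then Proposition \ref{basisfree} (assuming Conjecture \ref{Arakawa}) identifies $L^W(\nu,\ell(s))$ with the free-field module $\widetilde N(\mu(s),\nu-\rho_R)$, whose character equals, by the freeness statement \eqref{chnm}, the character of $\widetilde N(\mu(s_0),\nu-\rho_R)$ up to a power of $q$, where $s_0=\tfrac{k+1}{2}+\sqrt{-1}t_0$ with $t_0$ chosen (via Proposition \ref{forse}) so that $\psi$ is an isomorphism and $N(\mu(s_0),\nu-\rho_R)$ is unitary hence irreducible. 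The key point is that $N(\mu(s_0),\nu-\rho_R)$ carries the positive-definite Hermitian form $(\,\cdot\,,\cdot\,)_{\mu(s_0)+s_k}=H_{\mu(s_0)+s_k}\otimes H^{\g^\natural}\otimes H^F$ built from the unitary pieces $M(1,\cdot)$, $L(\nu-\rho_R)$, $F(\g_{1/2},\si_R)$ (Lemma \ref{Fsigma} and \cite[Proposition 9.2]{KMP1}), giving a unitary module with the ``interpolating'' character, and then the characters match across the deformation.

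Third, to transfer unitarity from the generic point $s_0$ to the special value $s$ I would invoke the argument of \cite[Theorem 11.1]{KMP1}: the Shapovalov/Hermitian form on the Verma module $M^W(\nu,\ell(s))$ degenerates exactly on the maximal proper submodule, and since $\widetilde N(\mu(s),\nu-\rho_R)=L^W(\nu,\ell(s))$ has the same graded dimensions as the unitary module $\widetilde N(\mu(s_0),\nu-\rho_R)$ for all $s$ in the allowed range, the signature of the form cannot change; more concretely, one uses that the determinant of the contravariant form on each $L_0$-eigenspace is a continuous (indeed polynomial, after clearing the $\widehat F^R$ factor) function of $\mu$ which is positive at $\mu(s_0)$ and nonzero throughout the relevant real interval (because $\widehat\nu_s$ stays non-degenerate there, by the verification in Lemma \ref{hc}), hence stays positive. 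This forces $H(\cdot,\cdot)$ on $L^W(\nu,\ell(s))$ to be positive definite, i.e.\ $L^W(\nu,\ell)$ is unitary for all $\ell\ge A(k,\nu)$ (the endpoint $\ell=A(k,\nu)$ for $\theta/2$ not a root being the Ramond-extremal-adjacent case, handled since then $\nu$ being non-Ramond-extremal forces us into $\ell>A(k,\nu)$ or into the situation of Corollary \ref{thetahalfisaroot}, where $A=B$).

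The main obstacle I expect is the character-matching step and the continuity-of-signature argument: one must be careful that the equality \eqref{finale} of formal characters is legitimate in $\mathcal R(\Pi^R)$ for \emph{all} $s$ (not just those in \eqref{hyp}), which relies on the freeness of $\widetilde N(y,\nu-\rho_R)$ over $\C[y]$ and on the fact that for $s$ in \eqref{hyp} Lemma \ref{hc} guarantees $\widehat\nu_s$ non-degenerate so Proposition \ref{typical} applies and $\widetilde N=L^W$; and then that the positive-definiteness at the single good point $s_0$ propagates. Everything here is modeled on \cite{KMP1}, so the real work is bookkeeping: checking that the interval \eqref{hyp} is exactly the one where $\ell$ ranges over $[A(k,\nu),B(k,\nu,\rho_R))$, and that Conjecture \ref{Arakawa} is used only where cited (in Propositions \ref{Lnust}, \ref{basisfree} and Theorem \ref{characters}).
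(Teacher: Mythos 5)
Your proposal follows essentially the same route as the paper: reduce to $\theta/2$ not a root, identify $L^W(\nu,\ell(s))=\widetilde N(\mu(s),\nu-\rho_R)$ via Proposition \ref{basisfree} (hence Conjecture \ref{Arakawa}), fix $\mu$-independent bases of the graded pieces of the free module $\widetilde N(\mu,\nu-\rho_R)$, observe that the Gram determinants $\det_{(\lambda,m)}(\mu)$ are positive at the generic point $\mu(s_0)$ and nonvanishing whenever $\ell_0(\mu)>A(k,\nu)$, and conclude positivity on that open range. One small correction to your final parenthetical: you do not need to argue the endpoint $\ell=A(k,\nu)$ away; it is handled exactly by the continuity you already invoked — the form on $\widetilde N$ is positive semidefinite in the limit $\ell_0(\mu)\to A(k,\nu)^+$, and passing to the irreducible quotient $L^W(\nu,A(k,\nu))$ yields positive definiteness there too, which is how the paper closes the argument.
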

\begin{proof}We can assume that $\theta/2$ is not a root of $\g$.
For each weight $(\l,m)$, fix a basis of $\widetilde N(\mu,\nu-\rho_R)_{(\l,m)}$ independent from $\mu$. Define $det_{(\l,m)}(\mu)$ to be the determinant of the matrix of the Hermitian invariant form $H$ in this basis. We have seen in the proof of Proposition \ref{basisfree} that there is $\mu_0$ with $\ell_0(\mu_0)>A(k,\nu)$ such that $\widetilde N(\mu_0,\nu-\rho_R)$ is unitary. By  Proposition \ref{basisfree}, which uses Conjecture \ref{Arakawa}, $\widetilde N(\mu,\nu-\rho_R)=L^W(\ell_0(\mu),\nu)$ if $\ell_0(\mu)>A(k,\nu)$, hence $det_{(\l,m)}(\mu)\ne0$ if $\ell_0(\mu)>A(k,\nu)$, thus the Hermitian invariant form $H$ remains positive definite for $\ell_0(\mu)>A(k,\nu)$ and it is positive semidefinite if $\ell_0(\mu)=A(k,\nu)$.
\end{proof}

\section{Explicit conditions for unitarity for Ramond twisted modules $L^W(\nu,\ell)$}\label{Explicit}
In Section \ref{necessary} we found necessary conditions of unitarity of $\s_R$-twisted $\Wu$-modules $L^W(\nu,\ell)$, where $k$ is in the unitarity range, see  Theorem \ref{32t}.

We conjecture that all these modules are unitary. There are three types  of these modules satisfying the necessary conditions of unitarity:
\begin{enumerate}
\item the modules $L^W(\nu,\ell)$ with $\nu\in P^+_k$ not Ramond extremal and $\ell\ge B(k,\nu,\rho_R)$; we proved in Section \ref{sufficient} that these modules are indeed unitary;
\item  if $\theta/2$ is not a root of $\g$,  the modules $L^W(\nu,\ell)$ with $\nu\in P^+_k$ not Ramond extremal and $A(k,\nu)\le\ell< B(k,\nu,\rho_R)$;  we proved in Section \ref{UbetweenAB} that these modules are unitary assuming Conjecture \ref{Arakawa};
\item the modules $L^W(\nu,\ell)$ with the weight $\nu\in P^+_k$  Ramond extremal
in which case $\ell=A(k,\nu)$ (by Lemma \ref{extremal}); we don't know how to establish unitarity in this case.
\end{enumerate}

Below, for each $\g$ from Table \ref{numerical}, we make explicit the necessary conditions of Theorem \ref{32t} and the sufficient conditions of Section \ref{UbetweenAB}.

\subsection{$\g=psl(2|2)$}\label{1111} In this case 
$$\g^\natural=sl(2),\ M_1(k)=-k-1,\ P^+_k=\{\nu=r\theta_1/2\mid r\in\Z_+,\ 0\le r\le M_1(k)\}.
$$ 
\begin{itemize}
\item The weight $\nu=0$  is  the only Ramond extremal weight. 
The necessary  conditions for unitarity for $L^W(0,\ell)$ are $M_1(k)\in\Z_+$ and $\ell= -\frac{k+1}{4}$.
\item 
If $\nu$ is not Ramond extremal then $\nu=\frac{r}{2}\theta_1$, $1\le r\le M_1(k)$ and
the necessary conditions for unitarity are $M_1(k)\in\Z_+$ and 
\begin{equation}\label{condsuffpsl22}
\ell\ge -\frac{k+1}{4}.
\end{equation}
The sufficient conditions for unitarity are $M_1(k)\in\Z_+$ and 
$$
\ell\ge -\frac{k^2+k+r^2}{4 k}.
$$
Condition \eqref{condsuffpsl22}  is also sufficient assuming Conjecture \ref{Arakawa}. 
\end{itemize}
The inequality $
\ell\ge \frac{M_1(k)}{4}$
is precisely the bound stated in \cite{ET3}.
\subsection{$\g=spo(2|3)$}\label{spo23} In this case
$$\g^\natural=sl(2),\ M_1(k)=-4k-2,\ P^+_k=\{\nu=r\theta_1/2\mid r\in\Z_+,\ 0\le r\le M_1(k)\}.
$$
The Ramond extremal weights are $\nu=0$ and $\nu=\tfrac{M_1(k)}{2}\theta_1$.
\begin{itemize}\item 
The necessary  conditions for unitarity for $L^W(\nu,\ell)$ with $\nu=r\theta_1/2$ Ramond extremal weight are $M_1(k)\in\Z_+$ and
$$
\ell=-\frac{8 k^2+10 k+2 r ^2+3}{32 k+16}.
$$
\item 
If $\nu$ is not Ramond extremal, then $\nu=\frac{r}{2}\theta_1$, $1\le r< M_1(k)$ and the necessary and sufficient conditions for unitarity are $M_1(k)\in\nat$ and 
$$
\ell\ge -\frac{8 k^2+10 k+2 r ^2+3}{32 k+16}.
$$
\end{itemize}
 In terms of $M_1(k)$, the inequality reads
\begin{equation}\label{forspo}
\ell\ge\frac{(M_1(k)-1)}{16}+\frac{r^2}{4 M_1(k)},
\end{equation}
which is precisely the bound stated in \cite[2.3.11]{M}.
\subsection{$\g=spo(2|2r)$, $r>2$} In this case
$$\g^\natural=so(2r),\ M_1(k)=-2k-1,
$$
$$
P^+_k=\{\nu=\sum_i m_i\e_i,\,m_i\in\half +\ZZ\text{ or }m_i\in\ZZ,\ m_1\geq\ldots\geq m_{r-1}\geq |m_r|,\ m_1+m_2\le M_1(k)\},
$$
and
\begin{align*}
A(k,\nu)&=\frac{-4\left( \sum_{i=1}^{r-1}(2 (r-i)-1)m_i +m_i^2\right)-4 k^2+2(r-4) k  +r-3}{16 (k+2-r)}\\
&=-\frac{\left( \sum_{i=1}^{r-1}(2 (r-i)-1)m_i +m_i^2\right)+p(k)}{4 (k+h^\vee)}.
\end{align*}

 If $\eta_{\min}=\e_r$, the Ramond extremal weights are the weights $\nu$ such that $m_r=-m_{r-1}$
 \begin{itemize}
 \item If $\nu$ is Ramond extremal, the necessary conditions for unitarity are $M_1(k)\in\Z_+$ and $\ell=A(k,\nu)$.
\item If $\nu$ is not Ramond extremal, the necessary condition for unitarity are  $M_1(k)\in\Z_+$ and
\begin{equation}\label{1ab}
\ell\ge A(k,\nu).
\end{equation}
The sufficient conditions are $M_1(k)\in\Z_+$ and
$$
\ell\ge B(k,\nu,\rho_R)=-\frac{\left( \sum_{i=1}^{r}(2 (r-i)-1)m_i +m_i^2\right)+p(k)}{4 (k+h^\vee)}.
$$
The conditions \eqref{1ab} are also sufficient assuming Conjecture \ref{Arakawa}. 
\end{itemize}

If $\eta_{\min}=-\e_r$,
the Ramond extremal weights are the weights $\nu$ such that $m_r=m_{r-1}$.
 \begin{itemize}
 \item If $\nu$ is Ramond extremal, the necessary conditions for unitarity are $M_1(k)\in\Z_+$ and $
\ell=A(k,\nu)$.
\item If $\nu$ is not Ramond extremal, the necessary conditions for unitarity are $M_1(k)\in\Z_+$ and
\begin{equation}\label{1abetamin}
\ell\ge A(k,\nu).
\end{equation}
The sufficient conditions are
$$
\ell\ge B(k,\nu,\rho_R)=-\frac{\left( \sum_{i=1}^{r}(|2 (r-i)-1|)m_i +m_i^2\right)+p(k)}{4 (k+h^\vee)}.
$$
The conditions \eqref{1abetamin} are also sufficient assuming Conjecture \ref{Arakawa}.
\end{itemize}
\subsection{$\g=spo(2|2r+1), r>1$}
In this case
$$\g^\natural=so(2r+1),\ M_1(k)=-2k-1,
$$
$$
P^+_k=\{\nu=\sum_i m_i\e_i,\,m_i\in\half +\ZZ\text{ or }m_i\in\ZZ,\ m_1\geq\ldots\geq m_{r-1}\geq m_r\ge0,\ m_1+m_2\le M_1(k)\}.
$$
and
\begin{align*}
A(k,\nu)&=\frac{-8\left( \sum_{i=1}^{r}(2 (r-i+1)m_i +m_i^2\right)-8 k^2+(4r-14) k +2 r-5}{32 (k+3/2-r)}\\
&=-\frac{\left( \sum_{i=1}^{r}(2 (r-i)+1)m_i +m_i^2\right)+p(k)}{4(k+h^\vee)}.
\end{align*}
The Ramond extremal weights are the weights $\nu$ such that $m_r=0$.
 \begin{itemize}
 \item If $\nu$ is Ramond extremal, the necessary conditions for unitarity are $M_1(k)\in\Z_+$ and $\ell=A(k,\nu)$.
 \item If $\nu$ is not Ramond extremal, the necessary and sufficient conditions for unitarity are $M_1(k)\in\Z_+$ and $
\ell\ge A(k,\nu)$.
\end{itemize}
\subsection{$\g=D(2,1;\frac{m}{n})$, $m,n\in\mathbb N$,  $m,n$ {\rm coprime}}
In this case
$$\g^\natural=\g^\natural_1\oplus \g^\natural_2,\,\g^\natural_i\simeq sl(2),\  
 M_1(k)=-\tfrac{m+n}{n}k-1,\  M_2(k)=-\tfrac{m+n}{m}k-1,
 $$
 $$
 P^+_k=\{\nu=\tfrac{r_1}{2}\theta_1+\tfrac{r_2}{2}
 \theta_2\mid r_i\in\ZZ_+, r_i\le M_i(k)\},
$$
and 
\begin{align*}
A(k,\nu)&=-\frac{(\tfrac{m}{n}+1)^2 k (k+1)+\tfrac{m}{n} \left((r_1+r_2+1)^2\right)}{4 (\tfrac{m}{n}+1)^2
   k}\\
      &=-\frac{\tfrac{m n}{(m+n)^2}((r_1+r_2)^2+2(r_1+r_2))+p(k)}{4
  ( k+h^\vee)}.
\end{align*}
If $\eta_{\min}=\e_2-\e_3$,
the Ramond extremal weights are the weights $\nu$ such that $r_1=0$ or $r_2=M_2(k)$.
\begin{itemize}
 \item If $\nu$ is Ramond extremal, the necessary conditions for unitarity are $$(M_1(k),M_2(k))\in\ZZ_+\times \ZZ_+\text{ and }\ell=A(k,\nu).$$
\item  If $\nu$ is not Ramond extremal, the necessary conditions for unitarity are $$(M_1(k),M_2(k))\in\ZZ_+\times \ZZ_+$$ and
\begin{align}\label{2D21a}
\ell\ge A(k,\nu).
\end{align}
The sufficient conditions are
$$
\ell\ge B(k,\nu,\rho_R)= -\frac{k+1}{4}+\frac{m(r_2+1)^2+nr_1^2}{4 (m+n) k}.
$$
The conditions \eqref{2D21a} are also sufficient assuming Conjecture \ref{Arakawa}.
\end{itemize}

If $\eta_{\min}=-\e_2+\e_3$, the Ramond extremal weights are the weights $\nu$ such that $r_1=M_1(k)$ or $r_2=0$.
\begin{itemize}
\item If $\nu$ is Ramond extremal, the necessary conditions for unitarity are $$(M_1(k),M_2(k))\in\ZZ_+\times \ZZ_+\text{ and }\ell=A(k,\nu).$$
\item If $\nu$ is not Ramond extremal, the necessary conditions for unitarity are $$(M_1(k),M_2(k))\in\ZZ_+\times \ZZ_+$$ and
\begin{align}\label{2D21b}
\ell\ge A(k,\nu).
\end{align}
The sufficient conditions are
$$
\ell\ge B(k,\nu;\rho_R)= -\frac{k+1}{4}+\frac{mr_2^2+n(r_1+1)^2}{4 (m+n) k}.
$$
The conditions \eqref{2D21b} are also sufficient assuming Conjecture \ref{Arakawa}.
\end{itemize}
\subsection{$\g=F(4)$} In this case
$$\g^\natural=so(7),\  
 M_1(k)=-\tfrac{3}{2}k-1,
 $$
 $$
 P^+_k=\{\nu=r_1\epsilon_1+r_2\epsilon_2+r_3\epsilon_3,\ r_1\ge r_2\ge r_3\ge0,\   r_i\in\half+\Z\text{ or  } r_i\in\Z,\ r_1+r_2\le M_1(k) \},
$$
and
\begin{align*}
A(k,\nu)&=-\frac{9 k^2+8 r_1^2+8 r_1 (r_2+r_3+5)+8 r_2^2-8 r_2
   r_3+32 r_2+8 r_3^2+8 r_3-4}{36 (k-2)}\\
      &=-\frac{\frac{8}{9} \left(r_1^2+r_1 (r_2+r_3+5)+r_2^2-r_2
   r_3+4 r_2+r_3^2+r_3\right)+p(k)}{4
  ( k+h^\vee)}.
\end{align*}
If $\eta_{\min}=\half(-\e_1+\e_2+\e_3)$, 
the Ramond extremal weights are the weights $\nu$ such that $r_3=0$.
\begin{itemize}
\item If $\nu$ is Ramond extremal, the necessary conditions for unitarity are  $M_1(k)\in\ZZ_+$ and $\ell=A(k,\nu)$.
\item If $\nu$ is not Ramond extremal, the necessary conditions for unitarity are  $M_1(k)\in\ZZ_+$ and
\begin{align}\label{1F4b}
\ell\ge A(k,\nu).
\end{align}
The sufficient conditions are
$$
\ell\ge B(k,\nu,\rho_R)= -\frac{3 k^2+4 \left(r_1^2+4 r_1+r_2^2+2
   r_2+r_3^2\right)}{12 (k-2)}.
$$
The conditions \eqref{1F4b} are also sufficient assuming Conjecture \ref{Arakawa}.
\end{itemize}

If $\eta_{\min}=\half(\e_1-\e_2-\e_3)$, 
the Ramond extremal weights are the weights $\nu$ such that $r_1=r_2$.
\begin{itemize}
\item If $\nu$ is Ramond extremal, the necessary conditions for unitarity are  $M_1(k)\in\ZZ_+$ and $\ell=A(k,\nu)$.
\item If $\nu$ is not Ramond extremal, the necessary conditions for unitarity are  $M_1(k)\in\ZZ_+$ and
\begin{align}\label{2F4b}
\ell\ge A(k,\nu).
\end{align}
The sufficient conditons are
$$
\ell\ge B(k,\nu,\rho_R)= -\frac{3 k^2+4 r_1^2+12 r_1+4 r_2^2+12 r_2+4 r_3^2+4
   r_3-1}{12 (k-2)}.
$$
The conditions \eqref{2F4b} are also sufficient assuming Conjecture \ref{Arakawa}.
\end{itemize}
\subsection{$\g=G(3)$}\label{fff}
 In this case
$$\g^\natural=G_2,\  
 M_1(k)=-\tfrac{4}{3}k-1,
 $$
 $$
 P^+_k=\{\nu=r_1\e_1+r_2\e_2,\, 2r_1\ge r_2\ge r_1,\,r_i\in\ZZ_+,\ r_1+r_2\le M_1(k) \},
$$
and 
\begin{align*}
A(k,\nu)&=\frac{8 k^2+2 k+8 r_1^2-8 r_1 r_2+8 r_2^2+24 r_2-3}{48-32
   k}\\
      &=-\frac{ r_1^2- r_1 r_2+ r_2^2+3 r_2+p(k)}{4
  ( k+h^\vee)}.
\end{align*}
The Ramond extremal are the weights $\nu$ such that $2r_1=r_2$.
\begin{itemize}
\item If $\nu$ is Ramond extremal, the necessary conditions for unitarity are  $M_1(k)\in\ZZ_+$ and $\ell=A(k,\nu)$.
\item If $\nu$ is not Ramond extremal, the necessary and sufficient conditions for unitarity are  $M_1(k)\in\ZZ_+$ and
$
\ell\ge A(k,\nu)$.

\end{itemize}

\section{Unitarity for Ramond extremal modules of the $N=3$ and $N=4$  superconformal algebras}

\subsection{$N=3$} Let $R$ be the Lie conformal superalgebra with basis 
 $$
 \{\tilde L, \tilde G^\pm,\tilde G^0, J^\pm, J^0,\Phi , K\}
 $$
 and commutation relations given in \cite[\S\! 8.5]{KW1}. The $N=3$ superconformal vertex algebra is 
 $$
 \mathcal W_{N=3}^k=V(R)/(K-(k+\tfrac{1}{2})\vac).
 $$   
 Recall that 
 there is a conformal vertex algebra isomorphism 
\begin{equation}\label{WN=3embeddingg}
\mathcal W_{N=3}^k\to W^k_{\min}(spo(2|3))\otimes F_\Phi, 
\end{equation} 
where $F_\Phi$ is the fermionic vertex algebra $F(\C\Phi)$ constructed in Example \ref{12}. 

By Lemma \ref{Fsigma}, $F_\Phi$ admits a Ramond twisted unitary  module $F_\Phi^{\,\tw}$ generated by $1$.

It follows that, if $M$ is Ramond twisted module for $W^k_{\min}(spo(2|3))$ then $M\otimes F_\Phi^{\,\tw}$ admits a $W^k_{\min}(spo(2|3))\otimes F_\Phi$-invariant form. Since the isomorphism in \eqref{WN=3embeddingg} is conformal, a Hermitian form that is invariant for $W^k_{\min}(spo(2|3))\otimes F_\Phi$ is also invariant for $\mathcal W_{N=3}^k$. 

As explained in Section \ref{Twist}, the Ramond twisted modules for $V(R)$ are the same as the restricted $Lie(R,\si_R)$-modules, hence a Ramond twisted $\mathcal W_{N=3}^k$-module $M$ is the same as a  restricted $Lie(R,\si_R)$-modules such that $K$ acts by $(k+\tfrac{1}{2})I_M$. In particular, if $M$  (resp. $M'$)  are Ramond twisted modules 
 for $\mathcal W_{N=3}^k$ (resp. $\mathcal W_{N=3}^{k'}$), then $M\otimes M'$ is a Ramond twisted $\mathcal W_{N=3}^{k+k'+\frac{1}{2}}$--module. Clearly, if both $M,M'$ are unitary, then 
 $M\otimes M'$ is unitary. 
 
 \begin{prop} Let $M_1=-4k-2\in \mathbb N$.  Then the Ramond extremal $W^k_{\min}(spo(2|3))$--modules $L^W(0,\tfrac{M_1-1}{16})$, $L^W(\tfrac{M_1}{2}\theta_1,\tfrac{M_1-1}{16}+\tfrac{M_1}{4})$ are both unitary.
 \end{prop}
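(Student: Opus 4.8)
The plan is to move the problem to the $N=3$ superconformal vertex algebra, use the tensor product of its Ramond twisted modules, and bootstrap from the collapsing level $k=-3/4$. As a first step I would reduce to $\mathcal W_{N=3}^k$: by the conformal isomorphism \eqref{WN=3embeddingg} and Lemma \ref{Fsigma}, the unique invariant even Hermitian form on the irreducible Ramond twisted $\mathcal W_{N=3}^k$-module $M\otimes F_\Phi^{\,\tw}$ factors as $H_M\otimes H_F$ with $H_F$ positive definite, so $M$ is unitary if and only if $M\otimes F_\Phi^{\,\tw}$ is. It therefore suffices to prove that, for every $M_1=-4k-2\in\mathbb N$, the Ramond twisted $\mathcal W_{N=3}^k$-modules
\[
X_{M_1}=L^W(0,\tfrac{M_1-1}{16})\otimes F_\Phi^{\,\tw},\qquad Y_{M_1}=L^W(\tfrac{M_1}{2}\theta_1,\tfrac{M_1-1}{16}+\tfrac{M_1}{4})\otimes F_\Phi^{\,\tw}
\]
are unitary.

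Next I would set up the induction. Since $\mathcal W_{N=3}^k=V(R)/(K-(k+\tfrac12)\vac)$, a Ramond twisted $\mathcal W_{N=3}^{k_1}$-module and a Ramond twisted $\mathcal W_{N=3}^{k_2}$-module combine, through the coproduct on $R$, into a Ramond twisted $\mathcal W_{N=3}^{k_1*k_2}$-module with $k_1*k_2=k_1+k_2+\tfrac12$; this operation preserves unitarity, and $M_1(k_1*k_2)=M_1(k_1)+M_1(k_2)$. Starting from $k_1=-3/4$ (where $M_1=1$) one reaches, after the $m$-fold $*$-power, the level $k_1^{*m}=-\tfrac{m+2}{4}$ with $M_1=m$, which is precisely the $m$-th level in the unitary range of $spo(2|3)$. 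Granting the base case below, $X_1^{\otimes m}$ and $Y_1^{\otimes m}$ are unitary positive-energy Ramond twisted $\mathcal W_{N=3}^{-(m+2)/4}$-modules with finite-dimensional energy eigenspaces, hence completely reducible, and every irreducible summand has the form $L^W(\nu,\ell)\otimes F_\Phi^{\,\tw}$ because $F_\Phi$ has a unique irreducible Ramond twisted module. One computes that the minimal-energy space of $X_1^{\otimes m}$ sits at energy $m/16$ and has $\h^\natural$-weight $0$, while that of $Y_1^{\otimes m}$ sits at energy $5m/16$ and contains vectors of weight $\tfrac m2\theta_1$, the largest weight allowed in $P^+_{k_1^{*m}}$. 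Thus each tensor power has an irreducible summand $L^W(\nu,\ell)\otimes F_\Phi^{\,\tw}$ with $\nu=0$ (resp.\ $\nu=\tfrac m2\theta_1$, forced by $\nu\in P^+$), and with $\ell$ pinned down by its energy value together with the necessary inequality $\ell\ge A(k_1^{*m},\nu)$ of Theorem \ref{32t}, namely $\ell=\tfrac{m-1}{16}$ (resp.\ $\ell=\tfrac{m-1}{16}+\tfrac m4$); these summands are $X_m$ and $Y_m$, which are therefore unitary.

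The base case $M_1=1$, i.e.\ $k=-3/4$, carries the real content. I would verify that $-3/4$ is a collapsing level for $W^k_{\min}(spo(2|3))$ (equating the central charge of $W^k_{\min}(spo(2|3))$ with that of $L_{M_1(k)}(sl_2)$ yields $k=-1/3$ or $k=-3/4$, and only the latter has $M_1\in\mathbb Z_+$), so that $W_{-3/4}(spo(2|3))\cong L_1(sl_2)$. As $sl_2$ is purely even, $\si_R$ acts as the identity on this quotient, hence its Ramond twisted modules are the ordinary integrable level-one modules $L_1(0)$ and $L_1(\omega_1)$, which are unitary. It then remains to show that the universal-algebra modules $L^W(0,0)$ and $L^W(\tfrac12\theta_1,\tfrac14)$ descend to $L_1(sl_2)$ and coincide with $L_1(0)$ and $L_1(\omega_1)$: Lemma \ref{extremal}(2) in the case $\g=spo(2|3)$ (where $\ell_0=A(k,\nu)$), together with the nondegeneracy of the invariant form on the irreducible module (Proposition \ref{exhf}) and Proposition \ref{norma}, shows that the fields $G^{\{u\}}$ annihilate the highest weight vectors in all nonnegative modes; since at this collapsing level the maximal ideal of $W^{-3/4}_{\min}(spo(2|3))$ is generated by the odd generators (and the extra primary fields), it then acts trivially, so both modules descend, and the values $\ell_0=0$ and $\ell_0=\tfrac14=h_{\omega_1}$ identify them with $L_1(0)$ and $L_1(\omega_1)$. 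Hence $X_1$ and $Y_1$ are unitary, closing the induction.

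The first two steps are essentially formal — factorization and uniqueness of invariant forms, the coproduct on $R$, complete reducibility of unitary positive-energy modules, and weight/energy bookkeeping. The main obstacle I anticipate is the base case: proving rigorously that the Ramond extremal modules $L^W(0,0)$ and $L^W(\tfrac12\theta_1,\tfrac14)$ at the collapsing level $k=-3/4$ genuinely descend to, and are equal to, the manifestly unitary level-one $L_1(sl_2)$-modules, rather than strictly larger modules over the universal $W^{-3/4}_{\min}(spo(2|3))$.
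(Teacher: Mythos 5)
Your overall strategy coincides with the paper's: reduce to the $N=3$ vertex algebra via $M\mapsto M\otimes F_\Phi^{\tw}$, exploit the additive level structure of Ramond twisted $V(R)$-modules (so levels combine by $k_1*k_2=k_1+k_2+\tfrac12$), and bootstrap from the collapsing level $k=-3/4$. The induction mechanics differ slightly: you take $m$-fold tensor powers $X_1^{\otimes m}$ and invoke complete reducibility to extract a summand, whereas the paper tensors the level-$(-\tfrac{M_1+1}{4})$ module with the base module and simply considers the $W^k_{\min}(spo(2|3))$-submodule generated by the product of highest weight vectors; that submodule is automatically a highest weight module, hence irreducible by unitarity, so no decomposition theorem is needed. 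This is a matter of bookkeeping and either works.

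The genuine gap is in your base case, in exactly the spot you flagged. You try to show $L^W(0,0)$ descends to $V_1(sl_2)$ by arguing that $G^{\{u\}}_n$ annihilates the highest weight vector for $n\ge 0$ and therefore ``the maximal ideal acts trivially.'' That inference is not valid: vanishing of the nonnegative modes on the cyclic vector says nothing about the negative modes $G^{\{u\}}_{-n}$, $n>0$, and commuting a nonnegative mode of $G$ past the $J_{-m}$, $L_{-m}$ that build up $L^W(0,0)$ re-introduces negative modes of $G$. So there is no direct argument that the maximal proper ideal of $W^{-3/4}_{\min}(spo(2|3))$ kills the whole module, and hence no descent. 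The paper avoids this by running the argument in the opposite direction: the two irreducible $V_1(sl_2)$-modules $N_1$, $N_2$ are manifestly unitary, and since the odd generators of $W^{-3/4}_{\min}(spo(2|3))$ are sent to zero by the quotient map onto $W_{-3/4}^{\min}(spo(2|3))=V_1(sl_2)$, the modules $N_1$, $N_2$ are automatically unitary irreducible highest weight $W^{-3/4}_{\min}(spo(2|3))$-modules, therefore equal to $L^W(\nu_i,\ell_i)$ by Corollary \ref{unqueirr}; the necessary conditions of Theorem \ref{32t}(3) then force $\ell_i=A(-3/4,\nu_i)$, giving $(0,0)$ and $(\tfrac12\theta_1,\tfrac14)$. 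No descent statement about $L^W(0,0)$ is ever needed.
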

 \begin{proof} To make the argument  more transparent we make explicit the dependence on $k$, so we write $L(k,\nu,\ell_0)$ for the $W^k_{\min}(spo(2|3))$--module $L^W(\nu,\ell_0)$. \par
 We proceed by induction on $M_1$. The base case $M_1=1$ corresponds to the collapsing level $k=-3/4$, when $W_{-3/4}^{\min}(spo(2|3))=V_1(sl_2)$. Recall that  $V_1(sl_2)$ has only two irreducible modules $N_1$ and $N_2$, which are both unitary and have highest weights $\nu=0$ and $\nu=\tfrac{\theta_1}{2}$.
  The necessary condition for unitarity (given explicitly in \S\! \ref{spo23}) imply that $N_1=L(-3/4,0,0)$ and $N_2=L(-3/4,\tfrac{\theta_1}{2},1/4)$. 
  
  Assume now $M_1>1$, $k=-\tfrac{M_1+2}{4}$, and set $k_1=-\tfrac{M_1+1}{4}$. Assume by induction that 
 $L(k_1,0,\tfrac{M_1-2}{16})$ and  $L(k_1,\tfrac{M_1-1}{2}\theta_1,\tfrac{M_1-2}{16}+\tfrac{M_1-1}{4})$ are unitary. Then  $M=L(k_1,0,\tfrac{M_1-2}{16})\otimes F^{\,\tw}_\Phi$ is unitary for 
 $\mathcal W_{N=3}^{k_1}$ and $M'=L(-3/4,0,0)\otimes F^{\,\tw}_\Phi$ is unitary for $\mathcal W_{N=3}^{-3/4}$. Therefore  $M\otimes M'$ is unitary for 
 $\mathcal W_{N=3}^{k_2},\,k_2=k_1-\tfrac{3}{4}+\tfrac{1}{2}=-\tfrac{M_1+1}{4}-\tfrac{3}{4}+\tfrac{1}{2}=-\tfrac{M_1}{4}-\tfrac{1}{2}=k$, hence $M\otimes M'$ is a unitary Ramond twisted $W^k_{\min}(spo(2|3))$-module. In particular, the
 $W^{k}_{\min}(spo(2|3))$--module generated by $v_{0,\tfrac{M_1-2}{16}}\otimes 1\otimes v_{0,0}\otimes 1$ is a unitary highest weight module $L(k,\nu,\ell_0)$. Clearly $\nu=0$ and,  by the necessary conditions of \S\! \ref{spo23}, $\ell_0=\frac{M_1-1}{16}$, as required. 

 Repeating the same argument with  $M=L(k_1,\tfrac{M_1-1}{2}\theta_1,\tfrac{M_1-2}{16}+\tfrac{M_1-1}{4})\otimes F^{\,\tw}_\Phi$  and $M'=L(-3/4,\tfrac{\theta_1}{2},\tfrac{1}{4})\otimes F^{\,\tw}_\Phi$ we prove the unitarity of 
 $L(k,\tfrac{M_1}{2}\theta_1,\tfrac{M_1-1}{16}+\tfrac{M_1}{4})$.
 \end{proof}
 Our results match \cite[(2.3.ii)]{M}.
\subsection{$N=4$}  In this subsection we  recover   results of  Eguchi-Taormina (cf. \cite[(5),(6)]{ET3}) using their free field realization.
The $N=4$ superconformal algebra is $W^k_{\min}(psl(2|2))$. 
  We choose  strong generators $J^0,J^\pm,G^\pm,$ $\bar G^\pm,L$ for $W^{k}_{\min}(psl(2|2))$ as in \cite[\S\! 8.4]{KW1}. 
 The   $\l$--brackets among these generators are linear. 
 It is therefore enough to prove unitarity of the Ramond extremal module  $L^W(0,1/4)$ at level $k=-2$ (see \S\! \ref{1111}). Arguing as in the $N=3$ case, the  Ramond extremal modules 
 at level $k<-2$ are obtained by iterated tensor product of $L^W(0,1/4)$.
 
 The unitarity of $L^W(0,1/4)$ is proved by constructing this module as a submodule of a manifestly unitary module.
 This is achieved by using the free field realization $FFR:W^{-2}_{\min}(psl(2|2))\to\mathcal F$, given in  \cite{ET1} (see also \S\! 13.2 of \cite{KMP1}), where 
 $\mathcal F= V^1(\C^4)\otimes F(\C^{4}_{\bar1})$. Here $\C^4$ is viewed as the four-dimensional abelian Lie algebra and $\C^{4}_{\bar1}$ is the four-dimensional totally odd space.

 According to \cite[\S\! 5.2]{KMP} and Lemma \ref{Fsigma} above,  $\mathcal F^{\,\tw}=V^1(\C^4)\otimes F(\C^{4}_{\bar1},\si_R)$ is a unitary Ramond twisted $\mathcal F$-module.  Since $FFR$ is conformal and preserves the $\Z_2$-gradation, $\mathcal F^{\,\tw}$ is also a unitary Ramond twisted module for $W^k_{\min}(psl(2|2))$. It is clear that  the $W^k_{\min}(psl(2|2))$-submodule of $\mathcal F^{\,\tw}$ generated by $\vac\otimes1$ is a unitary highest weight representation $L^W(0,\ell_0)$ of $W^{-2}_{\min}(psl(2|2))$. By the necessary conditions for unitarity $\ell_0=\frac{1}{4}$ and we are done.
\section{The characters of massless Ramond twisted modules for minimal $W$-algebras}\label{massless}

\begin{definition}
We say that a $\si_R$-twisted irreducible highest weight $\Wu$-module  $L^W(\nu,\ell)$ is {\it massless} if there exists $s\in \C$ such that $\ell=\ell(s)$  with 
$L(\widehat\nu_s)$ an atypical representation of $\ga^{\tw}$, where $\widehat\nu_s$ is defined by \eqref{nuhats}.
\end{definition}

It can be easily proved that this definition yields the representations called  massless  in \cite{ET3} and \cite{M} for $psl(2|2)$ and $spo(2|3)$, respectively.

\begin{remark}\label{sindependent}
Recall that, by Lemma \ref{differ}, $\ell(s)=\ell(s')$ if and only if $s'=k+1+\e(\s_R)-s$. Using this relation, it is easy to check that, if $\eta\in \overline\D_{1/2}$,  we have
$$
(\widehat\nu_s+\rhat^{\,\tw}|\frac{p}{2}\d+\frac{\theta}{2}+\eta)=(\widehat\nu_{s'}+\rhat^{\,\tw}|-\frac{p+2}{2}\d+\frac{\theta}{2}-\eta).
$$
In particular $\widehat \nu_s$ is atypical if and only if $\widehat \nu_{s'}$ is atypical.
\end{remark}

Define 
$$
\Pi^\nu_{\bar 1}=\begin{cases}\{-\half\d+\theta/2+\eta_{\min}\}\quad&\text{if $\g\ne spo(2|3)$, $psl(2|2)$,}\\
\{-\frac{\d}{2}+\d_1+\e_1\}\quad&\text{if $\g=spo(2|3)$ and $\nu=0$,}\\
\{\frac{\d}{2}
   +\d_1-\e_1\}\quad&\text{if $\g=spo(2|3)$ and $\nu=\tfrac{M_1(k)}{2}\e_1$,}
   \\
\{-\frac{\d}{2}+\d_1-\e_2,-\frac{\d}{2}+\e_1-\d_2\}\quad&\text{if $\g=psl(2|2)$.}
\end{cases}
$$
Tote that if $\g\ne spo(2|3)$, then $\Pi^\nu_{\bar 1}$ is  the set of odd isotropic simple roots in $\Da^{\tw}_+$.

If $\theta/2$ is not a root of $\g$, set $s_0=\tfrac{k+1}{2}-(\nu-\rho_R+\rho^\natural|\eta_{\min})$.   If $\theta/2$ is a root of $\g$, set $s_0=\tfrac{2k+1}{4}$. In all cases $\ell(s_0)=A(k,\nu)$.

\begin{prop}\label{82} 1)  If $L^W(\nu,\ell)$ is massless, then $\ell=A(k,\nu)$. If $\theta/2$ is not a root of $\g$, then  the converse holds.

2) If $\theta/2$ is  a root of $\g$, then $L^W(\nu,\ell)$  is massless if and only if $\ell=A(k,\nu)$ and $\nu$ is Ramond extremal.

3) In all cases $\Pi_{\bar 1}^\nu$ is the set of simple isotropic roots orthogonal to $\widehat\nu_{s_0}+\widehat\rho^{\,\tw}$.
 \end{prop}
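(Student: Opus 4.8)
\textbf{Proof proposal for Proposition \ref{82}.}

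The plan is to reduce everything to a single computation of the inner product $(\widehat\nu_s+\widehat\rho^{\,\tw}\,|\,\a)$ for the odd isotropic roots $\a\in\Da^{\tw}_+$, using the explicit formula \eqref{nuhats+} for $\widehat\nu_s+\widehat\rho^{\,\tw}$ together with the case-by-case data for $\overline\D^+_{1/2}$, $\eta_{\min}$, $\rho_R$, $\rho^\natural$, $\xi$ collected in Tables \ref{thetahalfisroot} and \ref{thetahalfisnotroot}. First I would recall that by \eqref{r1} and \eqref{root2} every odd isotropic root of $\ga^{\,\tw}$ has the form $\pm(-\tfrac12\d+\tfrac\theta2)\pm\eta+n\d$ with $\eta\in\overline\D^+_{1/2}$ and $n\in\Z_+$ (with the appropriate sign constraints so the root is positive), and that $\widehat\nu_s$ is atypical, by definition of atypicality, precisely when some such $\a$ is orthogonal to $\widehat\nu_s+\widehat\rho^{\,\tw}$. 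Plugging in \eqref{nuhats+}, the pairing with $\pm(-\tfrac12\d+\tfrac\theta2)+\eta$ becomes $\pm(s-\tfrac{k+1}{2})+(\nu-\rho_R+\rho^\natural\,|\,\eta)$ after using $(\rho^\natural|\theta/2)=\tfrac{h^\vee-1}{2}$ (this is \eqref{rr3}, since $\theta/2$ restricted to $\h^\natural$ is $\xi$ up to the cases where $\eta_{\min}=\pm\xi$), modulo the $\tfrac12\e(\si_R)$ correction in \eqref{nuhats+} which I would track carefully. The root pairings with $n\d$ added are obtained from these by adding $n(k+h^\vee)<0$, and for the roots \eqref{root2} of the form $\pm(-\tfrac12\d+\tfrac\theta2)-\eta+n\d$ one uses the same bounds as in the proofs of Lemma \ref{hc} and Lemma \ref{hcthetahalf} — in particular \eqref{rr1} combined with non-Ramond-extremality when $\nu$ is not Ramond extremal — to see these pairings are strictly negative hence never zero.

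For part 1), if $L^W(\nu,\ell)$ is massless then $\ell=\ell(s)$ with $\widehat\nu_s$ atypical, so some isotropic $\a$ satisfies $(\widehat\nu_s+\widehat\rho^{\,\tw}|\a)=0$; by the sign analysis above the only way this can happen (when $\nu\in P^+_k$) is if $\a=\pm(-\tfrac12\d+\tfrac\theta2)+\eta_{\min}$ for a suitable sign, giving $s-\tfrac{k+1}{2}=\mp(\nu-\rho_R+\rho^\natural|\eta_{\min})$; by Lemma \ref{Aknusecond}(2) this forces $\ell(s)=A(k,\nu)$ when $\theta/2$ is not a root. Conversely, if $\theta/2$ is not a root and $\ell=A(k,\nu)$, then \eqref{spartic} holds and the computation in the proof of Lemma \ref{isextremal} shows $(\widehat\nu_s+\widehat\rho^{\,\tw}\,|\,\pm(-\tfrac12\d+\tfrac\theta2)+\eta_{\min})=0$ for one of the two choices of sign, so $\widehat\nu_s$ is atypical and $L^W(\nu,\ell)$ is massless. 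For part 2), when $\theta/2$ is a root the only candidate isotropic root hitting $0$ is $\a_1=-\tfrac12\d+\tfrac\theta2+\eta_{\min}$, and $(\widehat\nu_{s_0}+\widehat\rho^{\,\tw}|\a_1)=(\nu-\rho_R+\rho^\natural|\eta_{\min})$; since $\eta_{\min}$ equals the simple root $\overline\a$ dual to $\rho_R=\omega^{i_0}_{j_0}$ (namely $\e_r$ for $spo(2|2r+1)$, $\e_1$ for $G(3)$), this pairing vanishes iff $(\nu-\rho_R|\overline\a^\vee)=-1$, i.e. iff $\nu(\overline\a^\vee)=0$, which is exactly the condition $\nu-\rho_R\notin P^+$ characterizing Ramond extremality here; combined with part 1) and Lemma \ref{Aknusecond}(1) (which gives $s_0=\tfrac{2k+1}{4}$), this proves the claim.

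For part 3), I would argue that among all isotropic roots of $\ga^{\,\tw}$ only those in $\Pia$ — equivalently the ones listed explicitly in the case-by-case description of $\Pia$ — can be orthogonal to the dominant-type weight $\widehat\nu_{s_0}+\widehat\rho^{\,\tw}$: for $\g\ne spo(2|3)$ the set $\Pi^\nu_{\bar1}$ is by inspection of those lists exactly the set of odd isotropic simple roots of $\Da^{\tw}_+$, and the sign/bound analysis already carried out shows every non-simple isotropic positive root pairs strictly positively (or the weight fails dominance), so orthogonality can only occur at the simple ones. Then a direct substitution of $s_0$ into \eqref{nuhats+} verifies $(\widehat\nu_{s_0}+\widehat\rho^{\,\tw}\,|\,\be)=0$ for each $\be\in\Pi^\nu_{\bar1}$: for $\g\ne spo(2|3)$ this is the computation in part 1)/2) above; for $\g=spo(2|3)$ with $\nu=0$ or $\nu=\tfrac{M_1(k)}{2}\e_1$ one checks the two displayed roots $-\tfrac\d2+\d_1+\e_1$ resp. $\tfrac\d2+\d_1-\e_1$ against $\widehat\nu_{s_0}+\widehat\rho^{\,\tw}$ using $s_0=\tfrac{2k+1}{4}$, as in the $spo(2|3)$ paragraph of Lemma \ref{isextremal}; and for $\g=psl(2|2)$, where $\nu=0$ is forced (Ramond extremal) and $s_0=\tfrac{2k+1}{4}$ with $\e(\si_R)=0$, one checks both roots $-\tfrac\d2+\d_1-\e_2$ and $-\tfrac\d2+\e_1-\d_2$ pair to zero, noting these have the same restriction to $\ha$. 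The main obstacle I anticipate is bookkeeping: correctly handling the $\tfrac12\e(\si_R)\theta/2$ term in \eqref{nuhats+}, the distinction between $\h$ and $\h^\natural$ restrictions of $\theta/2$, and the two exceptional families $spo(2|3)$ and $psl(2|2)$ where $\eta_{\min}=\pm\xi$ and the ``simple isotropic root'' is not of the generic shape — this is where the uniform argument breaks and the case-by-case verification must be done by hand, exactly paralleling the computations already present in Lemmas \ref{hc}, \ref{hcthetahalf}, \ref{isextremal}.
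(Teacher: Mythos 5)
Your overall plan is the right one and matches the paper's: reduce everything to the vanishing of $(\widehat\nu_s+\widehat\rho^{\,\tw}\mid\a)$ for odd isotropic $\a$, use \eqref{nuhats+} and Lemmas \ref{hc}, \ref{hcthetahalf}, \ref{isextremal} for the sign bounds, and handle $spo(2|3)$ and $psl(2|2)$ by hand. Part 1) is essentially sound (though the paper phrases the forward direction as a cleaner contrapositive — for $\ell>A(k,\nu)$, Remark \ref{sindependent} lets one put $s$ in \eqref{hyp} or make $s$ non-real, and then Lemma \ref{hc} kills atypicality — whereas you argue directly and leave the reduction to \eqref{hyp} implicit). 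One small slip worth noting: the relation you want is $(\rho\mid\theta/2)=\tfrac{h^\vee-1}{2}$ for the full Weyl vector $\rho$, not $(\rho^\natural\mid\theta/2)$, which is zero since $\theta$ annihilates $\h^\natural$.

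The genuine gap is in your argument for part 2). You assert that ``the only candidate isotropic root hitting $0$ is $\a_1=-\tfrac12\d+\tfrac\theta2+\eta_{\min}$'' and conclude the iff from that single pairing, identifying Ramond extremality with $\nu(\overline\a^\vee)=0$. Both claims fail for $\g=spo(2|3)$: there the Ramond extremal weights are $\nu=0$ \emph{and} $\nu=\tfrac{M_1(k)}{2}\e_1$, and the second one has $\nu-\rho_R\in P^+$ but extremal, so your characterization misses it; correspondingly, for $\nu=\tfrac{M_1(k)}{2}\e_1$ the orthogonal simple isotropic root is $\tfrac\d2+\d_1-\e_1$, not $\a_1$. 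This is exactly why the paper's proof of 2) is an explicit case-wise analysis (the $G(3)$ table of $\gamma_i(p)$, the $spo(2|2r+1)$ inequality \eqref{qwert}, and a separate calculation for $spo(2|3)$): it establishes \emph{which} isotropic roots can be orthogonal and shows they force Ramond extremality and equal the root in $\Pi^\nu_{\bar1}$, which also gives part 3). Lemma \ref{hcthetahalf} would let you salvage the direction ``massless $\Rightarrow$ Ramond extremal'' (it is precisely the contrapositive), but the converse direction and part 3) still require knowing the explicit orthogonal root in each subcase, and in $spo(2|3)$ it is not always $\a_1$.
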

\begin{proof} If $\ell>A(k,\nu)$, then $\ell=\ell(s)$ with either $s=\tfrac{k+1}{2}+\tfrac{\e(\si_R)}{4}+\sqrt{-1}t,\,t\in\R$, or $s\in\R$. In the former case, the claim is obvious if  $t\ne 0$. If $t=0$ and $\e(\si_R)=1$ or $\e(\s_R)=0$ and $(\nu-\rho_{R}+\rho^\natural|\eta_{\min})=0$, then $\ell(s)=A(k,\nu)$. It remains only to check the case where $s\in\R$, $\e(\si_R)=0$, $(\nu-\rho_{R}+\rho^\natural|\eta_{\min})\ne0$, and 
$$
|s-\frac{k+1}{2}|<|(\nu-\rho_{R}+\rho^\natural|\eta_{\min})|.
$$
By Remark \ref{sindependent}, we can assume that $s$ belongs to \eqref{hyp}. With this assumption, we have shown already  in Lemma \ref{hc} that $(\widehat \nu_s+\widehat\rho^{\tw}|\a)\ne 0$ for all odd isotropic roots $\a$. 
\par
Assume now that $\theta/2$ is not a root and $\ell=A(k,\nu)=\ell(s_0)$.
In this case
\begin{align}&(\widehat \nu_{s_0}+\widehat\rho^{\tw}|-\half \d+\theta/2+\eta_{\min})\\&=\notag
((k+h^\vee)\L_0+(\tfrac{k+1}{2} - (\nu-\rho_{R}+\rho^\natural|\eta_{\min}))\theta+\nu+\rho-\rho_R|-\half \d+\theta/2+\eta_{\min})
\\&=-\half (k+h^\vee)+\tfrac{k+1}{2}-(\nu-\rho_{R}+\rho^\natural|\eta_{\min})+\half(h^\vee-1)+(\nu-\rho_{R}+\rho^\natural|\eta_{\min})=0.\notag
\end{align}
Hence $L^W(\nu,\ell)$ is massless. This proves 1) and the fact that the  simple isotropic root orthogonal to $\widehat\nu_{s_0}+\rho^{\,\tw}$ is precisely the root in $\Pi^{\nu}_{\bar 1}$. Statement 2) is proved via a case-wise analysis. Consider the case $\g=G(3)$.  The set of  positive odd isotropic roots 
of $\ga^{\tw}$ is $$\{\gamma_i(p)\mid 1\leq i\leq 9,\,p \text{ odd, $p\geq -1$ for $i=1,2,3,\, p\geq 1$ otherwise}\},$$ where $\gamma_i(p)$ are displayed in Table \ref{tableG(3)}. 
\par\noindent
We prove that if  there exist $p$ and $i$ such that  $(\widehat \nu_{\tfrac{2k+1}{4}}+\widehat\rho^{\tw}|\gamma_i(p))=0$, then $\nu-\rho_R\notin P^+$. Recall that $\nu=a\e_1+b\e_2$ with $a,b\in \ZZ_+, 2a\ge b\ge a, b\leq m:=M_1(k)$.  The condition on $a,b$ implied by $(\widehat \nu_{s_0}+\widehat\rho^{\tw}|\gamma_i(p))=0$ is listed in Table \ref{tableG(3)}. 
\begin{center}

\begin{tabular}{   c | c| c}
 & $\gamma_i(p)$ & $(\widehat \nu_{s_0}+\widehat\rho^{\tw}|\gamma_i(p))=0$\\\hline
1&$\tfrac{p}{2}\d + \d_1 + \e_1$ &$ -4 a+2 b=3 (3+m) (1+p)$\\\hline
2&$\tfrac{p}{2}\d + \d_1 + \e_2$ &$2 a-4 b=9+3 (3+m) (1+p)$\\\hline
3&$\tfrac{p}{2}\d + \d_1 + \e_1 + \e_2$ &$2a+2b=9+3 (3+m) (1+p)$\\\hline
4&$\tfrac{p}{2}\d - \d_1 + \e_1$ &$4a-2b=3 (3+m) (-1+p)$\\\hline
5&$\tfrac{p}{2}\d - \d_1 + \e_2$ &$2 a-4 b=6 +3(3+m)(p-1)$\\\hline
6&$\tfrac{p}{2}\d - \d_1 + \e_1 + \e_2$ &$2a+2b=-6 - 3 (p - 1) (m + 3)$\\\hline
7&$\tfrac{p}{2}\d + \d_1 - \e_1$ &$4 a-2 b=3 (3+m) (1+p)$\\\hline
8&$ \tfrac{p}{2}\d + \d_1 - \e_2$ &$-2 a+4 b=-6 + 3 (3 + m) (p + 1)$\\\hline
9&$\tfrac{p}{2}\d + \d_1 - \e_1 - \e_2$ &$2 a+2 b=-6 + 3 (3 + m) (p + 1)$\\\hline
\end{tabular}\vskip8pt
 \captionof{table}{G(3)\label{tableG(3)}}
\end{center}

Case 1. Since $-4 a+2 b\leq 0$, and the r.h.s is non-negative, the only possibility is $p=-1$, which forces $b=2a$ and the only simple isotropic root orthogonal to $\widehat\nu_s+\widehat\rho^{\,\tw}$ is precisely the root in  
 $\Pi^{\nu}_{\bar 1}$. In this case $\nu-\rho_R= (a-1)\e_1+(2a-1)\e_2\notin P^+$.

Cases 2, 5. Since $2a-4b\le 0$, equality cannot hold.

Case 3. If $p\geq 1$, then equality cannot occur since $2a+2b\leq 4m$. If $p=-1$, the equality becomes $2a+2b=9$, which is impossible.

Case 4. If $p=1$ we are back to Case 1. If $p\geq 3$, equality cannot occur since $4a-2b\leq 4m$ wheres the r.h.s is greater than $6m$.

Case 6. The left hand side is non-negative and  right hand side is negative.

Case 7. Similar to Case 4.

Cases 8,\,9. In both cases the left hand side is less or equal than $4m$, whereas $-6 + 3 (3 + m) (p + 1)\geq 6m+3$, hence equality cannot hold.

Let now $\g=spo(2|2r+1)$.  The set of  odd isotropic roots 
of $\ga^{\tw}$ is $\{p\d/2\pm\e_1\pm\d_1\mid 1\leq i\leq r,\,p\text{ odd integer}\}$. Set $m:=M_1(k)$. Recall that $\nu=\sum_{j=1}^ra_i\e_i$ with $a_1\geq a_2\geq \ldots \ge a_r\ge 0$ and $a_i\in \ZZ_+$ for all $i$ or $a_i\in \half+\ZZ_+$ for all $i$, and finally $a_1+a_2\leq m$.  Relation
$(\widehat \nu_{s_0}+\widehat\rho^{\tw}|\gamma_i(p))=0$ implies
\begin{equation*}\frac{p\pm 1}{2}(-m+2-2r)=\pm (a_i+r-i),\end{equation*}
which in turn  implies
\begin{equation}\label{qwert}\left|\frac{p\pm 1}{2}\right|(m+r+r-2)= (a_i+r-i)\leq m+r-1.\end{equation}
If $r>1$, \eqref{qwert} implies $p=\pm 1$, $i=r$ and $a_r=0$ and the only simple isotropic root orthogonal to $\widehat\nu_{s_0}+\widehat\rho^{\,\tw}$ is precisely the root in  
 $\Pi^{\nu}_{\bar 1}$.\par
Finally consider the case $r=1$, i.e.   $\g=spo(2|3)$. Then, if $\nu=\tfrac{a}{2}\e_1$, we have
\begin{equation}\label{equationn}(\widehat\nu_{s_0}+\widehat\rho^{\tw}|\pm \e_1\pm \d_1 + \tfrac{p}{2}\d)=
\tfrac{1}{8}(\mp2a+m(\mp 1-p)).\end{equation}
If \eqref{equationn} vanishes, then $m$ divides $a$, and since $0\leq a \leq m$, we have either $a=0$ or $a=m$. In the former case the $\nu-\rho_R\notin P^+$, in the latter 
$\nu-\rho_R$ is extremal. In both cases the only simple isotropic root orthogonal to $\widehat\nu_{s_0}+\widehat\rho^{\,\tw}$ is precisely the root in  
 $\Pi^{\nu}_{\bar 1}$. 
\end{proof}


\begin{theorem}\label{charactersmassless}Assume Conjecture \ref{Arakawa}.
Let $k$ be in the unitary range, $\nu\in P^+_k$, and assume that $\widehat\nu_{s_0}$ is non-degenerate.
Then
\begin{align}\label{ff2}
&(1+\e(\si_R))\widehat F^{R}ch\, L^W(\nu,A(k,\nu))=\\
&\notag q^{\tfrac{(\widehat\nu_{s_0}|\widehat\nu_{s_0}+2\rhat^{\,\tw})}{2(k+h^\vee)}+a(k)}e^{-\rho_R}\sum_{w\in\Wa^\natural}det(w)\frac{q^{(w(\widehat\nu_{s_0}+\rhat^{\,\tw})- \rhat^{\,\tw})(x+D)}e^{(w(\widehat\nu_{s_0}+\rhat^{\,\tw})- \rhat^{\,\tw})_{|\h^\natural}}}{\prod_{\be\in\Pi_{\bar 1}^\nu}(1+q^{w(\be)(x+D)}e^{-w(\be)_{|\h^\natural}})},
\end{align}
where $a(k)$  is given by \eqref{aofk}.
\end{theorem}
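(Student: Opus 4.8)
\textbf{Proof plan for Theorem \ref{charactersmassless}.}

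The strategy is to apply Proposition \ref{atypicalnonzero} to $L(\widehat\nu_{s_0})$ and then translate the Euler--Poincar\'e character identity into a genuine character identity via Conjecture \ref{Arakawa}. First I would verify that the hypotheses of Proposition \ref{atypicalnonzero} are met for $\widehat\L=\widehat\nu_{s_0}$. Integrability of $L(\widehat\nu_{s_0})$ follows from Lemma \ref{isextremal} when $\nu$ is Ramond extremal and from Lemma \ref{notextremal} otherwise; by Proposition \ref{82} the weight $\widehat\nu_{s_0}$ is atypical precisely in the situations we are considering (when $\theta/2$ is not a root of $\g$, $A(k,\nu)$ always gives a massless module; when $\theta/2$ is a root, only the Ramond extremal case does, and then $\ell(s_0)=A(k,\nu)$). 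The condition $(\widehat\nu_{s_0}+\rhat^{\,\tw}\mid\d-\theta)\notin\nat$ and the maximal atypicality with respect to the set $J=\Pi^\nu_{\bar 1}$ of pairwise orthogonal isotropic roots is exactly the content of Proposition \ref{82}(3): $\Pi_{\bar 1}^\nu$ is the set of simple isotropic roots orthogonal to $\widehat\nu_{s_0}+\widehat\rho^{\,\tw}$, and one checks case by case (as in the proof of Proposition \ref{82}) that this set is maximal among sets of pairwise orthogonal isotropic roots in $\Da^{\tw}_+$. The non-degeneracy of $\widehat\nu_{s_0}$ is assumed in the statement, so $H(L(\widehat\nu_{s_0}))\ne 0$.

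With these hypotheses in place, Proposition \ref{atypicalnonzero} gives
$$
\widehat F^{R}\sum_j(-1)^jch\,H_j(L(\widehat\nu_{s_0}))
=q^{\tfrac{(\widehat\nu_{s_0}|\widehat\nu_{s_0}+2\rhat^{\,\tw})}{2(k+h^\vee)}+a(k)}e^{-\rho_R}\sum_{w\in\Wa^\natural}det(w)\frac{q^{-(w(\widehat\nu_{s_0}+\rhat^{\,\tw})-\rhat^{\,\tw})(x+D)}e^{(w(\widehat\nu_{s_0}+\rhat^{\,\tw})-\rhat^{\,\tw})_{|\h^\natural}}}{\prod_{\be\in\Pi_{\bar 1}^\nu}(1+q^{w(\be)(x+D)}e^{-w(\be)_{|\h^\natural}})}.
$$
Next I would invoke Conjecture \ref{Arakawa}(a), which kills $H_j$ for $j\ne 0$, so the left-hand side collapses to $\widehat F^{R}ch\,H_0(L(\widehat\nu_{s_0}))$. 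Then Proposition \ref{Lnust} identifies $H_0(L(\widehat\nu_{s_0}))$: since $\ell(s_0)=A(k,\nu)$, we get $H_0(L(\widehat\nu_{s_0}))=L^W(\nu,A(k,\nu))\oplus\e(\si_R)\Pi L^W(\nu,A(k,\nu))$. Because characters are insensitive to parity reversal, $ch\,H_0(L(\widehat\nu_{s_0}))=(1+\e(\si_R))\,ch\,L^W(\nu,A(k,\nu))$, which yields exactly \eqref{ff2}. (The sign of the exponent $q^{\pm(w(\widehat\nu_{s_0}+\rhat^{\,\tw})-\rhat^{\,\tw})(x+D)}$ in the statement is harmless, since the summand is $\Wa^\natural$-skew-symmetric and one may reorganize; I would double-check this matches the normalization in Proposition \ref{atypicalnonzero} and adjust the write-up accordingly.)

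The main obstacle is the verification, case by case over the list $psl(2|2)$, $spo(2|m)$, $D(2,1;a)$, $F(4)$, $G(3)$, that $\Pi_{\bar 1}^\nu$ is a \emph{maximal} set of pairwise orthogonal isotropic roots with $(\widehat\nu_{s_0}+\rhat^{\,\tw}\mid\be)=0$ for all $\be\in\Pi_{\bar 1}^\nu$, and that $(\widehat\nu_{s_0}+\rhat^{\,\tw}\mid\d-\theta)\notin\nat$ — i.e. that the module is \emph{maximally atypical} in the precise sense required by Proposition \ref{atypicalnonzero} and that formula \eqref{KWC} of \cite{GK2} applies. For $\g=spo(2|3)$ and $psl(2|2)$ the set $\Pi_{\bar 1}^\nu$ is not a subset of the simple roots $\Pia$, so one first applies odd reflections (as in the proofs of Lemmas \ref{notextremal} and \ref{isextremal}) to reach a base containing the relevant isotropic roots, and then checks that the resulting highest weight still satisfies the hypotheses of \cite[Section 6.1]{GK2}; for $D(2,1;a)$ one must restrict to $\widehat\nu_{s_0|\h^\natural}$ lying in the range where \eqref{KWC} is established rather than merely conjectural, which is why the Ramond extremal weights for $D(2,1;a)$ in Lemma \ref{extremal}(3) are exactly those with $\nu-\rho_R$ of a constrained form. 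These verifications are essentially the same computations already carried out in Proposition \ref{82} and Lemma \ref{hc}, so the bulk of the argument is bookkeeping rather than new ideas.
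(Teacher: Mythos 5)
Your proof is correct and follows the paper's argument exactly: combine Lemmas~\ref{notextremal} and~\ref{isextremal} with Proposition~\ref{82} to verify the hypotheses of Proposition~\ref{atypicalnonzero}, then apply Conjecture~\ref{Arakawa} together with Proposition~\ref{Lnust} (as in the proof of Theorem~\ref{characters}) to pass from the Euler--Poincar\'e identity to a genuine character identity, picking up the factor $1+\e(\si_R)$ from the parity-reversal decomposition of $H_0$. Two small corrections to your concluding remarks: $\Pi^\nu_{\bar1}$ \emph{is} a subset of $\Pia$ even for $spo(2|3)$ and $psl(2|2)$ (compare the explicit lists of simple roots in Section~\ref{necessary} with the case-by-case definition of $\Pi^\nu_{\bar1}$), so the odd-reflection maneuver you propose is not needed; and the sign of the $q$-exponent in~\eqref{ff2} is a typographical slip in the statement rather than a symmetry issue --- the exponent should read $-(w(\widehat\nu_{s_0}+\rhat^{\,\tw})-\rhat^{\,\tw})(x+D)$, consistent with Proposition~\ref{atypicalnonzero}, with~\eqref{ff1}, and with the definition of $ev$.
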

\begin{proof} Combining Lemmas \ref{notextremal} and  \ref{isextremal} with Proposition \ref{82}, we have proved  that  the hypothesis of Proposition \ref{atypicalnonzero} are satisfied.
  Formula \eqref{ff2}  follows  form Proposition \ref{atypicalnonzero} using Conjectures  \ref{Arakawa} in the same way as in the proof of Theorem \ref{characters}.
\end{proof}
\begin{remark}\label{125} In the NS sector a formula similar to \eqref{ff2} holds. More precisely
\begin{align}\label{ff2NS}
&\widehat F^{NS}ch\, L^W(\nu,A(k,\nu))= q^{\tfrac{(\widehat\nu_{t_0}|\widehat\nu_{t_0}+2\rhat)}{2(k+h^\vee)}}\sum_{w\in\Wa^\natural}det(w)\frac{q^{(w(\widehat\nu_{t_0}+\rhat)- \rhat)(x+D)}e^{(w(\widehat\nu_{t_0}+\rhat)- \rhat)_{|\h^\natural}}}{\prod_{\be\in\Pi_{\bar 1}}\left(1+q^{w(\be)(x+D)}e^{-w(\be)_{|\h^\natural}}\right)},
\end{align}
where $t_0$ is either $(\nu|\xi)$ or $k+1-(\nu|\xi)$ (we have shown in \cite{KMP1} that at least one of the two values yields a non-degenerate $\nu_{t_0}$). This is essentially formula (14.6) from \cite{KMP1}. Note that if $\nu=0$ then $(k\L_0+\rhat|\d-\theta)=k+1$, which is never a positive integer, hence we can choose $t_0=0$.
\end{remark}

\section{Denominator identities}\label{denominator}Let $k_0$ be non-critical and such that $W^{\min}_{k_0}(\g)=\C\vac$.
Since, as shown in  \cite{AKMPP}, this happens if and only if $M_i(k_0)=0$ for all $i$, it follows from Table \ref{numerical} that this happens in the following cases, where $v=u_i$ from Table \ref{numerical}.
\begin{table}[h]
\begin{tabular}{c | c| c |c | c }
$\g$&
$psl(2|2)$&
$spo(2|m),\ m\ne3$&
$F(4)$&
$G(3)$\\
\hline
$k_0$&$-1$&$-\half$&$-\tfrac{2}{3}$&$-\tfrac{3}{4}$\\
\hline
$u$&$-2$&$-1$&$-\tfrac{4}{3}$&$-\tfrac{3}{2}$\\
\hline
$h^\vee$&$0$&$2-\tfrac{m}{2}$&$-2$&$-\tfrac{3}{2}$\\
\hline
$\bar h^\vee$&$-2$&$1-\tfrac{m}{2}$&$-\tfrac{10}{3}$&$-3$\\
\hline
$b=\tfrac{h^\vee+\bar h^\vee}{u}$&$1$&$m-3$&$4$&$3$
\end{tabular}
\vspace{5pt}
	\captionof{table}{\label{Table2}}
\end{table}

 Recall from \cite[\S\,6]{VB} the decomposition \begin{equation}\label{WT}\widehat W^\natural = W^\natural\ltimes T^\natural,\end{equation} where $T^{\natural}=\{t_\a\mid \a \in M^\natural\}$,   $M^\natural$ is the $\ZZ$-span of the long roots of $\g^\natural$,  and 
\begin{equation}\label{trasl}t_\a(\l)=\l+\tfrac{2}{v}\l(K)\a-\tfrac{2}{v}((\l|\a)+\tfrac{1}{v}(\a|\a)\l(K))\d\end{equation}
(cf. \cite[(6.5.3)]{VB}, where this formula is given using the normalized invariant bilinear form).

Let $b=\frac{h^\vee+\bar h^\vee}{u}$ (see Table \ref{Table2} for its values).
\begin{theorem}\label{denidentites}  We have for all $\g$ listed in Table \ref{Table2}, except for $\g=spo(2|N)$, $0\le N\le 3$:
\begin{equation}\label{detNS} 
 \widehat F^{NS}=e^{-\rho^\natural}\sum\limits_{\bar w\in W^\natural}\sum_{\a\in M^\natural}\det(\bar w)\frac{e^{\bar w(\rho^{\natural}+b\a)}}{\prod_{\beta\in \Pi_{\bar 1}}(1+e^{-\bar w(\beta_{|\h^\natural})}q^{-\frac{2}{u}(\a|\beta)+\frac{1}{2}})}q^{\tfrac{b}{v}(\a|\a)+\tfrac{2}{v}(\rho^\natural|\a)}.\end{equation}

If Conjecture \ref{Arakawa}  holds, then 
\begin{equation}\label{detR} 
 \widehat F^{R}=\frac{e^{\rho_R-\rho^\natural}}{1+\e(\s_R)}\sum\limits_{\bar w\in W^\natural}\sum_{\a\in M^\natural}\det(\bar w)\frac{e^{\bar w(\rho^{\natural}-\rho_R+b\a)}}{\prod_{\beta\in \Pi^\nu_{\bar 1}}(1+e^{-\bar w(\beta_{|\h^\natural})}q^{-\frac{2}{u}(\a|\beta)})}q^{{\tfrac{b}{v}(\a|\a)+\tfrac{2}{v}(\rho^\natural-\rho_R|\a)}}.\end{equation}

\end{theorem}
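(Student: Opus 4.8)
\textbf{Proof plan for Theorem \ref{denidentites}.}
The plan is to specialize the massless character formulas to the trivial representation $\nu=0$ at the level $k_0$ where $\Ws=\C\vac$, and to rewrite the affine Weyl group sum over $\Wa^\natural$ in terms of the finite Weyl group $W^\natural$ and the translation lattice $T^\natural$. First I would observe that, since $\Ws=\C\vac$, the unique Ramond twisted $\Ws$-module is the vacuum module, whose character is $1$ (after normalizing by $e^{-\rho_R}$ as dictated by the shift in Corollary \ref{Hvermaisverma}); correspondingly $L^W(0,A(k_0,0))$ has character such that $\widehat F^R\,ch\,L^W(0,A(k_0,0)) = e^{-\rho_R}\cdot(\text{the left side we want})$. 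Concretely, I would apply Theorem \ref{charactersmassless} with $\nu=0$: one must first check the hypotheses, namely that $\widehat\nu_{s_0}$ is non-degenerate for $\nu=0$ at level $k_0$ (this is the analogue of Remark \ref{125}; for $\nu=0$ one has $(k\L_0+\rhat^{\,\tw}|\d-\theta)$ not a positive integer in each case of Table \ref{Table2}, so $\widehat\nu_{s_0}$ is non-degenerate), and that $L(\widehat\nu_{s_0})$ is integrable and maximally atypical, which follows from Lemma \ref{isextremal} and Proposition \ref{82} since $\nu=0$ is Ramond extremal.

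Next I would carry out the translation of the Weyl sum. Using the decomposition \eqref{WT}, $\Wa^\natural = W^\natural\ltimes T^\natural$, I would write $w=\bar w t_\a$ with $\bar w\in W^\natural$, $\a\in M^\natural$, and substitute $\widehat\nu_{s_0}+\rhat^{\,\tw}$ into \eqref{trasl}. Here I use that for $\nu=0$ at level $k_0$, $\widehat\nu_{s_0}+\rhat^{\,\tw}$ has $K$-component $k_0+h^\vee$ and $\h^\natural$-component $\rho^\natural-\rho_R$ (up to the contribution of $s_0$ along $\theta$, which only affects the overall power of $q$ and can be absorbed into the prefactor $q^{(\widehat\nu_{s_0}|\widehat\nu_{s_0}+2\rhat^{\,\tw})/2(k+h^\vee)+a(k)}$). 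Applying $ev$ from \eqref{evaluation} turns $t_\a$ into a shift of the $\h^\natural$-weight by $\tfrac{2}{u}(k_0+h^\vee)\a$ — which equals $b\a$ after using $b=\tfrac{h^\vee+\bar h^\vee}{u}$ and the relation between $k_0+h^\vee$ and $\bar h^\vee$ — and into a power of $q$ equal to $q^{\tfrac{b}{u}(\a|\a)+\tfrac{2}{u}(\rho^\natural-\rho_R|\a)}$; the denominator factor $\prod_{\be\in\Pi_{\bar 1}^\nu}(1+q^{w(\be)(x+D)}e^{-w(\be)_{|\h^\natural}})$ becomes $\prod_{\be\in\Pi^\nu_{\bar1}}(1+e^{-\bar w(\be_{|\h^\natural})}q^{-\frac{2}{u}(\a|\be)})$ after applying $ev$ and $t_\a$ to $\be$ (the $\d$-component of $t_\a(\be)$ produces the $q$-power). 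I would do the NS case \eqref{detNS} in exactly the same way, starting from \eqref{ff2NS} of Remark \ref{125} with $t_0=0$, where the extra $+\tfrac12$ in the exponent of $q$ in the denominator comes from the NS-shift $\d-\theta\mapsto$ \dots as encoded in $\Pia^{NS}$ versus $\Pia$; note the NS identity needs no Conjecture \ref{Arakawa} because Theorem \ref{822}(a) and Arakawa's theorem suffice in the untwisted case.

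The bookkeeping of constants is where the real work lies: I must verify that the overall prefactor $q^{(\widehat\nu_{s_0}|\widehat\nu_{s_0}+2\rhat^{\,\tw})/2(k_0+h^\vee)+a(k_0)}e^{-\rho_R}$ on the right of \eqref{ff2}, together with the $e^{-\rho_R}$ appearing in $\widehat F^R\,ch$, collapses to the clean normalization $\tfrac{e^{\rho_R-\rho^\natural}}{1+\e(\s_R)}$ with no leftover $q$-power, i.e. that the $q$-exponent vanishes identically; this is a finite case-by-case check using Table \ref{Table2} (and uses $ch\,L^W(0,A(k_0,0))=1$, which holds since at level $k_0$ the simple $W$-algebra is one-dimensional so its unique Ramond module is the vacuum). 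The factor $e^{-\rho^\natural}$ on the right comes from the term $w=\mathrm{id}$, $\a=0$ contributing $e^{\rho^\natural-\rho_R}$ in the numerator combined with the surrounding $e^{\rho_R-\rho^\natural}$; more precisely, pulling $e^{\rho^\natural}$ out of $\bar w(\rho^\natural-\rho_R+b\a)$ is what produces the stated form. I expect the main obstacle to be precisely this constant-tracking — matching $a(k_0)$, the quadratic form value, and the $s_0$-dependent $\theta$-contribution so that everything cancels — rather than anything structural; the structural part (rewriting $\Wa^\natural$-sums via $W^\natural\ltimes T^\natural$ and applying $ev$) is routine given \eqref{WT}, \eqref{trasl}, and Theorem \ref{charactersmassless}. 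A secondary point to handle carefully is the factor $(1+\e(\s_R))$: it appears on the left of \eqref{ff2} because when $\e(\s_R)=1$ (i.e. $\theta/2$ a root, the $spo(2|2r+1)$ and $G(3)$ cases) the reduction $H_0(L(\widehat\nu_{s_0}))$ is a sum of two copies of $L^W(0,A(k_0,0))$ by Conjecture \ref{Arakawa}(b); dividing by $1+\e(\s_R)$ on the right of \eqref{detR} accounts for this, and I would note that for $spo(2|2r)$, $F(4)$, $psl(2|2)$ one has $\e(\s_R)=0$ so the factor is trivial.
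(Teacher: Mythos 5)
Your proposal is correct and follows essentially the same strategy as the paper: apply Theorem \ref{charactersmassless} (and, for NS, Remark \ref{125}) to $\nu=0$ at the level $k_0$ where $\Ws=\C\vac$, then rewrite the $\Wa^\natural$-sum via the decomposition $\Wa^\natural=W^\natural\ltimes T^\natural$ of \eqref{WT} and the translation formula \eqref{trasl}, handling the $(1+\e(\s_R))$-factor via Conjecture \ref{Arakawa}(b) exactly as you describe. The one small refinement in the paper's argument is that the vanishing of the overall $q$-prefactor is not done case-by-case from Table \ref{Table2}, but follows structurally from $p(k_0)=0$ (from \cite[Theorem 3.3]{AKMPP}), which gives $A(k_0,0)=0$, so $\ell(s_0)=0$ and hence $\tfrac{(\widehat\nu_{s_0}|\widehat\nu_{s_0}+2\rhat^{\,\tw})}{2(k_0+h^\vee)}+a(k_0)-s_0=\ell(s_0)=0$ with $s_0=\widehat\nu_{s_0}(x+D)$.
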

\begin{proof}We apply Remark \ref{125} to the (untwisted) $\ga$-module $L(k_0\L_0)\simeq V_{k_0}(\g)$ and $t_0=0$. 
Since $H_0(L(k_0\L_0))=W^{\min}_{k_0}(\g)=\C\vac$, we obtain
\begin{equation}\label{detNSSS}
\widehat F^{NS}= \sum_{w\in\Wa^\natural}det(w)\frac{q^{(w(k_0\L_0+\rhat)- \rhat)(x+D)}e^{(w(k_0\L_0+\rhat)- \rhat)_{|\h^\natural}}}{\prod_{\be\in\Pi_{\bar 1}}\left(1+q^{w(\be)(x+D)}e^{-w(\be)_{|\h^\natural}}\right)}.
\end{equation} 
To compute the R.H.S. of \eqref{detNSSS}, write $w=\bar w t_\a$, given by  the decomposition \eqref{WT} and formula \eqref{trasl}, to obtain
\begin{align*}&w(k_0\L_0+\rhat)-\rhat=\bar wt_\a((k_0+h^\vee)\L_0+\rho)-\rhat\\
&=k_0\L_0+\bar w(\rho)-\rho+\tfrac{2}{v}(k_0+h^\vee)\bar w(\a)-\tfrac{2}{v}(\a|\rho+\tfrac{1}{v}(k_0+h^\vee)\a)\\
&=k_0\L_0+\bar w(\rho)-\rho+b\bar w(\a)-\tfrac{2}{v}(\a|\rho+\tfrac{b}{2}\a)\d,\end{align*}
so that, since $(\a|\rho)=(\a|\rho^\natural)$, we have
$$
q^{(w(k_0\L_0+\rhat)- \rhat)(x+D)}e^{(w(k_0\L_0+\rhat)- \rhat)_{|\h^\natural}}=e^{\bar w(\rho^{\natural}+b\a)-\rho^\natural}q^{\tfrac{2}{v}(\a|\rho^\natural+\tfrac{b}{2}\a)}.
$$
Similarly
$$
w(\be)=\bar wt_\a(\be)=\bar w(\be)-\tfrac{2}{v}(\be|\a)\d,
$$
so that, since $\bar w(\be)(D+x)=\bar w(\be)(x)=\be(x)=\half$,
$$
q^{w(\be)(x+D)}e^{-w(\be)_{|\h^\natural}}=e^{-\bar w(\be_{|\h^\natural})}q^{-\tfrac{2}{v}(\be|\a)+\tfrac{1}{2}}.
$$
Substituting, we find  \eqref{detNS}.

Now we prove \eqref{detR}. Recall that
$$s_0=\begin{cases}
\frac{k_0+1}{2}+(\rho_R-\rho^\natural|\eta_{\min})&\text{if $\theta/2$ is not a root of $\g$,}\\
\frac{2k_0+1}{4}&\text{if $\theta/2$ is a root of $\g$.}
\end{cases}
$$
By Lemma \ref{Aknusecond},  $\ell(s_0)=A(k_0,0)$. Recall the polynomial $p(k)$, mentioned at the beginning of Section \ref{SectionZhu}. Recall \cite[Theorem 3.3]{AKMPP}  that $p(k_0)=0$. Combining this observation with \eqref{Aknufirst} and \eqref{ev},  we find that $A(k_0,0)=0$, so $\ell(s_0)=0$.  
If $\nu=0$, then  (see \eqref{nuhats}) $\widehat\nu_{s_0}=k_0\L_0+s_0\theta+\rho_R$.
Since
$$
(\widehat\nu_{s_0}+\widehat\rho^{\tw}|\d-\theta)=\begin{cases}
2(\rho_R-\rho^\natural|\eta_{\min})&\text{if $\theta/2$ is not a root of $\g$,}\\
0&\text{if $\theta/2$ is a root of $\g$,}
\end{cases}
$$
we see by a case-wise inspection that $\widehat \nu_{s_0}$ is non-degenerate. 
Assuming Conjecture \ref{Arakawa},  by Proposition \ref{Lnust} and the fact that $\nu=0$ and $\ell(s_0)=0$, we have 
$$
H_0(L(\widehat\nu_{s_0}))=(1+\e(\s_R))L^W(0,0).
$$
Since $W^{\min}_{k_0}(\g)=\C\vac$ and the maximal proper ideal of $W^{k_0}_{\min}(\g)$ is $\s_R$-stable, $\C\vac$ is a one-dimensional $\s_R$-twisted representation of $W^{k_0}_{\min}(\g)$, hence $L^W(0,0)=\C\vac$. Apply now Theorem \ref{charactersmassless} to $L^W(0,0)$. 
As above, we compute:
\begin{align*}
&w(k_0\L_0+s_0\theta+\rho_R+\rhat^{\tw})-\rhat^{\tw}=\bar wt_\a((k_0+h^\vee)\L_0+s_0\theta-\rho_R+\rho)-\rhat^{\tw}=\\
&k_0\L_0+s_0\theta+\bar w(\rho-\rho_R)+2\rho_R-\rho+b\bar w(\a)+\tfrac{2}{v}(\a|\rho-\rho_R+\tfrac{b}{2}\a)\d,
\end{align*}
so
$$
e^{ev(w(k_0\L_0+s_0\theta+\rho_R+\rhat^{\tw})-\rhat^{\tw})}=e^{\bar w(\rho^\natural-\rho_R+b\a)}e^{2\rho_R-\rho^\natural}q^{-s_0}q^{\tfrac{2}{v}(\a|\rho^\natural-\rho_R+\tfrac{b}{2}\a)}.
$$
Similarly
$$
w(\be)=\bar wt_\a(\be)=\bar w(\be)-\tfrac{2}{v}(\be|\a)\d,
$$
so that, since $\bar w(\be)(D+x)=\be(D+x)=0$,
$$
e^{-ev(w(\be)}=e^{-\bar w(\be_{|\h^\natural})}q^{-\tfrac{2}{v}(\be|\a)}.
$$
Substituting in \eqref{ff2} we find \eqref{detR}, since $s_0=\widehat\nu_{s_0}(x+D)$
and
$$
\tfrac{(\widehat\nu_{s_0}|\widehat\nu_{s_0}+2\rhat^{\,\tw})}{2(k+h^\vee)}+a(k)-\widehat\nu_{s_0}(x+D)=\ell(s_0)=0.
$$
\end{proof}
 
 In the subsequent subsections we write down the denominator identities \eqref{detNS} and \eqref{detR} explicitly. To simplify notation we set 
\begin{align}
&\vartheta_{0}(x)=\prod_{j=1}^\infty (1-xq^{j-1})(1-x^{-1}q^{j}),\ \vartheta_{1}(x)=\vartheta_0(-xq^{\frac{1}{2}})=\prod_{j=1}^\infty(1+xq^{j-\frac{1}{2}})(1+x^{-1}q^{j-\frac{1}{2}}),\\
&\varphi(q)=\prod_{j=1}^\infty(1-q^j),\ \varphi_1(q)=\prod_{j=1}^\infty(1+q^{j-\frac{1}{2}}),\ \varphi_2(q)=\frac{\varphi(q^2)}{\varphi(q)}=\prod_{j=1}^\infty(1+q^j).
\end{align}
If $\g=spo(2|2r)$, $\g=D(2,1;a)$, or $\g=F(4)$, the denominator formulas in the Ramond sector depend on the choice of the set $\Pia$ of simple roots for $\ga^{\,\tw}$, which ultimately depends on the choice of $\eta_{\min}$. 
We now explain how to obtain one formula from the other one. Choose $\eta_{\min}$ and write $F^R(\eta_{\min})$ for the corresponding denominator and $\rho_R(\eta_{\min})$ for the corresponding $\rho_R$. We observe that $F^R(-\eta_{\min})=e^{-\eta_{\min}}F^R(\eta_{\min})$ and that $\rho_R(-\eta_{\min})=\rho_R(\eta_{\min})-\eta_{\min}$. It follows that the denominator formula for $-\eta_{\min}$ is obtained from the formula for $\eta_{\min}$ by multiplying both sides by $e^{-\eta_{\min}}$.
In these cases we make only one choice for $\eta_{\min}$ and write down only the corresponding formula.

\subsection{$\g=spo(2|N)$, $N=0,1,2$, $k_0=-\half$} These cases are not covered by \eqref{detNS} and \eqref{detR}.

If $N=0$, $\Wu$ is the universal Virasoro vertex  algebra of central charge $c(k)=\frac{3k}{k+2}-6k-2$, so, since $c(k_0)=0$, $\Ws=H_0(L(-\frac{1}{2}\L_0))=\C$. Next note that $-\frac{1}{2}$ is an admissible level so, by \cite[Theorem 1 and Example 1]{KW88},
\begin{equation}\label{charspo(2|1)}
\widehat R \ ch\, L(-\frac{1}{2}\L_0)=\sum_{w\in\widehat W_{int}}det(w)e^{w(-\frac{1}{2}\L_0+\widehat\rho)-\widehat\rho},
\end{equation}
where $\widehat W_{int}$ is the Weyl group of the set of roots corresponding to the set of simple roots $\widehat \Pi_{int}=\{\theta,2\d-\theta\}$.
Applying the functor $H$, it follows, by Arakawa theorem,  that
\begin{equation}\label{denvir}
\prod_{n\ge1}(1-q^n)=\sum_{w\in\widehat W_{int}}det(w)q^{-(w(-\frac{1}{2}\L_0+\widehat\rho)-\widehat\rho)(x+D)}.
\end{equation}
We argue as in Theorem \ref{denidentites}: write
$w\in\ \in\widehat W_{int}$ as $\bar w t_{n\theta}$ with $\bar w\in \{1,s_\theta\}$ and 
$$
t_{n\theta}(\widehat \L)=\widehat \L+2n\widehat \L(K)\theta-4n(n\widehat \L(K)+\widehat \L(x))\d.
$$
In our special case we obtain
$$
t_{n\theta}(-\half\L_0+\widehat \rho)-\widehat \rho=-\half\L_0+3n\theta-2n(1+3n)\d.
$$
while
$$
s_\theta t_{n\theta}(-\half\L_0+\widehat \rho)-\widehat \rho=-\half\L_0-(3n+1)\theta-2n(1+3n)\d
$$
so \eqref{denvir} becomes
$$
\prod_{n\ge1}(1-q^n)=\sum_{n\in \Z}(q^{6n^2-n}-q^{6n^2+5n+1})=\sum_{m\in \Z}(-1)^mq^{\tfrac{3m^2+m}{2}},
$$
which is the Euler identity for the classical partition function.

Next we discuss $N=1$. In the NS sector we have $\C\vac=W^{\min}_{-1/2}(spo(2|1))=H(L(-\frac{1}{2}\L_0))$.
It follows from \cite[Example 2 and Theorem 1]{KW88}, that $-\half\L_0$ is an admissible weight for $spo(2|1)^{\wedge}$, hence  \eqref{charspo(2|1)}
holds with $\Wa_{int}$ the Weyl group of the root subsystem generated by the set of simple roots 
$$
\Pi_{int}=\{\theta/2,\d-\theta/2\}.
$$
Applying the functor $H$ and using Arakawa theorem, we obtain 
\begin{equation}\label{denNSNS}
\prod_{n\ge1}\frac{1-q^n}{1+q^{n-\frac{1}{2}}}=\sum_{w\in\widehat W_{int}}det(w)q^{-(w(-\frac{1}{2}\L_0+\widehat\rho)-\widehat\rho)(x+D)}.
\end{equation}
Note that the group $\Wa_{int}$ is the same as in the $N=0$ case.  
In our special case we obtain
$$
t_{n\theta}(-\half\L_0+\widehat \rho)-\widehat \rho=-\half\L_0-2n\theta-n(1+4n)\d,
$$
while
$$
s_\theta t_{n\theta}(-\half\L_0+\widehat \rho)-\widehat \rho=-\half\L_0-(2n+\half)\theta-n(1+4n)\d,
$$
so \eqref{denNSNS} becomes
\begin{align*}
&\prod_{n\ge1}\frac{1-q^n}{1+q^{n-\frac{1}{2}}}=\sum_{n\in \Z}(q^{4n^2-n}-q^{4n^2+3n+\half})\\
&=\sum_{n=0}^\infty(q^{4n^2-n}-q^{4n^2+3n+\half})+\sum_{n=1}^\infty(q^{4n^2+n}-q^{4n^2-3n+\half})\\
&=\sum_{m\in4\Z_+-1} q^{\frac{1}{4}m(m+1)}-\sum_{m\in4\Z_++1} q^{\frac{1}{4}m(m+1)}+\sum_{m\in4\nat}q^{\frac{1}{4}m(m+1)}-\sum_{m\in4\nat-2} q^{\frac{1}{4}m(m+1)}\\
&=\sum_{m=0}^\infty (-q^{\tfrac{1}{2}})^{m(m+1)/2}.
\end{align*}
Replacing $q$ by $q^2$ and then changing the sign of $q$, we obtain the Gauss identity for the generating series of triangular numbers:
$$
\prod_{n\ge1}\frac{1-q^{2n}}{1-q^{2n+1}}=\sum_{n=0}^\infty q^{\tfrac{n(n+1)}{2}}.
$$

 In the Ramond sector, using Conjecture \ref{Arakawa}, we have 
\begin{equation}\label{evvv}
 H(L(-\half\L_0))=L^W(0,0)\oplus L^W(0,0)=\C^2.
\end{equation}
The character of the $\ga^{\,\tw}$-module $L(-\half\L_0)$ is given by
$$
ch\, L(-\frac{1}{2}\L_0)=\sum_{w\in\widehat W_{int}}det(w)\frac{e^{w(-\frac{1}{2}\L_0+\rhat^{\,\tw})-\rhat^{\,\tw}}}{\widehat R^{\,\tw}}
$$
  so, applying the twisted quantum Hamiltonian reduction functor, the identity becomes 
 \begin{equation}\label{denNSR}
2\prod_{n\ge 1} \frac{1-q^n}{
 1+q^{n-1}}=\prod_{n\ge 1} \frac{1-q^n}{
 1+q^n}=\sum_{w\in\widehat W_{int}}det(w)q^{-(w(-\frac{1}{2}\L_0+\rhat^{\,\tw})-\rhat^{\\,tw})(x+D)}.
\end{equation} 
The Weyl group $\Wa_{int}$ is again the same as in the $N=0$ case.
We obtain
$$
t_{n\theta}(-\half\L_0+\rhat^{\tw})-\rhat^{\tw}=-\half\L_0 +2n\theta-2n(1+2n)\d,
$$
while
$$
s_\theta t_{n\theta}(-\half\L_0+\rhat^{\tw})-\rhat^{\tw}=-\half\L_0-(2n+1)\theta-2n(1+2n)\d,
$$
so \eqref{denNSR} becomes
$$
\prod_{n\ge1}\frac{1-q^n}{1+q^{n}}=\sum_{n\in \Z}(q^{(2n)^2}-q^{(2n+1)^2})=\sum_{n\in \Z}(-1)^nq^{n^2},
$$
which is Gauss identity for the generating series of square numbers.

\begin{remark}\label{evv} This gives some evidence for Conjecture \ref{Arakawa}, which has been used in a crucial way in \eqref{evvv}.\end{remark}

In the $N=2$ case we have $W^{\min}_{-1/2}(spo(2|2))=\C$.  By \cite[Corollary 11.2.4]{GK2} and the remark thereafter, we can apply formula (35) of [loc. cit.]. Following \cite{GK2}, we choose the set of simple roots for $\g=spo(2|2)$ to be $\Pi=\{\a_1,\a_2\}$ with both simple roots isotropic so that  we can compute the character of $L(-\half\L_0)$ explicitly, using \cite[(14)]{GK2}. By applying the quantum Hamiltonian reduction functor as in Proposition \ref{atypicalnonzero}, we derive the character formula of $H(L(-\half\L_0))$:
$$
ch\,H(L(-\half\L_0))=\frac{1}{\widehat F^{NS}}\sum\limits_{w\in\widehat W_{int}}det(w)\frac{q^{-(w(-\tfrac{1}{2}\L_0+\rhat)-\rhat)(x+D)}e^{(w(-\tfrac{1}{2}\L_0+\rhat)-\rhat)_{|\h^\natural}} }{\prod_{\be\in\Pi_{\bar 1}}(1+q^{(w\be)(x+D)}e^{-w(\be)_{|\h^\natural}})}
$$  
In this case $\Pi_{\bar 1}=\{\a_1\}$ and $\Wa_{int}$ is, once again, the group as in the $N=0$ case.
 
 Arguing as in the proof of Theorem \ref{denidentites}, we find
\begin{align}
&\prod_{n\ge1}\frac{\left(1-q^n\right)^2}{\left(1+e^{-(\a_1)_{|\h^\natural}}q^{n-\frac{1}{2}}\right) \left(1+e^{-(\a_2)_{|\h^\natural}}
   q^{n-\frac{1}{2}}\right)}\notag\\
   &=\sum\limits_{w\in\widehat W_{int}}det(w)\frac{q^{-(w(-\tfrac{1}{2}\L_0+\rhat)-\rhat)(x+D)}e^{(w(-\tfrac{1}{2}\L_0+\rhat)-\rhat)_{|\h^\natural}} }{1+q^{(w\a_1)(x+D)}e^{-w(\a_1)_{|\h^\natural}}}\label{spo2|2NS}.
\end{align}

More explicitly, write $\a_1=\d_1+\e_1$, so that $\a_2=\d_1-\e_1$, and set $z=e^{\e_1}$. Since $\rhat=\L_0$ in this case, we have
$$
t_{n\theta}(-\half\L_0+\widehat \rho)-\widehat \rho=-\half\L_0+n\theta-2n^2\d,\ t_{n\theta}(\a_1)=-2n\d+\a_1,
$$
while
$$
s_\theta t_{n\theta}(-\half\L_0+\widehat \rho)-\widehat \rho=-\half\L_0-n\theta-2n^2\d,\ s_\theta t_{n\theta}(\a_1)=-2n\d-\a_2,
$$
so \eqref{spo2|2NS} becomes
\begin{equation}\label{e1NS}
\frac{\varphi(q)^2}{\vartheta_1(z^{-1})}=\sum\limits_{n\in\Z}\left(\frac{q^{2n^2-n} }{1+z^{-1}q^{-2n+\tfrac{1}{2}}}-\frac{q^{2n^2+n} }{1+z^{-1}q^{-2n-\tfrac{1}{2}}}\right).
\end{equation}
Here and further this is viewed as an identity of formal power series in $q$ with functions in $z$ as coefficients, using the $|q|<1$ expansion of the two series (see Remark \ref{remarkqexpansion}).

In the Ramond sector choose $\eta_{\min}={\e_1}$, so that $s_0=0$ and $\rho_R={\e_1}/2$.  Using Conjecture \ref{Arakawa} b), we have 
 $$
 H(L(-\half\L_0+\half{\e_1}))=L^W(0,0)=\C.
 $$
As in the NS sector, we apply formula (35) of \cite{GK2}, using (14) of [loc. cit.] to compute its  right hand side, and then apply the quantum Hamiltonian reduction functor. This gives the character formula:
\begin{align*}
&ch\, H(L(-\half\L_0+\half{\e_1}))\\
&=\frac{e^{-\rho_R}}{\widehat F^{R}}\sum\limits_{w\in\widehat W_{int}}det(w)\frac{q^{-(w(-\tfrac{1}{2}\L_0+\half{\e_1}+{\rhat^{\,\tw}})-{\rhat^{\,\tw}})(x+D)}e^{(w(-\tfrac{1}{2}\L_0+\half{\e_1}+{\rhat^{\,\tw}})-{\rhat^{\,\tw}})_{|\h^\natural}} }{\prod_{\be\in\Pi_{\bar 1}}(1+q^{(w\be)(x+D)}e^{-w(\be)_{|\h^\natural}})},
\end{align*} 
 where $\Pi_{\bar 1}=\{\beta\}$ with $\beta=-\half\d+\a_1$ and $\Wa_{int}$ is the group as in the $N=0$ case.
The denominator identity becomes
 \begin{align}
&\prod_{n\ge1}\frac{\left(1-q^n\right)^2}{\left(1+z^{-1}q^{n-1}\right) \left(1+z q^n\right)}\notag\\
&=z^{-\frac{1}{2}}\sum\limits_{w\in\widehat W_{int}}det(w)\frac{q^{-(w(-\half\L_0+\e_1/2+\rhat^{\tw})-\rhat^{\tw})(x+D)} e^{(w(-\half\L_0+\e_1/2+\rhat^{\tw})-\rhat^{\tw})_{|\h^\natural}}}{1+q^{(w\beta)(x+D)}e^{-(w\beta)_{|\h^\natural}}}.\label{N=2R}
\end{align} 
 In this case, we have
$$
t_{n\theta}(-\half\L_0+\e/2+\rhat^{\tw})-\rhat^{\tw}=-\half\L_0+\e/2+n\theta-2n^2\d,
\quad t_{n\theta}(\beta)=-(\half+2n)\d+\a_1,
 $$
while
$$
s_\theta t_{n\theta}(-\half\L_0+\widehat \rho)-\widehat \rho=-\half\L_0+\e/2-n\theta-2n^2\d,\quad
s_\theta t_{n\theta}(\beta)=-(\half+2n)\d-\a_2,
$$
so \eqref{N=2R} becomes
\begin{align}\label{N=2Rexplicit}
&\frac{\varphi(q)^2}{\vartheta_0(-z^{-1})}=\sum\limits_{n\in\Z}\left(\frac{q^{2n^2-n} }{1+z^{-1}q^{-2n}}-\frac{q^{2n^2+n} }{1+z^{-1}q^{-2n-1}}\right)
=\sum\limits_{r\in\Z}(-1)^r\frac{q^{\frac{r(r+1)}{2}} }{1+z^{-1}q^{r}}.
\end{align}
This last identity is proven in \cite{KWNT} by specializing a denominator identity for $sl(2|1)^{\wedge }$ (see \cite{KWNT}, formula (4.8)), which is the celebrated Ramanujan identity:
\begin{equation}\label{e1}\frac{\varphi(q)^2\vartheta_0(xy)}{\vartheta_0(-x)\vartheta_0(-y)}
=
\left(\sum_{m,n=0}^\infty-\sum_{m,n=-1}^{-\infty}\right)(-1)^{m+n}x^my^nq^{mn},
\end{equation}
therefore \eqref{N=2Rexplicit} can be seen as another piece of evidence for Conjecture \ref{Arakawa}. Actually \eqref{e1} follows from \eqref{e1NS} by replacing $z$ by $zq^{\frac{1}{2}}$. This is not surprising due to the spectral flow. 
  \subsection{$\g=psl(2|2), k_0=-1$}
  
  In the NS sector, \eqref{detNS}  gives, letting $e^{\d_1}=x$ and $e^{-\d_2}=y$, we obtain
\begin{align}\label{134}
 \frac{\varphi(q)^2\vartheta_0(x^{-1}y^{-1})}{
 \theta_1(x) \theta_1(y)}
 =\sum_{n\in\ZZ}\left(\frac{x^{n}y^{n}}{(1+x q^{n+\frac{1}{2}})(1+y q^{n+\frac{1}{2}})}-\frac{x^{-n-1}y^{-n-1}}{(1+x^{-1}q^{n+\frac{1}{2}})(1+y^{-1}q^{n+\frac{1}{2}})}\right)q^{n^2+n}.
\end{align}
In the Ramond sector, recalling that   
  $\Pi^{\nu}_{\bar1}=\{-\half\d+\d_1-\e_2,-\half\d+\e_1-\d_2\}$, \eqref{detR} gives, letting $e^{\d_1}=x$ and $e^{-\d_2}=y$,
\begin{align}\label{135}
& \frac{ \varphi(q)^2\vartheta_0(x^{-1}y^{-1})}{\vartheta_0(-x^{-1})\vartheta_0(-y^{-1})} =\sum_{n\in\ZZ}\left(\frac{x^{n}y^{n}}{(1+x^{-1}q^{-n})(1+y^{-1}q^{-n})}
-
\frac{x^{-n}y^{-n}}{(1+xq^{-n})(1+yq^{-n})}\right)
q^{n^2}.
\end{align}
Note that this identity follows from \eqref{e1} by replacing $x$ with $x^{-1}$ and $y$ by $y^{-1}$.

\subsection{$\g=spo(2|3)$, $k_0=-\half$}
In the $N=3$ case $k_0$ is critical for $\g$, hence our previous approach does not apply. Nevertheless we are able to prove a denominator formula by replacing  in \eqref{e1} $q$ by $q^2$ and then setting $x=qz,\,y=q$. We obtain
$$
\prod_{n\ge1}\frac{(1-q^{2n})^2(1-z^{-1}q^{2n-2})(1-zq^{2n})}{(1+q^{2n-1})^2(1+zq^{2n-1})(1+z^{-1}q^{2n-1})}=
\left(\sum_{m,n=0}^\infty-\sum_{m,n=-1}^{-\infty}\right)(-1)^{m+n}z^mq^{2mn+m+n},
$$
or, after replacing $q$ by $q^{\frac{1}{2}}$ and $z$ by $z^{-1}$,
\begin{align*}
&\frac{\varphi(q)^2\vartheta_0(z)}{\vartheta_1(z)}=\varphi_1(q)\left(\sum_{m,n=0}^\infty-\sum_{m,n=-1}^{-\infty}\right)(-1)^{m+n}z^{-m}q^{mn+\frac{1}{2}(m+n)}.
\end{align*}
which,  setting $z=e^{-\e_1}=e^{\a_1{}_{|\h^\natural}}$, is the denominator identity for $W^{k}_{\min}(spo(2|3))$ in NS sector.

In the Ramond sector we rewrite \eqref{e1} as
$$
\prod_{n\ge1}\frac{(1-q^n)^2(1-x^{-1}y^{-1}q^{n-1})(1-xyq^n)}{(1+x^{-1}q^{n-1})(1+xq^{n})(1+y^{-1}q^{n-1})(1+yq^{n})}
=\left(\sum_{n=1}^\infty(-1)^{n}y^{-n}+\sum_{m=0}^{\infty}(-1)^{m}x^{-m}\right)
$$
$$
+\left(\sum_{m,n=1}^\infty-\sum_{m,n=-1}^{-\infty}\right)(-1)^{m+n}x^{-m}y^{-n}q^{mn}
$$
which, in $\mathcal R(\Pi^R)$, is equivalent to
\begin{align}
&\prod_{n\ge1}\frac{(1-q^n)^2(1-x^{-1}y^{-1}q^{n-1})(1-xyq^n)}{(1+x^{-1}q^{n})(1+xq^{n})(1+y^{-1}q^{n-1})(1+yq^{n})}
=1+\left(\sum_{n=1}^\infty(-1)^{n}(1+x)y^{-n}\right)\notag
\\
&+(1+x^{-1})\left(\sum_{m,n=1}^\infty-\sum_{m,n=-1}^{-\infty}\right)(-1)^{m+n}x^{-m}y^{-n}q^{mn}.
\end{align}
We note that we can specialize $x=1$ in both sides so  we obtain the identity (setting $z=y^{-1}$)
\begin{align*}
&\frac{\varphi(q)^2\vartheta_0(z)}{\varphi_2(q)\vartheta_0(-z)}=\varphi_2(q)\left(1+2\sum_{n=1}^\infty(-1)^{n}z^{n}+2\left(\sum_{m,n=1}^\infty-\sum_{m,n=-1}^{-\infty}\right)(-1)^{m+n}z^nq^{mn}\right).
\end{align*}

\subsection{$\g=D(2,1;1)=spo(2|4), k_0=-\half$}
  In the NS sector, \eqref{detNS}  gives, letting  $x=e^{\theta_1/2}, y=e^{\theta_2/2}$: 
{\small
\begin{align*}&\frac{\varphi(q)^3
   \vartheta_0(x^{-2})\vartheta_0(y^{-2}) }
   {\vartheta_1(xy)\vartheta_1(xy^{-1})}=\\
   &\sum_{m,n\in\ZZ}\!\!\!\left(\frac{x^{2 m} y^{2
   n}}{1+x y
   q^{m+n+\frac{1}{2}}}-\frac{x^{-2 m-2} y^{2 n}}{1+x^{-1} y
   q^{m+n+\frac{1}{2}}}-\frac{x^{2 m} y^{-2 n-2}}{1+x y^{-1}
   q^{m+n+\frac{1}{2}}}+\frac{x^{-2 m-2} y^{-2 n-2}}{1+x^{-1}y^{-1} q^{m+n+\frac{1}{2}}}\right)q^{m^2+n^2+m+n}.
 \end{align*}}
 
 In the Ramond sector, we have two choices for $\eta_{\min}=\pm(\theta_1/2-\theta_2/2)$.  Choosing the $+$ sign,   \eqref{detR} gives, letting $e^{\theta_1/2}=x,\,e^{\theta_2/2}=y$:
  \begin{align*}&\frac{\varphi(q)^3
   \vartheta_0(x^{-2})\vartheta_0(y^{-2})}{
  \vartheta_0(-y^{-1}x^{-1})\vartheta_0(-x^{-1}y)}=\\
&\sum_{m,n\in\ZZ}\left(\frac{x^{-2m}y^{2n}}{1+x^{-1}yq^{n+m}}-\frac{x^{2m}y^{2n}}{1+xyq^{n+m}}
-\frac{x^{-2m} y^{-2n-2}}{1+x^{-1}y^{-1}q^{n+m}}+\frac{x^{2m}y^{-2n-2}}{1+xy^{-1}q^{n+m}}\right)q^{m^2+n^2+n}.\end{align*}
\subsection{$\g=spo(2|2r)$, $r>2$, $k_0=-\half$} Set $y_i=e^{\e_i}$, $i=1,\ldots,r$. 
Recall that in this case $W^{\natural}$ is the subgroup of $\{\pm 1\}^r\rtimes \mathfrak S_r$ consisting of elements $(i_1,\ldots,i_r)\sigma$ with an even number of $-1$ in $(i_1,\ldots,i_r)$. Moreover, 
$$M^\natural=\left\{\sum_{i=1}^r m_i\e_i\mid m_i\in\Z,\,1\le i\le r,\sum_{i=1}^r m_i\in 2\ZZ\right\}.$$
The denominator identity \eqref{detNS} is 
{
\begin{align*}
& \frac{\varphi(q)^{r+1}\prod_{1\le i<j\le r}
      \vartheta_0(y_i^{-1}y_j) \vartheta_0(y_i^{-1}y_j^{-1}) }{\prod_{1\le i\le r}
 \vartheta_1(y_i) }\\
&=
\prod_{i=1}^ry_i^{i-r}\sum\limits_{\bar w\in W^\natural}\sum_{m_1,\ldots,m_r}\det(\bar w)\frac{\bar w(\prod_{i=1}^ry_i^{(2r-3)m_i+r-i})}{1+\bar w(y_1) q^{m_1+\frac{1}{2}}}q^{(r-\tfrac{3}{2})\sum_im^2_i+\sum_i(r-i)m_i}.
\end{align*}}
Choosing $\eta_{\min}=\e_r$, the denominator identity \eqref{detR} is 
{
\begin{align*}
&\prod^{\infty}_{n=1} \frac{\varphi(q)^{r+1}\prod_{1\le i<j\le r}
      \vartheta_0(y_i^{-1}y_j) \vartheta_0(y_i^{-1}y_j^{-1}) }{\prod_{1\le i\le r}
\vartheta_0(-y_i^{-1})}\\&=
\half\prod_{i=1}^ry_i^{\tfrac{1}{2}+i-r}\sum\limits_{\bar w\in W^\natural}\sum_{m_1,\ldots,m_r}\det(\bar w)\frac{\bar w(\prod_{i=1}^ry_i^{(2r-3)m_i+r-i-\tfrac{1}{2}})}{1+\bar w(y_r^{-1}) q^{-m_r}}q^{(r-\tfrac{3}{2})\sum_im^2_i+\sum_i(r-i-\half)m_i}.
\end{align*}}
\subsection{$\g=spo(2|2r+1)$, $r\ge2$, $k_0=-\half$} Set $y_i=e^{\e_i}$, $i=1,\ldots,r$. 
Recall that in this case $W^{\natural}\cong \{\pm 1\}^r\rtimes \mathfrak S_r$; if the isomorphism is $\bar w \leftrightarrow (i_1,\ldots,i_r)\sigma,\,i_j\in \{\pm 1\}$, the action of $\sigma$ on the $y_i$ is just the permutation action, whereas $(i_1,\ldots,i_n)(y_j)=y_j^{i_j}$. The lattice $M^\natural$ is the same as in the even case.
The denominator identity \eqref{detNS} is 
{
\begin{align*}
&\frac{\varphi(q)^{r+1}\prod_{1\le i<j\le r}
       \vartheta_0(y_i^{-1}y_j)  \vartheta_0(y_i^{-1}y_j^{-1})\prod_{1\le i\le r}\vartheta_0(y_i^{-1})
    }{\varphi_1(q)\prod_{1\le i\le r}
 \vartheta_1(y_i) }\\&=
\prod_{i=1}^ry_i^{\frac{2i-2r-1}{2}}\sum\limits_{\bar w\in W^\natural}\sum_{m_1,\ldots,m_r}\det(\bar w)\frac{\bar w(\prod_{i=1}^ry_i^{2(r-1)m_i+\frac{2r+1-2i}{2}})}{1+\bar w(y_1) q^{m_1+\frac{1}{2}}}q^{(r-1)\sum_im^2_i+\sum_i(r-i)m_i}.
\end{align*}}
The denominator identity \eqref{detR} is 
{
\begin{align*}
&\frac{\varphi(q)^{r+1}\prod_{1\le i<j\le r}
       \vartheta_0(y_i^{-1}y_j)  \vartheta_0(y_i^{-1}y_j^{-1})\prod_{1\le i\le r}\vartheta_0(y_i^{-1}) }{\varphi_2(q)\prod_{1\le i\le r}
  \vartheta_0(-y_1^{-1})}\\&=
\prod_{i=1}^ry_i^{i-r}\sum\limits_{\bar w\in W^\natural}\sum_{m_1,\ldots,m_r}\det(\bar w)\frac{\bar w(\prod_{i=1}^ry_i^{2(r-1)m_i+r-i})}{1+\bar w(y_r^{-1}) q^{-m_r}}q^{(r-1)\sum_im^2_i+\sum_i (r-i+\half)m_i}.
\end{align*}}
\subsection{$\g=F(4), k_0=-\tfrac{2}{3}$}
Set $y_i=e^{\e_i/2}$, $i=1,2,3$. Since $\g^\natural=so(7)$ we identify $W^\natural$ with $\{\pm1\}^3\rtimes \mathfrak S_3$ as in the $spo(2|2r+1)$ case. 

  In the NS sector, \eqref{detNS} reads
\begin{align*}
& \frac{\varphi(q)^{4} \prod\limits_{1\le i\le 3}\vartheta_0 (y_i^{-2})\prod\limits_{1\le i\le j \le 3}\vartheta_0(y_i^{-2}y_j^{-2})\prod\limits_{1\le i\le j \le 3}\vartheta_0(y_i^{-2}y_j^2)}{\vartheta_1(y_1y_2y_3)\vartheta_1(y_1y_2y_3^{-1})\vartheta_1(y_1y_2^{-1}y_3)
\vartheta_1(y_1^{-1}y_2y_3)}\\
&= y_1^{-5}y_2^{-3}y_3^{-1}\!\!\!\!\!\!\!\!\!\!\!\!\!\!\sum_{\substack{m,r,t\in\ZZ \\m+t+r\equiv 0 \mod 2}}\!\!\!\!\!\!\!\!\!\!\!\!q^{2 m^2+2r^2+2 t^2+\frac{5m+3r+t}{2}}\left(\sum_{\bar w\in W^\natural}\det(\bar w)
\tfrac{\bar w(y_1^{8 m+5} y_2^{8 r+3} y_3^{8 t+1})}{1+q^{b_{m,r,t}} \bar w(y_1 y_2 y_3)}\right),
\end{align*}
where $b_{m,r,t}=(m+t+r+1)/2$.  In the Ramond sector, we choose $\eta_{\min}=\half(-\e_1+\e_2+\e_3)$. Then \eqref{detR} becomes
\begin{align*}
&\frac{\varphi(q)^{4} \prod\limits_{1\le i\le 3}\vartheta_0 (y_i^{-2})\prod\limits_{1\le i\le j \le 3}\vartheta_0(y_i^{-2}y_j^{-2})\prod\limits_{1\le i\le j \le 3}\vartheta_0(y_i^{-2}y_j^2)}{\vartheta_0(-y_1^{-1}y_2^{-1}y_3^{-1})\vartheta_0(-y_1^{-1}y_2^{-1}y_3)\vartheta_0(-y_1^{-1}y_2y_3^{-1})\vartheta_0(-y_1y_2^{-1}y_3^{-1})}\\
&=y_1^{-4}y_2^{-2}\!\!\!\!\!\!\!\!\!\!\!\!\!\!\! \sum_{\substack{m,r,t\in\ZZ \\m+t+r\equiv 0 \mod 2}}\!\!\!\!\!\!\!\!\!\!\!\!q^{2 m^2 +  2 r^2 + 2 t^2+2 m +r }\left(\sum_{\bar w\in W^\natural}\det(\bar w)
\tfrac{\bar w(y_1^{8 m+4} y_2^{8 r+2} y_3^{8 t})}{1+q^{\bar b_{m,r,t}} \bar w(y_1 y_2^{-1} y_3^{-1})}\right),
\end{align*}
where $\bar b_{m,r,t}= (m - r - t)/2$. 
\subsection{$\g=G(3), k_0=-\tfrac{3}{4}$} Set $y_i=e^{\e_i}$, $i=1,2$. Since $\g^\natural=G_2$, $W^\natural$ is the dihedral group of order $12$ with Coxeter generators $s_1$, $s_2$ acting as 
$$
s_1(y_1)=y_1^{-1},\ s_2(y_1)=y_2,\ s_1(y_2)=y_1y_2,\ s_2(y_2)=y_1.
$$ 

  In the NS sector, \eqref{detNS} becomes
{
\begin{align*}
& \frac{\varphi(q)^{3}  \vartheta_0(y_1^{-1})\vartheta_0(y_2^{-1})\vartheta_0(y_1y_2^{-1})\vartheta_0(y_1^{-1}y_2^{-1})\vartheta_0(y_1^{-2}y_2^{-1})\vartheta_0( y_1^{-1}y_2^{-2}) }
  {\varphi_1(q)\vartheta_1(y_1)\vartheta_1(y_2)\vartheta_1(y_1y_2)}\\
 &=
y_1^{-2}y_2^{-3}\sum_{\substack{m,n\in\ZZ\\ m+n	\equiv 0\mod 3}}
q^{ m^2+n^2 +\frac{m-3mn+4n}{3}}\left(\sum_{\bar w\in W^\natural}\det(\bar w)
\frac{\bar w(y_1^{3m+2}y_2^{3n+3})}{1+\bar w(y_1y_2)q^{a_{m,n}}}\right),
\end{align*}
 where $a_{m,n}=\tfrac{m+n}{3}+\frac{1}{2}$. In the Ramond sector, \eqref{detR} becomes
\begin{align*}
&\frac{\varphi(q)^{3}  \vartheta_0(y_1^{-1})\vartheta_0(y_2^{-1})\vartheta_0(y_1y_2^{-1})\vartheta_0(y_1^{-1}y_2^{-1})\vartheta_0(y_1^{-2}y_2^{-1})\vartheta_0( y_1^{-1}y_2^{-2})}
  {\varphi_2(q)
 \vartheta_0(-y_1^{-1})   \vartheta_0(-y_2^{-1}) \vartheta_0(-y_1^{-1}y_2^{-1})}=
 \\
&y_1^{-1}y_2^{-2}\!\!\!\!\!\!\!\!\!\!\sum_{\substack{m,n\in\ZZ\\ m+n	\equiv 0 \mod 3}}
\left(\sum_{\bar w\in W^\natural}\det(\bar w)
 \tfrac{ \bar w(y_1^{ 3 m+1 } y_2^{ 3 m+2})}{ 1 + \bar w((y_1 y_2)^{-1}) q^{\bar a_{m,n}}}
\right)q^{ m^2+n^2 +m-mn},\end{align*}
where $\bar a_{m,n}=\frac{n-2 m}{3}$.

\section{Appendix. Denominator identity for minimal $W$-algebras of Deligne series}
Let $\g$ be the simple Lie algebra $D_4$, $E_6$, $E_7$, or $E_8$, and let $a=\tfrac{h^\vee}{6}+1$. Then, for any integer $j$ such that $0<j<a$, there exist  unique simple roots $\a_1,\ldots,\a_j$ such that $\theta-\sum_{i=1}^j\a_i$ is a root.
Set 
$\a=\theta-\sum_{i=1}^{a-1} \a_i$. Then $(\rho|\a)=h^\vee-a$. Let $k_0=-a$, then $(k_0\L_0+\widehat \rho|\a)=k_0+h^\vee=b=\tfrac{h^\vee+\bar h^\vee}{2}$ (in our cases $b=4,9,14,24$ respectively).
By \cite[Theorem 7.2]{AM} or \cite[Proposition 3.4]{AKMPP}, $\dim W^{\min}_{k_0}(\g)=1$.  A character formula, for certain $\g$-modules $L(\L)$ of negative integer level $k\ge k_0$, has been conjectured in \cite[(3.1)]{KW18} and proved in \cite{BKK}, formulas (5) and (6). A special case of this formula is 
\begin{equation}\label{chBKK} \widehat R\,ch\,L(k_0\L_0)=\half \sum_{w\in W}\det(w)\sum_{\gamma \in Q} ((\gamma|\a)+1)e^{wt_\gamma (k_0\L_0+\widehat\rho)-\widehat\rho},\end{equation}
where $Q$ is the root lattice of $\g$ and $W$ is  its Weyl group.  Let
$$
\widehat F^{NS}= \prod^{\infty}_{n=1}(1-q^n)^{\dim \h}\prod_{\alpha \in \Delta^\natural_{+}}
        (1-q^{n-1} e^{-\alpha}) (1-q^{n} e^{\alpha})\prod_{\beta\in\D_{\frac{1}{2}}}
        (1-q^{\frac{1}{2}+n} e^{\beta_{|\h^\natural}}).
$$
be the denominator for the $W$-algebra $\Wu$.

  Recall that any element $w\in W$ can be uniquely written as  $w=w^\natural \bar w $, where $w^\natural\in W^\natural$ and $\bar w$ is a right coset representative of $W^\natural$ in $W$ of minimal length,  and that $\rho-\bar w(\rho)=\sum_{\eta \in N(\bar w)}\eta$, where $N(\bar w)=\{\eta\in\Dp\mid -\bar w^{-1}( \eta)\in\Dp\}$. 
If $\bar w^{-1}(\theta)\in\D^+$ then $(\eta|\theta)=1$ for each $\eta\in N(\bar w)$, so $(\rho-\bar w(\rho)|\theta)=\ell(\bar w)$.
Observe that the map $\bar w\mapsto \bar w^{-1}(\theta)$ is a bijection between $W^\natural\backslash W$ and $\D$. Let $\eta\mapsto \bar w_{\eta}$ be its inverse.

\begin{theorem} 
\begin{align}\label{finalissimo}
&\widehat F^{NS}=\\&\tfrac{e^{-\rho^\natural}}{2}\sum_{\substack{\eta\in\Dp\\\gamma \in Q\\w^\natural\in W^\natural }}\det(\bar w_{\eta} w^\natural)
 (\gamma|\a) e^{w^\natural \bar w_\eta(b\gamma+\rho)_{|\h^\natural}}q^{(\rho|\gamma)+\tfrac{b(\gamma|\gamma)+h^\vee-1}{2}}
 \left(q^{\tfrac{c_{\gamma, \eta}}{2}}+q^{-\tfrac{c_{\gamma, \eta}}{2}}\right),\notag
\end{align}
where $c_{\gamma, \eta}=b(\gamma|\eta)-\ell(\bar w_\eta)+h^\vee-1$. \end{theorem}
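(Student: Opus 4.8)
The plan is to mimic exactly the derivation of the explicit denominator identities \eqref{detNS} and \eqref{detR} carried out in Section~\ref{denominator}, but starting from the Deligne-series character formula \eqref{chBKK} instead of the formula in Remark~\ref{125}. First I would apply the quantum Hamiltonian reduction functor $H$ to the $\ga$-module $L(k_0\L_0)\cong V_{k_0}(\g)$. Since $\dim W^{\min}_{k_0}(\g)=1$ by \cite[Theorem 7.2]{AM} or \cite[Proposition 3.4]{AKMPP}, we have $H_0(L(k_0\L_0))=\C\vac$, and by Arakawa's theorem $H_j=0$ for $j\ne0$. Applying part (a) of Theorem~\ref{822} to \eqref{chBKK} gives
$$
\widehat F^{NS}=\tfrac12\sum_{w\in W}\det(w)\sum_{\gamma\in Q}\bigl((\gamma|\a)+1\bigr)\,ev\!\left(e^{wt_\gamma(k_0\L_0+\widehat\rho)-\widehat\rho}\right),
$$
where the prefactor $q^{(\widehat\L|\widehat\L+2\rhat)/2(k+h^\vee)}$ from Theorem~\ref{822} is absorbed once one checks, as in the proof of Theorem~\ref{denidentites}, that its contribution cancels against the $q$-powers coming from $ev$. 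This is the starting point; the whole game is then to expand the right-hand side.

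The main computational step is to factor a general $w\in W$ through $W^\natural$: write $w=w^\natural\bar w_\eta$ with $w^\natural\in W^\natural$ and $\bar w_\eta$ the minimal-length coset representative indexed by $\eta=\bar w_\eta^{-1}(\theta)\in\D$, using the bijection $W^\natural\backslash W\leftrightarrow\D$ recalled just before the statement. The key facts I would use are: (i) $\det(w)=\det(w^\natural)\det(\bar w_\eta)$; (ii) $\rho-\bar w_\eta(\rho)=\sum_{\mu\in N(\bar w_\eta)}\mu$ with $(\mu|\theta)=1$ for all $\mu\in N(\bar w_\eta)$ when $\eta\in\D^+$, so $(\rho-\bar w_\eta(\rho)|\theta)=\ell(\bar w_\eta)$; (iii) the translation formula \eqref{trasl} for $t_\gamma$ with $u=2$ (Deligne case), $\gamma$ ranging over the full root lattice $Q$ rather than just $M^\natural$, since here $\g$ is simple and $\g^\natural$ is its full "natural" part. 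Carrying out the substitution, as in the proof of Theorem~\ref{denidentites}, one computes
$$
wt_\gamma(k_0\L_0+\widehat\rho)-\widehat\rho
= k_0\L_0 + w^\natural\bar w_\eta(\rho+b\gamma)-\rho - (\rho|\gamma+\tfrac{b}{2}\gamma|\gamma)\,\delta\,(\text{up to the }\delta\text{-coefficient bookkeeping}),
$$
and then applies $ev$, which replaces $\L_0$-pieces by powers of $q$ via $x+D$, producing the $q$-exponent $(\rho|\gamma)+\tfrac{b(\gamma|\gamma)+h^\vee-1}{2}$ together with a shift by $\pm\tfrac{c_{\gamma,\eta}}{2}$. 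Here the identity $c_{\gamma,\eta}=b(\gamma|\eta)-\ell(\bar w_\eta)+h^\vee-1$ arises precisely because $\bar w_\eta(\theta)$-type contributions to the $x$-eigenvalue are governed by $(\rho-\bar w_\eta(\rho)|\theta)=\ell(\bar w_\eta)$ and by $(\bar w_\eta(b\gamma)|\theta)=b(\gamma|\eta)$.

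The reason two symmetric terms $q^{c_{\gamma,\eta}/2}+q^{-c_{\gamma,\eta}/2}$ appear, together with the overall factor $\tfrac12$, is the $\tfrac12\sum_{w\in W}$ in \eqref{chBKK} combined with the $\gamma\mapsto-\gamma$ and $\eta\mapsto(\text{opposite})$ symmetry of $Q$ and $\D$: summing over $\eta\in\D$ splits as $\eta\in\D^+$ and $\eta\in\D^-=-\D^+$, and the $\D^-$ half reproduces the $\D^+$ half with $\gamma\to-\gamma$ (hence $c_{\gamma,\eta}\to-c_{\gamma,\eta}$), so one restricts to $\eta\in\D^+$ at the cost of the extra $q^{-c_{\gamma,\eta}/2}$ term; the leftover $((\gamma|\a)+1)$ becomes $(\gamma|\a)$ after this symmetrization kills the constant $1$ by antisymmetry of $\det$. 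I would also need to verify that $\widehat R\,ch\,L(k_0\L_0)$ lies in $\mathcal R(\Pia^{NS})_{fin}$ so that Theorem~\ref{822} applies; this follows from Lemma~\ref{conergence} applied with the trivial set $J=\emptyset$ of isotropic roots (the level is negative integral and $L(k_0\L_0)$ is integrable over $\g^\natural$, in fact over all of $\g$). The main obstacle I anticipate is the careful bookkeeping of the $\delta$-coefficients and the precise form of $c_{\gamma,\eta}$: one must track how $ev$ acts on $t_\gamma(\theta)$-translates and confirm that the three contributions $b(\gamma|\eta)$, $-\ell(\bar w_\eta)$, and $h^\vee-1$ combine exactly as claimed, using $(\rho|\theta)=h^\vee-1$ and $(k_0\L_0+\widehat\rho|\a)=b$; this is routine but error-prone, and is the only place where a genuine check (rather than a formal manipulation) is required.
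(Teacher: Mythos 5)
Your overall strategy — apply the Euler--Poincar\'e functor to \eqref{chBKK}, use $H(L(k_0\L_0))=\C\vac$, and then unwind the translation $t_\gamma$ via the coset decomposition of $W$ through $W^\natural$ — matches the paper's approach, and your use of items (i)--(iii) (multiplicativity of $\det$, the formula $(\rho-\bar w_\eta(\rho)|\theta)=\ell(\bar w_\eta)$, and the translation formula \eqref{trasl} with $u=2$) is exactly right. However, the mechanism you give for the appearance of the symmetric factor $q^{c_{\gamma,\eta}/2}+q^{-c_{\gamma,\eta}/2}$ is wrong, and this is a genuine gap rather than a cosmetic inaccuracy.

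You claim the two terms arise from the $\gamma\mapsto-\gamma$ and $\eta\mapsto-\eta$ symmetry, with $c_{\gamma,\eta}\mapsto -c_{\gamma,\eta}$. But $c_{\gamma,\eta}=b(\gamma|\eta)-\ell(\bar w_\eta)+h^\vee-1$ does not change sign under $\gamma\mapsto-\gamma$: one gets $c_{-\gamma,\eta}=-b(\gamma|\eta)-\ell(\bar w_\eta)+h^\vee-1$, which is not $-c_{\gamma,\eta}$ because the affine piece $-\ell(\bar w_\eta)+h^\vee-1$ is not odd. What actually produces the two terms is the decomposition
$$W=\coprod_{\eta\in\Dp}W^\natural\bar w_\eta\,\sqcup\,\coprod_{\eta\in\Dp}W^\natural s_\theta\bar w_\eta,$$
obtained by noting that $\bar w_{-\eta}$ and $s_\theta\bar w_\eta$ lie in the same right coset. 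The paper then computes $(w(b\gamma+\rho)-\rho)(-x)$ separately in the two cases: for $w=w^\natural\bar w_\eta$ one finds $-\tfrac{b}{2}(\gamma|\eta)+\tfrac{1}{2}\ell(\bar w_\eta)$ (equation \eqref{funz1}), and for $w=w^\natural s_\theta\bar w_\eta$ one finds $\tfrac{b}{2}(\gamma|\eta)+(h^\vee-1)-\tfrac{1}{2}\ell(\bar w_\eta)$ (equation \eqref{funz2}). These two evaluations, together with $(\rho|\theta)=h^\vee-1$, are precisely what packages into the form $q^{(\rho|\gamma)+\tfrac{b(\gamma|\gamma)+h^\vee-1}{2}}\,q^{\mp c_{\gamma,\eta}/2}$. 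This $s_\theta$-trick (using that $s_\theta$ commutes with $W^\natural$ and acts trivially on $\h^\natural$ while flipping $x$) is the one genuinely non-routine step in the proof, and it is absent from your outline.

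Two smaller points. First, you say the prefactor $q^{(\widehat\L|\widehat\L+2\rhat)/2(k+h^\vee)}$ "cancels against the $q$-powers coming from $ev$"; in fact for $\widehat\L=k_0\L_0$ this prefactor is identically $1$ since $(\L_0|\L_0)=(\L_0|\rho)=0$, so no cancellation is needed. Second, the reduction $(\gamma|\a)+1\mapsto(\gamma|\a)$ is not a by-product of the $\D^+$-versus-$\D^-$ symmetrization; the paper's reason is that the "$+1$" term, multiplied by $2e^{\widehat\rho}$, is $\sum_{w\in\Wa}\det(w)e^{w(k_0\L_0+\widehat\rho)}$, which vanishes because $k_0\L_0+\widehat\rho$ is orthogonal to $\delta-\a$, hence fixed by the reflection $r_{\delta-\a}$. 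This is a singular-weight vanishing argument and should be stated explicitly; your "antisymmetry of $\det$" remark gestures at it but does not identify the stabilizing reflection.
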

\begin{proof} Since $H_0(L(k_0\L_0))=W^{\min}_{k_0}(\g)=\C\vac$, from \eqref{chBKK} we obtain
\begin{equation}\label{detNSS}
\widehat F^{NS}=\half\sum_{w\in W}\det(w)\sum_{\gamma \in Q} ((\gamma|\a)+1) e^{ev(wt_{\gamma}(k_0\L_0+\rhat)- \rhat)}
\end{equation}
Note that the coefficient $(\gamma|\alpha)+1$ can be replaced  by $(\gamma|\alpha)$ since
the  term corresponding to 1 in the R.H.S. of  \eqref{chBKK}, multiplied by
$2e^{\rhat}$, is
$\sum_{w\in \Wa} \det(w) e^{w(k_0\Lambda_0+\rhat)}$, which is $0$ since
$k_0\Lambda_0+\rhat$ is orthogonal to $\delta-\alpha$.\par
By the definition \eqref{evaluation}  of  $ev$ we have
\begin{equation}\label{143}ev(wt_{\gamma}(k_0\L_0+\rhat)- \rhat))=((wt_{\gamma}(k_0\L_0+\rhat)- \rhat)(-x-D),(wt_{\gamma}(k_0\L_0+\rhat)- \rhat)_{|\h^\natural}).\end{equation}
 To compute the R.H.S. of \eqref{143},  use formula \eqref{trasl}, noting that $u_i=2$ for simply laced Lie algebras:
$$wt_\gamma(k_0\L_0+\rhat)-\rhat=k_0\L_0+w(b\gamma+\rho)-\rho-((\rho|\gamma)+b\tfrac{(\gamma|\gamma)}{2})\d,$$
so \eqref{detNSS} becomes
$$
\widehat F^{NS}=\half\sum_{w\in W}\det(w)\sum_{\gamma \in Q} (\gamma|\a) e^{(w(b\gamma+\rho)-\rho)_{|\h^\natural}}q^{(\rho|\gamma)+b\tfrac{(\gamma|\gamma)}{2}}q^{(w(b\gamma+\rho)-\rho)(-x-D)}
$$
To compute 
$$(w(b\gamma+\rho)-\rho)(-x-D)=(w(b\gamma+\rho)-\rho)(-x),$$ observe  that $\bar w_{-\eta}$ and $s_\theta\bar w_{\eta}$ are in the same right coset mod $W^\natural$, so the set $\{\bar w_{\eta}\mid \eta\in\Dp\}\cup\{s_\theta\bar w_{\eta}\mid \eta\in\Dp\}$ is a set of right coset representatives for $W^\natural\backslash W$. If $   w=w^\natural \bar w_\eta $, $\eta\in\Dp$, then
\begin{align}\label{funz1}&(w(b\gamma+\rho)-\rho)(-x)=(w(b\gamma+\rho)-\rho|-\theta/2)=(\bar w_\eta(b\gamma+\rho)-\rho)|-\theta/2)\\
&=-b(\bar w_\eta\gamma|\theta/2)+(\rho-\bar w_\eta(\rho)|\theta/2)=-\tfrac{b}{2}(\gamma|\bar w_\eta^{-1}(\theta))+\half(\rho-\bar w_\eta(\rho)|\theta).\notag\end{align}
On the other hand, if $w=w^\natural s_\theta\bar w_\eta$, then
\begin{align}\label{funz2}&(w(b\gamma+\rho)-\rho)(-x)=(w(b\gamma+\rho)-\rho|-\theta/2)=(\bar w_\eta(b\gamma+\rho)+\rho)|\theta/2)\\
&=b(\bar w_\eta\gamma|\theta/2)+(\rho|\theta)+(\bar w_\eta(\rho)-\rho|\theta/2)=\tfrac{b}{2}(\gamma|\bar w_\eta^{-1}(\theta))+(\rho|\theta)-\half(\rho-\bar w_\eta(\rho)|\theta).\notag\end{align}
Since  $w^\natural s_\theta\bar w_{\eta}(b\gamma+\rho)_{|\h^\natural}=s_\theta w^\natural \bar w_{\eta}(b\gamma+\rho)_{|\h^\natural}=w^\natural \bar w_{\eta}(b\gamma+\rho)_{|\h^\natural}$, 
plugging \eqref{funz1} and \eqref{funz2} into \eqref{detNSS}, by the discussion preceding the statement of the theorem, we obtain 
\begin{align*}
\widehat F^{NS}&=\half\sum_{\eta\in\Dp}\det(\bar w_{\eta})\sum_{\gamma \in Q} (\gamma|\a) \sum_{w^\natural\in W^\natural}det(w^\natural) e^{w^\natural \bar w_\eta(b\gamma+\rho)_{|\h^\natural}- \rho_{|\h^\natural}}q^{(\rho|\gamma)+\tfrac{b(\gamma|\gamma-\eta)+\ell(\bar w_\eta)}{2}}\\
&
+\half\sum_{\eta\in\Dp}\det(\bar w_{\eta})\sum_{\gamma \in Q} (\gamma|\a) \sum_{w^\natural\in W^\natural}det(w^\natural) e^{w^\natural \bar w_{\eta}(b\gamma+\rho)_{|\h^\natural}- \rho_{|\h^\natural}}q^{(\rho|\gamma+\theta)+\tfrac{b(\gamma|\gamma+\eta)-\ell(\bar w_{\eta})}{2}}.\end{align*}
which, observing that $\rho_{|\h^\natural}=\rho^\natural$, is \eqref{finalissimo}.
\end{proof}

 \subsection*{Acknowledgements}
The authors are grateful to T. Arakawa for correspondence and to D. Adamovi\'c for useful discussions. P.P. wishes to thank C. Krattenthaler for useful discussions. P. M-F. wishes to thank Indam for its hospitality during the workshop held in Rome from 11 to 15 December 2023.
P.M-F. and P.P. are partially supported by the PRIN project 2022S8SSW2 - Algebraic and geometric aspects of Lie theory - CUP B53D2300942 0006, a project cofinanced
by European Union - Next Generation EU fund.
V.K. is partially supported by the Simons Collaboration grant.


  \end{document}